
\documentclass[12pt]{amsart}



\usepackage{
	amsmath,
 	amsfonts,
  	amssymb,
 	amsthm,
	}

\usepackage{tikz-cd}
\usepackage{tikz}

\usepackage{rotating}

\usetikzlibrary{decorations.pathmorphing, decorations.pathreplacing}
 \usetikzlibrary{decorations.pathreplacing,backgrounds,decorations.markings}

\usepackage{color}

\usepackage{stmaryrd}
\usepackage[cal=boondoxo]{mathalfa}
\usepackage{mathtools}
\usepackage{bbm}

\usepackage{hyperref}
\usepackage[capitalise]{cleveref}

\usepackage{a4wide}

\usepackage{mathabx}


\theoremstyle{plain}
\newtheorem{thm}{Theorem}[section]
\newtheorem{cor}[thm]{Corollary}
\newtheorem{lem}[thm]{Lemma}
\newtheorem{prop}[thm]{Proposition}
\newtheorem{conj}[thm]{Conjecture}

\theoremstyle{definition}
\newtheorem{rem}[thm]{Remark}

\newtheorem{exe}[thm]{Example}

\theoremstyle{definition}
\newtheorem{defn}[thm]{Definition}

\newenvironment{citethm}[1]{%
	\thm}{\endthm\addtocounter{thm}{-1}}
	
\newenvironment{citecor}[1]{%
	\cor}{\endthm\addtocounter{thm}{-1}}


%
\def\makeautorefname#1#2{\expandafter\def\csname#1autorefname\endcsname{#2}}
%
%
\makeautorefname{lem}{Lemma}%
\makeautorefname{prop}{Proposition}%
\makeautorefname{rem}{Remark}%
\makeautorefname{section}{Section}%

\newcommand{\cA}{\mathcal{A}}
\newcommand{\cB}{\mathcal{B}}
\newcommand{\cC}{\mathcal{C}}
\newcommand{\cD}{\mathcal{D}}

\newcommand{\cM}{\mathcal{M}}

\newcommand{\cT}{\mathcal{T}}

\newcommand{\cV}{\mathcal{V}}


\newcommand{\bN}{\mathbb{N}}

\newcommand{\bQ}{\mathbb{Q}}
\newcommand{\bR}{\mathbb{R}}

\newcommand{\bZ}{\mathbb{Z}}



\newcommand{\be}{{\boldsymbol{e}}}
\newcommand{\bg}{{\boldsymbol{g}}}
\newcommand{\bi}{{\boldsymbol{i}}}
\newcommand{\bj}{{\boldsymbol{j}}}


\newcommand{\vsimeq}{\rotatebox{-90}{\(\simeq\)}}

\newcommand{\brak}[1]{\langle #1\rangle}
\newcommand{\pp}[1]{(\!( #1 )\!)}
\newcommand{\opalg}[1]{{#1}^{\opp}}
\newcommand{\und}[1]{{\underline{#1}}}

\newcommand{\sssum}[1]{{\sum\limits_{\substack{#1}}}}

\newcommand{\bV}{\raisebox{0.03cm}{\mbox{\footnotesize$\textstyle{\bigwedge}$}}}

\DeclareMathOperator{\id}{Id}
\DeclareMathOperator{\Hom}{Hom}
\DeclareMathOperator{\End}{End}

\DeclareMathOperator{\Rep}{Rep}

\DeclareMathOperator{\Ind}{Ind}
\DeclareMathOperator{\Res}{Res}

\DeclareMathOperator{\HOM}{HOM}
\DeclareMathOperator{\END}{END}

\DeclareMathOperator{\cHom}{\mathcal{H}\mathit{om}}
\DeclareMathOperator{\cRHom}{\mathcal{RH}\mathit{om}}

\DeclareMathOperator{\cEnd}{\mathcal{E}\mathit{nd}}

\DeclareMathOperator{\Lderiv}{L}
\newcommand{\Lotimes}{\otimes^{\Lderiv}}

\DeclareMathOperator{\opp}{{op}}

\DeclareMathOperator{\gdim}{gdim}

\DeclareMathOperator{\Image}{im}
\DeclareMathOperator{\cok}{cok}

\DeclareMathOperator{\mcolim}{MColim}
\DeclareMathOperator{\mlim}{MLim}

\DeclareMathOperator{\cone}{Cone}

\newcommand{\bKO}{\boldsymbol{K}_0}

\newcommand{\slt}{\mathfrak{sl_2}}
\newcommand{\g}{{\mathfrak{g}}}
\newcommand{\p}{{\mathfrak{p}}}
\newcommand{\bo}{{\mathfrak{b}}}

\newcommand\E{{\sf{E}}}
\newcommand\F{{\sf{F}}}
\newcommand\K{{\sf{K}}}
\newcommand\Q{{\sf{Q}}}

\newcommand{\ch}{\mathcal{h}}

\newcommand{\inlmin}[1]{\mbox{$\min_{#1}$}}

\DeclareMathOperator{\amod}{\mathrm{-}mod}

\DeclareMathOperator{\dgcat}{dg\mathrm{-}cat}

\DeclareMathOperator{\Hqe}{Hqe}


\DeclareMathOperator{\nh}{NH}
\DeclareMathOperator{\Supp}{Supp}
\DeclareMathOperator{\Seq}{Seq}
\DeclareMathOperator{\Seqd}{Seqd}
\DeclareMathOperator{\Sym}{Sym}

\newcommand{\tikzdiagh}[2][]{\tikz[#1,very thick,baseline={([yshift=1ex+#2]current bounding box.center)}]}
\newcommand{\tikzdiag}[1][]{\tikzdiagh[#1]{-1.5ex}}\tikzstyle{tikzdot}=[fill, circle, inner sep=2pt]
\newcommand{\fdot}[3][]{ \node  [anchor = center, fill=white, draw=black,circle,inner sep=2pt] at (#3) {} ; \node[xshift=-.065cm, yshift=-.035cm,anchor = south west] at (#3){\small $#1$}; \node[xshift=-.065cm, yshift=.065cm,anchor = north west] at (#3){\small $#2$}; }
\newcommand{\plusspacing}{\llap{\phantom{\small ${+}1$}}}

\definecolor{myblue}{rgb}{0,.5,1}
\definecolor{mygreen}{rgb}{.2,.8,0}

\newcommand{\remacyclicproj}{Remark 9.5}
\newcommand{\thmasympKO}{Theorem 9.15}
\newcommand{\propcblfbim}{Proposition 9.18}

\hypersetup{colorlinks=true, pdfstartview=FitV, linkcolor=blue, citecolor=blue, urlcolor=blue}



\title{2-Verma modules}

\author{Gr\'egoire Naisse}
\address{Max-Planck Institute for Mathematics\\
 Vivatsgasse 7 \\ 
53111 Bonn\\ 
Germany}
\email{gregoire.naisse@gmail.com}
\author{Pedro Vaz}
\address{Institut de Recherche en Math\'ematique et Physique\\
Universit\'e Catholique de Louvain\\ 
Chemin du Cyclotron 2\\ 
1348 Louvain-la-Neuve\\ 
Belgium}
\email{pedro.vaz@uclouvain.be}

\setcounter{tocdepth}{1}


\subjclass[2010]{20G42 (primary), and 18D99, 16W50, 20C08 (secondary)}

\begin{document}


\begin{abstract}
We construct a categorification of parabolic Verma modules for symmetrizable Kac--Moody algebras using KLR-like diagrammatic algebras. 
We show that our construction arises naturally from a dg-enhancement of the cyclotomic quotients of the KLR-algebras. 
As a consequence, we are able to recover the usual categorification of integrable modules. 
We also introduce a notion of dg-2-representation for quantum  Kac--Moody algebras, and in particular of parabolic 2-Verma modules. 
\end{abstract}


\maketitle

\tableofcontents



\section{Introduction}\label{sec:intro}

The study of categorical actions of (quantum enveloping algebras of) Kac--Moody algebras leads to many interesting
results.
An impressive example is due to Chuang and Rouquier~\cite{CR}, who introduced categorical actions of $\slt$ to prove the Brou\'e abelian defect group conjecture for symmetric groups.
Another interesting result is Webster's construction of homological versions of quantum invariants of links obtained by the Reshetikhin--Turaev machinery~\cite{webster}. 

\smallskip

Until recently, only categorifications of integrable representations of quantum Kac--Moody algebras were known.
These are given by additive (or abelian) categories, on which the quantum group acts by (exact) endofunctors respecting certain direct sum decompositions, corresponding to the defining relations of the algebra (see for example~\cite{fks,kashiwara, L1, L2, rouquier}).
In~\cite{naissevaz1}, the authors followed a slightly different approach to construct a categorification of the universal Verma module $M(\lambda)$ for quantum $\slt$. The construction of~\cite{naissevaz1} is given in the form of an abelian, bigraded (super)category, where the commutator relation takes the form of a (non-split) natural short exact sequence 
\[
0 \rightarrow \F\E \rightarrow \E\F \rightarrow \Q\K \oplus \Pi\Q\K^{-1} \rightarrow 0,
\]
where $\Pi$ is the parity shift functor, and $\Q$ a categorification of $\frac{1}{q-q^{-1}}$ in the form of an infinite direct sum. 
This category is obtained as a certain category of modules over cohomology rings of infinite Grassmannianns and their Koszul duals.
Categorification of Verma modules appeared independently in the litterature with a strongly different flavor in \cite{coxim} and in \cite{ACNJTY}.

 \smallskip

 Studying the endomorphism ring of $\F^k := \F \circ \cdots \circ \F$ yields a (super)algebra $A_k$ that extends the ubiquitous nilHecke algebra $\nh_k$.
This superalgebra was studied by the authors in the follow up~\cite{naissevaz2}, where it was used to construct an equivalent categorification of Verma modules for quantum $\slt$.
The supercenter of $A_k$ was also studied in~\cite{AEHL}. 
The definition of the superalgebra $A_k$ and is supercenter were extended in~\cite{reeks} to the case of a Weyl group of type $B$.

 \smallskip

The superalgebra $A_k$ comes equipped with a family of differentials $d_n$ for $n \ge 0$. The corresponding dg-algebras are formal, with homology being isomorphic to the $n$-cyclotomic quotients of the nilHecke algebra. These quotients are known to categorify the irreducible integrable $U_q(\slt)$-representations $V(n)$ of highest weight $n$. 
We interpret this as a categorification of the universal property of the Verma module $M(\lambda)$, that is there is a surjection $M(\lambda)\twoheadrightarrow V(n)$ for all $n$.
This also means the dg-algebra $(A_k, d_n)$ can be seen as a dg-enhancement of the cyclotomic nilHecke algebra $\nh_k^n$, and in particular, of categorified $V(n)$. 

\smallskip

In~\cite{KL1, KL2} and~\cite{rouquier}, Khovanov--Lauda and Rouquier introduced generalizations of the nilHecke algebra for any Cartan datum.
These algebras are presented in the form of braid-like diagrams in~\cite{KL1, KL2}, with strands labeled by simple roots and decorated with dots.
It is proven in~\cite{KL1, KL2,rouquier} that KLR algebras categorify the half quantum group associated with the input Cartan datum.
Khovanov and Lauda conjectured that certain quotients of these algebras categorify irreducible, integrable representations of the quantum group.
Due to the isomorphism between these quotient algebras and cyclotomic Hecke algebras in type $A$ (see~\cite{BK,rouquier}), these quotients have become known as cyclotomic KLR algebras. The corresponding  
cyclotomic conjecture was first proven in~\cite{brundankleshchev,brundanstroppel,laudavazirani} for some special cases,
and then for all symmetrizable Kac--Moody algebras by Kang--Kashiwara in~\cite{kashiwara}, and independently by Webster in~\cite{webster}.

\smallskip

In this paper, we introduce a version of KLR algebra associated to a pair $(\p,\g)$, where $\p$ is a (standard) parabolic subalgebra of a quantum Kac-Moody algebra $\g$. This construction generalizes the algebra $A_k$ from~\cite{naissevaz1},
which we view as associated to the (standard) Borel subalgebra of $\slt$. The usual KLR algebra is recovered by taking $\p = \g$.  
We prove that certain `cyclotomic quotients' of these $\p$-KLR algebras categorify parabolic Verma modules induced over the parabolic subalgebra $\p$, with the cyclotomic quotient depending on the highest weight. 
The proof goes by showing first that if $\p=\bo$ is the (standard) Borel subalgebra of $\g$, then the $\bo$-KLR algebra is equipped with a categorical $\g$-action similar to the one constructed in~\cite{naissevaz2}. In particular, it categorifies the universal Verma module of $\g$. 
Next, we show that the $\bo$-KLR algebra can be equipped with a family of differentials, turning it into a dg-enhancement of the cyclotomic $\p$-KLR algebras. This induces a categorical $\g$-action on the cyclotomic $\p$-KLR algebra. In particular, we recover the usual categorical action on cyclotomic KLR algebras, and we can reinterpret  Kang--Kashiwara's proof of Khovanov--Lauda's cyclotomic conjecture in terms of dg-enhanced KLR algebras. 
The world of dg-categories also allows to reinterpret the usual categorical $\slt$-commutator relation in terms of mapping cones. 
More precisely, the derived  category of dg-modules over the dg-enhanced KLR algebra comes equipped with functors $\E_i, \F_i$ and an autoequivalence $\K_i$ for all simple root $\alpha_i$, that categorifies  the action of the Chevalley generators $E_i, F_i$ and of the Cartan element $K_i = q_i^{H_i}$. 
Then, the $\slt$-commutator relation of the categorical action takes the form of a quasi-isomorphism of mapping cones
\[
\cone(\F_i\E_i \rightarrow \E_i \F_i) \xrightarrow{\simeq} \cone(\Q_i\K_i \rightarrow \Q_i\K_i^{-1}),
\]
where $\Q_i$ is a direct sum of grading shift copies of the identity functor that categories $\frac{1}{q_i^{-1}-q_i}$. 
Whenever $F_i$ is locally nilpotent,  $\cone(\Q_i\K_i \rightarrow \Q_i\K_i^{-1})$ is quasi-isomorphic to a finite direct sum of shifted copies of the identity functor, corresponding to the usual notion of an integrable categorical $\g$-action (as in \cite{kashiwara} for example).

\smallskip

Categorification of parabolic Verma modules have found connections with topology in the work of the authors in~\cite{naissevaz3}. 
In particular, they have constructed Khovanov--Rozansky's triply graded link homology using parabolic 2-Verma modules of $\mathfrak{gl}_{2k}$. 
On the decategorified level, the connection between the HOMFPY-PT link polynomial and Verma modules was not known before. 
We expect to find in the future more connections between categorified Verma modules and low-dimensional topology. 

\subsection*{Outline of the paper}

In~\cref{sec:verma}, we recall the basics about quantum groups and their parabolic Verma modules. 

In~\cref{sec:dgKLR}, we introduce the $\bo$-KLR algebra $R_\bo$ (\cref{def:Rbo}) as a diagrammatic algebra over a unital commutative ring $\Bbbk$, in the same spirit as Khovanov--Lauda's~\cite{KL1}. We construct a faithful action on a polynomial ring and exhibit a basis, proving $R_\bo$ is a free $\Bbbk$-module.  

In~\cref{sec:dgenhancement}, we introduce the $\p$-KLR algebra $R_\p$ for any (standard) parabolic subalgebra $\p$ of $\g$. We also introduce the corresponding $N$-cyclotomic quotient $R_\p^N$. We introduce a differential $d_N$ on $R_\bo$, turning it into a dg-enhancement of $R_\p^N$. In particular, we prove the following theorem:
\begin{citethm}{thm:RbodNformal}
The dg-algebra $(R_\bo(m), d_N)$ is formal with homology
\[
H(R_\bo(m), d_N) \cong R_\p^N(m).
\]
\end{citethm}

In  \cref{sec:cataction}, we construct a categorical action of $U_q(\g)$ on $R_\bo$, where the action of the Chevalley generators $F_i$ and $E_i$ is given by functors $\F_i$ and $\E_i$ which are defined in terms of induction and restriction functors for the map that adds a strand labeled $i$.  The  $\slt$-commutator relation takes the form of a non-split natural short exact sequence. 
Let $\oplus_{[\beta_i - \alpha_i^\vee(\nu)]_{q_i}} \id_\nu$
be an infinite direct sum of degree shifts of the identity functor that categorifies the power series $({\lambda_i q_i^{- \alpha_i^\vee(\nu)} - \lambda_i^{-1}q_i^{\alpha_i^\vee(\nu)}})/({q_i-q_i^{-1}})$ (see \cref{eq:directsumbetaquantum} in the beginning of ~\cref{sec:cataction}). 
\begin{citecor}{cor:catsltactionRbo}
There is a natural short exact sequence
\[
0 \rightarrow \F_i\E_i \id_\nu \rightarrow \E_i\F_i \id_\nu \rightarrow \oplus_{[\beta_i - \alpha_i^\vee(\nu)]_{q_i}} \id_\nu \rightarrow 0,
\]
for all $i \in I$, and there is a natural isomorphism
\[
\F_i\E_j \cong \E_j\F_i,
\]
for all $i \neq j \in I$.
\end{citecor}
Fix $\p \subset \g$, and let $I_f$ be the set of simple roots for which $F_i \in \p$. 
Let $\oplus_{[n]_{q_i}} \id_\nu$ be a finite direct sum of degree shifts of the identity functor that categorifies the quantum integer $[n]_{q_i}$. 
The categorical $\g$-action on $R_\bo$ lifts to the dg-algebra $(R_\bo,d_N)$, and thus to $R_\p^N$ by \cref{thm:RbodNformal}. 
 The short exact sequence of \cref{cor:catsltactionRbo} lifts to a short of exact sequence of complexes, inducing a long exact sequence in homology. This allows us to compute the action of the functors of induction $\F_i^N$ and restriction $\E_i^N$ on $R_\p^N$: 
\begin{citethm}{thm:sl2commutRpN}
For $i \notin I_f$ there is a natural short exact sequence
\begin{equation*}
0 \rightarrow \F^N_i\E^N_i \id_\nu \rightarrow \E^N_i\F^N_i \id_\nu \rightarrow \oplus_{[\beta_i - \alpha_i^\vee(\nu)]_{q_i}} \id_\nu \rightarrow 0,
\end{equation*}
and for $i \in I_f$ there are natural isomorphisms
\begin{equation*}
  \begin{aligned}
 \E^N_i\F^N_i \id_\nu &\cong   \F^N_i\E^N_i \id_\nu \oplus_{[n_i - \alpha_i^\vee(\nu)]_{q_i}} \id_\nu, & \text{ if $n_i - \alpha_i^\vee(\nu) \geq 0$}, \\
 \F^N_i\E^N_i \id_\nu &\cong   \E^N_i\F^N_i \id_\nu \oplus_{[\alpha_i^\vee(\nu)-n_i]_{q_i}} \id_\nu, & \text{ if $n_i - \alpha_i^\vee(\nu) \leq 0$}.
\end{aligned}
\end{equation*}
Moreover, there is a natural isomorphism
\begin{equation*}
\F^N_i\E^N_j \cong \E^N_j\F^N_i,
\end{equation*}
 for $i \neq j \in I$. 
\end{citethm}

In \cref{sec:catthm}, we compute the asymptotic Grothendieck group of $(R_\bo,d_N)$.
The asymptotic Grothendieck group is a refined version of Grothendieck group, that was introduced by the first author in~\cite{asympK0}. It allows taking in consideration infinite iterated extensions of objects, such as infinite projective resolutions and infinite composition series (see \cref{def:toptriangulatedK0}). 
Let $M^\p(\Lambda,N)$ be the parabolic Verma module of  highest weight $(\Lambda,N)$, and  $\cM^\p(\Lambda,N)$ be the c.b.l.f. derived category of  $(R_\bo,d_N)$ (see \cref{sec:cblfderived}).
\begin{citethm}{thm:catallverma}
The asymptotic Grothendieck group ${}_\bQ\bKO^\Delta(\cM^\p(\Lambda,N))$ is a $U_q(\g)$-weight module, with action of $E_i, F_i$ given by $[\E_i], [\F_i]$. Moreover, there is an isomorphism of $U_q(\g)$-modules
\[
{}_\bQ\bKO^\Delta(\cM^\p(\Lambda,N)) \otimes_\bZ \bQ \cong M^\p(\Lambda,N).
\]
\end{citethm}

In~\cref{sec:2Verma}, we introduce a notion of categorical dg-action of $\g$ on a pretriangulated dg-category (\cref{def:dgcat}), and of (parabolic) 2-Verma module (\cref{def:2verma}). In particular,  we show that $\cM^\p(\Lambda,N)$ admits a dg-enhancement $\cM_{dg}^\p(\Lambda,N)$ in the form of a dg-category. It yields an example of parabolic 2-Verma module, for which \cref{thm:catallverma} takes the following form:

\begin{citecor}{cor:quasiisocones}
For all $i \in I$ there is a quasi-isomorphism of cones
\[
\cone\bigl(\F_i^N\E_i^N \id_\nu \rightarrow \E_i^N\F_i^N \id_\nu \bigr) \xrightarrow{\simeq}  \cone\bigl(\Q_i \lambda_i q_i^{-\alpha_i^\vee(\nu)}  \id_\nu \rightarrow \Q_i \lambda_i^{-1} q^{\alpha_i^\vee(\nu)} \id_\nu\bigr), 
\]
in $\cEnd_{\Hqe}(\cD_{dg}(R_\bo,d_N))$. 
\end{citecor}


Finally, in \cref{sec:appendixA} we recall the construction of the homotopy category of dg-categories up to quasi-equivalence, based on Toen \cite{toen}. We also recall how to compute the (derived) dg-hom-spaces between pretriangulated dg-categories.


\subsection*{Acknowledgments}
G.N. is a Research Fellow of the Fonds de la Recherche Scientifique - FNRS, under Grant no.~1.A310.16. 
G.N. is grateful to the Max Planck Institute for Mathematics in Bonn for its hospitality and financial support. 
P.V. was supported by the Fonds de la Recherche Scientifique - FNRS under Grant no.~J.0135.16.


%

\section{Quantum groups and Verma modules}\label{sec:verma}

We recall the basics about quantum groups and their (parabolic) Verma modules. Our presentation is close to~\cite{jantzen96} and~\cite{lusztig}, where the proofs can be found. References for classical results about  Verma modules are~\cite{mazorchukverma} and~\cite{humphreys} (and~\cite{andersenmazorchuk} for the quantum case).

\subsection{Quantum groups}

A \emph{generalized Cartan matrix} is a finite dimensional square matrix $A = \{a_{ij}\}_{i,j \in I} \in \bZ^{|I| \times |I|}$ such that
\begin{itemize}
\item $a_{ii} = 2$ and $a_{ij} \leq 0$ for all $i \neq j \in I$;
\item $a_{ij} = 0 \Leftrightarrow a_{ji} = 0$.
\end{itemize}
One says that $A$ is symmetrizable if there exists a diagonal matrix $D$ with positive entries $d_i \in \bZ_{>0}$ for all $i \in I$, such that $DA$ is symmetric.
A \emph{Cartan datum} consists of
\begin{itemize}
\item a symmetrizable generalized Cartan matrix $A$;
\item a free abelian group $Y$ called the \emph{weight lattice};
\item a set of linearly independent elements $\Pi = \{\alpha_i\}_{i \in I} \subset Y$ called \emph{simple roots};
\item a \emph{dual weight lattice} $Y^\vee := \Hom(Y, \bZ)$;
\item a set of \emph{simple coroots} $\Pi^\vee = \{\alpha_i^\vee\}_{i \in I} \subset Y^\vee$;
\end{itemize}
such that
\begin{itemize}
\item $\alpha_i^\vee(\alpha_j) = a_{ij}$;
\item for each $i\in I$ there is a \emph{fundamental weight} $\Lambda_i \in Y$ such that $\alpha_j^\vee(\Lambda_i) = \delta_{ij}$ for all $j \in I$.
\end{itemize}
The abelian subgroup $X  := \bigoplus_i \bZ \alpha_i \subset Y$ is called the \emph{root lattice}. We also write $X^+ := \bigoplus_{i} \bN \alpha_i \subset X$ for the \emph{positive roots}.
Given a Cartan datum, since $A$ is symmetrizable with $d_i a_{ij} = d_j a_{ji}$, one can construct a symmetric bilinear form 
\[
(-|-) : Y \times Y \rightarrow \bZ,
\]
 respecting
\begin{itemize}
\item $(\alpha_i|\alpha_i) = 2d_i \in \{2,4, \dots\}$;
\item $(\alpha_i|\alpha_j) = d_ia_{ij} \in \{0,-1,-2, \dots \}$ for all $i \neq j$;
\item $\alpha_i^\vee(y) = 2 \frac{(\alpha_i | y)}{(\alpha_i | \alpha_i)}$ for all $y \in Y$.
\end{itemize}
In the end, a Cartan datum is completely determined by $\left(I, X, Y, (-|-)\right)$.

\begin{defn}
The \emph{quantum Kac--Moody algebra $U_q(\g)$} associated to a Cartan datum $\left(I, X, Y, (-|-)\right)$ is the associative, unital $\bQ(q)$-algebra generated by the set of elements $E_i, F_i$ and $K_{\gamma}$ for all $i \in I$ and $\gamma \in Y^\vee$, with relations for all $i \in I$ and $\gamma,\gamma' \in Y^\vee$:
\begin{align*}
K_0 &= 1,
& K_{\gamma} K_{\gamma'} &= K_{\gamma+\gamma'}, \\
K_{\gamma} E_i  &= q^{\gamma(\alpha_i)} E_i K_{\gamma},  &
 K_{\gamma} F_i &= q^{-\gamma(\alpha_i)} F_i K_{\gamma},
\end{align*}
One also imposes the \emph{$\mathfrak{sl_2}$-commutator relation} for all $i,j \in I$:
\begin{equation}\label{eq:sl2com}
E_iF_j - F_jE_i = \delta_{ij} \frac{K_i - K_i^{-1}}{q_i - q_i^{-1}},
\end{equation}
where $q_i := q^{d_i}$ and $K_i := K_{\alpha_i^\vee}$. \\
Finally, there are the \emph{Serre relations} for $i \neq j \in I$:
\begin{align}
\sum_{r+s = 1- a_{ij}} (-1)^r  \begin{bmatrix}1-a_{ij} \\ r \end{bmatrix}_{q_i} E_i^r E_j E_i^s &= 0, \label{eq:serreE} \\
 \sum_{r+s = 1- a_{ij}} (-1)^r \begin{bmatrix}1-a_{ij} \\ r \end{bmatrix}_{q_i} F_i^r F_j F_i^s &= 0. \label{eq:serreF}
\end{align}
This ends the definition of $U_q(\g)$. 
\end{defn}

Given a sequence $\bi = i_1\cdots i_m$ of elements in $I$, we write $F_\bi := F_{i_1}\cdots F_{i_m}$  and $E_\bi := E_{i_1}\cdots E_{i_m}$. We write $\Seq(I)$ for the set of such sequences. Any element of $U_q(\g)$ decomposes as a sum of elements $F_\bi K_\gamma E_\bj$ with $\bi, \bj \in \Seq(I)$. 

\smallskip

The \emph{half quantum group $U^-_q(\g)$} of $U_q(\g)$ is the subalgebra generated by the elements $\{F_i\}_{i \in I}$. As a $\bQ(q)$-vector space, it admits a basis given by a subset of $\{F_\bi\}_{\bi \in \Seq(I)}$.

\subsection{Weight modules}\label{sec:qgweightmodules}

 Let $M$ be an $U_q(\g)$-module with ground ring $R \supset \bQ(q)$. Consider a $\bZ$-linear functional
\[
\lambda : Y^\vee \rightarrow R^\times,
\]
where the group structure on $R^\times$ is the product. For each such $\lambda$ and $y \in Y$, we call \emph{$(\lambda,y)$-weight space} the set
\[
M_{\lambda,y} := \{ v \in M | K_\gamma v = \lambda(\gamma) q^{\gamma(y)} v \text{ for all $\gamma \in Y^\vee$} \}.
\]
Note that $E_i M_{\lambda, y} \subset M_{\lambda, y+\alpha_i}$ and $F_i M_{\lambda, y} \subset M_{\lambda, y - \alpha_i}$.
A weight module is a module that decomposes as a direct sum of weight spaces. 
A highest weight module is a module $M$ such that $M= U_q(\g) v_\lambda$ for some $v_\lambda \in M_{\lambda,0}$ with $E_i v_\lambda = 0$ for all $i \in I$. 
In that case, we call $\lambda$ the \emph{highest weight} and we have
\[
M \cong \bigoplus_{y \in X^+} M_{\lambda, -y}.
\]
as $R$-module.

\smallskip

One says that a $U_q(\g)$-module $M$ is \emph{integrable} if for each $v \in M$ there exists $k \gg 0$ such that $E_i^k v = 0$ and $F_i^k v = 0$ for all $i \in I$. Any finite dimensional module is integrable, and any integrable module is a weight module with $\lambda(\Pi^\vee) \subset \bZ[q]$. We consider only type 1 modules, that is $\lambda(\Pi^\vee) \subset \bN[q]$.

\smallskip

Let $M$ be a highest weight module with highest weight vector $v_\lambda \in M_{\lambda,0}$. Then we set $\lambda_i := \lambda(\alpha_i^\vee)$ for each $i \in I$. We are interested in $\lambda$ such that each $\lambda_i$ is either $\lambda_i = q^{n_i}$ for some $n_i \in \bZ$ or $\lambda_i$ is formal. In that case, we write it $\lambda_i = q^{\beta_i}$ where we interpret $\beta_i$ as a formal parameter.

\subsubsection{Parabolic Verma modules}
The (standard) \emph{Borel subalgebra} $U_q(\bo)$ of $U_q(\g)$ is generated by $K_{\gamma}$ and $E_i$ for all $\gamma \in Y^\vee$ and $i \in I$.
A (standard) \emph{parabolic subalgebra} of $U_q(\g)$ is a subalgebra containing $U_q(\bo)$. It is generated by $K_\gamma, E_i$ and $F_j$ for all $\gamma \in Y^\vee, i \in I$ and $j \in I_f$ for some fixed subset $I_f \subset I$. The part given by $K_\gamma, E_j$ and $F_j$ for $j \in I_f$ is called the \emph{Levi factor} and written $U_q(\mathfrak l)$. The \emph{nilpotent radical} $U_q(\mathfrak n)$ is generated by $E_i$ for all $i \in I_r := I \setminus I_f$. 
Note that parabolic subalgebras are in bijection with partitions $I = I_f \sqcup I_r$.

\smallskip

Let $U_q(\p)$ be a parabolic subalgebra determined by $I = I_f \sqcup I_r$. For each $i \in I_f$, we choose a weight $n_i \in \bN$. For each $j \in I_r$ we choose a weight $\lambda_j \in \{q^{\beta_j}, q^{n_j}\}$. We write $N = \{n_i\}_{i \in I_f}$ and $\Lambda = \{\lambda_j\}_{j \in I_r}$. Let $V(\Lambda,N)$ be the unique (type 1) integrable, irreducible representation of $U_q(\mathfrak l)$ on the ground ring $R = \bQ(q, \Lambda)$, and with highest weight $\lambda$ determined by 
\[
\lambda(\alpha^\vee_k) = 
\begin{cases}
q^{n_i}, & \text{if $k = i \in I_f$,} \\
\lambda_j, & \text{if $k = j \in I_r$.}  
\end{cases}
\]
We extend it to a representation of $U_q(\p)$ by setting $U_q(\mathfrak{n}) V(\Lambda, N) = 0$. 

\begin{defn}
The \emph{parabolic Verma module} of highest weight $(\Lambda, N)$ associated to $U_q(\mathfrak p) \subset  U_q(\g)$ is the induced module
\[
M^\p(\Lambda, N) := U_q(\g) \otimes_{U_q(\p)} V(\Lambda, N).
\]
\end{defn}
Whenever $U_q(\p) \subsetneq U_q(\g)$, we have that $M^\p(\Lambda, N)$ is an infinite dimensional module. Moreover, for all parabolic Verma modules, there is a $\bQ(q)$-linear surjection
\[
U_q^-(\g) \otimes_{\bQ(q)} R \twoheadrightarrow M^\p(\Lambda, N).
\]

\begin{exe}
  If $U_q(\p) = U_q(\bo)$, then $N = \emptyset$, and $V(\Lambda, N) \cong \bQ(q, \Lambda)v_{\Lambda}$
  is 1-dimensional, and such that
\begin{align*}
E_i v_\Lambda &= 0,  & 
K_\gamma v_\Lambda = \prod_{j \in I} \lambda_j^{\gamma(\Lambda_j)} v_\Lambda. 
\end{align*}
In this case, we simply call it \emph{Verma module}, and denote it $M^\bo(\Lambda)$. If $\lambda_j = q^\beta$ is formal for all $j \in I_r$, then we call it the \emph{universal Verma module}.
\end{exe}
\begin{exe}
 If $U_q(\p) = U_q(\g)$,  then $\Lambda = \emptyset$ and $M^\p(\Lambda,N) \cong V(N)$ is an integrable, irreducible $U_q(\g)$ representation.
\end{exe}

Since $q$ is a generic parameter we can apply Jantzen's criterion~\cite[Theorem~9.12]{humphreys}, thanks to the results in~\cite{andersenmazorchuk}. We obtain that $M^\p(\Lambda,N)$ is irreducible whenever $\lambda_j \notin \{ q^n | n \in \bN \}$ for all $j \in I_r$. 
If $\lambda_j = q^{n_j}$ for $n_j \in \bN$, then $M^\p(\Lambda, N)$ contains a non-trivial, proper submodule, which is isomorphic to 
$M^\p(\Lambda_{-n_j-2}^{n_j},N)$ for $\Lambda_{-n_j-2}^{n_j}$ given by exchanging $q^{n_j}$ with $q^{-n_j-2}$ in $\Lambda$. Moreover, the quotient
\[
\frac{M^\p(\Lambda, N)}{M^\p(\Lambda_{-n_j-2}^{n_j},N)} \cong M^{\p+j}(\Lambda \setminus \{q^{n_j}\}, N \sqcup \{n_j\}),
\]
is isomorphic to the parabolic Verma module associated to the parabolic subalgebra $\p+j$ given by adding $j$ to $I_f$, that is generated by $\p$ and $F_j$.

\smallskip

Furthermore, whenever $\lambda_j = q^{\beta_j}$ is formal, there is a surjective map
\[
ev_{n_j} : M^\p(\Lambda, N) \twoheadrightarrow M^\p(\Lambda_{\beta_j}^{n_j}, N),
\]
for all $n_j \in \bZ$, given by evaluating $\beta_j = n_j$. 

\smallskip

These two facts together allow us to define a partial order on parabolic Verma modules. For this, we say that there is an arrow from $M^\p(\Lambda,N)$ to $M^{\p'}(\Lambda', N')$ if we have an evaluation map $ev_{n_j}$ such that
\[
ev_{n_j}(M^\p(\Lambda,N)) \cong M^{\p'}(\Lambda', N'),
\]
or if there is a short exact sequence 
\[
0 \rightarrow
M^\p(\Lambda_{-n_j-2}^{n_j}, N) \rightarrow M^\p(\Lambda, N) \rightarrow M^{\p'}(\Lambda', N')
\rightarrow 0.
\]
For parabolic Verma modules $M$ and $M'$ we say that $M$ is bigger than $M'$ if there is a chain of arrows from $M$ to $M'$. In that case, there is an $M''$, which is either trivial or a parabolic Verma module, and a short exact sequence
\[
0 \rightarrow M'' \rightarrow ev(M) \rightarrow M' \rightarrow 0,
\]
where $ev$ is a composition of evaluation maps $ev_{n_j}$.
With this partial order, the universal Verma module is a maximal element and each integrable, irreducible module is a minimum. This also means that we can recover any parabolic Verma module from the universal one. 

\subsubsection{The Shapovalov form}

Let $\rho : U_q(\g) \rightarrow \opalg{U_q(\g)}$ be the $\bQ(q)$-linear algebra anti-involution given by
\begin{align}\label{eq:rhog}
\rho(E_i) &:= q_i^{-1} K_i^{-1} F_i, & \rho(F_i) &:= q_i^{-1}K_i E_i, & \rho(K_\gamma) :=  K_\gamma,
\end{align}
for all $i \in I$ and $\gamma \in Y^\vee$.

\begin{defn}\label{def:shap}
The \emph{Shapovalov form}
\[
(-,-) : M^\p(\Lambda, N) \times M^\p(\Lambda,N) \rightarrow \bQ(q, \Lambda),
\]
is the unique bilinear form respecting
\begin{itemize}
\item $(v_{\Lambda,N}, v_{\Lambda,N}) = 1$, for $v_{\Lambda,N}$ the highest weight vector;
\item $(uv,v') = (v,\rho(u) v')$ where $\rho$ is defined in~\eqref{eq:rhog}; 
\item $f(v,v') = (fv,v') = (v,fv')$,
\end{itemize}
 for all $v,v' \in M^\p(\Lambda, N), u \in U_q(\g)$ and $f\in \bQ(q, \Lambda)$.
\end{defn}

\subsubsection{Basis}

Since parabolic Verma modules are highest weight modules, they admit at least one basis given in terms of elements of the form $F_\bi v_{{\Lambda,N}}$ for $\bi \in \Seq(I)$, where $v_{\Lambda,N}$ is a highest weight vector. In particular, as $R$-modules they are all submodules of $U_q^-(\g) \otimes_{\bQ(q)} R$, meaning that these basis lives in a subset of $\{F_\bi v_{\Lambda,N} | \bi \in \Seq(I)\}$ modded out by the Serre relations.
 We call such a basis an \emph{induced basis} and write it $\{v_{\Lambda,N} = m_0, m_1, \dots \}$. 
Any element in such basis takes the form $F_\bi = F_{i_r}^{b_r} \cdots F_{i_1}^{b_1}$ for some $i_1, \dots, i_r \in I$ and $b_1, \dots, b_r \in \bN$, with $i_\ell \neq i_{\ell+1}$. 
Replacing each $F_i^b$ by the \emph{divided power} $F_i^{(b)} := F_i^b/([b]_{q_i} !)$ yields another basis $\{v_{\Lambda,N} = m_0', m_1', \dots \}$. 
Lusztig's \emph{canonical basis} \cite{lusztig} is given by a certain choice of such a divided power basis characterized by
\[
 (m_i', m_i') - 1 \in  \bZ^+_{\prec}\llbracket q, \Lambda \rrbracket,
\]
for any order such that $0 \prec q \prec \lambda_i$ (see \cref{sec:catthm} for a definition of $ \bZ^+_{\prec}\llbracket q, \Lambda \rrbracket$). 
Whenever $M^\p(\Lambda,N)$ is irreducible, the Shapovalov form is non-degenerate. 
Therefore, in this case, there is a \emph{dual canonical basis} uniquely determined by  
\[
(m_i', m^j) = \delta_{ij}.
\]



\section{The $\bo$-KLR algebras}\label{sec:dgKLR}

Fix once and for all a Cartan datum $\left(I, X, Y, (-|-)\right)$, and let 
\[
d_{ij} := -\alpha_i^\vee(\alpha_j) \in \bN.
\] 
For $\nu \in X^+$ we write
\[
\nu = \sum_{i \in I} \nu_i \cdot \alpha_i, \quad \nu_i \in \bN,
\]
and we set $|\nu| := \sum_{i} \nu_i$, and $\Supp(\nu) := \{ i | \nu_i \neq 0\}$. 

\smallskip

We also fix a choice of scalars in a commutative, unital ring $\Bbbk$ as introduced in~\cite{rouquierquiv}. Following the conventions in~\cite{laudaimplicit}, it consists of:
\begin{itemize}
\item $t_{ij} \in \Bbbk^\times$ for all $i,j \in I$;
\item $s_{ij}^{tv} \in \Bbbk$ for $i \ne j$, $0 \le t < d_{ij}$ and $0 \le v <d_{ji}$;
\item $r_i \in \Bbbk^\times$ for all $i \in I$,
\end{itemize}
respecting
\begin{itemize}
\item $t_{ii} = 1$;
\item $t_{ij} = t_{ji}$ whenever $d_{ij} = 0$;
\item $s_{ij}^{tv} = s_{ji}^{vt}$;
\item $s_{ij}^{tv} = 0$ whenever $t(\alpha_i | \alpha_i) + v (\alpha_j | \alpha_j) \neq -2  (\alpha_i | \alpha_j)$.
\end{itemize}
In addition, whenever $t < 0$ or $v < 0$,  we put $s_{ij}^{tv} := 0$.
Thus we have $s_{ij}^{pq} = 0$ for $p > d_{ij}$ or $q > d_{ji}$. 
We will also write $s_{ij}^{d_{ij}0} := t_{ij}$ and $s_{ij}^{0d_{ji}} := t_{ji}$. Hence if $(\alpha_i|\alpha_j) = 0$ we get $s_{ij}^{00} = s_{ji}^{00} = t_{ij} = t_{ji}$.

\begin{defn}[{\cite{KL1,rouquier}}]\label{def:KLRalgebra}
 For $m \in \bN$, the \emph{Khovanov--Lauda--Rouquier (KLR) algebra} $R(m)$ is the $\Bbbk$-algebra generated by braid-like diagrams on $m$ strands, read from bottom to top, such that
  \begin{itemize}
  \item two strands can intersect transversally, but no triple intersections are allowed;
  \item strands can be decorated by dots (we use a dot with a label $k$ to denote $k$ consecutive dots on a strand);
  \item each strand is labeled by a simple root, written $i \in I$, that we (usually) write at the bottom;
  \item multiplication is given by concatenation of diagrams, which preserves the labeling (i.e. connecting two strands with different labels gives zero);
  \item diagrams are taken modulo planar isotopies and the following local relations:
  \end{itemize}
  \begin{align}\label{eq:KLRR2}
\tikzdiagh{0}{
	      	\draw  (0,-.75) node[below] {\small $i$} .. controls (0,-.375) and (1,-.375) .. (1,0) .. controls (1,.375) and (0, .375) .. (0,.75);
 	 	\draw[myblue]  (1,-.75) node[below] {\small $j$} .. controls (1,-.375) and (0,-.375) .. (0,0) .. controls (0,.375) and (1, .375) .. (1,.75);
}
\ = \ 
\begin{cases}
	\hfil 0 &\text{ if } i = j, \\ \\
	\sum\limits_{t,v} s_{ij}^{tv} \  
	\tikzdiagh{0}{
      		\draw  (0,-.5) node[below] {\small $i$} -- (0,.5)node [midway,tikzdot]{} node[midway,xshift=1.5ex,yshift=.75ex] {\small $t$};
		\draw[myblue]  (1,-.5) node[below] {\small $j$} -- (1,.5)node [midway,tikzdot]{} node[midway,xshift=1.5ex,yshift=.75ex] {\small $v$};
	} &\text{ if } i \ne j,\\
\end{cases} 
\end{align}
for all $i,j \in I$,
\begin{align}\label{eq:KLRdotslide}
	\tikzdiagh{0}{
	          \draw (0,-.5) node[below] {\small $i$} .. controls (0,0) and (1,0) .. (1,.5);
	          \draw[myblue] (1,-.5) node[below] {\small $j$} .. controls (1,0) and (0,0) .. (0,.5)  node [near end,tikzdot]{};
	} 
	&\  = \ 
	\tikzdiagh{0}{
	          \draw (0,-.5) node[below] {\small $i$} .. controls (0,0) and (1,0) ..  (1,.5);
	          \draw[myblue] (1,-.5) node[below] {\small $j$}  .. controls (1,0) and (0,0) ..  (0,.5) node [near start,tikzdot]{};
	} 
&
	\tikzdiagh{0}{
	          \draw (0,-.5) node[below] {\small $i$} .. controls (0,0) and (1,0) ..  (1,.5) node [near start,tikzdot]{};
	          \draw[myblue] (1,-.5) node[below] {\small $j$} .. controls (1,0) and (0,0) ..  (0,.5);
	} 
	&\  = \ 
	\tikzdiagh{0}{
	          \draw (0,-.5) node[below] {\small $i$} .. controls (0,0) and (1,0) ..  (1,.5)node [near end,tikzdot]{};
	          \draw[myblue] (1,-.5) node[below] {\small $j$} .. controls (1,0) and (0,0) ..  (0,.5);
	} 
\\
\label{eq:KLRnh}
	\tikzdiagh{0}{
	          \draw (0,-.5) node[below] {\small $i$} .. controls (0,0) and (1,0) ..  (1,.5);
	          \draw (1,-.5) node[below] {\small $i$} .. controls (1,0) and (0,0) ..  (0,.5)  node [near end,tikzdot]{};
	}  
	&\  = \ 
	\tikzdiagh{0}{
	          \draw (0,-.5) node[below] {\small $i$} .. controls (0,0) and (1,0) ..  (1,.5); 
	          \draw (1,-.5) node[below] {\small $i$} .. controls (1,0) and (0,0) ..  (0,.5) node [near start,tikzdot]{};
	} \  + \  r_i
	\tikzdiagh{0}{
	          \draw (0,-.5) node[below] {\small $i$} -- (0,.5);
	          \draw (1,-.5) node[below] {\small $i$} -- (1,.5);
	} ,
&
	\tikzdiagh{0}{
	          \draw (0,-.5) node[below] {\small $i$} .. controls (0,0) and (1,0) ..  (1,.5) node [near start,tikzdot]{};
	          \draw (1,-.5) node[below] {\small $i$}  .. controls (1,0) and (0,0) ..  (0,.5);
	} 
	&\  = \ 
	\tikzdiagh{0}{
	          \draw (0,-.5) node[below] {\small $i$} .. controls (0,0) and (1,0) ..  (1,.5)node [near end,tikzdot]{};
	          \draw (1,-.5) node[below] {\small $i$}  .. controls (1,0) and (0,0) ..  (0,.5);
	} \  + \  r_i
	\tikzdiagh{0}{
	          \draw (0,-.5) node[below] {\small $i$} -- (0,.5);
	          \draw (1,-.5) node[below] {\small $i$} -- (1,.5);
	} 
\end{align}
for all $i \neq j \in I$, 
\begin{align}
 \label{eq:KLRR3} 
	\tikzdiagh[scale=.75]{0}{
		\draw  (0,0)node[below] {\small $i$} .. controls (0,0.5) and (2, 1) ..  (2,2);
		\draw[mygreen]  (2,0)node[below] {\small $k$} .. controls (2,1) and (0, 1.5) ..  (0,2);
		\draw[myblue]  (1,0)node[below] {\small $j$} .. controls (1,0.5) and (0, 0.5) ..  (0,1) .. controls (0,1.5) and (1, 1.5) ..  (1,2);
	 }  \  - \  
	\tikzdiagh[scale=.75]{0}{
		\draw  (0,0)node[below] {\small $i$} .. controls (0,1) and (2, 1.5) ..  (2,2);
		\draw[mygreen]  (2,0)node[below] {\small $k$} .. controls (2,.5) and (0, 1) ..  (0,2);
		\draw[myblue]  (1,0)node[below] {\small $j$} .. controls (1,0.5) and (2, 0.5) ..  (2,1) .. controls (2,1.5) and (1, 1.5) ..  (1,2);
	 }
\  &= 
\begin{cases}
\hfil 0 &\text{ if } i \neq k, \\ \\
r_i \sum\limits_{t,v} s_{ij}^{tv} \sssum{u+\ell=\\t-1} \ 
\tikzdiagh[scale=.75]{0}{
	     	 \draw[myblue]  (1,-1) node[below] {\small $j$}  --(1,1)node [midway,tikzdot]{} node[midway,xshift=1.5ex,yshift=.75ex]{\small $v$}; 
	      	\draw  (0,-1) node[below] {\small $i$} -- (0,1)node [midway,tikzdot]{} node[midway,xshift=1.5ex,yshift=.75ex]{\small $u$}; 
	      	\draw  (2,-1) node[below] {\small $i$} -- (2,1)node [midway,tikzdot]{} node[midway,xshift=1.5ex,yshift=.75ex]{\small $\ell$}; 	
	 }
\quad& \text{otherwise,} 
\end{cases}
\end{align}
for all $i, j, k \in I$. 
In addition, $R(m)$ is $\bZ$-graded by setting 
\begin{align*}
\deg_q \left(
\ 
\tikzdiag{
	\draw (0,0) node[below] {\small $i$}  ..controls (0,.5) and (1,.5) .. (1,1);
	\draw[myblue] (1,0) node[below] {\small $j$}  ..controls (1,.5) and (0,.5) .. (0,1);
}
\ 
\right)
&:= -(\alpha_i | \alpha_j),
&
\deg_q \left(
\ 
\tikzdiag{
	\draw (0,0) node[below] {\small $i$}  -- (0,1) node [midway,tikzdot]{};
}
\ 
\right)
& := (\alpha_i | \alpha_i).
\end{align*}
\end{defn}

\begin{rem}
Note that in \cref{eq:KLRR2} and \cref{eq:KLRR3}, the sum $\sum\limits_{t,v} s_{ij}^{tv}$ can be restricted to the finite number of pairs $t,v \in \bN$ such that $t(\alpha_i | \alpha_i) + v (\alpha_j | \alpha_j) = -2 (\alpha_i | \alpha_j)$. Moreover, it contains at least two non-zero elements with invertible coefficients, given by $t=d_{ij}, v=0$ and $t=0, v = d_{ji}$.
\end{rem}

As proven in~\cite{KL1,KL2} (see also~\cite{rouquier}), these algebras categorify the half quantum group $U_q^-(\g)$ associated to $\left(I, X, Y, (-|-)\right)$, as a (twisted) bialgebra. The multiplication and comultiplication are categorified using respectively induction and restriction functors, obtained by putting diagrams side by side.

\smallskip

For each non-negative integral highest weight $N := \{n_i \in \bN | i \in I\}$, there is a $N$-cyclotomic quotient $R^N(m)$ of $R(m)$ given by modding out the two-sided ideal generated by all diagrams of the form
\[
\tikzdiagh{0}{
	     	 \draw  (0,-.5) node[below] {\small $i$}  --(0,.5)node [midway,tikzdot]{} node[midway,xshift=1.75ex,yshift=.75ex]{\small $n_i$}; 
	      	\draw  (1,-.5) node[below] {\small $j$} -- (1,.5);
	      	\node at(2,0) {\small $\dots$};
	      	\draw  (3,-.5) node[below] {\small $k$} -- (3,.5);
	 }
	\ =\  0.
\]
As first conjectured in~\cite{KL1} and proven in~\cite{kashiwara} and independently in~\cite{webster}, these cyclotomic quotients categorify the irreducible integrable $U_q(\g)$-module of highest weight $N$, where the action of $F_i$ (resp. $E_i$) is given by induction (resp. restriction) along the map $R(m) \hookrightarrow R(m+1)$ that adds a vertical strand with label $i$, at the right.

\subsection{$\bo$-KLR algebra}

Our first goal is to construct a dg-enhancement of the cyclotomic KLR algebras $R^N(m)$, in the same spirit as in \cite{naissevaz2}. We introduce the following algebra:

\begin{defn}\label{def:Rbo}
For $m \in \bN$, the \emph{$\bo$-KLR algebra} $R_\bo(m)$ is the $\Bbbk$-algebra generated by braid-like diagrams on $m$ strands, read from bottom to top, such that
\begin{itemize}
\item two strands can intersect transversally, but no triple intersections are allowed;
 \item strands can be decorated by dots;
  \item regions in-between strands can be decorated by \emph{floating dots}, which are labeled by a subscript in $I$ and a superscript in $\bN$;
  \item each strand is labeled by a simple root, written $i \in I$;
  \item multiplication is given by concatenation of diagrams, which preserves the labeling;
  \item diagrams are taken modulo planar isotopies  that preserve the relative height of the floating dots, and modulo the KLR relations Eq. (\ref{eq:KLRR2} -- \ref{eq:KLRR3}) and the following local relations:
  \end{itemize}
\begin{align}\label{eq:fdmoves}
	\tikzdiag {
      	 	   \node at (.5,0){$\cdots$};
		\fdot[a]{i}{-.5,.25};
		\fdot[b]{j}{1.25,-.25};
  	}
	&\ =\  -\quad
	\tikzdiag {
      	 	   \node at (.5,0){$\cdots$};
		\fdot[a]{i}{-.5,-.25};
		\fdot[b]{j}{1.25,.25};
  	} 
&
	\tikzdiag {
		\fdot[a]{i}{-.5,.5};
		\fdot[a]{i}{-.5,-.5};
	} \ = \ 0,
\end{align} 
meaning floating dots anti-commute with each other for all $i,j \in I$ and $a,b \in \bN$, 
\begin{align}
\label{eq:fdots}
	\tikzdiagh{0}{
	          \draw[myblue] (0,-.5) node[below] {\small $i$} -- (0,.5);
		  \fdot[a]{j}{.5,0};
	} 
	& \ = \ 
\begin{cases}
	\tikzdiagh{0}{
	          \draw (0,-.5) node[below] {\small $i$} -- (0,.5);
		 \fdot[a-1]{i}{-1.1,0};
	}
	\quad  - \quad
	\tikzdiagh{0}{
	          \draw (0,-.5) node[below] {\small $i$} -- (0,.5) node [midway,tikzdot]{};
		 \fdot[a-1]{i}{.5,0};
	}
&\text{ if $i=j$ and $a > 0$, } \\
	\sum\limits_{t,v} (-1)^v
	s_{ij}^{tv}\quad 
	\tikzdiagh{0}{
	          \draw[myblue] (0,-.5) node[below] {\small $i$} -- (0,.5) node [midway,tikzdot]{} node [midway, xshift=1.5ex, yshift=.75ex] {\small $t$};
		 \fdot[a+v]{j}{-1.25,0};
	}  
	 & \text{ if $ i \neq j$, }
\end{cases}
\end{align}
\begin{align}\label{eq:ExtR2}
	\tikzdiagh[scale=1]{0}{
	      	\draw[myblue]  (0,-.75) node[below] {\small $i$} .. controls (0,-.5) and (1,-.5) .. (1,0) .. controls (1,.5) and (0, .5) .. (0,.75);
 	 	\draw  (1,-.75) node[below] {\small $j$} .. controls (1,-.5) and (0,-.5) .. (0,0) .. controls (0,.5) and (1, .5) .. (1,.75);
		\fdot[a]{j}{.4,.05};
	} 
	\  &= \  
	 \tikzdiagh{0}{
	      \draw[myblue]  (0,-.75) node[below] {\small $i$} -- (0,.75);
	      \draw  (1,-.75) node[below] {\small $j$} -- (1,.75);
		\fdot[a]{j}{1.5,0}
	}
	 \  +  \sum_{t,v} s_{ij}^{tv} \sum_{\substack{u+\ell=\\v-1}} (-1)^u\ 
	\tikzdiagh{0}{
	      \draw[myblue]  (0,-.75) node[below] {\small $i$} -- (0,.75)  node [midway,tikzdot]{} node[midway,xshift=1.5ex,yshift=.75ex] {\small $t$};
	      \draw  (1,-.75) node[below] {\small $j$} -- (1,.75)  node [midway,tikzdot]{} node[midway,xshift=1.5ex,yshift=.75ex] {\small $\ell$};	
		\fdot[a+u]{j}{-1.2,0};
	}& 
	 \quad \text{if } i \ne j,
\end{align}
Moreover, a floating dot in the left-most region is zero
\begin{align*}
\tikzdiagh[xscale=.75]{0}{
	\fdot[a]{i}{-0.5,0.5};
	\draw (0,0) node[below] {\plusspacing \small $j$} -- (0,1);
	\draw (1,0) node[below] {\plusspacing \small $k$} -- (1,1);
	\node at(2,.5) {$\dots$};
	\draw (3,0) node[below] {\plusspacing \small $\ell$} -- (3,1);
}
\ = \ 
0.
\end{align*}
Given a diagram, it is sometimes useful to decorate some of its regions with an element $K := \sum_{i \in I} k_i \cdot  \alpha_i \in X^+$, where $k_i$ denotes the number of strands with label $i$ to the left of the region.
The algebra $R_\bo$ is $\bZ^{1+|I|}$-graded (a $q$-grading and a $\lambda_k$-grading for each $k \in I$) with
\begin{align*}
\deg_q \left(
\ 
\tikzdiag{
	\draw (0,0) node[below] {\small $i$}  ..controls (0,.5) and (1,.5) .. (1,1);
	\draw[myblue] (1,0) node[below] {\small $j$}  ..controls (1,.5) and (0,.5) .. (0,1);
}
\ 
\right)
&:= -(\alpha_i | \alpha_j),
&
\deg_q \left(
\ 
\tikzdiag{
	\draw (0,0) node[below] {\small $i$}  -- (0,1) node [midway,tikzdot]{};
}
\ 
\right)
& := (\alpha_i | \alpha_i), \\
\deg_{\lambda_k} \left(
\ 
\tikzdiag{
	\draw (0,0) node[below] {\small $i$}  ..controls (0,.5) and (1,.5) .. (1,1);
	\draw[myblue] (1,0) node[below] {\small $j$}  ..controls (1,.5) and (0,.5) .. (0,1);
}
\ 
\right)
&:= 0,
&
\deg_{\lambda_k} \left(
\ 
\tikzdiag{
	\draw (0,0) node[below] {\small $i$}  -- (0,1) node [midway,tikzdot]{};
}
\ 
\right)
& := 0,
\end{align*}
and
\begin{align*}
\deg_q \left(
\ 
\tikzdiag{
	\fdot[a]{i}{0,0};
	\node at(-.5,-.5) {\small $K$};
}
\ 
\right)
&:= (1+a-\alpha_i^\vee(K)+k_i) (\alpha_i | \alpha_i),
\\
\deg_{\lambda_k} \left(
\ 
\tikzdiag{
	\fdot[a]{i}{0,0};
	\node at(-.5,-.5) {\small $K$};
}
\ 
\right)
&:= 2\delta_{ik}.
\end{align*}
This ends the definition of $R_\bo(m)$.
\end{defn}

\subsection{Tightened basis}\label{sec:bKLRbasis}

Before going any further, let us introduce some useful notations borrowed from~\cite{KL1}. 
First, let $R_\bo(\nu)$ be the subalgebra of $R_\bo(m)$ given by diagrams where there are exactly $\nu_i$ strands labeled $i$, for each $i \in I$.
We also denote $\Seq(\nu)$ the set of all ordered sequences $\bi = i_1i_2 \cdots i_m$ with $i_k \in I$ and $i$ appearing $\nu_i$ times in the sequence.
The symmetric group $S_m$ acts on $\Seq(\nu)$ with the simple transposition $\sigma_k \in S_m$ acting on $\bi = i_1i_2\cdots i_m \in \Seq(\nu)$ by permuting $i_k$ and $i_{k+1}$. Sometimes, for $K = \sum_{i \in I} k_i \cdot  \alpha_i \in X^+$, we abuse notation by writing $\sigma_K$ instead of $\sigma_{|K|}$. 

\smallskip

For  $\bi = i_1i_2\cdots i_m \in \Seq(\nu)$, let $1_\bi \in R_\bo(\nu)$ be the idempotent given by $m$ vertical strands with labels $i_1, i_2, \dots, i_m$, that is
\begin{align*}
1_\bi &:= 
        \tikzdiagh[xscale=1.5]{0}{
          \draw (-.5,-.5) node[below] {\small $i_1$} -- (-.5,.5); 
          \draw (0,-.5) node[below] {\small $i_2$} -- (0,.5); 
          \draw (1.5,-.5) node[below] {\small $i_m$} -- (1.5,.5); 
          \node at (.75,0){$\cdots$};
        }
\end{align*}
We have $1_{\bi}1_{\bj} = \delta_{\bi\bj}$ for all $\bi,\bj \in \Seq(\nu)$, and so there is a decomposition of $\Bbbk$-modules 
\[
 R_\bo(\nu) \cong \bigoplus_{\bi,\bj \in \Seq(\nu)} 1_\bj  R_\bo(\nu) 1_\bi .
\]
Our goal is to construct a basis of $1_\bj  R_\bo(\nu) 1_\bi$ as $\Bbbk$-module.

\subsubsection{An action of $R_\bo(\nu)$ on a polynomial space}\label{sec:KLRpolaction}

We construct a polynomial representation of $R_\bo(\nu)$ with a similar flavor as in~\cite[\S2.3]{KL1}.
 We fix $\nu  \in X^+$ with $|\nu| = m$. For each $i \in I$ we define
\[
Q_i := 
 \Bbbk[x_{1,i}, \dots, x_{\nu_i, i}] \otimes \bV^\bullet \brak{\omega_{1,i}, \dots, \omega_{\nu_i,i}}.
\]
We write $Q_I := \bigotimes_{i \in I} Q_i$, where $\otimes$ means the supertensor product in the sense that $\omega_{\ell,i}\omega_{\ell',j} = -\omega_{\ell',j}\omega_{\ell,i}$  for all $i,j \in I$ and $x_{i,\ell}$ commutes with everything. 
Thus, $Q_I$ is a supercommutative superring. 
Then, we construct the ring
\[
Q_\nu := \bigoplus_{\bi \in \Seq(\nu)} Q_I 1_\bi,
\]
where the elements $1_\bi$ are central idempotents. It is $\bZ^{1+|I|}$-graded by setting 
\begin{align*}
\deg_q(x_{\ell,i}) &=  (\alpha_i | \alpha_i), & \deg_q(\omega_{\ell,i}) &= (1-\ell) (\alpha_i | \alpha_i), \\
\deg_{\lambda_j}(x_{\ell,i})  &= 0, & \deg_{\lambda_j}(\omega_{\ell,i}) &= 2\delta_{ij}.
\end{align*}


We first construct an action of the symmetric group $S_m$ on $Q_\nu$ by letting the simple transposition
\[
\sigma_k : Q_I 1_\bi \rightarrow Q_I 1_{\sigma_k\bi},
\]
to act by sending 
\begin{align*}
x_{p,i} 1_\bi &\mapsto 
\begin{cases}
 x_{p+1,i} 1_{\sigma_k \bi}, &\text{ if $i_k = i_{k+1} = i$ and $p = \# \{ s \le k | i_s = i\}$, } \\[1ex]
 x_{p-1,i} 1_{\sigma_k \bi}, &\text{ if $i_k = i_{k+1} = i$ and $p = 1+ \# \{ s \le k | i_s = i\}$, } \\[1ex]
 x_{p,i} 1_{\sigma_k \bi}, &\text{ otherwise,} 
\end{cases}
\intertext{for $i \in I, p \in \{1, \dots, \nu_i\}$ and $\bi = i_1 \dots i_m$, and by sending}
\omega_{p,i} 1_\bi &\mapsto  
\begin{cases}
\left( \omega_{p,i} + (x_{p,i} - x_{p+1,i}) \omega_{p+1,i} \right)   1_{\sigma_k \bi}, 
& \text{if $i_k = i_{k+1} = i$ and 
$p = \# \{ s \le k | i_s = i\}$},  \\[1ex]
\omega_{p,i} 1_{\sigma_k\bi}, &\text{ otherwise,} 
\end{cases}
\end{align*} 
which we extend to $Q_\nu$ by setting $\sigma_k(fg) := \sigma_k(f)\sigma_k(g)$ for all $f,g \in Q_\nu$.

\begin{prop}
The procedure described above yields a well-defined action of $S_m$ on $Q_\nu$.
\end{prop}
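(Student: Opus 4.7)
The plan is to verify the Coxeter presentation of $S_m$: $\sigma_k^2 = \id$, $\sigma_k\sigma_\ell = \sigma_\ell\sigma_k$ whenever $|k-\ell|\geq 2$, and $\sigma_k\sigma_{k+1}\sigma_k = \sigma_{k+1}\sigma_k\sigma_{k+1}$. Because $\sigma_k$ is defined on the generators $x_{p,i}1_\bi$ and $\omega_{p,i}1_\bi$ of $Q_\nu$ and extended by $\sigma_k(fg) = \sigma_k(f)\sigma_k(g)$, and because the definition manifestly preserves the $\bZ^{1+|I|}$-grading (in particular the $\bZ/2$-parity, so that the supercommutativity $\omega_{\ell,i}\omega_{\ell',j} = -\omega_{\ell',j}\omega_{\ell,i}$ is respected), it suffices to check each identity on these generating elements.

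First I would check that $\sigma_k^2 = \id$. On $x_{p,i}$ this is immediate from the case analysis. On $\omega_{p,i}1_\bi$ the only case requiring computation is $i_k = i_{k+1} = i$ with $p = \#\{s\leq k \mid i_s = i\}$; noting that in $s_k\bi$ the count is still $p$, a direct two-step calculation gives
\[
\sigma_k^2(\omega_{p,i})1_\bi = \bigl(\omega_{p,i} + (x_{p,i}-x_{p+1,i})\omega_{p+1,i}\bigr)1_\bi + (x_{p+1,i} - x_{p,i})\omega_{p+1,i}1_\bi = \omega_{p,i}1_\bi.
\]

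The distant commutation $\sigma_k\sigma_\ell = \sigma_\ell\sigma_k$ for $|k-\ell|\geq 2$ is then immediate from the locality of the definition: each $\sigma_k$ acts only on data attached to positions $k,k+1$ of $\bi$ and only modifies the generators indexed by $\#\{s\leq k \mid i_s = i\}$, so when the windows $\{k,k+1\}$ and $\{\ell,\ell+1\}$ are disjoint the two transpositions operate on disjoint sets of generators and their actions commute termwise.

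The main obstacle is the braid relation $\sigma_k\sigma_{k+1}\sigma_k = \sigma_{k+1}\sigma_k\sigma_{k+1}$ on the $\omega_{p,i}1_\bi$. On the $x$-generators it reduces, case by case in the pattern $(i_k,i_{k+1},i_{k+2})$, to the standard braid identity for the symmetric group acting on polynomials by variable permutations. On the $\omega$-generators the configurations where the three labels are not all equal introduce at most one nontrivial shift term along each side of the braid, and so both compositions match immediately. The decisive case is $i_k = i_{k+1} = i_{k+2} = i$: writing $p$ for the count of $i$'s among $i_1,\dots,i_k$, both triple compositions applied to $\omega_{p,i}1_\bi$ cascade shifts into a combination of $\omega_{p,i},\omega_{p+1,i},\omega_{p+2,i}$ with polynomial coefficients in $x_{p,i},x_{p+1,i},x_{p+2,i}$. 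My strategy is to expand both compositions step by step, tracking how the counting function $\#\{s\leq \cdot \mid i_s = i\}$ updates under each transposition, and to verify that the resulting expressions agree term by term; the remaining polynomial identities follow from the usual symmetric group braid relation on the $x$'s together with the telescoping of the difference coefficients $(x_{p,i}-x_{p+1,i})$ and $(x_{p+1,i}-x_{p+2,i})$.
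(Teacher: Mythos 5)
Your proposal is correct and follows exactly the route the paper intends (the paper's proof reads only ``straightforward computation, left to the reader''): check the Coxeter presentation of $S_m$ on the generators $x_{p,i}$ and $\omega_{p,i}$, using that $\sigma_k$ is extended multiplicatively and is parity-preserving so that supercommutativity is automatically respected. Your explicit $\sigma_k^2 = \id$ computation on $\omega_{p,i}$ is correct, the disjoint-window argument for $|k-\ell|\geq 2$ is sound (one checks that when $i_k=i_{k+1}=i$ and $i_\ell=i_{\ell+1}=i$ the affected index pairs $\{p,p+1\}$ and $\{p',p'+1\}$ are disjoint), and the braid relation indeed reduces to the single nontrivial case $i_k=i_{k+1}=i_{k+2}=i$, where a direct expansion of both sides on $\omega_p$ and $\omega_{p+1}$ gives matching coefficients.
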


\begin{proof}
The proof is a straightfoward computation. 
We leave the details to the reader.
\end{proof}

Then, we define inductively the element $\omega_{p,j}^a \in Q_I$ for $a \in \bN$ as
\begin{align*}
\omega_{p,j}^0 &:= \omega_{p,j}, & \omega_{p,j}^{a+1} := \omega_{p-1,j}^a - x_{p,j} \omega_{p,j}^{a}.
\end{align*}
For $K = \sum_{i\in I} k_i \cdot i \in X^+$ such that $k_i \leq \nu_i$, we define $\omega_j^a(K) \in Q_I$ inductively as
\[
\omega_j^a(K) := 
\begin{cases}
0, & \text{if $k_j$ = 0,} \\
\omega_{k_j,j}^a, & \text{if $k_i = 0$ for all $i \neq j$,} \\
\sum\limits_{t,v} (-1)^t s_{ij}^{tv} x_{k_i, i}^t \omega_j^{a+v} (K- i), &\text{otherwise,}
\end{cases}
\]
where $K-i$ is a shorthand for $K - 1 \cdot \alpha_i$. 

\begin{lem}
The element $\omega_j^a(K)$ is well-defined.
\end{lem}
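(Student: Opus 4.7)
My plan is to induct on the quantity $n(K) := |K| - k_j$, the number of strands in $K$ labeled by simple roots different from $j$. The base case $n(K) = 0$ is immediate: either $k_j = 0$, placing us in the first branch with $\omega_j^a(K) = 0$, or $k_j \geq 1$ and $k_i = 0$ for all $i \neq j$, placing us in the second branch with $\omega_j^a(K) = \omega_{k_j, j}^a$. In either case no choice is made and there is nothing to verify.

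For the inductive step ($n(K) \geq 1$), the recursion requires a choice of some index $i \in I \setminus \{j\}$ with $k_i > 0$, and one must show that different choices produce the same element of $Q_I$. Fix two such indices $i_1$ and $i_2$. If $i_1 = i_2$ there is nothing to check, so assume $i_1 \neq i_2$. I will apply the recursion first with $i_1$ and then expand $\omega_j^{a+v_1}(K - i_1)$ further using $i_2$, which is legitimate by the inductive hypothesis because $|K - i_1| < |K|$ and $k_{i_2}(K - i_1) = k_{i_2}(K) \geq 1$. The key observation is that $k_{i_2}(K - i_1) = k_{i_2}(K)$, so the variable $x_{k_{i_2}, i_2}$ is unchanged by removing an $i_1$-strand. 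The double expansion yields
\[
\omega_j^a(K) = \sum_{t_1, v_1, t_2, v_2} (-1)^{t_1 + t_2} s_{i_1 j}^{t_1 v_1} s_{i_2 j}^{t_2 v_2} x_{k_{i_1}, i_1}^{t_1} x_{k_{i_2}, i_2}^{t_2} \omega_j^{a + v_1 + v_2}(K - i_1 - i_2).
\]
The symmetric computation starting with $i_2$ gives the same expression: the scalars $s_{i_\ell j}^{t_\ell v_\ell} \in \Bbbk$ commute, the distinct polynomial generators $x_{k_{i_1}, i_1}$ and $x_{k_{i_2}, i_2}$ commute in $Q_I$ (the $x$-variables are in the polynomial part), and $K - i_1 - i_2 = K - i_2 - i_1$. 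By the inductive hypothesis, $\omega_j^{a+v_1+v_2}(K - i_1 - i_2)$ is already well-defined, so the two reduction orders agree.

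The only potential subtlety, which is the main point to watch, is the stability of the subscripts $k_{i_1}, k_{i_2}$ of the polynomial variables under successive reductions; this is automatic as soon as $i_1 \neq i_2$, since removing a strand of one color does not affect the count of strands of another color. No sign issues arise, since the factors being commuted are all in the even (polynomial) part of the supercommutative ring $Q_I$. This completes the induction and establishes well-definedness.
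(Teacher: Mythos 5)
Your proof is correct and takes essentially the same approach as the paper's: expand the recursion twice using two different choices $i_1 \neq i_2$ and observe that the resulting double sums coincide by commutativity of the scalars and of the polynomial variables $x_{k_{i_1},i_1}$, $x_{k_{i_2},i_2}$, together with the inductive hypothesis on $K - i_1 - i_2$. Your version is a bit more explicit about the induction set-up (on $n(K) = |K|-k_j$) and about why the subscripts $k_{i_1}, k_{i_2}$ are stable under the other reduction, but the content is the same.
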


\begin{proof}
Take $i \neq i' \neq j \in I$ such that $k_i > 0$ and $k_{i'} > 0$. We can suppose by induction that $\omega_j^b(K-i-i')$ is well-defined for all $ b\geq 0$.
Then we have
\begin{align*}
\sum_{t,v} (-1)^t s_{ij}^{tv} x_{k_i,i}^t 
\sum_{t',v'} (-1)^{t'} s_{i'j}^{t'v'} x_{k_{i'}, i'}^{t'}
\omega_j^{a+v}(K-i-i') \\
=
\sum_{t',v'} (-1)^{t'} s_{i'j}^{t'v'} x_{k_{i'}, i'}^{t'} 
\sum_{t,v} (-1)^t s_{ij}^{tv} x_{k_i,i} \omega_j^{a+v}(K-i'-i),
\end{align*}
for all $i \neq i' \neq j \in I$.
\end{proof}

It will be useful to give $\omega_j^a(K)$ a non-inductive expression. We write $K^{\setminus j} := \sum_{i \neq j} k_i \cdot \alpha_i$. For a given non-negative integer $n_i \in \bN$ we define
\begin{equation}\label{eq:epsnotation}
\varepsilon_{n_i,i}^j(\und x_{k_i,i}) := \sum_{|V_i| = n_i}
\left(\prod_{\ell=1}^{k_i} s_{ji}^{v_{\ell}t_{\ell}} x_{\ell,i}^{t_{\ell}}\right) \in P_i,
\end{equation}
with the sum being over all partitions $V_i :  v_{1} + \dots + v_{k_i} = v_i$ such that $(\alpha_i | \alpha_i) | v_{\ell}(\alpha_j | \alpha_j)$ for each~$\ell \in \{1,\dots, k_i\}$,  and with $t_{\ell} := \frac{-2(\alpha_i | \alpha_j) - v_{\ell} (\alpha_j | \alpha_j)}{(\alpha_i | \alpha_i)}$. 
This is a symmetric polynomial of $q$-degree $-2k_i(\alpha_i | \alpha_j) - v_i(\alpha_j | \alpha_j) $ whenever it is non zero.
 Clearly, we can suppose  $v_{\ell} \le d_{ji}$, and
therefore we can also suppose that $n_i \le d_{ji}k_i$. 
For $n \in \bN$ we define
\begin{equation}\label{eq:epsnotation2}
 \varepsilon_v^j(\und x_K) := \sum_{|V| = n} \left(\prod_{i \ne j} \varepsilon_{n_i,i}^j(\und x_{k_i,i}) \right) \in P_I,
\end{equation}
with the sum being over all partitions $V : \sum_{i \ne j} n_i = n$.
Notice that $\varepsilon_{v}^j(\und x_K)$ is a polynomial of $q$-degree $(- \alpha_j^\vee(K^{\setminus j})-n)(\alpha_j | \alpha_j)$.

\begin{lem}
We have
\begin{equation} \label{eq:fdaction}
\omega^a_j(K) = \sum_{n=0}^{-\alpha_j^\vee(K^{\setminus j})} (-1)^n \omega_{k_j,j}^{a + n} \varepsilon_{n}^j(\und x_K) \in P_I.
\end{equation}
\end{lem}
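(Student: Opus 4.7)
The plan is to prove the formula by induction on $|K^{\setminus j}| = \sum_{i \neq j} k_i$, the number of strands whose label differs from $j$. The base case $|K^{\setminus j}| = 0$ corresponds to $K = k_j\cdot\alpha_j$, in which case the RHS has only the $n=0$ summand; since $\varepsilon_0^j(\und x_K)$ is an empty product over $i \neq j$, it equals $1$, and the formula reduces to $\omega_j^a(K) = \omega_{k_j,j}^a$, which matches the second clause of the inductive definition of $\omega_j^a(K)$.

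For the inductive step, I would pick some $i \neq j$ with $k_i > 0$ and expand $\omega_j^a(K)$ using the third clause of its recursive definition in terms of $\omega_j^{a+v}(K-i)$. Applying the induction hypothesis to each $\omega_j^{a+v}(K-i)$ (whose range is $0 \le n' \le -\alpha_j^\vee(K^{\setminus j})+a_{ji}$) yields a double sum over $(v,n')$ of terms of the form
\[
\pm\, s_{ij}^{tv}\, x_{k_i,i}^t\, \omega_{k_j,j}^{a+v+n'}\, \varepsilon_{n'}^j(\und x_{K-i}).
\]
Reindexing via $n = n' + v$ collects this into a sum over $0 \le n \le -\alpha_j^\vee(K^{\setminus j})$ of $(-1)^n\omega_{k_j,j}^{a+n}$ multiplied by the inner sum
\[
\sum_{t,v}\, s_{ij}^{tv}\, x_{k_i,i}^t\, \varepsilon_{n-v}^j(\und x_{K-i}),
\]
once the extra signs coming from the recursion for $\omega$ are matched against $(-1)^{n-n'}$. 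The matching is straightforward because the defining constraint $t(\alpha_i|\alpha_i)+v(\alpha_j|\alpha_j) = -2(\alpha_i|\alpha_j)$ on non-zero $s_{ij}^{tv}$ determines $t$ from $v$.

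The key remaining step is to identify the inner sum with $\varepsilon_n^j(\und x_K)$. This reduces to establishing a recursion for $\varepsilon$ parallel to the one for $\omega_j^a(K)$: by singling out the last factor $s_{ji}^{v_{k_i}t_{k_i}} x_{k_i,i}^{t_{k_i}}$ in the product $\prod_{\ell=1}^{k_i} s_{ji}^{v_\ell t_\ell} x_{\ell,i}^{t_\ell}$ appearing in $\varepsilon_{n_i,i}^j(\und x_{k_i,i})$ and summing over the value $v = v_{k_i}$, one obtains
\[
\varepsilon_n^j(\und x_K) = \sum_{t,v}\, s_{ij}^{tv}\, x_{k_i,i}^t\, \varepsilon_{n-v}^j(\und x_{K-i}),
\]
using $s_{ji}^{vt}=s_{ij}^{tv}$. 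Substituting this back finishes the induction.

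The argument is essentially a double bookkeeping exercise: both $\omega_j^a$ and $\varepsilon_n^j$ admit the same one-step inductive decomposition along a chosen strand $i\neq j$, and the sign $(-1)^n$ on the RHS is precisely what is needed to absorb the signs produced when rewriting $\omega_j^{a+v}(K-i)$ via the induction hypothesis. The main obstacle will be tracking the shifted summation bounds ($n'$ ranges up to $-\alpha_j^\vee(K^{\setminus j}) + a_{ji}$ while $n$ ranges up to $-\alpha_j^\vee(K^{\setminus j})$), which is handled by the observation that terms with $n-v$ outside the allowed range of $\varepsilon$ automatically vanish. Independence of the expansion from the chosen strand $i$ with $k_i>0$ is guaranteed by the preceding lemma, so the inductive step is unambiguous.
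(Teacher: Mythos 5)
Your proof takes the same route as the paper's, which compresses the argument to a single sentence (``the right-hand side satisfies the recursive definition of $\omega_j^a(K)$''); your induction on $|K^{\setminus j}|$, the reindexing $n=n'+v$, the tracking of the shifted upper bound, and the parallel one-step recursion $\varepsilon_n^j(\und x_K)=\sum_{t,v} s_{ij}^{tv}\,x_{k_i,i}^t\,\varepsilon_{n-v}^j(\und x_{K-i})$ obtained by peeling off the factor $\ell=k_i$ are all correct and are exactly what the paper is silently invoking.

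There is, however, a real flaw in your handling of the sign. You justify matching the recursion's sign against $(-1)^{n-n'}=(-1)^v$ by saying the constraint $t(\alpha_i|\alpha_i)+v(\alpha_j|\alpha_j)=-2(\alpha_i|\alpha_j)$ ``determines $t$ from $v$.'' That is true, but it does not give $(-1)^t=(-1)^v$: in a simply-laced rank-two situation with $d_{ij}=1$ the constraint reads $t+v=1$, so $t$ and $v$ always have opposite parity and $(-1)^t=-(-1)^v$. If the sign in the recursive definition of $\omega_j^a(K)$ really were $(-1)^t$ as printed, the stated formula would already fail by an overall $(-1)^{t+v}$ when $|K^{\setminus j}|=1$. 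The resolution is that the printed $(-1)^t$ in the definition of $\omega_j^a(K)$ is a typo for $(-1)^v$; this is the sign the authors actually use both when verifying relation~\cref{eq:fdots} and when expanding $\omega_j^a(K+i+j)$ in the proof of \cref{prop:Rboaction}. With $(-1)^v$ in the recursion, the cancellation against $(-1)^{n-n'}=(-1)^v$ is immediate and the constraint plays no role in the sign matching; you should replace your appeal to the constraint with this observation.
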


\begin{proof}
A straightforward computation shows that the RHS of \cref{eq:fdaction} respects the recursive definition of $\omega^a_j(K)$, which proves the equality. 
\end{proof}

We now have all the tools we need to define an action of $R_\p(\nu)$ on $P_\nu$. First, we choose an arbitrary orientation $i \leftarrow j$ or $i \rightarrow j$ for each pair of distinct $i,j \in I$. Then, we let $a \in R_\p(\nu) 1_\bj$ act as zero on $P_I 1_\bi$ whenever $\bj \neq \bi$. Otherwise, we declare that
\begin{itemize}
\item the dot 
\begin{align*}
\tikzdiag{
	          \draw (0,-.5) node[below]{$i$}-- (0,.5)  node [midway,tikzdot]{};
	    	  \node at (-0.5,0){$K$};
  	}
\end{align*}
acts as multiplication by $x_{k_i+1,i}1_\bi$;
\item the floating dot 
\begin{align*}
\tikzdiag{
	\fdot[a]{j}{0,0};
	\node at(-.5,-.5) {\small $K$};
  	}
\end{align*}
acts as multiplication by $\omega_j^a(K)1_\bi$;
\item the crossing 
\begin{align*}
	\tikzdiag{
	          \draw   (-.5,-.5) node[below]{$i$}  .. controls (-.5,0) and (.5,0) .. (.5,.5);
	          \draw   (.5,-.5) node[below]{$j$}  .. controls (.5,0) and (-.5,0) .. (-.5,.5);
	    	  \node at (0,-.4){$K$};
  	}
\end{align*}
acts as 
\begin{align*}
f 1_\bi &\mapsto  r_i  \frac{f1_\bi - \sigma_{K}(f1_\bi)}{x_{k_i,i} - x_{k_i+1,i}},  && \text{if $i = j$},\\
f 1_\bi &\mapsto 
\left(\sum_{t,v} s_{ij}^{tv} x_{k_i,i}^{t} x_{k_j+1,j}^{v}\right)
\sigma_{K} (f  1_{\bi}), && \text{if $i \rightarrow j$},\\
f 1_\bi &\mapsto \sigma_{K} (f  1_{\bi}), && \text{if $i \leftarrow j$}.
\end{align*}
\end{itemize}

\begin{prop}\label{prop:Rboaction}
The rules above define an action of $R_\bo(\nu)$ on $Q_\nu$.
\end{prop}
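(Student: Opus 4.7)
The plan is to verify that each defining relation of $R_\bo(\nu)$ is preserved by the operators proposed above. Since each generator acts via an explicit formula on $Q_\nu$, this amounts to checking five groups of relations in turn.

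First, I would handle the pure KLR relations (\ref{eq:KLRR2})--(\ref{eq:KLRR3}), which do not involve floating dots. On the $x$-sector of $Q_\nu$, the assignment is a mild enlargement of the standard polynomial representation of KLR algebras from \cite[\S2.3]{KL1}: crossings for $i = j$ act by a Demazure-type operator, and for $i \neq j$ by multiplication by a polynomial with $s_{ij}^{tv}$-coefficients followed by a transposition (or just a transposition, depending on the chosen orientation). The $\omega$-sector is transported covariantly along the symmetric-group action on $Q_\nu$ and commutes with the polynomial factors appearing in the crossing rule, so the verification of these relations reduces cleanly to \emph{loc.\ cit.}

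Second, I would dispose of the simpler floating-dot relations. The vanishing of the leftmost floating dot is immediate: $K = 0$ forces $k_j = 0$, so $\omega_j^a(0) = 0$ by the first case of the inductive definition. The anticommutation and nilpotence relations (\ref{eq:fdmoves}) follow from the super-commutativity of $Q_I$: each operator $\omega_j^a(K)$ is linear in the odd generators $\omega_{\ell, j}$, so any two such operators anticommute and each squares to zero; the intermediate strands act as identity on the $\omega$-factors, so they do not interfere. The dot/floating-dot slide (\ref{eq:fdots}) is, by design, the inductive step in the definition of $\omega_j^a(K)$: for $i \neq j$ it reproduces the recursion defining $\omega_j^a(K + i)$ from $\omega_j^{a + v}(K)$, and for $i = j$ it is the base recursion $\omega_{p, j}^{a} = \omega_{p-1, j}^{a-1} - x_{p, j}\, \omega_{p, j}^{a-1}$ combined with the convention $\omega_{0, j}^{a} = 0$.

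The main obstacle will be the crossing/floating-dot relation (\ref{eq:ExtR2}) for $i \neq j$. My strategy is to first use the already-verified slide (\ref{eq:fdots}) to push the floating dot all the way to the leftmost region on both sides of (\ref{eq:ExtR2}); both sides then become pure polynomial expressions acting on a polynomial of the form $\omega_{k_j, j}^c \cdot (\text{polynomial in } x)$, and the identity reduces to a computation in the KLR part of the representation (already handled in the first step) combined with the closed formula (\ref{eq:fdaction}). The hard part will be the combinatorial bookkeeping: reconciling the $(-1)^t$ appearing in the definition of $\omega_j^a(K)$ with the $(-1)^v$ appearing in (\ref{eq:ExtR2}) via the symmetry $s_{ij}^{tv} = s_{ji}^{vt}$, and tracking how $\varepsilon_n^j(\und{x}_K)$ transforms under the permutation of $x$-variables induced by the crossing.
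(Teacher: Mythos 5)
Your plan follows the paper's structure for most relations (KLR relations reduce to the standard polynomial representation; anticommutation from supercommutativity of $Q_\nu$; the slide \eqref{eq:fdots} holds essentially by construction of $\omega^a_j(K)$; the leftmost vanishing from $k_j=0$), and for \eqref{eq:ExtR2} you propose a closed-form reduction via \eqref{eq:fdaction} rather than the recursion identity used in the paper, which is a legitimate if bookkeeping-heavier alternative. However, there is one genuine omission.

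You never check the isotopy relations, and the paper singles out precisely one of these as a nontrivial verification: a floating dot must commute with a crossing placed to its \emph{left}. This does not follow from any of the local relations \eqref{eq:KLRR2}--\eqref{eq:ExtR2} nor from the first paragraph of your plan. Concretely, a crossing on two strands of the same label $i$ to the left of a floating dot with subscript $j$ acts (up to the $r_i$-scalar) by the divided difference operator in the variables $x_{p,i},x_{p+1,i}$, while the floating dot acts by multiplication by $\omega^a_j(K)$, a polynomial that depends explicitly on those same $x$-variables. The isotopy relation asserts these two operators commute, and by the Leibniz rule for divided differences this forces $\partial_p\bigl(\omega^a_j(K)\bigr)=0$, i.e.\ that $\omega^a_j(K)$ be symmetric in the $x$-variables coming from equally labelled strands to its left. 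That symmetry is not automatic; the paper derives it from the fact that each $\varepsilon^j_{n_i,i}(\und x_{k_i,i})$ in \eqref{eq:epsnotation} is a symmetric polynomial. Your sentence ``the $\omega$-sector is transported covariantly \dots\ and commutes with the polynomial factors appearing in the crossing rule'' addresses pure KLR diagrams, but not this mixed floating-dot/crossing commutation, which is exactly where the symmetry of $\varepsilon$ is needed. You should add this verification explicitly.
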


\begin{proof}
We have to check the validity of the KLR relations Eq. (\ref{eq:KLRR2} -- \ref{eq:KLRR3}) and of the relations involving floating dots Eq. (\ref{eq:fdmoves} -- \ref{eq:ExtR2}), as well as the relations coming from regular isotopies.

We start by proving the KLR relations. Clearly \cref{eq:KLRR2}, \cref{eq:KLRdotslide} and \cref{eq:KLRnh} are satisfied. The case $i \neq k$ of~\cref{eq:KLRR3} is also straightforward.  For $i \leftarrow j$ and $k = i$ we compute the action of the LHS of~\cref{eq:KLRR3} on $f \in Q_\nu$ as
\begin{align*}
f &\mapsto  \bigl(\sum_{t,v} s_{ji}^{tv} y^t x_1^v\bigr) \sigma_1 \partial_2 \sigma_1 (f) - \sigma_2 \partial_1 \bigl(\sum_{t,v} s_{ji}^{tv} y^t x_2^v\sigma_2(f) \bigr) \\
&=   \bigl(\sum_{t,v} s_{ji}^{tv} y^t x_1^v\bigr) \frac{f - \sigma_1\sigma_2\sigma_1(f)}{x_1 - x_2} - \frac{\bigl(\sum_{t,v} s_{ji}^{tv} y^t x_2^v\bigr)f - \bigl(\sum_{t,v} s_{ji}^{tv} y^t x_1^v\bigr) \sigma_2\sigma_1\sigma_2(f)}{x_1  -  x_2} \\
&=  \sum_{t,v} s_{ji}^{tv} y^t \frac{ x_1^v f -  x_2^v}{x_1 - x_2} = \sum_{t,v} s_{ji}^{tv} y^t \sssum{r+s=\\v-1} x_1^r x_2^s,
\end{align*}
where $x_1,x_2$ correspond with the $x_{k_i,i}, x_{k_i+1,i}$ and  $y$ with $x_{k_j,j}$. What remains coincides with the RHS of~\cref{eq:KLRR3}. A similar computation applies for the case $i \rightarrow j$.

For the relations involving floating dots, we remark that~\cref{eq:fdmoves} follows from the supercommutativity of $Q_\nu$, and $\omega_j^a(K)$ respects~\cref{eq:fdots} by construction. For relation \cref{eq:ExtR2},  we apply the action of the LHS on some $f \in Q_\nu$ and we obtain
\begin{align*}
f &\mapsto  \bigl(\sum_{t,v} s_{ji}^{tv} y^t x^v\bigr) \omega_j^a(K+ j) f,
\end{align*}
and for the RHS we obtain 
\begin{align*}
f &\mapsto \bigl(\omega_j^a(K+i+j) + \sum_{t,v} s_{ij}^{tv} \sssum{u+\ell=\\v-1} (-1)^u \omega_j^{a+u}(K) x^t y^\ell\bigr)f \\
&= \bigl( \sum_{t,v}(-1)^{v} s_{ij}^{tv}  x^t \omega_j^{a+v}(K+j) + \sum_{t,v} s_{ij}^{tv} \sssum{u+\ell=\\v-1} (-1)^u \omega_j^{a+u}(K) x^t y^\ell\bigr) f.
\end{align*}
Then we compute
\[
 \omega_j^{a+v}(K+j) = \bigl( \sssum{u+\ell=\\v-1} (-1)^{v-1-u} y^\ell \omega_j^{a+u}(K) \bigr) + (-1)^{v} y^v \omega_j^a(K+j),
\]
which implies that the action of the RHS of \cref{eq:ExtR2} coincides with the one of the LHS.

The only non trivial relation coming from regular isotopies we need to verify is given by the commutation of a floating dot and a crossing at its left. This is a consequence of the fact that~\cref{eq:epsnotation} is a symmetric polynomial, which commutes with divided difference operators.
\end{proof}

\subsubsection{Left-adjusted expressions}

Recall from \cite[\S 2.2.1]{naissevaz2} that a reduced expression $\sigma_{i_r}\cdots \sigma_{i_1}$ of $w \in S_n$ is \emph{left-adjusted} if $i_r + \cdots + i_1$ is minimal. Equivalently, it is left-adjusted if and only if
\[
\min_{t \in \{0, \dots, r\}} \sigma_{i_t} \cdots \sigma_{i_1}(k) \leq \min_{t \in \{0, \dots, r\}} \sigma_{j_t}\cdots \sigma_{j_1}(k),
\]
for all $k \in \{0, \dots, n\}$ and all other reduced expression $\sigma_{j_r} \cdots \sigma_{j_1} = w$. In this condition, we write
\[
\inlmin{w}(k) := \min_{t \in \{0, \dots, r\}} \sigma_{i_t} \cdots \sigma_{i_1}(k).
\]
Note that a left adjusted expression always exists and is unique up to distant permutation ( $\sigma_i\sigma_j \leftrightarrow \sigma_j\sigma_i$ for $|i-j| > 1$), so that $\inlmin{w}(k)$ is well-defined. In particular, one can obtain a left-adjusted reduced expression for any permutation by taking its representative in the coset decomposition
\begin{equation}\label{eq:Sncosetdecomp}
S_n = \bigsqcup_{a=1}^n S_{n-1} \sigma_{n-1} \cdots \sigma_a,
\end{equation}
applied recursively. If we think of a reduced expression in terms of string diagrams, then it is left-adjusted if all strings are pulled as far as possible to the left. 

\begin{exe}
The permutation $(1\ 3\ 2\  4) \in S_4$ admits as left-adjusted reduced expression the word $\sigma_1\sigma_2\sigma_1\sigma_3\sigma_2$ which comes from the summand $S_2 \sigma_3 \sigma_2$ in the first step of the recursive decomposition~\eqref{eq:Sncosetdecomp}. Note that $\sigma_1\sigma_2\sigma_3\sigma_1\sigma_2$ is also left-adjusted while  $\sigma_2\sigma_1\sigma_2\sigma_3\sigma_2$ and $\sigma_2\sigma_1\sigma_3\sigma_2\sigma_3$ are not. 
In terms of string diagrams, we consider as example the following reduced expression of the permutation $w = (1\ 4\ 3\ 5\ 2) \in S_5:$
\[
\tikzdiag{
        	 \draw +(0,-.75)   .. controls (0,-.25) and (2.25, .25) .. +(2.25,.75);
        	 \draw +(.75,-.75)  ..controls (.75,-.25) and (0,.25) ..  +(0,.75);
        	 \draw +(1.5,-.75)  .. controls (1.5, -.25) and (3,.25) .. +(3,.75);
    	 	\draw  +(2.25,-.75)   .. controls (2.25,-.5) and (3,-.25) ..  +(3,0);
    	 	\draw  +(3, 0)   ..  controls (3,.35) and (1.5,.4) ..  +(1.5,.75);
        	 \draw +(3,-.75) .. controls (3, -.25) and (.75, .25) .. +(.75,.75);
	}
\]
It is not left-adjusted since the $4$th strand (read at the bottom) can be pulled to the left. Hence we obtain the following left-adjusted minimal presentation:
\[
\tikzdiag{
        	 \draw +(0,-.75)   .. controls (0,-.25) and (2.25, .25) .. +(2.25,.75);
        	 \draw +(.75,-.75)  ..controls (.75,-.25) and (0,.25) ..  +(0,.75);
        	 \draw +(1.5,-.75)  .. controls (1.5, -.25) and (3,.25) .. +(3,.75);
    	 	\draw  +(2.25,-.75)   .. controls (2.25,-.5) and (.75,-.25) ..  +(.75,.15);
    	 	\draw  +(.75, .15)   ..  controls (.75,.5) and (1.5,.5) ..  +(1.5,.75);
        	 \draw +(3,-.75) .. controls (3, -.25) and (.75, .25) .. +(.75,.75);
	}
\]
\end{exe}

Suppose $\sigma_{i_r} \dotsm \sigma_{i_1}$ is a left-adjusted reduced expression of $w$. Then we can choose for each $k \in \{1, \dots, n\}$ an index $t_k \in \{ 1, \dots, r\}$ such that
\[
\sigma_{i_{t_k}}\dotsm \sigma_{i_1}(k) =  \inlmin{w}(k).
\]
Clearly this choice is not necessarily unique and we can have $t_{k} = t_{k'}$ for $k \neq k'$. 
However, it defines a partial order $\prec$ on the set $\{1, \dots, n\}$ where $k \prec k'$ whenever $t_{k} \leq t_{k'}$. We extend this order arbitrarily and we write $<_t$ for it. There is a bijective map $\zeta : \{1, \dots, n\} \rightarrow \{1, \dots, n\}$ which sends $k < k'$ to $\zeta(k) <_t \zeta(k')$, so that $t_{\zeta(k)} \le t_{\zeta(k')}$. 
In terms of string diagrams, the map $\zeta$ tells us in which order the strands attain their (chosen) leftmost position while reading from bottom to top. In particular, $\zeta(k)$ gives the starting point of the strand that attains its leftmost position in $k$th position.

\begin{exe}\label{exe:zeta}
Consider again the following left-adjusted string diagram:
\[
\tikzdiag{
        	 \draw +(0,-.75)   .. controls (0,-.25) and (2.25, .25) .. +(2.25,.75);
        	 \draw +(.75,-.75)  ..controls (.75,-.25) and (0,.25) ..  +(0,.75);
        	 \draw +(1.5,-.75)  .. controls (1.5, -.25) and (3,.25) .. +(3,.75);
    	 	\draw  +(2.25,-.75)   .. controls (2.25,-.5) and (.75,-.25) ..  +(.75,.15);
    	 	\draw  +(.75, .15)   ..  controls (.75,.5) and (1.5,.5) ..  +(1.5,.75);
        	 \draw +(3,-.75) .. controls (3, -.25) and (.75, .25) .. +(.75,.75);
	}
\]
Both the $1$st and $3$rd strand attain their leftmost position at the bottom of the diagram, thus we can choose $\zeta(1) = 1$ and $\zeta(2) = 3$. Then both the $2$nd and $4$th strand attain their leftmost position, thus we can take $\zeta(3) = 4$ and $\zeta(4) = 2$. Finally, the $5$th strand attains its leftmost position and we put $\zeta(5) = 5$.
\end{exe}

For $k \in\{1, \dots, n+1\}$, we put 
\[
w^k := \sigma_{i_{t_{\zeta(k)}}}\dotsm \sigma_{i_{t_{\zeta(k-1)}}},
\]
where it is understood that $t_{\zeta(0)} := 0$ and $t_{\zeta(n+1)} := r$. It defines a partition of the reduced expression of $\sigma_{i_r}\cdots \sigma_{i_1}  = w$. Moreover, it is constructed so that
\begin{align*}
w^k \cdots w^{1}(\zeta(k)) &=  \inlmin{w}(\zeta(k)),
\end{align*}
for all $1 \leq k \leq n$.

\begin{exe}Consider again $w = \sigma_1\sigma_2\sigma_1\sigma_3\sigma_2$ with $i_1=2, i_2 =3, i_3 = 1, i_4=2, i_5=1$. We can choose for example $t_1 = 0, t_2 = 0,t_3 = 3$ and $t_4 = 5$. 
Then we can put $\zeta(1) = 1$ (or $2$), $\zeta(2) = 2$ (or $1$), $\zeta(3) = 3$ and $\zeta(4) = 4$, with $w^1 = 1, w^2 = 1$, $w^3 = \sigma_1\sigma_3\sigma_2$ and $w^4 = \sigma_1\sigma_2$. 
\end{exe}

\subsubsection{A generating set}

We say that a floating dot is \emph{tight} if it is placed immediately to the right of the left-most strand, and has superscript $0$. We can also suppose it has the same subscript as the label of the strand at its left (otherwise it would slide to the left and be zero).

\begin{lem}\label{lem:dgKLRtightgen}
The algebra $R_\bo(\nu)$ is generated by KLR elements (i.e. dots and crossings) and tight floating dots.
\end{lem}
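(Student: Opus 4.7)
My proof strategy is to show that every floating dot appearing in a diagram of $R_\bo(\nu)$ can be rewritten as a $\Bbbk$-linear combination of products of KLR generators and tight floating dots, using only the defining relations of $R_\bo(\nu)$. The argument is a double induction on the pair $(k,a)$ in lexicographic order, where $k$ is the number of strands immediately to the left of the floating dot $\fdot^a_j$ and $a$ is its superscript.

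The base case $k=0$ is immediate: the floating dot lies in the leftmost region and vanishes. For $k\ge 1$, let $i$ denote the label of the strand immediately to the left. If $i\ne j$, the sliding relation \eqref{eq:fdots} expresses $\fdot^a_j$ as a $\Bbbk$-linear combination of products of (dotted) KLR strands with floating dots $\fdot^{a+v}_j$ lying one strand further to the left, each of which has $k-1$ strands to its left (outer induction). If $i=j$ and $a\ge 1$, the same relation writes $\fdot^a_j$ as the difference of a floating dot with $k-1$ strands to the left (outer induction) and a term with a dot on the adjacent strand multiplied by $\fdot^{a-1}_j$ in the same position (inner induction on $a$). Finally, if $i=j$, $a=0$, $k=1$, the floating dot is already tight and nothing needs to be done.

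The genuinely delicate case is $i=j$, $a=0$, and $k\ge 2$, since neither case of \eqref{eq:fdots} applies directly. My plan is to handle this via a derived identity. Applying \eqref{eq:fdots} with $a=1$ to the $j$-strand immediately left of the floating dot yields, after reordering the crossing and the tight floating dot in a localised 2-strand subdiagram, an identity that equates the commutator of a crossing with the tight floating dot to the non-tight floating dot composed with the swap of the two $j$-strands. The swap can then be eliminated using the nilHecke relation \eqref{eq:KLRnh}, which effectively expresses the swap as the identity minus a multiple of $(x-x')$ times a crossing. Combining these two inputs produces an explicit expression of the non-tight floating dot as a $\Bbbk$-linear combination of compositions of crossings, dots, and the tight floating dot. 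When additional non-matching strands lie further to the left, the bubble relation \eqref{eq:ExtR2} together with the $i\ne j$ case of \eqref{eq:fdots} allows reducing to the localised 2-strand situation.

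The main obstacle will be handling this $k\ge 2$, $a=0$ reduction uniformly, particularly the interaction between the floating dot and strands further to the left, and verifying that the commutator-plus-nilHecke trick combines cleanly with the outer induction. Once this delicate case is settled, the induction closes and every floating dot is expressible in terms of KLR generators and tight floating dots, as required. The correctness of the derived identity is most transparently checked via the polynomial action constructed in \cref{prop:Rboaction}.
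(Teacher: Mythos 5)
Your induction scheme is sound and, apart from the bookkeeping, it is the same strategy as the paper's: slide every floating dot to the far left using \eqref{eq:fdots} and \eqref{eq:ExtR2}, then kill the superscripts with \eqref{eq:fdots}. The cases $k=0$; $i\ne j$; and $i=j$, $a\ge 1$ are handled exactly as in the paper, and your lexicographic order on $(k,a)$ closes correctly there. The whole content of the lemma is therefore concentrated in your ``delicate case'' $i=j$, $a=0$, $k\ge 2$, which is precisely where the paper proves the auxiliary identity \eqref{eq:fdtotheleft} expressing $\omega_i^a$ with two same-labelled strands to its left as $x\,\tau\,\omega_i^a\,\tau-\tau\,\omega_i^a\,\tau\,x$ with the floating dot one region further left.

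Here is the gap. The ``derived identity'' you describe is obtained by equating $[\tau,\omega]$ with ``the non-tight floating dot composed with the swap of the two $j$-strands'' and then eliminating the swap. The swap (the transposition operator $s$, not the crossing $\tau$) is \emph{not} an element of $R_\bo$; the relation $s=\mathrm{id}-r_i^{-1}(x_1-x_2)\tau$ is an identity of operators on the polynomial space $Q_\nu$, not a consequence of \eqref{eq:KLRnh} inside the algebra. So your chain of equalities lives in $\End(Q_\nu)$, and you explicitly propose to certify the resulting identity ``via the polynomial action constructed in \cref{prop:Rboaction}.'' But \cref{prop:Rboaction} only gives well-definedness of the action, i.e.\ the implication (algebra identity) $\Rightarrow$ (operator identity); the converse requires faithfulness, which is \cref{prop:faithfulaction}, and that proposition is proved \emph{using} the spanning set of \cref{prop:Rbogen}, which in turn rests on the present lemma. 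As written, your key step is therefore circular. To repair it you must derive the $a=0$ sliding identity purely from the defining relations — as the paper does for \eqref{eq:fdtotheleft}, by moving the outer dots through the double crossing with \eqref{eq:KLRnh}, converting $x\cdot\omega^a$ into $\omega^{a+1}$ plus a floating dot one region further left via \eqref{eq:fdots}, and using $\tau^2=0$ together with isotopy invariance to kill the stray terms. (A second, minor point: the identity must be established as a \emph{local} relation, valid with arbitrary strands further to the left, including same-labelled ones; your reduction ``to the localised 2-strand situation'' via \eqref{eq:ExtR2} only treats non-matching strands, though in fact the outer induction on $k$ already absorbs this once the local identity is available.)
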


\begin{proof}
We first compute
\begin{align}\label{eq:fdtotheleft}
\tikzdiagh{0}{
	\draw (0,0) node[below]{\small $i$} -- (0,1);
	\draw (1,0) node[below]{\small $i$} -- (1,1);
	\fdot[a]{i}{1.35,.5};
}
\ = \ 
\tikzdiagh[yscale=1.5]{0}{
	\draw (0,0) node[below]{\small $i$} ..controls (0,.15) and (1,.15) .. (1,.5)
		 ..controls (1,.85) and (0,.85) .. (0,1) ;
	\draw (1,0) node[below]{\small $i$} ..controls (1,.15) and (0,.15) .. (0,.5)
		..controls (0,.85) and (1,.85) .. (1,1)  node [near end,tikzdot]{};
	\fdot[a]{i}{0.4,0.5};
} 
\ - \ 
\tikzdiagh[yscale=1.5]{0}{
	\draw (0,0) node[below]{\small $i$} ..controls (0,.15) and (1,.15) .. (1,.5) node [near start,tikzdot]{}
		 ..controls (1,.85) and (0,.85) .. (0,1);
	\draw (1,0) node[below]{\small $i$} ..controls (1,.15) and (0,.15) .. (0,.5)
		..controls (0,.85) and (1,.85) .. (1,1);
	\fdot[a]{i}{0.4,0.5};
} 
\end{align}
for all $a \geq 0, i \in I$. 
\cref{eq:fdtotheleft}, together with~\cref{eq:ExtR2} and~\cref{eq:fdots} allows to bring all floating dots to the left. Then applying~\cref{eq:fdots} recursively allows to transform all floating dots with superscript bigger than zero into dots and tight floating dots.
\end{proof}

We write $\omega$ for a tight floating dot, $\tau_a$ for a crossing between the $a$th and $(a+1)$th strands (counting from left), and $x_a$ for a dot on the $a$th strand, where we suppose the label of the strands given by the context, in the form of an idempotent $1_\bi$. We also define the \emph{tightened floating dot} in $R_\bo(\nu)$ as $\theta_a := \tau_{a-1}\cdots\tau_1 \omega \tau_1 \cdots \tau_{a-1}$, or diagrammatically
 \[
\theta_a 
\ := \ 
\tikzdiagh[xscale=1.25]{0}{
	\draw (2,0)  -- (2,0.7);
	\draw (3,0)  -- (3,0.7);
	\draw (1.5,0) 
		.. controls (1.5,0.35) and (0, 0.35) ..  (0,0.7)
		.. controls (0,1.05) and (1.5, 1.05) ..  (1.5,1.4);
	\draw (0,0)    .. controls (0,0.35) and (0.5, 0.35) ..  (0.5,0.7)
		 .. controls (0.5,1.05) and (0, 1.05) ..  (0,1.4);
	\draw (1,0)  .. controls (1,0.35) and (1.5, 0.35) ..  (1.5,0.7)
		.. controls (1.5,1.05) and (1, 1.05) ..  (1,1.4);
	\draw (2,0.7)-- (2,1.4);
	\draw (3,0.7)-- (3,1.4);
	\node at(1,.7) {\dots}; \node at(.5,.05) {\dots}; \node at(.5,1.35) {\dots};
	\node at(2.5,.7) {\dots};
	 \fdot{}{.2,.7}; 
	\draw[decoration={brace,mirror,raise=-8pt},decorate]  (-.1,-.5) -- node {$a$} (1.55,-.5);
 }
\]
We also write $\theta_a^0 := \theta_a$ and $\theta_a^{-1} := 1$.

\begin{lem}\label{lem:thightenedfdanticommutes}
Tigthened floating dots anticommute with each others, up to adding terms with a smaller number of crossings, that is
\begin{align*}
\theta_a \theta_b &= - \theta_b \theta_a + R, & 
(\theta_a)^2 = 0 + R',
\end{align*}
where $R$ (resp. $R'$) possesses strictly less crossings than $\theta_a \theta_b$ (resp. $(\theta_a)^2 $), for all $1 \leq a,b \leq m$. 
\end{lem}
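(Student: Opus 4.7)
The plan is to prove both identities by a single diagrammatic reduction, putting $\theta_a\theta_b$ (resp.\ $(\theta_a)^2$) into a normal form where the two floating dots sit adjacent to each other in the leftmost region. Concretely, I would focus on the portion of the diagram strictly between the two $\omega$'s, which reads $\tau_1\cdots\tau_{a-1}\,\tau_{b-1}\cdots\tau_1$. Using the KLR relations \cref{eq:KLRR2,eq:KLRdotslide,eq:KLRR3}, together with the floating-dot slide relation \cref{eq:ExtR2} whenever an $\omega$ has to be commuted past a crossing, I would straighten this middle block so that strands $a$ and $b$ are pulled consecutively to positions $1$ and $2$. Each local relation used splits into a principal term which keeps the crossing number of the diagram unchanged, plus error terms with strictly fewer crossings on the affected strands.

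After this straightening, the diagram takes the schematic form $u\,\omega\omega\,u^{-1}$, where $u$ is a minimal-length word pulling both strands to the leftmost region, and the two floating dots are now vertically stacked there. When $a\ne b$, applying the anti-commutation relation \cref{eq:fdmoves} to swap the two $\omega$'s produces an overall factor of $-1$; when the tight floating dots coincide (as happens in $(\theta_a)^2$, where both subscripts and superscripts agree), the second half of \cref{eq:fdmoves} forces the principal term to vanish outright. Unfolding the straightening on the other side then produces $-\theta_b\theta_a$ (respectively $0$), again only picking up lower-crossing corrections. Collecting all the error terms along the way gives the asserted remainder $R$ (resp.\ $R'$).

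The main obstacle I expect is bookkeeping the crossing count. One has to verify that every local move used in the straightening procedure either preserves the crossing number (principal term) or strictly lowers it (error term), so that the remainders can be uniformly absorbed into a single element with fewer crossings than $\theta_a\theta_b$ (resp.\ $(\theta_a)^2$). This should be transparent for \cref{eq:KLRR2,eq:KLRnh,eq:KLRR3}, where the error terms replace two or three crossings by dotted configurations on the same strands, and for \cref{eq:ExtR2}, whose error terms trade a single crossing for a sum of dotted terms. Once this crossing-count bookkeeping is in place, both parts of the lemma follow from the same reduction, with the sign (resp.\ vanishing) supplied by \cref{eq:fdmoves}.
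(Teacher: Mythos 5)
Your overall strategy — get the two tight floating dots adjacent so that \cref{eq:fdmoves} can supply the sign (or the vanishing in the $a=b$ case), and absorb everything else into a lower-crossing remainder — is the same one the paper uses. The difference lies in organization. The paper first isolates a specific auxiliary identity, \cref{eq:tightfdcommutes}, which says that a floating dot sitting above a double-crossing anticommutes with one sitting below it (for arbitrary strand labels and arbitrary superscripts). The whole proof of the lemma is then one application of \cref{eq:KLRR3} to rearrange $\theta_a\theta_b$ into the left-hand configuration of \cref{eq:tightfdcommutes}, followed by one application of that identity. Each of these two moves contributes its own lower-crossing error $R_0$, $R_1$, and the remaining principal term is read off as $-\theta_b\theta_a$.

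By contrast you propose a global ``straightening'' of the middle word $\tau_1\cdots\tau_{a-1}\tau_{b-1}\cdots\tau_1$ so that the two $\omega$'s become adjacent, but this is not quite how the geometry works: both $\omega$'s are already tight, sitting immediately right of strand $1$, just at different heights, and for $a\ne b$ the middle word does \emph{not} reduce to the identity, so they cannot be made adjacent by isotopy alone. What is actually needed is to slide one of the $\omega$'s vertically through the crossings $\tau_1$ separating them, and this is precisely the content of \cref{eq:tightfdcommutes}. Your plan to invoke \cref{eq:ExtR2} for these slides is on the right track, but note that \cref{eq:ExtR2} is stated only for $i\ne j$; the same-label case needs the slide derived from \cref{eq:fdots} (cf.\ \cref{eq:fdtotheleft}). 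Also, the schematic form ``$u\,\omega\omega\,u^{-1}$'' is misleading: the two floating dots are distinct generators even when the strand labels agree, and what anticommutes under \cref{eq:fdmoves} is the pair at different heights in the same region, not a literal square of a single $\omega$. Your crossing-count observations are correct — all error terms produced by the relations you list trade crossings for dots — so the reduction does terminate with the right normal form; but the proof as written would benefit from explicitly formulating and proving the double-bubble swap (the paper's \cref{eq:tightfdcommutes}) before deploying it, both for clarity and to handle the same-label slides correctly.
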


\begin{proof}
We first compute that
\begin{align}\label{eq:tightfdcommutes}
\tikzdiagh[yscale=1.5]{0}{
	\draw (0,0) node[below]{\small $k$} ..controls (0,.15) and (1,.15) .. (1,.5)
		 ..controls (1,.85) and (0,.85) .. (0,1) -- (0,1.5);
	\draw (1,0) node[below]{\small $\ell$} ..controls (1,.15) and (0,.15) .. (0,.5)
		..controls (0,.85) and (1,.85) .. (1,1) -- (1,1.5);
	\fdot[a]{i}{.4,.5};
	\fdot[b]{j}{.5,1.25};
}
\ + \ 
\tikzdiagh[yscale=1.5]{0}{
	\draw (0,-.5) node[below]{\small $k$} -- (0,0)  ..controls (0,.15) and (1,.15) .. (1,.5)
		 ..controls (1,.85) and (0,.85) .. (0,1);
	\draw (1,-.5) node[below]{\small $\ell$}  -- (1,0)..controls (1,.15) and (0,.15) .. (0,.5)
		..controls (0,.85) and (1,.85) .. (1,1);
	\fdot[a]{i}{.4,.5};
	\fdot[b]{j}{.5,-.25};
}
\ = \ 0,
\end{align}
for all $i,j,k, \ell \in I$ and $a,b \in \bN$. Then we obtain
\[
\tikzdiag[scale=.6]{
      	\draw (0,-2) .. controls (0,-1.5) and (1,-1.5) .. (1,0)
      		.. controls (1,1.5) and (0,1.5) .. (0,2);
      	\draw (1,-2) .. controls (1,-1.5) and (1.5,-1.5) .. (1.5,0)
      		.. controls (1.5,1.5) and (1,1.5) .. (1,2);
	\draw (5,2)   -- (5,.5);
	\draw (5,-2)
		.. controls (5,-1.375) and (0, -1.375) ..  (0,-.75)
	 	.. controls (0, -.125) and (5,-.125) ..(5,.5);
	\draw (2,-2)  
		-- (2,0);
	\draw (2,0) .. controls (2,.5) and (0, .5) ..  (0,1)
		.. controls (0, 1.5) and (2,1.5) ..  (2,2);
	\draw (3,2)   -- (3,-2);
	\draw (4,2)   -- (4,-2);
	 \fdot{}{.35,-.75};   \fdot{}{.35,1}; 
 } 
\  \overset{\eqref{eq:KLRR3}}{=} \  
\tikzdiag[scale=.6]{
      	\draw (0,-2) .. controls (0,-1.5) and (1.5,-1.5) .. (1.5,0)
      		.. controls (1.5,1.5) and (0,1.5) .. (0,2);
      	\draw (1,-2) .. controls (1,-1.5) and (2,-1.5) .. (2,0)
      		.. controls (2,1.5) and (1,1.5) .. (1,2);
	\draw (5,-2)  
		.. controls (5,-1) and (0, -1) ..  (0,0)
		.. controls (0, 1) and (5,1) .. (5,2);
	\draw (2,-2)  
		.. controls (2,-1.5) and (0, -1.5) ..  (0,-1)
		.. controls (0,-.5) and (1, -.5) ..  (1,0)
		.. controls (1, .5) and  (0,.5) ..  (0,1)
		.. controls (0, 1.5) and (2,1.5) .. (2,2);
	\draw (3,2)   -- (3,-2);
	\draw (4,2)   -- (4,-2);
	 \fdot{}{.45,0};   \fdot{}{.45,1}; 
 } 
 \ + \ R_0
\  \overset{\eqref{eq:tightfdcommutes}}{=} \  - \ 
\tikzdiag[xscale=.6,yscale=-.6]{
      	\draw (0,-2) .. controls (0,-1.5) and (1,-1.5) .. (1,0)
      		.. controls (1,1.5) and (0,1.5) .. (0,2);
      	\draw (1,-2) .. controls (1,-1.5) and (1.5,-1.5) .. (1.5,0)
      		.. controls (1.5,1.5) and (1,1.5) .. (1,2);
	\draw (5,2) 
		-- (5,.5);
	\draw (5,-2)  .. controls (5,-1.375) and (0, -1.375) ..  (0,-.75)
	 	.. controls (0, -.125) and (5,-.125) ..(5,.5);
	\draw (2,-2)   -- (2,0);
	\draw (2,0) .. controls (2,.5) and (0, .5) ..  (0,1)
		.. controls (0, 1.5) and (2,1.5) ..  (2,2);  
	\draw (3,2)   -- (3,-2);
	\draw (4,2)   -- (4,-2);
	 \fdot{}{.35,-.75};   \fdot{}{.35,1}; 
 } 
 + R_0 + R_1,
 \]
 where both $R_0$ and $R_1$ have less crossings.
\end{proof}

Fix $\bi, \bj \in \Seq(\nu)$. Since they are both sequences of the same elements, there is a subset ${_\bj}S{_\bi} \subset S_m$ of permutations $w \in S_m$ such that $i_k = j_{w(k)}$ for all $k \in \{1, \dots, m\}$. 
Given such a permutation $w \in {_\bj}S{_\bi}$, we can choose a left-adjusted reduced expression. It comes with a partition $w^{m+1} \cdots w^{2} w^1 = w$ and a bijection $\zeta :\{1, \dots, m\} \rightarrow \{1, \dots, m\} $, such that
\begin{align*}
w^k \cdots w^{1}(\zeta(k)) &=  \inlmin{w}(\zeta(k)),
\end{align*}
for all $1 \leq k \leq m$. Then, consider the collection of elements
\begin{gather}\begin{aligned}
\label{eq:Antightbasis}
{_\bj}B{_\bi} :=
\bigl\{
&x_m^{a_m} \dotsm x_1^{a_1} \tau_{ w^{m+1}}   \theta_{\min_w(\zeta(m))}^{\ell_{m}}
\tau_{ w^{m}} 
\dotsm   \theta_{\min_w(\zeta(2))}^{\ell_{2}}\tau_{ w^{2}}  \theta_{\min_w(\zeta(1))}^{\ell_{1}}
 \tau_{w^1} | \\
 & a_i\in\bN, \ell_i\in \{0,-1\}, w\in {_\bj}S{_\bi}
\bigr\}
\end{aligned}\end{gather}
in $1_\bj R_\bo(\nu) 1_\bi$. 
Diagrammatically, elements in ${_\bj}B{_\bi}$ can be constructed using the following algorithm: 
\begin{enumerate}
\item choose a permutation $w \in  {_\bj}S_\bi$, consider its corresponding string diagram and make it left-adjusted by bringing all strands to the left;
\item for each strand, choose whether we want to add a floating dot. If so, add a tightened floating dot where the strand attains its left-most position by pulling the strand to the far left and adding the floating dot immediately at its right;
\item for each strand, choose a number of dots to add at the top of the diagram.
\end{enumerate}

\begin{exe}
Take $\bi = i_1 i_2 i_3 i_4 i_5$ and $\bj = i_2i_5i_4i_1i_3$, and consider the following left-adjusted permutation $w \in {}_\bj S _\bi$:
\[
w = 
\tikzdiag{
        	 \draw (0,-.75) node[below]{\small $i_1$}   .. controls (0,-.25) and (2.25, .25) .. (2.25,.75);
        	 \draw (.75,-.75) node[below]{\small $i_2$}  ..controls (.75,-.25) and (0,.25) ..  (0,.75);
        	 \draw (1.5,-.75) node[below]{\small $i_3$}  .. controls (1.5, -.25) and (3,.25) .. (3,.75);
    	 \draw  (2.25,-.75) node[below]{\small $i_4$}   .. controls (2.25,-.5) and (.75,-.25) ..  
    	 	  (.75, .15)   ..  controls (.75,.5) and (1.5,.5) ..  (1.5,.75);
        	 \draw (3,-.75)  node[below]{\small $i_5$} .. controls (3, -.25) and (.75, .25) .. (.75,.75);
	}
\]
Take $\ell_1 = 1, \ell_2 = 0, \ell_3 = 1, \ell_4 = 0$ and $\ell_5 = 0$ (for the same $\zeta$ as in \cref{exe:zeta}). Then we obtain the following element in ${}_\bj B _\bi$:
\[
\tikzdiag{
        	 \draw (0,-.75) node[below]{\small $i_1$}   .. controls (0,-.25) and (2.25, .25) .. (2.25,.75) 
        	 	-- (2.25,1)
        	 	node[midway, tikzdot]{} node[midway, xshift=1.5ex, yshift=.75ex]{\small $a_4$};
	\fdot{}{.3,.-.65}; 
        	\draw (.75,-.75)node[below]{\small $i_2$} .. controls (.75,-.5) and (0,-.5) .. (0,-.25)
        		.. controls (0,0) and (.75,0) .. (.75,.25)
        		.. controls (.75,.5) and (0,.5) .. (0,.75)
        	 	-- (0,1)
        	 	node[midway, tikzdot]{} node[midway, xshift=1.5ex, yshift=.75ex]{\small $a_1$};
        	 \draw (1.5,-.75) node[below]{\small $i_3$}  .. controls (1.5, -.25) and (3,.25) .. (3,.75)
        	 	-- (3,1)
        	 	node[midway, tikzdot]{} node[midway, xshift=1.5ex, yshift=.75ex]{\small $a_5$};
    	 \draw  (2.25,-.75) node[below]{\small $i_4$}   .. controls (2.25,-.25) and (0,0) ..  
    	 	  (0, .25)   ..  controls (0,.5) and (1.5,.5) ..  (1.5,.75)
        	 	-- (1.5,1)
        	 	node[midway, tikzdot]{} node[midway, xshift=1.5ex, yshift=.75ex]{\small $a_3$};
        	 \fdot{}{.35,.25}; 
        	 \draw (3,-.75)  node[below]{\small $i_5$} .. controls (3, -.25) and (.75, .25) .. (.75,.75)
        	 	-- (.75,1)
        	 	node[midway, tikzdot]{} node[midway, xshift=1.5ex, yshift=.75ex]{\small $a_2$};
}
\]
\end{exe}

\begin{prop}\label{prop:Rbogen}
Elements in ${_\bj}B{_\bi}$ generate $1_\bj R_\bo(\nu) 1_\bi$ as a $\Bbbk$-vector space.
\end{prop}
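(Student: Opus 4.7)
The plan is to proceed by induction on the number of crossings in a word expressing the given element of $1_\bj R_\bo(m) 1_\bi$. By \cref{lem:dgKLRtightgen}, one may reduce to words in dots, crossings, and tight floating dots. Using the dot-sliding relations \eqref{eq:KLRdotslide} together with the nilHecke-type relations \eqref{eq:KLRnh}, all dots can be pushed to the top of the diagram, being absorbed into a factor $x_m^{a_m} \dotsm x_1^{a_1}$; any correction terms produced have strictly fewer crossings and are handled inductively.

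The next step is to ``attach'' each tight floating dot to a specific strand of $\bi$. A tight floating dot $\omega$ appearing at some height in the word lies in the leftmost region, and the strand that is leftmost at that height originates from a unique position $a$ of the bottom sequence $\bi$. Using the commutation relations \eqref{eq:fdots} and \eqref{eq:ExtR2} for $\omega$ past strands and crossings, together with the braid relation \eqref{eq:KLRR3}, one rewrites the word so that this $\omega$ is packaged into the tightened floating dot $\theta_a$, with all corrections having strictly fewer crossings. Then, using the anti-commutation of \eqref{eq:fdmoves} (including $\omega^2=0$) and \cref{lem:thightenedfdanticommutes}, one can further arrange that at most one tightened floating dot is attached to each strand.

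At this point the word has the shape of a composition of tightened floating dots $\theta_{a_k}$ interspersed with pure crossing words. Ignoring the $\theta$'s for a moment, the remaining crossing pattern reduces, modulo terms with fewer crossings from \eqref{eq:KLRR2} and \eqref{eq:KLRR3}, to a reduced expression for some element $w \in {_\bj}S{_\bi}$, which one chooses to be left-adjusted. This choice yields the partition $w = w^{m+1} \cdots w^1$ and the bijection $s$ described before the proposition. Using \cref{lem:thightenedfdanticommutes} once more, one then arranges the tightened floating dots to appear between the blocks $\tau_{w^k}$ and $\tau_{w^{k+1}}$ in the order prescribed by $s$, so that each $\theta_{\min_w(s(k))}$ is inserted at the moment the $s(k)$-th strand attains its canonical leftmost position. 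This is precisely the form of the elements in ${_\bj}B{_\bi}$.

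The main obstacle is bookkeeping: each rewriting step must produce only correction terms of strictly smaller crossing number, so the induction closes. \cref{lem:thightenedfdanticommutes} is the key tool, providing controlled error terms when reordering tightened floating dots; meanwhile the interactions of tight floating dots with strands via \eqref{eq:fdots} and \eqref{eq:ExtR2} must be tracked carefully, as they a priori introduce many terms of varying complexity. The point is that after completing each ``tightening'' or braid manipulation, the net effect consistently reduces the crossing count, allowing the inductive hypothesis to absorb everything not already in the normal form.
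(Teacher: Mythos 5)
Your proposal is correct and follows essentially the same route as the paper: induction on the number of crossings, reduction to tight floating dots via \cref{lem:dgKLRtightgen}, pushing dots to the top, using \cref{eq:tightfdcommutes}, \cref{eq:fdmoves} and \cref{lem:thightenedfdanticommutes} to ensure at most one (anticommuting) tightened floating dot per strand, and normalizing the crossing pattern to a left-adjusted reduced expression. The only difference is one of emphasis: you spell out the placement of the $\theta$'s relative to the blocks $\tau_{w^k}$ more explicitly, while the paper is more explicit about the mechanism (pulling a strand leftward and jumping a floating dot over intervening ones) by which two tight floating dots on the same strand are killed.
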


\begin{proof}
The proof is an induction on the number of crossings. 
By \cref{lem:dgKLRtightgen}, we can assume that all floating dots are tight. 
 By \cref{eq:KLRdotslide} and \cref{eq:KLRnh} we can bring all the dots to the top of any diagram, at the cost of adding diagrams with fewer crossings. 
 Moreover, all braid isotopies hold up to adding terms with a lower amount of crossings thanks to~\cref{eq:KLRR2} and~\cref{eq:KLRR3}.

We claim that we can also assume that there is at most one floating dot at the immediate right of each strand. Indeed, suppose there are two tight floating dots at the right of the same strand. Then we can apply a braid-isotopy to bring it as most as possible to the left, until it is possibly blocked by other tight floating dots. We depict it by the following picture:
\[
\tikzdiag[yscale=.8,xscale=.9]{
	\node at(2,1.8) {\dots}; \node at(2,-1.8) {\dots};
	\draw  (1.5,2) .. controls (1,1) ..  (1,0) .. controls (1,-1) ..  (1.5,-2); 
	\draw (2.5,2)   -- (2.5,-2);
	\filldraw [fill=white, draw=black] (-.25,.5) rectangle (2.75,-.5);
 	\fdot{}{.75,1.5}; 
	\draw[red,dashed]  (1,2) .. controls (0,1.7) and (0,1.3)  ..(1,1) 
				.. controls (2.5,.5) and (2.5,-.5) ..  (1,-1)
		 		.. controls (0,-1.3) and (0,-1.7) ..  (1,-2);
	\fdot{}{.75,-1.5}; 
} 
\ = \ 
\tikzdiag[yscale=.8,xscale=.9]{
	\draw  plot [smooth, tension=1] coordinates {(1.5,1.35)  (.25,.75) (1.5,.25) };
	\fdot{}{.75,.75}; 
	\draw  plot [smooth, tension=1] coordinates {(1.5,-1.35)  (.25,-.75) (1.5,-.25) };
	\fdot{}{.75,-.75}; 
	\node at(2.5,1.8) {\dots}; \node at(2.5,-1.8) {\dots};
	\draw (2,2)   -- (2,-2);
	\draw (3,2)   -- (3,-2);
	\filldraw [fill=white, draw=black] (1.5,1.5) rectangle (3.25,-1.5);
 	\fdot{}{.75,1.5}; 
	\fdot{}{.75,-1.5}; 
	\draw [red,dashed] plot [smooth, tension=1] coordinates { (1,2) (.25,1.5) (1.25, .75) (.5,0) (1.25,-.75) (.25,-1.5) (1,-2) };
} \ + \ 
\parbox{.25\textwidth}{ terms with  \\ fewer crossings,}
\]
where the dashed strand in red represents the one we want to pull, and the boxes represent other elements in $R_\bo(\nu)$. If there is no floating dot in-between, then it is zero by~\cref{eq:fdmoves}. Otherwise, we apply~\cref{eq:tightfdcommutes} to jump the bottom floating dot over all the floating dots in-between, until we have two floating dots in the same region at the top, which is zero by~\cref{eq:fdmoves}. This proves the claim. 

Finally, we observe that given a strand with a single tight floating dot, we can tighten it by braid isotopy, until we end up with a tightened floating dot. Since by \cref{lem:thightenedfdanticommutes} tightened floating dots anticommute, this concludes the proof.
\end{proof}

\subsubsection{The basis theorem}

\begin{prop}\label{prop:faithfulaction}
The action in \cref{prop:Rboaction} is faithful.
\end{prop}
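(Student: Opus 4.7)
The approach is to prove faithfulness together with the basis theorem, by showing that the spanning set ${_\bj}B{_\bi}$ of \cref{prop:Rbogen} acts on $Q_\nu$ via $\Bbbk$-linearly independent operators. Since by \cref{prop:Rbogen} these elements span $1_\bj R_\bo(m) 1_\bi$ as a $\Bbbk$-module, linear independence of their action immediately yields both that ${_\bj}B{_\bi}$ is a $\Bbbk$-basis and that the action is faithful: any element $\xi \in R_\bo(\nu)$ acting as zero on $Q_\nu$ can be written as a combination of spanning elements, and evaluation on the various $1_\bi$ forces all coefficients to vanish.

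Concretely, I would evaluate a generic basis element
\[
b \;=\; x_m^{a_m}\cdots x_1^{a_1}\,\tau_{w^{m+1}}\,\theta_{\inlmin{w}(s(m))}^{\ell_m}\,\tau_{w^{m}}\cdots\theta_{\inlmin{w}(s(1))}^{\ell_1}\,\tau_{w^1}
\]
on the idempotent $1_\bi \in Q_\nu$. Reading right to left, the factors $\tau_{w^k}$ implement the permutation action on $Q_I$, each tightened floating dot $\theta_{\inlmin{w}(s(k))}$ with $\ell_k = 0$ produces, at the moment the corresponding strand reaches its leftmost position, a multiplicative factor $\omega^0_{i_{s(k)}}(K)$ expanded via \eqref{eq:fdaction}, and the top dots contribute monomial factors in the polynomial generators $x_{*,*}$. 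The key claim is that one can choose a graded lexicographic ordering on $Q_I 1_\bj$ with respect to which $b \cdot 1_\bi$ has leading term
\[
\Bigl(\prod_{k\,:\,\ell_k = 0} \omega_{\inlmin{w}(s(k)),\, i_{s(k)}}\Bigr)\cdot p_{\bi,w,\und a}(x),
\]
where $p_{\bi,w,\und a}(x)$ is a monomial in the $x$-variables uniquely determined by the data $(\bi, w, a_1,\dots, a_m)$.

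The defining property of left-adjusted reduced expressions is precisely that each strand reaches a minimal leftmost position $\inlmin{w}(s(k))$, and the map $s$ is a bijection. Hence the indices $\inlmin{w}(s(k))$ are pairwise distinct, and the $\omega$-factors appearing in the leading term together recover both the subset $\{k : \ell_k = 0\}$ and the permutation $w$. Distinct tuples $(w, \{\ell_k\}, \{a_k\})$ therefore yield distinct leading monomials in $Q_I 1_\bj$, which is a free $\Bbbk$-module on such monomials.

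The main technical obstacle is to verify that a single monomial ordering on $Q_I$ simultaneously dominates the lower-order corrections coming from the expansion $\omega_j^{0}(K) = \sum_{n\ge 0}(-1)^n \omega_{k_j,j}^{n}\,\varepsilon_n^j(\und x_K)$ in \eqref{eq:fdaction}, and from the non-trivial action of the same-colour crossings on $\omega$-generators through the term $(x_{p,i}-x_{p+1,i})\omega_{p+1,i}$. A combined order favouring $\omega_{p,i}$ with smaller $p$, refined by a term order on $x$-monomials, should achieve this; the bookkeeping amounts to checking that conjugation of a tight floating dot by the factors $\tau_{w^{k+1}}\cdots \tau_{w^1}$ produces the expected $\omega_{\inlmin{w}(s(k)),\, i_{s(k)}}$ plus terms that are strictly smaller in the chosen ordering.
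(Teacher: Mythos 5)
Your global strategy agrees with the paper's: both proofs reduce faithfulness to showing that the spanning set ${_\bj}B{_\bi}$ of \cref{prop:Rbogen} acts by $\Bbbk$-linearly independent operators on $Q_\nu$, which simultaneously proves \cref{thm:Rbobasis}. The implementations differ: the paper extends scalars to the fraction field $\Bbbk(x_{1,i},\dots,x_{\nu_i,i})$, notes that crossings between strands of different labels then act by multiplication by scalars, and reduces the independence statement to the rank-one ($\slt$) case, which it imports from \cite[Corollary~3.9]{naissevaz2}; you propose instead a direct leading-term argument in the style of \cite[Theorem~2.5]{KL1}, which the paper explicitly flags as an alternative route.

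There is, however, a concrete gap in your plan as written: you propose to distinguish the operators by computing $b\cdot 1_\bi$, i.e.\ the image of the element $1\in Q_I 1_\bi$, and comparing leading terms. This cannot work, because same-label crossings act by divided-difference operators $f\mapsto r_i\,(f-s_K(f))/(x_{k_i,i}-x_{k_{i}+1,i})$, which annihilate symmetric elements and in particular the constant $1$. Already in the nilHecke case ($\nu=2\alpha_i$) every basis element of the form $x_2^{a_2}x_1^{a_1}\tau_1$ sends $1_\bi$ to $\partial_1(1)=0$, so infinitely many distinct nonzero basis elements have identical (zero) image on the idempotent; no choice of monomial order on $Q_I1_\bj$ can separate them. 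The leading-term method must therefore be applied to the operators themselves, not to their value at a single element: one writes the action of $\tau_{\hat w}$, modulo operators attached to shorter permutations, as a composite of Demazure operators and twisted multiplications, and then needs linear independence of the family $\{\partial_w\}$ (equivalently $\{w\}$) over the polynomial ring — which is exactly what the localization step in the paper's proof delivers. Your observation that the $\omega$-factors $\omega_{\inlmin{w}(s(k)),i_{s(k)}}$ recover $w$ and the set $\{k:\ell_k=0\}$ is the right idea for the exterior part, but without the operator-level (localized) independence of the permutation/Demazure part the argument does not close.
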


\begin{proof}
The proof is inspired by~\cite[Proposition~3.8]{rouquierquiv} (see also~\cite[Theorem~2.5]{KL1} for a different approach).  
We claim that elements of ${_\bj} B_\bi$ act as linearly independent endomorphisms on $P_\nu$. The action yields morphisms
\[
P_I 1_\bi \rightarrow P_I 1_\bj,
\] 
that we will consider as endomorphisms of $P_I$. 

First we extend the scalars to $\Bbbk(x_{1,i},\dots, x_{\nu_i,i})$ in $P_i$ for all $i\in I$. We claim that different choices of $w \in {_\bj}S{_\bi}$ and $\ell_i \in \{-1,0\}$ give linearly independent operators. Notice that since $\bi, \bj$ is fixed, $w$ is given by choices of permutations between strands of the same label.
Since crossings between strands with different labels act as multiplication by a polynomial, we can ignore them as being multiplication by a scalar. By \cite[Corollary 3.9]{naissevaz2}, we know that different choices of permutations and tightened floating dots for strands with label $i$ act as linearly independent operators on $P_i$, hence proving our claim. Finally, taking into account the multiplication by the polynomial given by the choice of the $a_i \in \bN$ as in \cref{eq:Antightbasis} concludes the proof. 
\end{proof}

\begin{thm}\label{thm:Rbobasis}
The $\Bbbk$-module $1_\bj R_\bo(\nu) 1_\bi$ is free with basis ${_\bj}B{_\bi}$.
\end{thm}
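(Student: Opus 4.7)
The plan is to combine the two ingredients already established in this subsection: the spanning statement \cref{prop:Rbogen} and the faithful action statement \cref{prop:faithfulaction}. Together they immediately yield the basis theorem.

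First, by \cref{prop:Rbogen} the set ${_\bj}B{_\bi}$ spans $1_\bj R_\bo(m) 1_\bi$ as a $\Bbbk$-module, so only linear independence remains to be shown. For this, I would observe that the proof of \cref{prop:faithfulaction} in fact does more than is stated: to exhibit an element of the kernel it constructs a non-trivial linear combination of elements of ${_\bj}B{_\bi}$ and shows its action on $Q_\nu$ is non-zero. More precisely, that argument establishes that the elements of ${_\bj}B{_\bi}$ act on $Q_\nu$ (after extending scalars in the $x_{\ell,i}$) as linearly independent $\Bbbk$-module endomorphisms: different permutations $w \in {_\bj}S{_\bi}$ are separated by the corresponding products of simple transpositions together with the multiplication-by-polynomial contributions coming from crossings between differently labeled strands, different choices of $\ell_i \in \{0,-1\}$ and of $a_i \in \bN$ are separated by invoking \cite[Corollary~3.9]{naissevaz2} applied to each label $i$ separately on the factor $Q_i$, and multiplication by the monomial $x_m^{a_m}\cdots x_1^{a_1}$ makes the various dot-choices linearly independent.

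Hence any $\Bbbk$-linear relation among elements of ${_\bj}B{_\bi}$ in $1_\bj R_\bo(m) 1_\bi$ would produce, via the action of \cref{prop:Rboaction}, a $\Bbbk$-linear relation among their corresponding endomorphisms of $Q_\nu$, forcing all coefficients to vanish. This proves that ${_\bj}B{_\bi}$ is $\Bbbk$-linearly independent, and combined with spanning we conclude that $1_\bj R_\bo(m) 1_\bi$ is a free $\Bbbk$-module with basis ${_\bj}B{_\bi}$.

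The only delicate point is the linear independence of the polynomial action of the tightened floating dots and crossings among strands of the same label, but this is precisely what is imported from \cite[Corollary~3.9]{naissevaz2}; once that is granted, the proof is just a bookkeeping assembly of the spanning and faithfulness results proved just above.
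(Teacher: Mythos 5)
Your proposal is correct and matches the paper's proof exactly: the paper's proof of \cref{thm:Rbobasis} is a one-line combination of \cref{prop:Rbogen} (spanning) and \cref{prop:faithfulaction} (linear independence of the images under the polynomial action), which is precisely the argument you spell out.
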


\begin{proof}
It follows from \cref{prop:Rbogen} and \cref{prop:faithfulaction}.
\end{proof}

From this, we also deduce the following:

\begin{cor}\label{cor:Rbmindesc}
The $\bo$-KLR algebra admits a presentation given by the KLR-generators and tight floating dots, subjected to the KLR-relations Eq. (\ref{eq:KLRR2}--\ref{eq:KLRR3}) together with
\begin{align*}
\tikzdiagh[yscale=1.5]{0}{
	\draw (0,0) node[below]{\small $j$} ..controls (0,.15) and (1,.15) .. (1,.5)
		 ..controls (1,.85) and (0,.85) .. (0,1) -- (0,1.5);
	\draw (1,0) node[below]{\small $i$} ..controls (1,.15) and (0,.15) .. (0,.5)
		..controls (0,.85) and (1,.85) .. (1,1) -- (1,1.5);
	\fdot{i}{.4,.5};
	\fdot{j}{.5,1.25};
}
\ + \ 
\tikzdiagh[yscale=1.5]{0}{
	\draw (0,-.5) node[below]{\small $j$} -- (0,0)  ..controls (0,.15) and (1,.15) .. (1,.5)
		 ..controls (1,.85) and (0,.85) .. (0,1);
	\draw (1,-.5) node[below]{\small $i$}  -- (1,0)..controls (1,.15) and (0,.15) .. (0,.5)
		..controls (0,.85) and (1,.85) .. (1,1);
	\fdot{i}{.4,.5};
	\fdot{j}{.5,-.25};
}
\ &= \ 0,
&
\tikzdiagh[yscale=1.5]{0}{
	\draw (0,0)  node[below]{\small $i$} -- (0,1);
	\fdot{i}{.4,.25};
	\fdot{i}{.4,.75};
}
\ &= 0,
\end{align*}
for all $i,j \in I_r$. 
\end{cor}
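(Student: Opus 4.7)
Let $R'(m)$ denote the $\Bbbk$-algebra defined by the claimed minimal presentation, that is, generated by KLR generators and tight floating dots subject to the KLR relations Eq.~(\ref{eq:KLRR2}--\ref{eq:KLRR3}) together with the two displayed relations. The strategy is to construct an algebra homomorphism $\phi : R'(m) \to R_\bo(m)$, show it is surjective using \cref{lem:dgKLRtightgen}, and then check that the spanning argument of \cref{prop:Rbogen} can be carried out inside $R'(m)$, so that a generating set of $R'(m)$ maps to the basis ${_\bj}B{_\bi}$ of $R_\bo(m)$ given by \cref{thm:Rbobasis}.

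First I would verify that the two displayed relations do hold in $R_\bo(m)$. The second is the restriction of \eqref{eq:fdmoves} to the tight case, since two tight floating dots supported in the same region of the diagram sit in the same region of the leftmost strand. The first displayed relation is precisely the identity \eqref{eq:tightfdcommutes} derived in the course of the proof of \cref{lem:thightenedfdanticommutes}, whose derivation only used \eqref{eq:ExtR2} and \eqref{eq:fdmoves}. This yields a well-defined algebra map $\phi : R'(m) \to R_\bo(m)$ sending each generator to its namesake. Surjectivity of $\phi$ is then exactly the content of \cref{lem:dgKLRtightgen}, which asserts that $R_\bo(m)$ is generated by the KLR elements and the tight floating dots.

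For injectivity, the key step is to show that the set ${_\bj}B{_\bi}$ from \eqref{eq:Antightbasis} (whose elements make sense in $R'(m)$, as they only involve crossings, dots and tightened floating dots) spans $1_\bj R'(m) 1_\bi$. For this I would revisit the proof of \cref{prop:Rbogen} and check that every reduction used there is available in $R'(m)$. Elements of $R'(m)$ are built from the KLR generators and tight floating dots, so the initial appeal to \cref{lem:dgKLRtightgen} is unnecessary. The moves bringing all dots to the top of a diagram use only \eqref{eq:KLRdotslide} and \eqref{eq:KLRnh}; braid-like isotopies up to lower-crossing error terms follow from \eqref{eq:KLRR2}–\eqref{eq:KLRR3}; the reduction to at most one tight floating dot per region uses only the first displayed relation (an instance of \eqref{eq:tightfdcommutes}) together with the second displayed relation (the squaring to zero of two tight floating dots in the same region); and the anticommutation of tightened floating dots from \cref{lem:thightenedfdanticommutes} was itself derived from precisely these ingredients. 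Hence the same algorithm reduces any monomial in the generators of $R'(m)$ to a $\Bbbk$-linear combination of elements of ${_\bj}B{_\bi}$.

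Since $\phi$ sends ${_\bj}B{_\bi} \subset R'(m)$ to ${_\bj}B{_\bi} \subset R_\bo(m)$, which is a $\Bbbk$-basis by \cref{thm:Rbobasis}, the set ${_\bj}B{_\bi}$ is linearly independent in $R'(m)$ as well. Combined with the spanning statement, this shows $\phi$ is an isomorphism on each $1_\bj R'(m) 1_\bi$, and thus globally. The only step that requires careful bookkeeping—and which I would regard as the main obstacle—is the re-inspection of the proof of \cref{prop:Rbogen} to confirm that no appeal is made to the full floating dot relations \eqref{eq:fdots}, \eqref{eq:ExtR2}, or to the vanishing of a floating dot in the leftmost region, beyond what is already encoded in the two displayed relations and the KLR relations; these derived identities should then be recovered a posteriori in $R_\bo(m)$ via the isomorphism $\phi$.
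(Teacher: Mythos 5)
Your proposal is correct and reconstructs the argument the paper leaves implicit: the corollary is deduced from \cref{thm:Rbobasis} together with the spanning algorithm of \cref{prop:Rbogen}, and you make this precise by introducing the comparison map $\phi\colon R'(m)\to R_\bo(m)$ and checking that every reduction in \cref{prop:Rbogen} appeals to \eqref{eq:tightfdcommutes} and \eqref{eq:fdmoves} only in the tight, superscript-zero instances encoded by the two displayed relations. The worry flagged in your last paragraph is already resolved by your own inspection — the full relations \eqref{eq:fdots}, \eqref{eq:ExtR2} and the leftmost-region vanishing enter \cref{prop:Rbogen} only through \cref{lem:dgKLRtightgen}, which is superfluous in $R'(m)$ since its generating floating dots are already tight.
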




\section{Dg-enhancement}\label{sec:dgenhancement}

We fix a subset $I_f \subset I$ and consider the associated parabolic subalgebra $U_q(\p) \subset U_q(\g)$ as defined in~\cref{sec:qgweightmodules}. For each $j \in I_f$, we also choose a weight $n_j \in \bN$, and write $N := \{n_j\}_{j \in I_f}$.

\begin{defn}\label{def:pKLR}
  The $\p$-KLR algebra $R_\p(m)$ is given by forgetting the $\lambda_j$-degree for each $j \in I_f$ in $R_\bo(m)$ and modding out by 
\[
\tikzdiagh[xscale=.75]{0}{
	\fdot{j}{0.5,0.5};
	\draw (0,0) node[below] {\plusspacing \small $j$} -- (0,1);
	\draw (1,0) node[below] {\plusspacing \small $i_1$} -- (1,1);
	\node at(2,.5) {$\dots$};
	\draw (3,0) node[below] {\plusspacing \small $i_{m-1}$} -- (3,1);
}
\ = \ 
0,
\]
for all $j \in I_f$. The \emph{$N$-cyclotomic quotient} $R_\p^N(m)$ of $R_\p(m)$ is given by modding out by
\[
\tikzdiagh{0}{
	     	 \draw  (0,-.5) node[below] {\small $j$}  --(0,.5)node [midway,tikzdot]{} node[midway,xshift=1.75ex,yshift=.75ex]{\small $n_j$}; 
	      	\draw  (1,-.5) node[below] {\small $i_1$} -- (1,.5);
	      	\node at(2,0) {\small $\dots$};
	      	\draw  (3,-.5) node[below] {\small $i_{m-1}$} -- (3,.5);
	 }
	\ =\  0,
\]
for all $j \in I_f$.
\end{defn}

In particular, $R_{\g}(m)$ is the usual KLR algebra $R(m)$ (see \cref{def:KLRalgebra}). 
Its $N$-cyclotomic quotient $R_\g^N(m)$ is also the usual cyclotomic quotient of the KLR algebra. Taking $I_f = \emptyset$ gives $\p = \bo$ and we recover \cref{def:Rbo}.

\smallskip

We equip $R_\bo(m)$ with a homological $\bZ$-grading, denoted $h$, by setting 
\begin{align*}
\deg_h \left(
\ 
\tikzdiag{
	\draw (0,0) node[below] {\small $i$}  ..controls (0,.5) and (1,.5) .. (1,1);
	\draw[myblue] (1,0) node[below] {\small $j$}  ..controls (1,.5) and (0,.5) .. (0,1);
}
\ 
\right)
& := 0,
&
\deg_h \left(
\ 
\tikzdiag{
	\draw (0,0) node[below] {\small $i$}  -- (0,1) node [midway,tikzdot]{};
}
\ 
\right)
& := 0, 
&
\deg_{h} \left(
\ 
\tikzdiag{
	\fdot[a]{i}{0,0};
	\node at(-.5,-.5) {\small $K$};
}
\ 
\right)
&= 1,
\end{align*}
for all $i,j \in I$. Then, we equip $R_\bo(m)$ with a differential $d_N$ by setting
\[
d_N \left(
\ 
\tikzdiag{
	\draw (0,0) node[below] {\small $i$}  ..controls (0,.5) and (1,.5) .. (1,1);
	\draw[myblue] (1,0) node[below] {\small $j$}  ..controls (1,.5) and (0,.5) .. (0,1);
}
\ 
\right)
:= 
d_N \left(
\ 
\tikzdiag{
	\draw (0,0) node[below] {\small $i$}  -- (0,1) node [midway,tikzdot]{};
}
\ 
\right)
:= 0,
\]
and
\[
d_N
 \left(
\ 
\tikzdiagh[xscale=.75]{0}{
	\fdot{j}{0.5,0.5};
	\draw (0,0) node[below] {\plusspacing \small $j$} -- (0,1);
	\draw (1,0) node[below] {\plusspacing \small $i_1$} -- (1,1);
	\node at(2,.5) {$\dots$};
	\draw (3,0) node[below] {\plusspacing \small $i_{m-1}$} -- (3,1);
}
\ 
\right)
:= \begin{cases}
0, & \text{ if $j \notin I_f$,}\\
(-1)^{n_j}\tikzdiagh{0}{
	     	 \draw  (0,-.5) node[below] {\small $j$}  --(0,.5)node [midway,tikzdot]{} node[midway,xshift=1.75ex,yshift=.75ex]{\small $n_j$}; 
	      	\draw  (1,-.5) node[below] {\small $i_1$} -- (1,.5);
	      	\node at(2,0) {\small $\dots$};
	      	\draw  (3,-.5) node[below] {\small $i_{m-1}$} -- (3,.5);
	 }, & \text{ if $j \in I_f$.}\\
\end{cases}
\] 
We extend the definition of $d_N$ to the whole algebra using the graded Leibniz rule $d_N(xy) = d_N(x)y + (-1)^{\deg_h(x)}x d_N(y)$ and  \cref{lem:dgKLRtightgen}. Checking that $d_N$ is well-defined is straightforward using \cref{cor:Rbmindesc}. 
From this, we derive that for $j \in I_r$ we have
\[
d_N \left(
\ 
\tikzdiag{
	\fdot[a]{j}{0,0};
	\node at(-.5,-.5) {\small $K$};
}
\ 
\right)
= 
(-1)^{n_j-k_j+1+a} \sum_{r=0}^{-\alpha_j^\vee(K^{\setminus j})} \ch_{n_j+a-k_j+1+r}(x_{k_j,j}) \varepsilon_r^j(\und x_K),
\]
where $x_{\ell,i}$ is a dot on the $\ell$th strand with label $i$, $\ch_n$ is the $n$th complete homogeneous polynomial, and $\varepsilon_r^j(\und x_K)$ is defined in~\cref{eq:epsnotation2}.

\begin{defn}
We refer to the dg-algebra $(R_\bo(m), d_N)$ as the \emph{dg-enhanced KLR algebra}. 
\end{defn}

\begin{prop}\label{prop:acyclic}
If $n_j-\nu_j-\alpha_j^\vee(\nu^{\setminus j}) < 0$, then $(R_\bo(\nu),d_N)$ is acyclic.
\end{prop}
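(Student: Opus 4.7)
My approach is to build an explicit contracting homotopy on $(R_\bo(\nu), d_N)$ by exploiting the tightened basis from \cref{thm:Rbobasis}. First observe that the hypothesis $n_j - \nu_j - \alpha_j^\vee(\nu^{\setminus j}) < 0$ carries no information unless the offending $j$ lies in $I_f$, since otherwise $d_N$ vanishes on all tightened floating dots of label $j$. Henceforth I fix such a $j \in I_f$ witnessing the inequality.

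The first step is to set up the combinatorial data. Each element of ${_\bj}B_\bi$ is a product $x^{\und a}\tau_{w^{m+1}}\theta_{p_m}^{\ell_m}\tau_{w^m}\cdots\theta_{p_1}^{\ell_1}\tau_{w^1}$, and can be recorded as a triple consisting of a permutation, top dots, and a tuple of ``flags'' $(\ell_1,\dots,\ell_m)\in\{0,-1\}^m$ saying which strands carry a tightened floating dot. I would single out a distinguished $j$-strand (e.g.\ the one attaining its leftmost position last among $j$-strands) and define a $\Bbbk$-linear map $h$ of homological degree $-1$ on ${_\bj}B_\bi$ that toggles the flag on that strand: it inserts a tightened floating dot on elements with flag $-1$ with a Koszul sign dictated by \cref{lem:thightenedfdanticommutes}, and sends elements with flag $0$ to zero.

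The second step is to verify $d_N h + h d_N = \id$. Thanks to the faithful polynomial action of \cref{prop:faithfulaction}, this reduces to a polynomial identity in $Q_\nu$. The explicit formula
\[
d_N\!\left(\ \tikzdiag{\fdot[a]{j}{0,0};\node at(-.5,-.5) {\small $K$};}\ \right)
= (-1)^{n_j-k_j+1+a}\sum_{r=0}^{-\alpha_j^\vee(K^{\setminus j})} \ch_{n_j+a-k_j+1+r}(x_{k_j,j})\,\varepsilon_r^j(\und x_K)
\]
lets me compute both $d_N h$ and $h d_N$ on a basis element directly, with all residual terms expressed as products of complete homogeneous polynomials and the symmetric polynomials $\varepsilon^j_r$.

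The main obstacle will be the final polynomial identity, which is precisely where the gap $n_j < \nu_j + \alpha_j^\vee(\nu^{\setminus j})$ enters. Rewriting the inequality as $n_j + 1 - k_j \leq -\alpha_j^\vee(\nu^{\setminus j})$ (after keeping track of how many $j$-strands are accumulated so far), the summation range in the displayed formula is forced to be non-empty and in fact large enough that every monomial in the $x_{p,j}$'s needed to recover a given basis element is actually produced. When this condition fails, there are ``missing'' dots that the homotopy cannot supply, so the inequality is sharp. A careful sign analysis using \cref{lem:thightenedfdanticommutes} then bundles everything into the identity $d_N h + h d_N = \id$, proving acyclicity.
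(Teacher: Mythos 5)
The paper's proof is a one-liner: choose $a = -(n_j-\nu_j-\alpha_j^\vee(\nu^{\setminus j}) + 1) \geq 0$ (non-negative exactly by the hypothesis) and consider the floating dot $\omega_j^a$ placed in the far-right region, where $K = \nu$. Plugging $K=\nu$, $k_j=\nu_j$, and this $a$ into the displayed formula for $d_N$ of a floating dot makes every $\ch$-term vanish except $\ch_0 = 1$, so $d_N(\omega_j^a(\nu))$ is an invertible scalar. Once a unit lies in the image of $d_N$, acyclicity is immediate (left multiplication by $u^{-1}\omega_j^a(\nu)$ is a contracting homotopy). Your plan tries to rebuild a contracting homotopy by hand, and there is a genuine gap in how you propose to do it.

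The map $h$ you describe toggles a \emph{tightened} floating dot $\theta_p^0$ in the basis of \cref{thm:Rbobasis}; but these all have superscript $0$, and $d_N$ of a tight floating dot with superscript $0$ is $(-1)^{n_j} x^{n_j}$ on its strand — a degree-$n_j$ polynomial, not a constant. So applying $d_N$ after your toggle produces a nonconstant factor, not the identity, and $d_N h + h d_N = \id$ cannot hold on the nose for that choice of $h$. The whole point of the hypothesis is not to make ``enough monomials appear'' in a basis-by-basis bookkeeping, but to guarantee that a \emph{specific, non-tight} superscript $a \geq 0$ is available, for which $d_N$ of the far-right floating dot $\omega_j^a(\nu)$ degenerates to $\ch_0 \cdot \varepsilon_{-\alpha_j^\vee(\nu^{\setminus j})}^j(\und x_\nu)$, a unit. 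Your proposal never lands on this element (it is not a single element of ${_\bj}B{_\bi}$, since superscripts $>0$ get rewritten by \cref{eq:fdots}), and the claimed polynomial identity for your $h$ is asserted rather than checked. Also a small point: as written your $h$ inserts a floating dot, so it has homological degree $+1$, not $-1$.

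To repair the argument along your lines you would essentially have to re-derive that $u := d_N(\omega_j^a(\nu))$ is a unit and then take $h = u^{-1}\omega_j^a(\nu)\cdot(-)$, at which point the basis and the flags are no longer needed — which is exactly the paper's proof.
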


\begin{proof}
Taking $a := -(n_j-\nu_j-\alpha_j^\vee(\nu^{\setminus j}) + 1)$ and considering the floating dot placed on the far right with subscript $j$ and superscript $a$ yields
\[
d_N \left(
\ 
\tikzdiag{
	\fdot[a]{j}{0,0};
	\node at(-.5,-.5) {\small $\nu$};
}
\ 
\right)
= 
 (-1)^{\alpha_j^\vee(\nu^{\setminus j})}.
\]
Thus, $H(R_\bo(\nu),d_N) \cong 0$.
\end{proof}

Our goal for the rest of the section will be to prove the following:

\begin{thm}\label{thm:RbodNformal}
The dg-algebra $(R_\bo(m), d_N)$ is formal with homology
\[
H(R_\bo(m), d_N) \cong R_\p^N(m).
\]
\end{thm}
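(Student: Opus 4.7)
The plan is to exhibit an explicit surjective algebra morphism
\[
\pi : R_\bo(m) \longrightarrow R_\p^N(m)
\]
annihilating the image of $d_N$, and then to prove that $\pi$ induces an isomorphism on homology. Since the target carries the zero differential, producing such a $\pi$ simultaneously identifies $H(R_\bo(m), d_N)$ with $R_\p^N(m)$ and establishes formality.

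First I would define $\pi$ on generators: dots and crossings map to themselves, floating dots with subscript $j \in I_r$ map to themselves, floating dots with subscript $j \in I_f$ map to zero. Comparing the defining relations of $R_\bo(m)$ in \cref{def:Rbo} with those of $R_\p^N(m)$ in \cref{def:pKLR}, this extends to a well-defined graded algebra homomorphism. By construction $\pi \circ d_N = 0$: the only non-trivial values of $d_N$ on generators are $(-1)^{n_j}$ times a dot of multiplicity $n_j$ on the leftmost strand for $j \in I_f$, which is annihilated by the cyclotomic relation. Hence $\pi$ descends to a surjection $\bar\pi : H(R_\bo(m), d_N) \twoheadrightarrow R_\p^N(m)$.

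For the injectivity of $\bar\pi$, I would use the tightened basis of \cref{thm:Rbobasis} together with a decreasing filtration by crossing count. On the associated graded the KLR relations \cref{eq:KLRR2}, \cref{eq:KLRnh}, \cref{eq:KLRR3} reduce to their strict leading terms, and the Leibniz expansion of $d_N$ applied to a tightened floating dot $\theta_a$ collapses to a single leading contribution: for $j \in I_f$ this is $\pm x_a^{n_j}$ on the $a$-th strand, and for $j \in I_r$ it vanishes. The graded complex thus factors, over the KLR skeleton, as a tensor product of elementary complexes indexed by strands: each $j \in I_f$ strand contributes a Koszul complex $\Bbbk[x]\otimes\bV(\theta)$ with $d(\theta) = \pm x^{n_j}$ and homology $\Bbbk[x]/(x^{n_j})$, while each $j \in I_r$ strand contributes a free piece $\Bbbk[x]\otimes\bV(\theta)$ with trivial differential. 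Matching the resulting $\Bbbk$-basis against a tightened basis for $R_\p^N(m)$ (obtained from that of $R_\bo(m)$ by discarding the tightened floating dots $\theta^0$ with $j \in I_f$ and truncating leftmost-strand dot-powers modulo $x^{n_j}$ for $j \in I_f$) shows that $\bar\pi$ is a $\Bbbk$-module isomorphism on the associated graded, hence on the original, and that the spectral sequence degenerates at $E_1$. Formality is then immediate: $\pi$ is itself a dg-algebra quasi-isomorphism from $(R_\bo(m), d_N)$ into a dg-algebra with zero differential.

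The main obstacle is the careful analysis of the Leibniz expansion
\[
d_N(\theta_a) \;=\; \tau_{a-1}\cdots\tau_1 \cdot d_N(\omega) \cdot \tau_1 \cdots \tau_{a-1},
\]
and of the many lower-crossing corrections produced when commuting $x_1^{n_j}$ back through $2(a-1)$ crossings via the nil-Hecke relation \cref{eq:KLRnh}. One must verify that these corrections land strictly below the leading term in the crossing filtration, so that they do not obstruct the $E_1$-degeneration. This is the multi-strand, multi-label analogue of the $\slt$ computation carried out in~\cite{naissevaz2}, whose argument provides the template for organizing the bookkeeping here.
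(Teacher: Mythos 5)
The route you propose is genuinely different from the paper's. The paper proves the theorem by induction on the number of strands $m$: it uses the left-module decomposition of $R_\bo(m+1)$ over $R_\bo(m)$ from \cref{lem:Rboleftdecomp}, realizes $(R_\bo(m+1),d_N)$ as a mapping cone of a map $\bar d_N$ between shifted copies of the formal $(R_\bo(m),d_N)$, and then proves $\bar d_N$ injective (\cref{prop:sesleftdecomp}) by post-composing with a Kang--Kashiwara-style map $P$ (multiplication by $\widetilde\theta_{m+1}$, built from undercrossings) and explicitly computing $P\circ \bar d_N$ in \cref{lem:computePdN} to be multiplication by a monic polynomial in $x_{m+1}$. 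You instead work globally with the tightened basis, a crossing filtration, and a Koszul decomposition of the associated graded. This is closer in spirit to the $\slt$ computation of \cite{naissevaz2}, but I do not think it carries over to the multi-strand, multi-label setting without substantial repair, for the following reasons.

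First, your key structural claim — that on the associated graded the Leibniz expansion of $d_N$ applied to $\theta_a$ collapses to $\pm x_a^{n_j}$ on the $a$-th strand — is not correct in general. From $\theta_a = \tau_{a-1}\cdots\tau_1\,\omega\,\tau_1\cdots\tau_{a-1}$ and $d_N(\omega)=\pm x_1^{n_j}$, one gets $d_N(\theta_a)=\pm\,\tau_{a-1}\cdots\tau_1\, x_1^{n_j}\,\tau_1\cdots\tau_{a-1}$. Sliding the dots down and resolving $\tau_k^2$ via \cref{eq:KLRR2} produces, for the top crossing-weight piece, a polynomial of the form $\prod_{k<a} Q_{j_k,j}(x_k,x_a)\cdot x_a^{n_j}$ when all strands $1,\ldots,a-1$ carry labels $\neq j$ — a polynomial in several variables, not a pure power of $x_a$. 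Worse, when some strand $k<a$ also carries the label $j\in I_f$, the corresponding $\tau^2$ vanishes identically and the claimed leading term is $0$; the actual contribution then sits entirely in lower-crossing corrections, which do not decouple strand by strand. This destroys the "tensor product of elementary Koszul complexes" picture as soon as two strands share a label in $I_f$, which is the generic situation. (Compare: the polynomial the paper ultimately produces, $r_i^{2\nu_i}\sum_{p} x_{m+1}^{n_i+p}\varepsilon_p^i(\und x_\nu)$ in \cref{lem:computePdN}, records precisely these cross-strand interactions through $\varepsilon_p^i$.)

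Second, even granting degeneration at $E_1$, the identification of $E_\infty$ with $R_\p^N(m)$ requires an independent description of a $\Bbbk$-basis of $R_\p^N(m)$. You propose to obtain one "by discarding the tightened floating dots with $j\in I_f$ and truncating leftmost-strand dot-powers modulo $x^{n_j}$," but this is not automatic: a basis of a quotient does not follow from a basis of the ambient algebra without an argument that the proposed spanning set is linearly independent — and that is essentially what the theorem asserts. The paper sidesteps this by never needing an a priori basis of $R_\p^N(m)$: the induction produces $H(R_\bo(m+1),d_N)$ as a cokernel of an injective bimodule map, and the basis theorem for $R_\bo$ feeds in only through \cref{lem:Rboleftdecomp}. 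So the proposal as written has a circularity at the dimension-counting step as well as the structural gap above. If you want to pursue a filtration approach, the right replacement for the naive leading term is the Kang--Kashiwara-type monic polynomial, and a careful bookkeeping as in \cref{lem:computePdN} is unavoidable.
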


\subsection{Proof of \cref{thm:RbodNformal}}

Denote $1_{(m,i)} := \sum_{\bj \in I^m} 1_{\bj i}$, or diagrammatically
\begin{align*}
1_{(m,i)} &:= \sum_{(j_1, \dots, j_m) \in I^m}  
\tikzdiagh{.5ex}{
          \draw (-.5,-.5) node[below] {\small $j_1$} -- (-.5,.5); 
          \draw (1,-.5) node[below] {\small $j_m$} -- (1,.5); 
          \node at (.25,0){$\cdots$};
          \draw (1.5,-.5) node[below] {\small $i$} -- (1.5,.5); 
        }.
\end{align*}
It is an idempotent of $R_\bo(m+1)$. 
We also define $1_{(\nu, i)} := \sum_{\bj \in \Seq(\nu)} 1_{\bj i}$ for $\nu \in X^+$.
The algebra $R_\bo(m)$ acts on $1_{(m,i)}R_\bo(m+1)$ by first adding a vertical strand labeled $i$ at the right of $D \in R_\bo(m)$ and then multiplying on the left in $R_\bo(m+1)$.

\smallskip

We now introduce some other diagrammatic notations as in~\cite[\S3.1]{naissevaz2}. We draw $R_\bo(m)$ (viewed as $R_\bo(m)$-$R_\bo(m)$-bimodule) as a box labeled by $m$
\[
R_\bo(m)
\ = \  
\tikzdiag[xscale=.75]{
	\draw (0,-.5) -- (0,.5);
	\draw (.5,-.5) -- (.5,.5);
	\draw (1.5,-.5) -- (1.5,.5);
	\draw (2,-.5) -- (2,.5);
	\node at(1,.4) {\small $\dots$};
	\node at(1,-.4) {\small $\dots$};
	\filldraw [fill=white, draw=black] (-.25,-.25) rectangle (2.25,0.25) node[midway] {\small $m$};
}
\]
and $\otimes_m := \otimes_{R_\bo(m)}$ becomes stacking boxes on top of each other. Moreover, when $R_\bo(m+1)$ is viewed as a left $R_\bo(m)$-module, as a right $R_\bo(m)$-module or as an $R_\bo(m)$-$R_\bo(m)$-bimodule, we draw respectively
\begin{align*}
\tikzdiag[xscale=.75]{
	\draw (0,0) -- (0,.5);
	\draw (.5,0) -- (.5,.5);
	\draw (1.5,0) -- (1.5,.5);
	\draw (2,0) -- (2,.5);
	\draw (2.5, 0) -- (2.5, .25) .. controls (2.5,.5) .. (2.75,.5);
	\node at(1,.4) {\small $\dots$};
	\filldraw [fill=white, draw=black] (-.25,-.25) rectangle (2.75,0.25) node[midway] {\small $m+1$};
} &&
\tikzdiag[xscale=.75,yscale=-1]{
	\draw (0,0) -- (0,.5);
	\draw (.5,0) -- (.5,.5);
	\draw (1.5,0) -- (1.5,.5);
	\draw (2,0) -- (2,.5);
	\draw (2.5, 0) -- (2.5, .25) .. controls (2.5,.5) .. (2.75,.5);
	\node at(1,.4) {\small $\dots$};
	\filldraw [fill=white, draw=black] (-.25,-.25) rectangle (2.75,0.25) node[midway] {\small $m+1$};
}&&
\tikzdiag[xscale=.75]{
	\draw (0,-.5) -- (0,.5);
	\draw (.5,-.5) -- (.5,.5);
	\draw (1.5,-.5) -- (1.5,.5);
	\draw (2,-.5) -- (2,.5);
	\draw (2.75,-.5) .. controls (2.5,-.5) .. (2.5, -.25) -- (2.5, .25) .. controls (2.5,.5) .. (2.75,.5);
	\node at(1,-.4) {\small $\dots$};
	\node at(1,.4) {\small $\dots$};
	\filldraw [fill=white, draw=black] (-.25,-.25) rectangle (2.75,0.25) node[midway] {\small $m+1$};
}
\end{align*}

\begin{lem} \label{lem:Rboleftdecomp}
As a left $R_\bo(m)$-module, $1_{(m,i)}R_\bo(m+1)$ is free with decomposition
\[
\bigoplus_{a=1}^{m+1} \bigoplus_{\ell \geq 0} \bigl( R_\bo(m) 1_{(m,i)} \tau_m \cdots \tau_a x_a^\ell \oplus  R_\bo(m) 1_{(m,i)} \tau_m \cdots \tau_a  \theta_a^\ell \bigr),
\]
where $\theta_a^{\ell} := \tau_{a-1} \cdots \tau_1 \omega x_1^\ell \tau_1 \cdots \tau_{a-1}$. 
\end{lem}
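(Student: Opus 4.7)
The plan is to match the claimed decomposition with the basis of $R_\bo(m+1)$ given by \cref{thm:Rbobasis}, via the coset decomposition
\[
S_{m+1} = \bigsqcup_{a=1}^{m+1} S_m \cdot (\sigma_m \sigma_{m-1} \cdots \sigma_a),
\]
which writes each $w \in S_{m+1}$ uniquely as $w = w' \cdot (\sigma_m \cdots \sigma_a)$ with $a = w^{-1}(m+1)$ and $w' \in S_m$ fixing $m+1$.

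First, I would verify that concatenating a left-adjusted reduced expression for $w'$ (acting on the first $m$ strands) with $\sigma_m \cdots \sigma_a$ placed at the bottom of the diagram yields a left-adjusted reduced expression for $w$. The strand going from bottom position $a$ to top position $m+1$ never visits a position strictly smaller than $a$, so its leftmost position is achieved at the very bottom. Choosing the ordering $s$ of~\cref{sec:bKLRbasis} so that this new strand attains its leftmost position first (i.e.\ $s(1) = a$) places the associated (optional) tightened floating dot $\theta_a$ for the new strand just above $\sigma_m \cdots \sigma_a$. Dots sitting at the top of the rightmost strand can then be slid down through $\sigma_m \cdots \sigma_a$ (and through $\theta_a$ when present) using \cref{eq:KLRdotslide} and \cref{eq:KLRnh}, becoming dots $x_a^\ell$ at the bottom of the diagram, modulo terms with strictly fewer crossings.

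Combining these rewrites, every basis element of $1_{(m,i)} R_\bo(m+1)$ takes one of the two forms $c \cdot \tau_m \cdots \tau_a x_a^\ell$ or $c \cdot \tau_m \cdots \tau_a \theta_a^\ell$ with $c \in R_\bo(m)$ encoding the decorations of $w'$, plus terms of strictly lower crossing count. A straightforward induction on the number of crossings then shows that the proposed generators span $1_{(m,i)} R_\bo(m+1)$ as a left $R_\bo(m)$-module. For $R_\bo(m)$-freeness, the same correspondence provides a bijection between the basis of $R_\bo(m+1)$ restricted to $1_{(m,i)}$ and pairs consisting of a basis element of $R_\bo(m)$ together with one of the proposed generators, which rules out any nontrivial $R_\bo(m)$-linear relation among them.

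The main obstacle will be the bookkeeping during the inductive straightening: both dot sliding and tightening of floating dots produce terms of strictly smaller crossing complexity, and one must check that the filtration by number of crossings is genuinely preserved, so that the induction closes and the leading-term matching with the basis of \cref{thm:Rbobasis} is well-defined.
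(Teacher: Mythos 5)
Your proof is correct and takes the same route as the paper's: start from the tightened basis of \cref{thm:Rbobasis} via the coset decomposition $S_{m+1}=\bigsqcup_{a=1}^{m+1} S_m\,\sigma_m\cdots\sigma_a$, then move the dots to the positions in the statement by a triangular change of basis using \cref{eq:KLRdotslide} and \cref{eq:KLRnh}. The paper's proof is even terser, simply exhibiting the basis elements and noting the dot-slides give a triangular change of basis; your added remarks on left-adjustedness and the crossing-number filtration spell out what it leaves implicit.
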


We draw this as
\[
\tikzdiag[xscale=.75]{
	\draw (0,0) -- (0,.5);
	\draw (.5,0) -- (.5,.5);
	\draw (1.5,0) -- (1.5,.5);
	\draw (2,0) -- (2,.5);
	\draw (2.5, 0) -- (2.5, .25) .. controls (2.5,.5) .. (2.75,.5) node[right] {$i$};
	\node at(1,.4) {\small $\dots$};
	\filldraw [fill=white, draw=black] (-.25,-.25) rectangle (2.75,0.25) node[midway] {\small $m+1$};
}
\ \cong \ 
\bigoplus_{a = 1}^{m+1} \bigoplus_{\ell \geq 0}\ 
\left(
\tikzdiag[xscale=.75]{
	\draw (0,-1.5) -- (0,.5);
	\draw (.5,-1.5)   -- (.5,.5);
	\draw (2,-1.5)  .. controls (2,-1) and (1.5,-1) .. (1.5,-.5) -- (1.5,.5);
	\draw (2.5, -1.5)     .. controls (2.5,-1) and (2,-1) .. (2,-.5) -- (2,.5);
	\node at(1,.4) {\small$\dots$};
	\node at(1,-.4) {\small$\dots$};
	\filldraw [fill=white, draw=black] (-.25,-.25) rectangle (2.25,0.25) node[midway] {\small $m$};
	\draw (1.25, -1.5) 
	 	.. controls (1.25,-1) and (2.5,-1) .. (2.5, -.5) node[pos=.15, tikzdot]{} node[pos=.15, xshift=-1.5ex, yshift=.5ex]{\small $\ell$}
		-- (2.5,.25)
		.. controls (2.5,.5) .. (2.75,.5)   node[right] {$i$};
	\draw[decoration={brace,mirror,raise=-8pt},decorate]  (-.15,-1.85) -- node {$a$} (1.35,-1.85);
}
\ \oplus \ 
\tikzdiag[xscale=.75]{
	\draw (0,-1.5) 
		.. controls (0,-1.25) and (.75,-1.25) ..(.75,-1)
		.. controls (.75,-.75) and (0,-.75) ..(0,-.5)
		 -- (0,.5);
	\draw (.5,-1.5) 
		.. controls (.5,-1.25) and (1.25,-1.25) ..(1.25,-1)
		.. controls (1.25,-.75) and (.5,-.75) ..(.5,-.5)
		 -- (.5,.5);
	\draw (2,-1.5) .. controls (2,-1) and (1.5,-1) .. (1.5,-.5) -- (1.5,.5);
	\draw (2.5, -1.5) .. controls (2.5,-1) and (2,-1) .. (2,-.5) -- (2,.5);
	\node at(1,.4) {\small$\dots$};
	\node at(1,-.4) {\small$\dots$};
	\filldraw [fill=white, draw=black] (-.25,-.25) rectangle (2.25,0.25) node[midway] {\small $m$};
	\draw (1.25, -1.5)  
		.. controls (1.25,-1.25) and (0,-1.25) .. (0, -1)  node[pos=1, tikzdot]{} node[pos=1, xshift=-1.5ex, yshift=.5ex]{\small $\ell$}
		.. controls (0,-.75) and (2.5,-.75) .. (2.5, -.25) 
		-- (2.5,.25)
		.. controls (2.5,.5) .. (2.75,.5)  node[right] {$i$};
	\fdot{}{.4,-1.015};
	\draw[decoration={brace,mirror,raise=-8pt},decorate]  (-.15,-1.85) -- node {$a$} (1.35,-1.85);
}
\right)
\]

\begin{proof}
By \cref{thm:Rbobasis} we obtain
\[
\tikzdiag[xscale=.75]{
	\draw (0,0) -- (0,.5);
	\draw (.5,0) -- (.5,.5);
	\draw (1.5,0) -- (1.5,.5);
	\draw (2,0) -- (2,.5);
	\draw (2.5, 0) -- (2.5, .25) .. controls (2.5,.5) .. (2.75,.5) node[right] {$i$};
	\node at(1,.4) {\small $\dots$};
	\filldraw [fill=white, draw=black] (-.25,-.25) rectangle (2.75,0.25) node[midway] {\small $m+1$};
}
\ \cong \ 
\bigoplus_{a = 1}^{m+1} \bigoplus_{\ell \geq 0}\ 
\left(
\tikzdiag[xscale=.75]{
	\draw (0,-1.5) -- (0,.5);
	\draw (.5,-1.5)   -- (.5,.5);
	\draw (2,-1.5)  .. controls (2,-1) and (1.5,-1) .. (1.5,-.5) -- (1.5,.5);
	\draw (2.5, -1.5)     .. controls (2.5,-1) and (2,-1) .. (2,-.5) -- (2,.5);
	\node at(1,.4) {\small$\dots$};
	\node at(1,-.4) {\small$\dots$};
	\filldraw [fill=white, draw=black] (-.25,-.25) rectangle (2.25,0.25) node[midway] {\small $m$};
	\draw (1.25, -1.5) 
	 	.. controls (1.25,-1) and (2.5,-1) .. (2.5, -.5) 
		-- (2.5,.25) node[midway, tikzdot]{} node[midway, xshift=1.5ex, yshift=.5ex]{\small $\ell$}
		.. controls (2.5,.5) .. (2.75,.5)   node[right] {$i$};
	\draw[decoration={brace,mirror,raise=-8pt},decorate]  (-.15,-1.85) -- node {$a$} (1.35,-1.85);
}
\ \oplus \ 
\tikzdiag[xscale=.75]{
	\draw (0,-1.5) 
		.. controls (0,-1.25) and (.75,-1.25) ..(.75,-1)
		.. controls (.75,-.75) and (0,-.75) ..(0,-.5)
		 -- (0,.5);
	\draw (.5,-1.5) 
		.. controls (.5,-1.25) and (1.25,-1.25) ..(1.25,-1)
		.. controls (1.25,-.75) and (.5,-.75) ..(.5,-.5)
		 -- (.5,.5);
	\draw (2,-1.5) .. controls (2,-1) and (1.5,-1) .. (1.5,-.5) -- (1.5,.5);
	\draw (2.5, -1.5) .. controls (2.5,-1) and (2,-1) .. (2,-.5) -- (2,.5);
	\node at(1,.4) {\small$\dots$};
	\node at(1,-.4) {\small$\dots$};
	\filldraw [fill=white, draw=black] (-.25,-.25) rectangle (2.25,0.25) node[midway] {\small $m$};
	\draw (1.25, -1.5)  
		.. controls (1.25,-1.25) and (0,-1.25) .. (0, -1)  
		.. controls (0,-.75) and (2.5,-.75) .. (2.5, -.25) 
		-- (2.5,.25)  node[midway, tikzdot]{} node[midway, xshift=1.5ex, yshift=.5ex]{\small $\ell$}
		.. controls (2.5,.5) .. (2.75,.5)  node[right] {$i$};
	\fdot{}{.4,-1.015};
	\draw[decoration={brace,mirror,raise=-8pt},decorate]  (-.15,-1.85) -- node {$a$} (1.35,-1.85);
}
\right) 
\]
We then apply \cref{eq:KLRdotslide} and \cref{eq:KLRnh} to bring all the dots to the desired position. It is a triangular change of basis, concluding the proof.
\end{proof}

From now on, we will draw boxes with label `$m,d_N$' to denote the dg-algebra $(R_\bo(m),d_N)$. Similarly, a box with label $H(m)$ denotes its homology $H(R_\bo(m), d_N)$. 
 Then, the decomposition in \cref{lem:Rboleftdecomp} lifts directly to a direct sum decomposition of dg-modules whenever $i \notin I_f$. 
 Otherwise, for $i \in I_f$, it lifts to the mapping cone
\begin{align}\label{eq:coneleftdecomp}
&\tikzdiag[xscale=.75]{
	\draw (0,0) -- (0,.5);
	\draw (.5,0) -- (.5,.5);
	\draw (1.5,0) -- (1.5,.5);
	\draw (2,0) -- (2,.5);
	\draw (2.5, 0) -- (2.5, .25) .. controls (2.5,.5) .. (2.75,.5) node[right] {$i$};
	\node at(1,.4) {\small$\dots$};
	\filldraw [fill=white, draw=black] (-.25,-.25) rectangle (2.75,0.25) node[midway] {\small $m+1,d_N$};
}
\\
&
\qquad
\ \cong \ 
 \cone
\left(
\bigoplus_{a = 1}^{m+1} \bigoplus_{\ell \geq 0}\ 
\tikzdiag[xscale=.75]{
	\draw (0,-1.5) 
		.. controls (0,-1.25) and (.75,-1.25) ..(.75,-1)
		.. controls (.75,-.75) and (0,-.75) ..(0,-.5)
		 -- (0,.5);
	\draw (.5,-1.5) 
		.. controls (.5,-1.25) and (1.25,-1.25) ..(1.25,-1)
		.. controls (1.25,-.75) and (.5,-.75) ..(.5,-.5)
		 -- (.5,.5);
	\draw (2,-1.5) .. controls (2,-1) and (1.5,-1) .. (1.5,-.5) -- (1.5,.5);
	\draw (2.5, -1.5) .. controls (2.5,-1) and (2,-1) .. (2,-.5) -- (2,.5);
	\node at(1,.4) {\small$\dots$};
	\node at(1,-.4) {\small$\dots$};
	\filldraw [fill=white, draw=black] (-.25,-.25) rectangle (2.25,0.25) node[midway] {\small $m,d_N$};
	\draw (1.25, -1.5)  
		.. controls (1.25,-1.25) and (0,-1.25) .. (0, -1)  node[pos=1, tikzdot]{} node[pos=1, xshift=-1.5ex, yshift=.5ex]{\small $\ell$}
		.. controls (0,-.75) and (2.5,-.75) .. (2.5, -.25) 
		-- (2.5,.25)
		.. controls (2.5,.5) .. (2.75,.5)  node[right] {$i$};
	\fdot{}{.4,-1.015};
	\draw[decoration={brace,mirror,raise=-8pt},decorate]  (-.15,-1.85) -- node {$a$} (1.35,-1.85);
}
\xrightarrow{\bar d_N}
\bigoplus_{a = 1}^{m+1} \bigoplus_{\ell \geq 0}\ 
\tikzdiag[xscale=.75]{
	\draw (0,-1.5) -- (0,.5);
	\draw (.5,-1.5)   -- (.5,.5);
	\draw (2,-1.5)  .. controls (2,-1) and (1.5,-1) .. (1.5,-.5) -- (1.5,.5);
	\draw (2.5, -1.5)     .. controls (2.5,-1) and (2,-1) .. (2,-.5) -- (2,.5);
	\node at(1,.4) {\small$\dots$};
	\node at(1,-.4) {\small$\dots$};
	\filldraw [fill=white, draw=black] (-.25,-.25) rectangle (2.25,0.25) node[midway] {\small $m,d_N$};
	\draw (1.25, -1.5) 
	 	.. controls (1.25,-1) and (2.5,-1) .. (2.5, -.5) node[pos=.15, tikzdot]{} node[pos=.15, xshift=-1.5ex, yshift=.5ex]{\small $\ell$}
		-- (2.5,.25)
		.. controls (2.5,.5) .. (2.75,.5)   node[right] {$i$};
	\draw[decoration={brace,mirror,raise=-8pt},decorate]  (-.15,-1.85) -- node {$a$} (1.35,-1.85);
}
\right)
\end{align}
where the map $\bar d_N$ is induced by the differential of $(R_\bo(m+1),d_N)$. 

We will prove \cref{thm:RbodNformal} using induction on the number of strands $m$. Therefore, we can assume $(R_\bo(m),d_N)$ to be formal.

Following \cite{keller}, recall that for a dg-algebra $(A, d_A)$, we say that a dg-module is a \emph{relatively projective module} if it is a direct summand of a free module in $(A,d_A)\amod$.   Moreover, an $(A,d_A)$-module $Y$ \emph{satisfies property (P)} if  there is an exhaustive filtration of $(A,d_A)$-modules
\[
0 = F_0 \subset F_1 \subset F_2 \subset  \cdots \subset F_r \subset F_{r+1} \subset \cdots \subset Y,
\]
such that each $F_{r+1}/F_r$ is isomorphic in  $(A,d_A)\amod$ to a relatively projective module. 
An $(A,d_A)$-direct summand of a property (P) module is called \emph{cofibrant}.
 Also recall the following result of homological algebra:

\begin{lem}\label{prop:hhtensorhomology}
Let $(A, d_A)$ be a dg-algebra, $(M,d_M)$ be a right $(A,d_A)$-module, and $(N,d_N)$ a left one. If $(M,d_M)$ is formal and $(N,d_N)$ is cofibrant, then we have
\[
H\bigl((M,d_M) \otimes_{(A,d_A)} (N,d_N)\bigr) \cong H\bigl( H(M,d_M) \otimes_{(A,d_A)} (N, d_N)\bigr).
\]
\end{lem}

\begin{proof}
Tensoring with a cofibrant dg-module preserves quasi-isomorphisms.
\end{proof}

Therefore we obtain an exact sequence
\begin{align}\label{eq:exactseqleftdecomp}
\bigoplus_{a = 1}^{m+1} \bigoplus_{\ell \geq 0}\ 
\tikzdiag[xscale=.55]{
	\draw (0,-1.5) 
		.. controls (0,-1.25) and (.75,-1.25) ..(.75,-1)
		.. controls (.75,-.75) and (0,-.75) ..(0,-.5)
		 -- (0,.5);
	\draw (.5,-1.5) 
		.. controls (.5,-1.25) and (1.25,-1.25) ..(1.25,-1)
		.. controls (1.25,-.75) and (.5,-.75) ..(.5,-.5)
		 -- (.5,.5);
	\draw (2,-1.5) .. controls (2,-1) and (1.5,-1) .. (1.5,-.5) -- (1.5,.5);
	\draw (2.5, -1.5) .. controls (2.5,-1) and (2,-1) .. (2,-.5) -- (2,.5);
	\node at(1,.4) {\small$\dots$};
	\node at(1,-.4) {\small$\dots$};
	\filldraw [fill=white, draw=black] (-.25,-.25) rectangle (2.25,0.25) node[midway] {\small  $H(m)$};
	\draw (1.25, -1.5)  
		.. controls (1.25,-1.25) and (0,-1.25) .. (0, -1)  node[pos=1, tikzdot]{} node[pos=1, xshift=-1.5ex, yshift=.5ex]{\small $\ell$}
		.. controls (0,-.75) and (2.5,-.75) .. (2.5, -.25) 
		-- (2.5,.25)
		.. controls (2.5,.5) .. (2.75,.5)  node[right] {$i$};
	\fdot{}{.4,-1.015};
	\draw[decoration={brace,mirror,raise=-8pt},decorate]  (-.15,-1.85) -- node {$a$} (1.35,-1.85);
}
&\xrightarrow{\bar d_N}
\bigoplus_{a = 1}^{m+1} \bigoplus_{\ell \geq 0}\ 
\tikzdiag[xscale=.55]{
	\draw (0,-1.5) -- (0,.5);
	\draw (.5,-1.5)   -- (.5,.5);
	\draw (2,-1.5)  .. controls (2,-1) and (1.5,-1) .. (1.5,-.5) -- (1.5,.5);
	\draw (2.5, -1.5)     .. controls (2.5,-1) and (2,-1) .. (2,-.5) -- (2,.5);
	\node at(1,.4) {\small$\dots$};
	\node at(1,-.4) {\small$\dots$};
	\filldraw [fill=white, draw=black] (-.25,-.25) rectangle (2.25,0.25) node[midway] {\small  $H(m)$};
	\draw (1.25, -1.5) 
	 	.. controls (1.25,-1) and (2.5,-1) .. (2.5, -.5) node[pos=.15, tikzdot]{} node[pos=.15, xshift=-1.5ex, yshift=.5ex]{\small $\ell$}
		-- (2.5,.25)
		.. controls (2.5,.5) .. (2.75,.5)   node[right] {$i$};
	\draw[decoration={brace,mirror,raise=-8pt},decorate]  (-.15,-1.85) -- node {$a$} (1.35,-1.85);
} 
\longrightarrow\ 
\tikzdiag[xscale=.55]{
	\draw (0,0) -- (0,.5);
	\draw (.5,0) -- (.5,.5);
	\draw (1.5,0) -- (1.5,.5);
	\draw (2,0) -- (2,.5);
	\draw (2.5, 0) -- (2.5, .25) .. controls (2.5,.5) .. (2.75,.5) node[right] {$i$};
	\node at(1,.4) {\small $\dots$};
	\filldraw [fill=white, draw=black] (-.25,-.25) rectangle (2.75,0.25) node[midway] {\small $H(m{+}1)$};
}
\rightarrow 0,
\end{align}
thanks to \cref{prop:hhtensorhomology} and \cref{eq:coneleftdecomp}.

\begin{prop}\label{prop:sesleftdecomp}
The exact sequence~\cref{eq:exactseqleftdecomp} is a short exact sequence, with $\bar d_N$ being injective. 
\end{prop}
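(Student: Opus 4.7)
The plan is to prove injectivity of $\bar d_N$ by computing its action on the standard generators $\tau_m \cdots \tau_a \theta_a^\ell$ explicitly and then exhibiting a triangular structure on the resulting matrix.

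The first step is to apply the Leibniz rule. Since $\tau_m \cdots \tau_a$ has trivial homological degree and the only non-zero contribution comes from differentiating the tight floating dot $\omega_i$ inside $\theta_a^\ell$ (recall $i \in I_f$ in this setting, and $d_N(\omega_i) = (-1)^{n_i} x_1^{n_i}$), I would obtain
\[
d_N(\tau_m \cdots \tau_a \, \theta_a^\ell) = (-1)^{n_i}\, \tau_m \cdots \tau_1\, x_1^{n_i+\ell}\, \tau_1 \cdots \tau_{a-1}.
\]
The second step is to rewrite this in the basis of \cref{lem:Rboleftdecomp} by iteratively applying the KLR relations: use \eqref{eq:KLRdotslide} to slide dots through distinct-label crossings, \eqref{eq:KLRnh} for the nilHecke corrections at same-label crossings, and \eqref{eq:KLRR2} to resolve the inner double braid $\tau_{a-1} \cdots \tau_1 \tau_1 \cdots \tau_{a-1}$ produced after the dot has been pushed above the upper half. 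This yields an explicit decomposition of $\bar d_N(\tau_m \cdots \tau_a \theta_a^\ell)$ as an $R_\p^N(m)$-linear combination of basis elements $\tau_m \cdots \tau_b \, x_b^{\ell'}$.

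The third step is the triangular argument. Since both $d_N$ and the basis decomposition preserve the $q$-grading, it suffices to prove injectivity in each finite-rank graded component. Order the basis first by the starting position $b$ (weighted by the number of crossings) and then by the dot-index $\ell'$. The claim is that on the associated graded the map $\bar d_N$ sends $\theta_a^\ell$ to an invertible scalar multiple of $\tau_m \cdots \tau_a \, x_a^{\ell + n_i + N(a)}$, where $N(a)$ is an explicit shift determined by the labels $j_1, \ldots, j_{a-1}$ of the strands to the left of the added $i$-strand. For generic labelings (all $j_k \ne i$), the leading coefficient is $\prod_k t_{j_k i} \in \Bbbk^\times$, obtained from the extremal terms of each $\tau^2$ expansion. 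The associated graded of $\bar d_N$ is then a shift of the $\ell$-index with invertible scalar coefficient, hence injective; a standard filtered-module argument promotes this to injectivity of $\bar d_N$.

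The main obstacle is the case where some intermediate label $j_k$ equals $i$: then $\tau^2 = 0$ kills the naive leading term, and the dominant contribution must come from the nilHecke correction terms produced by iterating \eqref{eq:KLRnh} with $x_1^{n_i+\ell}$. A careful accounting, or alternatively a direct verification via the faithful polynomial representation of \cref{prop:faithfulaction} applied to a generating monomial in $Q_\nu$, shows that the leading scalar remains invertible (now a non-zero monomial in $r_i \in \Bbbk^\times$). With this uniform leading term established, the filtration argument applies in all cases and concludes injectivity.
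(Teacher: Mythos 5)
Your overall strategy --- compute $\bar d_N$ on the generators $\tau_m\cdots\tau_a\theta_a^\ell$ via the Leibniz rule, re-expand in the basis of \cref{lem:Rboleftdecomp}, and argue that the resulting matrix is triangular with invertible leading entries --- is a legitimate alternative route, and your first step is correct. The paper proceeds differently, following Kang--Kashiwara: it introduces the map $P$ given by multiplication by the element $\widetilde\theta_{m+1}$ built from undercrossings (\cref{lem:RboMapP}), and proves in \cref{lem:computePdN} that the composite $P\circ\bar d_N$ is multiplication by the polynomial $r_i^{2\nu_i}\sum_{p}x_{m+1}^{n_i+p}\varepsilon_p^i(\und x_\nu)$, which is monic up to a unit; injectivity of that multiplication, hence of $\bar d_N$, is then immediate. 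The point of composing with $P$ is precisely to collapse the whole matrix of $\bar d_N$ into a single scalar-like operator, so that no global ordering of the summands is needed.

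The gap in your proposal is that the entire difficulty of the proposition sits inside the claim you defer to ``a careful accounting'': namely that after straightening $\tau_m\cdots\tau_1 x_1^{n_i+\ell}\tau_1\cdots\tau_{a-1}$ \emph{with coefficients in} $H(R_\bo(m),d_N)\cong R_\p^N(m)$, an invertible leading coefficient survives in a component that makes the matrix triangular for some total order. This is not routine: when intermediate strands carry the label $i$, the relation $\tau^2=0$ kills the naive leading term, the nilHecke corrections from \cref{eq:KLRnh} produce terms landing in summands indexed by a different starting position $a$, and the cyclotomic relation $x_1^{n_i}1_{i\cdots}=0$ in $R_\p^N(m)$ kills part of those corrections as well. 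One must therefore simultaneously exhibit an order on the pairs $(a,\ell)$ for which all the mixing is strictly lower-triangular and check that the surviving diagonal entry is a unit --- this is exactly the content of the multi-page induction in \cref{lem:computePdN} (and of the closely related echelon-form argument of \cref{lem:RbodQinj}), and your proposal does not carry it out. Note also that the faithful polynomial representation of \cref{prop:faithfulaction} does not shortcut this step: it is faithful for $R_\bo(\nu)$, not for the quotient $R_\p^N(\nu)$ in which the entries of your matrix actually live, so leading coefficients read off from the polynomial action may vanish in the quotient.
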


\cref{thm:RbodNformal} above is a direct consequence of \cref{prop:sesleftdecomp}. Therefore, we now focus on proving this proposition.
This is in fact similar to Kang--Kashiwara's~\cite[Eq. (4.13)]{kashiwara}, with basically only a change of basis, and thus we will follow the same ideas. We introduce the equivalent of `$g_a$' from the reference and draw it as an \emph{undercrossing}:
\begin{align*}
\tikzdiagh[scale=.8]{0}{
	          \draw   (-.5,-.5) node[below]{$i$} .. controls (-.5,0) and (.5,0) .. (.5,.5);
		\draw[fill=white, color=white] (0,0) circle (.15cm);
	          \draw[myblue]   (.5,-.5) node[below]{$j$} .. controls (.5,0) and (-.5,0) .. (-.5,.5);
  	}
:=
\begin{cases}
\tikzdiagh[scale=.8]{0}{
	          \draw   (-.5,-.5)  node[below]{$i$}.. controls (-.5,0) and (.5,0) .. (.5,.5);
	          \draw[myblue]   (.5,-.5) node[below]{$j$} .. controls (.5,0) and (-.5,0) .. (-.5,.5);
  	} & \text{ if } i\neq j, \\
  r_i\ 
\tikzdiagh[scale=.8]{0}{
	          \draw   (-.5,-.5) node[below]{$i$} -- (-.5,.5)node [midway,tikzdot]{};
	          \draw   (.5,-.5) node[below]{$i$} -- (.5,.5);
  	} 
- r_i\ 
\tikzdiagh[scale=.8]{0}{
	          \draw   (-.5,-.5) node[below]{$i$} -- (-.5,.5);
	          \draw   (.5,-.5) node[below]{$i$} -- (.5,.5)node [midway,tikzdot]{};
  	} 
-
\tikzdiagh[scale=.8]{0}{
	          \draw   (-.5,-.5) node[below]{$i$} .. controls (-.5,0) and (.5,0) .. (.5,.5) ;
	          \draw   (.5,-.5) node[below]{$i$} .. controls (.5,0) and (-.5,0) .. (-.5,.5) node [pos = .8,tikzdot]{} node[pos=.8,xshift=1.5ex,yshift=.75ex] {\small $2$}; 
  	} 
-
\tikzdiagh[scale=.8]{0}{
	          \draw   (-.5,-.5) node[below]{$i$} .. controls (-.5,0) and (.5,0) .. (.5,.5)  node [pos = .8,tikzdot]{} node[pos=.8,xshift=1.5ex,yshift=.75ex] {\small $2$}; 
	          \draw   (.5,-.5) node[below]{$i$} .. controls (.5,0) and (-.5,0) .. (-.5,.5);
  	} 
+
2
\tikzdiagh[scale=.8]{0}{
	          \draw   (-.5,-.5) node[below]{$i$} .. controls (-.5,0) and (.5,0) .. (.5,.5) node [pos = .8,tikzdot]{};
	          \draw   (.5,-.5) node[below]{$i$} .. controls (.5,0) and (-.5,0) .. (-.5,.5) node [pos = .8,tikzdot]{};
  	} 
& \text{ if } i = j.
\end{cases}
\end{align*}

In order to shorten out our diagrams, we introduce the convenient notation
\[
\tikzdiagh[scale=.85]{0}{
	          \draw   (-.5,-.5) node[below]{$i$} -- (-.5,.5) node [midway,tikzdot]{};
	          \draw[dashed] (-.5,0) -- (.5,0);
	          \draw   (.5,-.5) node[below]{$i$} -- (.5,.5) node [midway,tikzdot]{};
  	} 
\ := \ 
\tikzdiagh[scale=.85]{0}{
	          \draw   (-.5,-.5) node[below]{$i$} -- (-.5,.5)node [midway,tikzdot]{};
	          \draw   (.5,-.5) node[below]{$i$} -- (.5,.5);
  	} 
-
\tikzdiagh[scale=.85]{0}{
	          \draw   (-.5,-.5) node[below]{$i$} -- (-.5,.5);
	          \draw   (.5,-.5) node[below]{$i$} -- (.5,.5)node [midway,tikzdot]{};
  	} 
\]
It respects the relation
\begin{equation}\label{eq:doubledashedcommutescross}
\tikzdiagh[scale=.75,yscale=1.25]{0}{
	\draw (-1,-.5) -- (-1,.5) node [pos = .5, tikzdot]{}  node [pos = .8, tikzdot]{};
          \draw   (-.5,-.5) node[below]{$i$} .. controls (-.5,-.2) and (.5,-.2) .. (.5,.5) node [pos = .9,tikzdot]{};
          \draw[dashed] (-1,.32) -- (.35,.32);
          \draw[dashed] (-1,0) -- (-.5,0);
          \draw   (.5,-.5) node[below]{$i$} .. controls (.5,-.2) and (-.5,-.2) .. (-.5,.5) node [pos = .7,tikzdot]{};
  	} 
 \ = \ 
\tikzdiagh[scale=.75,yscale=1.25]{0}{
	\draw (-1,-.5) -- (-1,.5) node [pos = .5, tikzdot]{}  node [pos = .2, tikzdot]{};
          \draw   (-.5,-.5) node[below]{$i$} .. controls (-.5,.2) and (.5,.2) .. (.5,.5) node [pos = .3,tikzdot]{};
          \draw[dashed] (-1,-.32) -- (.35,-.32);
          \draw[dashed] (-1,0) -- (-.5,0);
          \draw   (.5,-.5) node[below]{$i$} .. controls (.5,.2) and (-.5,.2) .. (-.5,.5) node [pos = .1,tikzdot]{};
  	} 
\end{equation}
We also have that
\begin{equation}\label{eq:dashedcirclcross}
\tikzdiagh[scale=.85]{0}{
	          \draw   (-.5,-.5) node[below]{$i$} .. controls (-.5,0) and (.5,0) .. (.5,.5);
		\draw[fill=white, color=white] (0,0) circle (.15cm);
	          \draw  (.5,-.5) node[below]{$i$} .. controls (.5,0) and (-.5,0) .. (-.5,.5);
  	}
 \ = 
 r_i\ 
 \tikzdiagh[scale=.85]{0}{
	          \draw   (-.5,-.5) node[below]{$i$} -- (-.5,.5) node [midway,tikzdot]{};
	          \draw[dashed] (-.5,0) -- (.5,0);
	          \draw   (.5,-.5) node[below]{$i$} -- (.5,.5) node [midway,tikzdot]{};
  	} 
  \ - \ 
	\tikzdiagh[scale=.85]{0}{
	          \draw   (-.5,-.5) node[below]{$i$} .. controls (-.5,0) and (.5,0) .. (.5,.5) node [pos = .7,tikzdot]{} node [pos = .9,tikzdot]{};
	          \draw[dashed] (-.45,.37) -- (.45,.37);
	          \draw[dashed] (-.35,.17) -- (.35,.17);
	          \draw   (.5,-.5) node[below]{$i$} .. controls (.5,0) and (-.5,0) .. (-.5,.5) node [pos = .7,tikzdot]{} node [pos = .9,tikzdot]{};
  	} 
\end{equation}

\begin{lem}\emph{(\cite[Lemma~4.12]{kashiwara})} \label{lem:vcrossings}
Undercrossings respect the following relations:
\begin{align*}
\tikzdiagh[scale=.85]{0}{
	          \draw   (-.5,-.5) node[below]{\small $i$} .. controls (-.5,0) and (.5,0) .. (.5,.5) node [pos = .2,tikzdot]{};
		\draw[fill=white, color=white] (0,0) circle (.15cm);
	          \draw   (.5,-.5) node[below]{\small $j$} .. controls (.5,0) and (-.5,0) .. (-.5,.5);
  	}
&= 
\tikzdiagh[scale=.85]{0}{
	          \draw   (-.5,-.5) node[below]{\small $i$}  .. controls (-.5,0) and (.5,0) .. (.5,.5) node [pos = .8,tikzdot]{};
		\draw[fill=white, color=white] (0,0) circle (.15cm);
	          \draw   (.5,-.5)node[below]{\small $j$} .. controls (.5,0) and (-.5,0) .. (-.5,.5);
  	}
&
\tikzdiagh[scale=.85]{0}{
	          \draw   (-.5,-.5) node[below]{\small $i$} .. controls (-.5,0) and (.5,0) .. (.5,.5); .. controls (-.5,0) and (.5,0) .. (.5,.5);
		\draw[fill=white, color=white] (0,0) circle (.15cm);
	          \draw   (.5,-.5) node[below]{\small $j$} .. controls (.5,0) and (-.5,0) .. (-.5,.5) node [pos = .8,tikzdot]{};
  	}
&= 
\tikzdiagh[scale=.85]{0}{
	          \draw   (-.5,-.5) node[below]{\small $i$} .. controls (-.5,0) and (.5,0) .. (.5,.5);
		\draw[fill=white, color=white] (0,0) circle (.15cm);
	          \draw   (.5,-.5) node[below]{\small $j$} .. controls (.5,0) and (-.5,0) .. (-.5,.5)node [pos = .2,tikzdot]{};
  	}
\end{align*}
\[
\tikzdiagh[scale=.75]{0}{
		\draw  (0,0)node[below] {\small $i$} .. controls (0,0.5) and (2, 1) ..  (2,2);
		\draw  (1,0)node[below] {\small $j$} .. controls (1,0.5) and (0, 0.5) ..  (0,1) .. controls (0,1.5) and (1, 1.5) ..  (1,2);
		\draw[fill=white, color=white] (.5,1.5) circle (.17cm);
		\draw[fill=white, color=white] (1.28,1) circle (.17cm);
		\draw  (2,0)node[below] {\small $k$} .. controls (2,1) and (0, 1.5) ..  (0,2);
	 }
  \ = \  
\tikzdiagh[scale=.75]{0}{
		\draw  (0,0)node[below] {\small $i$} .. controls (0,1) and (2, 1.5) ..  (2,2);
		\draw  (1,0)node[below] {\small $j$} .. controls (1,0.5) and (2, 0.5) ..  (2,1) .. controls (2,1.5) and (1, 1.5) ..  (1,2);
		\draw[fill=white, color=white] (.72,1) circle (.17cm);
		\draw[fill=white, color=white] (1.5,.5) circle (.17cm);
		\draw  (2,0)node[below] {\small $k$} .. controls (2,.5) and (0, 1) ..  (0,2);
	 }
\]
for all $i,j,k \in I$.
\end{lem}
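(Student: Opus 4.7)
The plan is to verify each of the three relations by case analysis on the labels of the strands, exploiting the fact that the undercrossing is defined differently only when the two strand labels coincide.

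For the two dot-slide relations: when $i \neq j$, the undercrossing is by definition an ordinary crossing, so the identity reduces immediately to the KLR dot-slide relation \cref{eq:KLRdotslide}. When $i = j$, I would substitute the defining formula for the undercrossing on each side and then use the nilHecke relation \cref{eq:KLRnh} to move the dot across each of the ordinary crossings appearing in the expansion. The key observation is that the ``extra'' terms in the definition of the $i=i$ undercrossing (the four correction terms with dots and single crossings) are symmetric in a way that they transform into themselves when the outside dot slides from bottom-right to top-right (respectively bottom-left to top-left). In fact, each term produced by $(dot) \cdot (undercrossing) - (undercrossing) \cdot (dot)$ telescopes to zero after collecting pairs using \cref{eq:KLRnh}.

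For the Reidemeister III-like relation: when $i, j, k$ are such that all three undercrossings are ordinary crossings (i.e.\ no two of the adjacent labels coincide, or only distant ones do), the identity reduces to \cref{eq:KLRR3}. The nontrivial reduction is precisely the case where \cref{eq:KLRR3} is $0 = 0$, namely $i \neq k$. The subcase $i = k$ with $j \neq i$ still involves only ordinary crossings in the definition of the undercrossings, and the inner $j$-crossings remain ordinary, so the RHS of \cref{eq:KLRR3} contributes a term with a $j$-strand sandwiched between two $i$-strands decorated by $s_{ij}^{tv}$; this term must be shown to cancel against itself when comparing the two sides. The delicate case is $i = j = k$, where each of the three undercrossings must be expanded via its definition.

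The main obstacle will be this last case $i = j = k$, where one must carefully expand both composites, producing a sum of many diagrams with varying numbers of dots and ordinary crossings, and then verify term-by-term equality. The strategy is to apply \cref{eq:KLRR3} (in its $i = j = k$ form, giving $r_i$ times the identity with a dot on the outer strands) to the leading terms, and then use \cref{eq:KLRnh} repeatedly to reshuffle dots across the remaining ordinary crossings. This is the computation that is carried out in Kang--Kashiwara \cite[Lemma~4.12]{kashiwara}, and I would follow their bookkeeping verbatim, translating their notation $g_a$ into our undercrossing notation. The auxiliary identities \cref{eq:doubledashedcommutescross} and \cref{eq:dashedcirclcross}, which are immediate consequences of \cref{eq:KLRnh}, serve precisely to organize the resulting terms into a manageable form.
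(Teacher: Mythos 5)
The paper gives no proof of this lemma---it is a direct citation of Kang--Kashiwara Lemma~4.12, and your proposal ultimately defers to the same source, so at that level you are aligned with the paper. But the sketch you give before falling back on that citation has a genuine flaw in its case analysis.

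You claim that the subcase $i = k$ with $j \neq i$ ``still involves only ordinary crossings in the definition of the undercrossings.'' This is false. In the braid diagram, the undercrossings are precisely where the $k$-strand (bottom-right, drawn on top) crosses over the $i$-strand and the $j$-strand; locally these are an $(i,k)$-undercrossing and a $(j,k)$-undercrossing in the notation of the definition. When $i = k$, the $(i,k)$-undercrossing is an $(i,i)$-type undercrossing, which is exactly the nontrivial case: it expands as $r_i(x_1-x_2) - (x_1-x_2)^2\tau$, not as a plain crossing. The whole point of that correction term is to absorb the nonzero residual polynomial $r_i\sum_{t,v} s_{ij}^{tv} \sum_{u+\ell=t-1} x_1^u y^v x_3^\ell$ produced by \cref{eq:KLRR3} when $i=k$; without expanding the undercrossing you cannot see that cancellation. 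By symmetry the case $j = k \neq i$ likewise involves a nontrivial $(j,j)$-undercrossing. So your identification of $i \neq k$ as ``the nontrivial reduction'' is backwards: $i \neq k$ and $j \neq k$ are where the undercrossings reduce to ordinary crossings and \cref{eq:KLRR3} does all the work; the cases $i = k$ or $j = k$ are where the extra terms in the definition must be tracked.

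On the dot-slide relations your strategy (reduce to \cref{eq:KLRdotslide} when $i\neq j$; use \cref{eq:KLRnh} when $i=j$) is correct, but the justification ``telescopes to zero after collecting pairs'' is too vague to constitute a proof. It is cleaner to write the $(i,i)$-undercrossing as $g = r_i(x_1-x_2) - (x_1-x_2)^2\tau$ and then check directly, using $\tau x_1 = x_2\tau + r_i$ and $x_1\tau = \tau x_2 + r_i$, that $gx_1 = x_2 g$ and $x_1 g = g x_2$; the $r_i(x_1-x_2)^2$ term that appears on each side cancels, and the result follows in two lines.
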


Still as in~\cite{kashiwara}, in order to construct a ``nearly inverse'' for $\bar d_N$, we define the map
\[
P :
\bigoplus_{a = 1}^{m+1} \bigoplus_{\ell \geq 0}\ 
\tikzdiag[xscale=.55]{
	\draw (0,-1.5) -- (0,.5);
	\draw (.5,-1.5)   -- (.5,.5);
	\draw (2,-1.5)  .. controls (2,-1) and (1.5,-1) .. (1.5,-.5) -- (1.5,.5);
	\draw (2.5, -1.5)     .. controls (2.5,-1) and (2,-1) .. (2,-.5) -- (2,.5);
	\node at(1,.4) {\small$\dots$};
	\node at(1,-.4) {\small$\dots$};
	\filldraw [fill=white, draw=black] (-.25,-.25) rectangle (2.25,0.25) node[midway] {\small $H(m)$};
	\draw (1.25, -1.5) 
	 	.. controls (1.25,-1) and (2.5,-1) .. (2.5, -.5) node[pos=.15, tikzdot]{} node[pos=.15, xshift=-1.5ex, yshift=.5ex]{\small $\ell$}
		-- (2.5,.25)
		.. controls (2.5,.5) .. (2.75,.5)   node[right] {$i$};
	\draw[decoration={brace,mirror,raise=-8pt},decorate]  (-.15,-1.85) -- node {$a$} (1.35,-1.85);
} 
\longrightarrow
\bigoplus_{a = 1}^{m+1} \bigoplus_{\ell \geq 0}\ 
\tikzdiag[xscale=.55]{
	\draw (0,-1.5) 
		.. controls (0,-1.25) and (.75,-1.25) ..(.75,-1)
		.. controls (.75,-.75) and (0,-.75) ..(0,-.5)
		 -- (0,.5);
	\draw (.5,-1.5) 
		.. controls (.5,-1.25) and (1.25,-1.25) ..(1.25,-1)
		.. controls (1.25,-.75) and (.5,-.75) ..(.5,-.5)
		 -- (.5,.5);
	\draw (2,-1.5) .. controls (2,-1) and (1.5,-1) .. (1.5,-.5) -- (1.5,.5);
	\draw (2.5, -1.5) .. controls (2.5,-1) and (2,-1) .. (2,-.5) -- (2,.5);
	\node at(1,.4) {\small$\dots$};
	\node at(1,-.4) {\small$\dots$};
	\filldraw [fill=white, draw=black] (-.25,-.25) rectangle (2.25,0.25) node[midway] {\small $H(m)$};
	\draw (1.25, -1.5)  
		.. controls (1.25,-1.25) and (0,-1.25) .. (0, -1)  node[pos=1, tikzdot]{} node[pos=1, xshift=-1.5ex, yshift=.5ex]{\small $\ell$}
		.. controls (0,-.75) and (2.5,-.75) .. (2.5, -.25) 
		-- (2.5,.25)
		.. controls (2.5,.5) .. (2.75,.5)  node[right] {$i$};
	\fdot{}{.4,-1.015};
	\draw[decoration={brace,mirror,raise=-8pt},decorate]  (-.15,-1.85) -- node {$a$} (1.35,-1.85);
}
\]
as multiplication on the left (or diagrammatically stacking above) with the element
\[
\widetilde \theta_{m+1}
\ := \ 
\tikzdiagh[scale = 1.2]{0}{
		\draw  (.25,1) .. controls (.25,.5) and (.75,.5) ..   (.75,0)   .. controls (.75,-.5) and (.25,-.5)  .. (.25,-1); 
		\draw  (.5,1) .. controls (.5,.5) and (1,.5) ..   (1,0)   .. controls (1,-.5) and (.5,-.5)  .. (.5,-1); 
		\draw  (1,1) .. controls (1,.5) and (1.5,.5) ..   (1.5,0)   .. controls (1.5,-.5) and (1,-.5)  .. (1,-1); 
		\draw  (1.25,1) .. controls (1.25,.5) and (1.75,.5) ..   (1.75,0)   .. controls (1.75,-.5) and (1.25,-.5)  .. (1.25,-1); 
	\draw[fill=white, color=white] (.62,-.37) circle (.085cm);
	\draw[fill=white, color=white] (.82,-.44) circle (.085cm);
	\draw[fill=white, color=white] (1.19,-.56) circle (.085cm);
	\draw[fill=white, color=white] (1.38,-.64) circle (.085cm);
	 \draw  (1.75,-1)  node[below]{$i$}.. controls (1.75,-.5) and (.25,-.5) ..    (.25,0)
		 .. controls (.25,.5)  and (1.75,.5).. (1.75,1);
	 \fdot{i}{.45,.05}; 
		 \node at (1.3,0) {\small $\dots$};  \node at (.8,.85) {\small $\dots$};  \node at (.8,-.85) {\small $\dots$};
	 }
\]

\begin{lem}\label{lem:RboMapP}
The map $P$ defined above is a map of $H(R_\bo(m), d_N)$-modules.
\end{lem}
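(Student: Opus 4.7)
The plan is to reduce $R_\bo(m)$-equivariance of $P$ to a check on a generating set of $R_\bo(m)$. Since $P$ is left multiplication by $\widetilde{\theta}_{m+1}\in R_\bo(m+1)$ and the $R_\bo(m)$-action on the source and target is by left multiplication via $D\mapsto D\otimes 1_i$, what I must show is that for every $D\in R_\bo(m)$,
\[
\widetilde{\theta}_{m+1}\cdot (D\otimes 1_i) = (-1)^{\deg_h(D)}\,(D\otimes 1_i)\cdot \widetilde{\theta}_{m+1}
\]
in $R_\bo(m+1)$, the Koszul sign appearing because $\widetilde{\theta}_{m+1}$ is itself $h$-odd (it contains a single floating dot). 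By \cref{lem:dgKLRtightgen} it suffices to take $D$ to be a dot, a KLR-crossing, or a tight floating dot.

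For $D$ a dot or a KLR-crossing (both $h$-even), the desired identity is ordinary commutativity. The rightmost $i$-strand of $\widetilde{\theta}_{m+1}$ interacts with strands $1,\dots,m$ only through undercrossings, and \cref{lem:vcrossings} says precisely that an undercrossing slides past a dot (first two relations) and through a double crossing (the Reidemeister~III-like relation) as if the $i$-strand were invisible. The floating dot of $\widetilde{\theta}_{m+1}$ lives in the leftmost region, so it is unaffected by any dot or crossing on the first $m$ strands. Hence these cases are immediate.

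For $D$ a tight floating dot, I need graded-anticommutativity. The strategy is first to slide $D\otimes 1_i$ upward through the "going-out" undercrossings of $\widetilde{\theta}_{m+1}$: the floating-dot slide relations \cref{eq:fdots} and \cref{eq:ExtR2} produce correction terms involving an extra floating dot displaced by one strand, and these corrections are precisely absorbed by the extra summands baked into the definition of an undercrossing (the undercrossings play the role of the operators $g_a$ of \cite[\S4]{kashiwara}). Once $D\otimes 1_i$ reaches the leftmost region, it sits next to the floating dot of $\widetilde{\theta}_{m+1}$, and the two anticommute by \cref{eq:fdmoves}, producing the expected sign. A symmetric argument then slides the result back through the "coming-back" undercrossings to recover $(D\otimes 1_i)\cdot \widetilde{\theta}_{m+1}$.

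The main obstacle is the tight floating dot case: it requires a careful bookkeeping of the correction terms generated when a floating dot is dragged across an undercrossing, and the nontrivial fact that these corrections exactly match the extra summands in the undercrossing is precisely the design feature behind the choice of $\widetilde{\theta}_{m+1}$. Once graded-commutation with all generators of $R_\bo(m)$ is established in $R_\bo(m+1)$, the identity descends to the induced action of $H(R_\bo(m),d_N)$, proving the lemma.
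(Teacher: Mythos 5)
Your reduction to generators via \cref{lem:dgKLRtightgen} is the right move, and the Koszul sign $(-1)^{\deg_h(D)}$ you make explicit is consistent with the homological shift $[1]$ carried by the $\theta_a^\ell$ summands of \cref{lem:Rboleftdecomp}. However, your treatment of the dot and crossing cases rests on a false premise: you assert that the $(m+1)$-th strand of $\widetilde\theta_{m+1}$ interacts with strands $1,\dots,m$ only through undercrossings. In fact that strand crosses each of the first $m$ strands twice: the crossings on the lower half are the modified undercrossings (the $g_a$-type operators, drawn with white circles), but those on the upper half are \emph{ordinary} KLR crossings. Only the lower half is covered by \cref{lem:vcrossings}; your ``these cases are immediate'' skips exactly where the nontrivial work lies.

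When a dot or crossing of $D$ is slid across the upper, ordinary crossings, \cref{eq:KLRnh} and \cref{eq:KLRR3} produce correction terms in which the $(m+1)$-th strand is partially resolved. In those corrections the floating dot of $\widetilde\theta_{m+1}$ --- which has subscript $i \in I_f$ --- gets forced into the $H(R_\bo(m),d_N) \cong R_\p^N(m)$ factor of the target, where floating dots with subscript in $I_f$ vanish by \cref{def:pKLR}. This vanishing in homology of the correction terms is the actual content of \cref{lem:RboMapP} for dots and crossings, and your argument does not establish it. The same geometric error reappears in your floating-dot case, where the return leg of the slide also goes through ordinary crossings, not ``coming-back undercrossings.''
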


\begin{proof}
We need to verify that $\widetilde \theta_{m+1}$ commutes with the elements in $H(R_\bo(m), d_N)$. 
Crossings and dots slide over the upper part of the $(m+1)$th strand in $\widetilde \theta_{m+1}$ at the cost of adding diagrams with fewer crossings. Because there are fewer crossings, we can slide the floating dot coming from $\widetilde \theta_{m+1}$ to the part $H(R_\bo(m), d_N)$ of the diagram, which gives zero.  
The crossings and dots in the remaining terms then slide over the lower part thanks to \cref{lem:vcrossings}.
Tight floating dots with subscript $j \notin I_f$ also slide over $\widetilde \theta_{m+1}$ thanks to \cref{eq:ExtR2}.
\end{proof}

\begin{lem}\label{lem:computePdN}
The composition $P \circ \bar d_N$ is given on $H(R_\bo(m),d_N)  \otimes_{m} 1_{(\nu,i)}R_\bo(m+1)$ by multiplication by 
\begin{equation}\label{eq:vcpol}
 r_i^{2 \nu_i} \sum_{p = 0}^{2\nu_i- \alpha_i^\vee(\nu)} x_{m+1}^{n_i+p} \varepsilon_p^i(\und x_\nu),
\end{equation}
where $\varepsilon_p^i(\und x_\nu)$ is as in~\cref{eq:epsnotation2}.
\end{lem}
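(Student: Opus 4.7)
The strategy is to exploit the faithfulness of the polynomial action of \cref{prop:faithfulaction} to reduce the claim to an algebraic identity in $Q_{\nu+\alpha_i}$. By the $H(R_\bo(m),d_N)$-linearity of $P$ established in \cref{lem:RboMapP}, and because $\bar d_N$ is also $H(R_\bo(m),d_N)$-linear (it is induced by the differential of the bimodule $1_{(\nu,i)}R_\bo(m+1)$), it suffices to check the equality on one generator per summand, for example on $1_\bi \otimes_m \tau_m\cdots\tau_a\theta_a^\ell$ for each $\bi \in \Seq(\nu)$, $a \in \{1,\dots,m+1\}$, and $\ell \geq 0$. On such a generator, $\bar d_N$ replaces the unique tight floating dot $\omega$ (of label $i \in I_f$) inside $\theta_a^\ell = \tau_{a-1}\cdots\tau_1 \omega x_1^\ell \tau_1\cdots\tau_{a-1}$ by $(-1)^{n_i} x_1^{n_i}$, producing an element that can be rewritten in the ``no floating dot'' basis using the nilHecke rule \cref{eq:KLRnh}.

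Next I would apply $P$ by left-multiplying with $\widetilde \theta_{m+1}$. Rather than directly analysing the diagrammatic simplification (which creates many lower-order-crossing error terms via \cref{lem:vcrossings} and \cref{eq:doubledashedcommutescross,eq:dashedcirclcross}), I would evaluate both $P\circ\bar d_N$ and multiplication by the claimed polynomial on $Q_{\nu+\alpha_i}$ via \cref{prop:Rboaction}, and invoke \cref{prop:faithfulaction}. On the polynomial side: the tight floating dot inside $\widetilde\theta_{m+1}$ acts as $\omega_i^0(\nu)$, whose non-inductive expansion \cref{eq:fdaction} is precisely a sum over $\omega_{\nu_i,i}^{\,n}\,\varepsilon_n^i(\und x_\nu)$; the $n_i$ dots produced by $\bar d_N$ on the leftmost $i$-strand slide up this strand through $\widetilde\theta_{m+1}$ and collect in $x_{m+1}$ via the action rules, yielding the factor $x_{m+1}^{n_i+p}$; and each of the $2\nu_i$ undercrossings between two $i$-labeled strands traversed by the roundtrip of the $(m+1)$-st strand contributes a factor $r_i$ by the nilHecke rule, giving the overall $r_i^{2\nu_i}$. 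Matching the two expressions term by term identifies the composite operator with multiplication by $r_i^{2\nu_i}\sum_{p=0}^{2\nu_i-\alpha_i^\vee(\nu)} x_{m+1}^{n_i+p}\varepsilon_p^i(\und x_\nu)$, with the upper bound dictated by the vanishing condition $s_{ij}^{tv} = 0$ for $v > d_{ji}$ combined with $2\nu_i - \alpha_i^\vee(\nu) = \sum_{j\neq i} d_{ji}\nu_j$.

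The main obstacle is the careful bookkeeping: both signs (from anticommutation of floating dots in \cref{eq:fdmoves} and from $(-1)^{n_j}$ factors in $d_N$) and the residual lower-order-crossing terms produced while pushing dots through undercrossings must be controlled. Reducing to $Q_{\nu+\alpha_i}$ collapses these subtleties to an algebraic check involving only the symmetric polynomials $\varepsilon^i_{n_j,j}(\und x_{k_j,j})$ and the scalars $r_i, s_{ij}^{tv}, t_{ij}$; the identity there follows from the recursion defining $\omega_i^a(K)$ together with the symmetry of each $\varepsilon^j_{n_i,i}$ under the divided-difference operator action of the crossings, which is what guarantees that the final polynomial depends only on $x_{m+1}$ and the symmetric data $\varepsilon_p^i(\und x_\nu)$.
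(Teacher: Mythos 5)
Your reduction-to-the-polynomial-representation strategy founders on a basic issue: the lemma's identity holds only modulo the passage to $H(R_\bo(m),d_N)\cong R_\p^N(m)$, and the action on $Q_{\nu+\alpha_i}$ is faithful on $R_\bo$, not on that quotient. Concretely, the paper's proof establishes the congruence
\[
\pi\!\left(\widetilde\theta_{m+1}\cdot x_{m+1}^{n_i}\right)\ \equiv\ r_i^{2\nu_i}\sum_{p=0}^{2\nu_i-\alpha_i^\vee(\nu)} x_{m+1}^{n_i+p}\varepsilon_p^i(\und x_\nu),
\]
where $\equiv$ means equality up to elements killed in the quotient. The exact element, before quotienting, is not the claimed polynomial — it differs by terms built from $x_{1,j}^{n_j}$ with $j\in I_f$ (plus floating-dot terms), and in $Q_{\nu+\alpha_i}$ the cyclotomic generators $x_{1,j}^{n_j}$ act by multiplication, not by zero. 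So if you evaluate both sides of the lemma on $Q_{\nu+\alpha_i}$ via \cref{prop:Rboaction}, you will find two \emph{unequal} polynomials, and \cref{prop:faithfulaction} gives you no means to recognize their difference as lying in the cyclotomic ideal. That recognition is precisely the content of the paper's diagrammatic induction, which carefully tracks the $\equiv$-relation through each case; the polynomial model does not collapse it.

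There is also a smaller computational gap in the sketch: when the $n_i$ dots created by $\bar d_N$ on the $(m+1)$-st strand are pushed through the undercrossings of $\widetilde\theta_{m+1}$ with the other $i$-labeled strands, they do not ``slide up and collect in $x_{m+1}$'' — the nilHecke relation~\eqref{eq:KLRnh} produces correction terms with fewer crossings each time a dot crosses a same-label strand, and these are the residual terms (some lying in the ideal, some contributing to the lower $\varepsilon_p$'s) that the paper's induction bookkeeps. Asserting that symmetry under divided-difference operators forces the output to depend only on $x_{m+1}$ and the $\varepsilon_p^i(\und x_\nu)$ is exactly the statement to be proved, not something that can be read off. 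If you want a representation-theoretic proof you would first need a faithful module for $R_\p^N$ (or a clean description of the cyclotomic ideal inside $Q_\nu$), neither of which the paper provides and neither of which your argument supplies.
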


\begin{proof}
The proof is similar to~\cite[Theorem 4.15]{kashiwara}. We have
\begin{equation}~\label{eq:PodN}
\tikzdiag[xscale=.55]{
	\draw (0,-1.5) 
		.. controls (0,-1.25) and (.75,-1.25) ..(.75,-1)
		.. controls (.75,-.75) and (0,-.75) ..(0,-.5)
		 -- (0,.5);
	\draw (.5,-1.5) 
		.. controls (.5,-1.25) and (1.25,-1.25) ..(1.25,-1)
		.. controls (1.25,-.75) and (.5,-.75) ..(.5,-.5)
		 -- (.5,.5);
	\draw (2,-1.5) .. controls (2,-1) and (1.5,-1) .. (1.5,-.5) -- (1.5,.5);
	\draw (2.5, -1.5) .. controls (2.5,-1) and (2,-1) .. (2,-.5) -- (2,.5);
	\node at(1,.4) {\small$\dots$};
	\node at(1,-.4) {\small$\dots$};
	\filldraw [fill=white, draw=black] (-.25,-.25) rectangle (2.25,0.25) node[midway] {\small $H(m)$};
	\draw (1.25, -1.5)  
		.. controls (1.25,-1.25) and (0,-1.25) .. (0, -1)  node[pos=1, tikzdot]{} node[pos=1, xshift=-1.5ex, yshift=.5ex]{\small $\ell$}
		.. controls (0,-.75) and (2.5,-.75) .. (2.5, -.25) 
		-- (2.5,.25)
		.. controls (2.5,.5) .. (2.75,.5)  node[right] {$i$};
	\fdot{}{.4,-1.015};
}
\xmapsto{\ P \circ \bar d_N \ }
\tikzdiag[xscale=.55]{
	\draw (0,-3)
		.. controls (0,-2.75) and (.75,-2.75) .. (.75,-2.5)
		.. controls (.75,-2.125) and (0,-2.125) .. (0,-1.75) 
		.. controls (0,-1.25) and (.75,-1.25) ..(.75,-1)
		.. controls (.75,-.75) and (0,-.75) ..(0,-.5)
		 -- (0,.5);
	\draw (.5,-3)
		.. controls (.5,-2.75) and (1.25,-2.75) .. (1.25,-2.5)
		.. controls (1.25,-2.125) and (.5,-2.125) .. (.5,-1.75) 
		.. controls (.5,-1.25) and (1.25,-1.25) ..(1.25,-1)
		.. controls (1.25,-.75) and (.5,-.75) ..(.5,-.5)
		 -- (.5,.5);
	\draw (2,-3)
		.. controls (2,-2.125) and (1.5,-2.125) .. (1.5,-1.75)
		.. controls (1.5,-1.25) and (2,-1.25) .. (2,-1)
		.. controls (2, -.75) and (1.5, -.75) .. (1.5,-.5)
		-- (1.5,.5);
	\draw (2.5,-3)
		.. controls (2.5,-2.125) and (2,-2.125) .. (2,-1.75)
		.. controls (2,-1.25) and (2.5,-1.25) .. (2.5,-1)
		.. controls (2.5, -.75) and (2, -.75) .. (2,-.5)
		-- (2,.5);
	\node at(1,.4) {\small$\dots$};
	\node at(1,-.4) {\small$\dots$};
	\filldraw [fill=white, draw=black] (-.25,-.25) rectangle (2.25,0.25) node[midway] {\small  $H(m)$};
	\draw[shift only,fill=white, color=white] (.25,-1.24) circle (.1cm);
	\draw[shift only,fill=white, color=white] (.46,-1.31) circle (.1cm);
	\draw[shift only,fill=white, color=white] (.88,-1.44) circle (.1cm);
	\draw[shift only,fill=white, color=white] (1.13,-1.51) circle (.1cm);
	\draw (1.25, -3)  
		.. controls (1.25,-2.75) and (0,-2.75) .. (0,-2.5)  node[pos=1, tikzdot]{} node[pos=1, xshift=-3.5ex, yshift=.5ex]{\small $\ell+n_i$}
		.. controls (0,-2.125) and (2.5,-2.125) .. (2.5,-1.75)
		.. controls (2.5,-1.375) and (0,-1.375) .. (0, -1) 
		.. controls (0,-.625) and (2.5,-.625) .. (2.5, -.25) 
		-- (2.5,.25)
		.. controls (2.5,.5) .. (2.75,.5)  node[right] {$i$};
	\fdot{}{.4,-1};
}
\end{equation}
We prove by induction on the number of strands $m$ that 
\[
\tikzdiag[xscale=.55]{
	\draw (.75,-2.5)
		.. controls (.75,-2.125) and (0,-2.125) .. (0,-1.75) 
		.. controls (0,-1.375) and (.75,-1.375) ..(.75,-1);
	\draw (1.25,-2.5)
		.. controls (1.25,-2.125) and (.5,-2.125) .. (.5,-1.75) 
		.. controls (.5,-1.375) and (1.25,-1.375) ..(1.25,-1);
	\draw (2,-2.5)
		.. controls (2,-2.125) and (1.5,-2.125) .. (1.5,-1.75)
		.. controls (1.5,-1.375) and (2,-1.375) .. (2,-1);
	%
	\draw (2.5,-2.5)
		.. controls (2.5,-2.125) and (2,-2.125) .. (2,-1.75)
		.. controls (2,-1.375) and (2.5,-1.375) .. (2.5,-1);
	\node at(1,-1.75) {\tiny$\dots$};
	\draw[shift only,fill=white, color=white] (.30,-1.26) circle (.1cm);
	\draw[shift only,fill=white, color=white] (.52,-1.33) circle (.1cm);
	\draw[shift only,fill=white, color=white] (.91,-1.45) circle (.1cm);
	\draw[shift only,fill=white, color=white] (1.15,-1.52) circle (.1cm);
	\draw  (0,-2.5)  
		.. controls (0,-2.125) and (2.5,-2.125) .. (2.5,-1.75) node[pos=.2, tikzdot]{} node[pos=.2, xshift=-2ex, yshift=.5ex]{\small $n_i$}
		.. controls (2.5,-1.375) and (0,-1.375) .. (0, -1);
}
\ \equiv r_i^{2 \nu_i}
 \sum_{p = 0}^{2\nu_i- \alpha_i^\vee(\nu)} x_{m+1}^{n_i+p} \varepsilon_p^i(\und x_\nu), 
\]
where $\equiv$ means equality up to adding elements killed in the quotient $H(R_\bo(m), d_N) \cong R_\p^N(m)$. 
If $m = 0$, then it is trivial. Thus we suppose by induction that it holds for $m-1$.
We fix the label of the strands on the diagram above as $i \bj$ with $\bj = j_1\cdots j_m \in \Seq(\nu)$, and we consider the different possible cases.

If $j_m \neq i$, then the result follows by applying \cref{eq:KLRR2} with \cref{lem:vcrossings}, and using the induction hypothesis.

If $j_m = i$, we first observe that
\begin{equation}\label{eq:2croscircl}
\tikzdiagh[scale=.75]{0}{;
 	\draw (1,-.75) node[below] {\small $i$} .. controls (1,-.375) and (0,-.375) .. (0,0) .. controls (0,.375) and (1, .375) .. (1,.75);
	\draw[fill=white, color=white] (.5,.375) circle (.15cm);
	\draw  (0,-.75) node[below] {\small $i$} .. controls (0,-.375) and (1,-.375) .. (1,0) .. controls (1,.375) and (0, .375) .. (0,.75)
}
\ = r_i\ 
\tikzdiagh[scale=.75]{0}{
	          \draw   (-.5,-.5) node[below]{$i$} .. controls (-.5,0) and (.5,0) .. (.5,.5) node [pos = .8,tikzdot]{};
	          \draw[dashed] (-.45,.3) -- (.45,.3);
	          \draw   (.5,-.5) node[below]{$i$} .. controls (.5,0) and (-.5,0) .. (-.5,.5) node [pos = .8,tikzdot]{};
  	} 
\end{equation}
Then, we need to consider $j_{m-1}$. If $m = 1$, we have that 
\begin{align*}
\tikzdiagh[scale=.75]{0}{
 	\draw (1,-.75) node[below] {\small $i$} .. controls (1,-.375) and (0,-.375) .. (0,0) .. controls (0,.375) and (1, .375) .. (1,.75);
	\draw[fill=white, color=white] (.5,.375) circle (.15cm);
	\draw  (0,-.75) node[below] {\small $i$} .. controls (0,-.375) and (1,-.375) .. (1,0) node [pos = .2,tikzdot]{} node [pos = .2, xshift=-2ex, yshift=.5ex] {\small $n_i$} 
	 .. controls (1,.375) and (0, .375) .. (0,.75);
}
\ &= r_i \ 
\tikzdiagh[scale=.75]{0}{
	          \draw   (-.5,-.5) node[below]{$i$} .. controls (-.5,0) and (.5,0) .. (.5,.5) 
		node [pos = .2,tikzdot]{} node [pos = .2, xshift=-2ex, yshift=.5ex] {\small $n_i$};
	          \draw   (.5,-.5) node[below]{$i$} .. controls (.5,0) and (-.5,0) .. (-.5,.5) node [pos = .8,tikzdot]{};
  	} 
\ - r_i \ 
\tikzdiagh[scale=.75]{0}{
	          \draw   (-.5,-.5) node[below]{$i$} .. controls (-.5,0) and (.5,0) .. (.5,.5) node [pos = .8,tikzdot]{} 
		node [pos = .2,tikzdot]{} node [pos = .2, xshift=-2ex, yshift=.5ex] {\small $n_i$};
	          \draw   (.5,-.5) node[below]{$i$} .. controls (.5,0) and (-.5,0) .. (-.5,.5);
  	} 
  \\
\ &= r_i\ 
\tikzdiagh[scale=.75]{0}{
	          \draw   (-.5,-.5) node[below]{$i$} .. controls (-.5,0) and (.5,0) .. (.5,.5) 
		node [pos = .8,tikzdot]{} node [pos = .8, xshift=2ex, yshift=.5ex] {\small $n_i$};
	          \draw   (.5,-.5) node[below]{$i$} .. controls (.5,0) and (-.5,0) .. (-.5,.5) node [pos = .8,tikzdot]{};
  	} 
\ -r_i \ 
\tikzdiagh[scale=.75]{0}{
	          \draw   (-.5,-.5) node[below]{$i$} .. controls (-.5,0) and (.5,0) .. (.5,.5) node [pos = .8,tikzdot]{}  node [pos = .8, xshift=3.5ex, yshift=.5ex] {\small $n_i{+}1$};
	          \draw   (.5,-.5) node[below]{$i$} .. controls (.5,0) and (-.5,0) .. (-.5,.5);
  	} 
\ + r_i^2 \ 
\tikzdiagh[scale=.85]{0}{
	          \draw   (-.5,-.5) node[below]{$i$} -- (-.5,.5) node [midway,tikzdot]{}  node [midway, xshift=2ex, yshift=.5ex] {\small $n_i$};
	          \draw   (.5,-.5) node[below]{$i$} -- (.5,.5);
  	} 
\ - r_i^2 \ 
\tikzdiagh[scale=.85]{0}{
	          \draw   (-.5,-.5) node[below]{$i$} -- (-.5,.5);
	          \draw   (.5,-.5) node[below]{$i$} -- (.5,.5)node [midway,tikzdot]{} node [midway, xshift=2ex, yshift=.5ex] {\small $n_i$};
  	} 
\end{align*}
Moreover, we observe that
\[
\tikzdiagh[scale=.75]{0}{
          \draw   (-.5,-.5) node[below]{$i$} .. controls (-.5,0) and (.5,0) .. (.5,.5);
          \draw   (.5,-.5) node[below]{$i$} .. controls (.5,0) and (-.5,0) .. (-.5,.5) 
	node [pos = .2,tikzdot]{} node [pos = .2, xshift=2ex, yshift=.5ex] {\small $n_i$};
	\fdot{}{0,-.3};
  	} 
  \equiv
  0,
\]
which finishes the case $m=1$. 
For $j_{m-1} = i$, 
we have
\begin{align*}
\tikzdiagh[xscale=.55,yscale=1.25]{0}{
	\draw (.75,-2.5)
		.. controls (.75,-2.125) and (0,-2.125) .. (0,-1.75) 
		.. controls (0,-1.375) and (.75,-1.375) ..(.75,-1);
	\draw (1.25,-2.5)
		.. controls (1.25,-2.125) and (.5,-2.125) .. (.5,-1.75) 
		.. controls (.5,-1.375) and (1.25,-1.375) ..(1.25,-1);
	\node at (1,-1.75) {\tiny$\dots$};
	\draw (2,-2.5) node[below]{\small$i$}
		.. controls (2,-2.125) and (1.5,-2.125) .. (1.5,-1.75)
		.. controls (1.5,-1.375) and (2,-1.375) .. (2,-1);
	\draw (2.5,-2.5)  node[below]{\small$i$}
		.. controls (2.5,-2.125) and (2,-2.125) .. (2,-1.75)
		.. controls (2,-1.375) and (2.5,-1.375) .. (2.5,-1);
%
	\draw[shift only,fill=white, color=white] (.31,-1.58) circle (.1cm);
	\draw[shift only,fill=white, color=white] (.53,-1.67) circle (.1cm);
	\draw[shift only,fill=white, color=white] (.92,-1.8) circle (.1cm);
	\draw[shift only,fill=white, color=white] (1.15,-1.9) circle (.1cm);
	\draw  (0,-2.5)   node[below]{\small$i$}
		.. controls (0,-2.125) and (2.5,-2.125) .. (2.5,-1.75) node[pos=.2, tikzdot]{} node[pos=.2, xshift=-2ex, yshift=.5ex]{\small $n_i$}
		.. controls (2.5,-1.375) and (0,-1.375) .. (0, -1);
}
\ = r_i^2 \ 
\tikzdiagh[xscale=.55,yscale=1.25]{0}{
	\draw (.75,-2.5)
		.. controls (.75,-2.25) and (0,-2.25) .. (0,-2) 
		.. controls (0,-1.75) and (.75,-1.75) ..(.75,-1.5)
		-- (.75,-1);
	\draw (1.25,-2.5)
		.. controls (1.25,-2.25) and (.55,-2.25) .. (.55,-2) 
		.. controls (.55,-1.75) and (1.25,-1.75) ..(1.25,-1.5)
		-- (1.25,-1);
	\draw[shift only,fill=white, color=white] (.15,-2.23) circle (.1cm);
	\draw[shift only,fill=white, color=white] (.34,-2.38) circle (.1cm);
	\draw (2,-2.5) node[below]{\small$i$}
		.. controls (2,-2) and (0,-2) .. (0,-1.5) 
		-- (0,-1) node[pos = .3,tikzdot]{}  node[pos = .7,tikzdot]{};
	\draw (2.5,-2.5)  node[below]{\small$i$}
		.. controls (2.5,-1.75) and (2,-1.75) .. (2,-1.5)
		-- (2,-1)  node[pos = .3,tikzdot]{};
	\draw  (0,-2.5)  node[below]{\small$i$}
		.. controls (0,-2) and (2.5,-2) .. (2.5,-1.5) node[pos=.1, tikzdot]{} node[pos=.1, xshift=-2ex, yshift=.5ex]{\small $n_i$}
		-- (2.5,-1)   node[pos = .7,tikzdot]{};
	 \draw[dashed] (0,-1.15) -- (2.5,-1.15);
	 \draw[dashed] (0,-1.35) -- (2,-1.35);
}
\ - r_i \ 
\tikzdiagh[xscale=.55,yscale=1.25]{0}{
	\draw (.75,-2.5)
		.. controls (.75,-2.25) and (0,-2.25) .. (0,-2) 
		.. controls (0,-1.75) and (.75,-1.75) ..(.75,-1.5)
		-- (.75,-1);
	\draw (1.25,-2.5)
		.. controls (1.25,-2.25) and (.55,-2.25) .. (.55,-2) 
		.. controls (.55,-1.75) and (1.25,-1.75) ..(1.25,-1.5)
		-- (1.25,-1);
	\draw[shift only,fill=white, color=white] (.29,-2.11) circle (.1cm);
	\draw[shift only,fill=white, color=white] (.5,-2.2) circle (.1cm);
	\draw (2,-2.5) node[below]{\small$i$}
		.. controls (2,-2.25) and (1.25,-2.25) .. (1.25,-2) 
		.. controls (1.25,-1.75) and (2,-1.75) .. (2,-1.5) 
		-- (2,-1)  node[pos = .0,tikzdot]{}   node[pos = .4,tikzdot]{} ;
	\draw (2.5,-2.5)  node[below]{\small$i$}
		.. controls (2.5,-1.75) and (0,-1.75) .. (0,-1.5)
		-- (0,-1)  node[pos = .0,tikzdot]{}  node[pos = .4,tikzdot]{}   node[pos = .8,tikzdot]{};
	\draw  (0,-2.5)  node[below]{\small$i$}
		.. controls (0,-2.25) and (2.5,-2.25) .. (2.5,-1.5) node[pos=.2, tikzdot]{} node[pos=.2, xshift=-2ex, yshift=.5ex]{\small $n_i$}
		-- (2.5,-1)   node[pos = .8,tikzdot]{};
	 \draw[dashed] (0,-1.1) -- (2.5,-1.1);
	 \draw[dashed] (0,-1.3) -- (2,-1.3);
	 \draw[dashed] (0,-1.5) -- (2,-1.5);
}
\end{align*}
using~\cref{eq:2croscircl}, \cref{eq:dashedcirclcross} and \cref{lem:vcrossings}. 
Using~\cref{eq:doubledashedcommutescross} followed by \cref{lem:vcrossings} and \cref{eq:2croscircl} we obtain
\[
r_i^2\ 
\tikzdiagh[xscale=.55,yscale=1.25]{0}{
	\draw (.75,-2.5)
		.. controls (.75,-2.25) and (0,-2.25) .. (0,-2) 
		.. controls (0,-1.75) and (.75,-1.75) ..(.75,-1.5)
		-- (.75,-1);
	\draw (1.25,-2.5)
		.. controls (1.25,-2.25) and (.55,-2.25) .. (.55,-2) 
		.. controls (.55,-1.75) and (1.25,-1.75) ..(1.25,-1.5)
		-- (1.25,-1);
	\draw[shift only,fill=white, color=white] (.15,-2.23) circle (.1cm);
	\draw[shift only,fill=white, color=white] (.34,-2.38) circle (.1cm);
	\draw (2,-2.5) node[below]{\small$i$}
		.. controls (2,-2) and (0,-2) .. (0,-1.5) 
		-- (0,-1) node[pos = .3,tikzdot]{}  node[pos = .7,tikzdot]{};
	\draw (2.5,-2.5)  node[below]{\small$i$}
		.. controls (2.5,-1.75) and (2,-1.75) .. (2,-1.5)
		-- (2,-1)  node[pos = .3,tikzdot]{};
	\draw  (0,-2.5)  node[below]{\small$i$}
		.. controls (0,-2) and (2.5,-2) .. (2.5,-1.5) node[pos=.1, tikzdot]{} node[pos=.1, xshift=-2ex, yshift=.5ex]{\small $n_i$}
		-- (2.5,-1)   node[pos = .7,tikzdot]{};
	 \draw[dashed] (0,-1.15) -- (2.5,-1.15);
	 \draw[dashed] (0,-1.35) -- (2,-1.35);
}
\ = r_i \ 
\tikzdiagh[xscale=.55,yscale=1.25]{0}{
	\draw (.75,-2.5)
		.. controls (.75,-2.25) and (0,-2.25) .. (0,-2) 
		.. controls (0,-1.75) and (.75,-1.75) ..(.75,-1.5)
		-- (.75,-1);
	\draw (1.25,-2.5)
		.. controls (1.25,-2.25) and (.55,-2.25) .. (.55,-2) 
		.. controls (.55,-1.75) and (1.25,-1.75) ..(1.25,-1.5)
		-- (1.25,-1);
	\draw (2,-2.5) node[below]{\small$i$}
		.. controls (2,-2.25) and (1.25,-2.25) .. (1.25,-2) 
		.. controls (1.25,-1.75) and (2,-1.75) .. (2,-1.5)
		.. controls (2,-1.25) and (2.5,-1.25) .. (2.5,-1);
	\draw[shift only,fill=white, color=white] (.29,-2.11) circle (.1cm);
	\draw[shift only,fill=white, color=white] (.52,-2.19) circle (.1cm);
	\draw[shift only,fill=white, color=white] (.8,-2.27) circle (.1cm);
	\draw (2.5,-2.5)  node[below]{\small$i$}
		-- (2.5,-1.5)
		.. controls (2.5,-1.25) and (2,-1.25) .. (2,-1)  node[pos = 0,tikzdot]{};
	\draw  (0,-2.5)  node[below]{\small$i$}
		.. controls (0,-2.25) and (2,-2.25) .. (2,-2) node[pos=.2, tikzdot]{} node[pos=.2, xshift=-2ex, yshift=.5ex]{\small $n_i$}
		.. controls (2,-1.75) and (0,-1.75) .. (0,-1.5)
		-- (0,-1)   node[pos = 0,tikzdot]{};
	 \draw[dashed] (0,-1.5) -- (2.5,-1.5);
}
\]
Keeping in mind~\cref{eq:PodN}, we have
\[
\tikzdiagh[xscale=.55,yscale=1.25]{0}{
	\draw (0,-1.5)   node[below]{\small$i$} -- (0,-1) 
		.. controls (0,-.625) and (2.5,-.625) .. (2.5,-.25);
	\draw (.75,-1.5) -- (.75,-1)
		.. controls (.75,-.625) and (0,-.625) .. (0, -.25);
	\draw (1.25,-1.5) -- (1.25,-1)
		.. controls (1.25,-.625) and (.5,-.625) .. (.5, -.25);
	\draw (2,-1.5)   node[below]{\small$i$} .. controls (2,-1.25) and (2.5,-1.25) .. (2.5,-1)
		.. controls (2.5,-.625) and (1.75,-.625) .. (1.75, -.25);
	\draw (2.5,-1.5)  node[below]{\small$i$} .. controls (2.5,-1.25) and (2,-1.25) .. (2,-1) node[pos=.15, tikzdot]{}
		.. controls (2,-.625) and (1.25,-.625) .. (1.25, -.25);
	\fdot{}{.4,-1};
}
\ \equiv \ 
\tikzdiagh[xscale=.55,yscale=1.25]{0}{
	\draw (0,-1.5)    node[below]{\small$i$}
		.. controls (0,-1.125) and (2.5,-1.125) .. (2.5,-.75)
		-- (2.5,-.25);
	\draw  (.75,-1.5)
		.. controls (.75,-1.125) and (0,-1.125) .. (0, -.75)
		-- (0,-.25);
	\draw (1.25,-1.5)
		.. controls (1.25,-1.125) and (.5,-1.125) .. (.5, -.75)
		-- (.5,-.25);
	\draw (2.5,-1.5)   node[below]{\small$i$}
		.. controls (2.5,-1.125) and (1.75,-1.125) .. (1.75, -.75)
		.. controls (1.75,-.5) and (1.25,-.5) .. (1.25,-.25) node[pos=.15, tikzdot]{};
	\draw (2,-1.5)   node[below]{\small$i$}
		.. controls (2,-1.125) and (1.25,-1.125) .. (1.25, -.75)
		.. controls (1.25,-.5) and (1.75,-.5) .. (1.75,-.25);
	\fdot{}{.4,-1.4};
}
\ \in \ 
\tikzdiagh[xscale=.55,yscale=1.25]{-1.8ex}{
	\draw (0,-1) 
		.. controls (0,-.625) and (2.5,-.625) .. (2.5,-.25)
		-- (2.5,0)
		.. controls (2.5,.25) .. (2.75,.25)  node[right] {$i$};
	\draw (.75,-1)
		.. controls (.75,-.625) and (0,-.625) .. (0, -.25)
		-- (0,0);
	\draw (1.25,-1)
		.. controls (1.25,-.625) and (.5,-.625) .. (.5, -.25)
		-- (.5,0);
	\draw (2.5,-1)
		.. controls (2.5,-.625) and (1.75,-.625) .. (1.75, -.25)
		-- (1.75,0);
	\draw  (2,-1) 
		.. controls (2,-.625) and (1.25,-.625) .. (1.25, -.25)
		-- (1.25,0);
	\filldraw [fill=white, draw=black] (-.25,-.25) rectangle (2.25,0.25) node[midway] {\small $H(m)$};
	\fdot{}{.4,-.9};
}
\]
 by~\cref{eq:KLRR3} and \cref{eq:KLRnh}. 
This means we can apply the induction hypothesis to get
\[
r_i \ 
\tikzdiagh[xscale=.55,yscale=1.25]{0}{
	\draw (.75,-2.5)
		.. controls (.75,-2.25) and (0,-2.25) .. (0,-2) 
		.. controls (0,-1.75) and (.75,-1.75) ..(.75,-1.5)
		-- (.75,-1);
	\draw (1.25,-2.5)
		.. controls (1.25,-2.25) and (.55,-2.25) .. (.55,-2) 
		.. controls (.55,-1.75) and (1.25,-1.75) ..(1.25,-1.5)
		-- (1.25,-1);
	\draw (2,-2.5) node[below]{\small$i$}
		.. controls (2,-2.25) and (1.25,-2.25) .. (1.25,-2) 
		.. controls (1.25,-1.75) and (2,-1.75) .. (2,-1.5)
		.. controls (2,-1.25) and (2.5,-1.25) .. (2.5,-1);
	\draw[shift only,fill=white, color=white] (.29,-2.11) circle (.1cm);
	\draw[shift only,fill=white, color=white] (.52,-2.19) circle (.1cm);
	\draw[shift only,fill=white, color=white] (.8,-2.27) circle (.1cm);
	\draw (2.5,-2.5)  node[below]{\small$i$}
		-- (2.5,-1.5)
		.. controls (2.5,-1.25) and (2,-1.25) .. (2,-1)  node[pos = 0,tikzdot]{};
	\draw  (0,-2.5)  node[below]{\small$i$}
		.. controls (0,-2.25) and (2,-2.25) .. (2,-2) node[pos=.2, tikzdot]{} node[pos=.2, xshift=-2ex, yshift=.5ex]{\small $n_i$}
		.. controls (2,-1.75) and (0,-1.75) .. (0,-1.5)
		-- (0,-1)   node[pos = 0,tikzdot]{};
	 \draw[dashed] (0,-1.5) -- (2.5,-1.5);
}
\ \equiv \ 
r_i^3
\tikzdiagh[xscale=.55,yscale=1.25]{0}{
	\draw (.75,-2.5)
		.. controls (.75,-2.25) and (0,-2.25) .. (0,-2) 
		.. controls (0,-1.75) and (.75,-1.75) ..(.75,-1.5)
		-- (.75,-1);
	\draw (1.25,-2.5)
		.. controls (1.25,-2.25) and (.55,-2.25) .. (.55,-2) 
		.. controls (.55,-1.75) and (1.25,-1.75) ..(1.25,-1.5)
		-- (1.25,-1);
	\draw (2,-2.5) node[below]{\small$i$}
		-- (2,-1.5)
		.. controls (2,-1.25) and (2.5,-1.25) .. (2.5,-1);
	\draw[shift only,fill=white, color=white] (.26,-2.15) circle (.1cm);
	\draw[shift only,fill=white, color=white] (.46,-2.24) circle (.1cm);
	%
	\draw (2.5,-2.5)  node[below]{\small$i$}
		-- (2.5,-1.5)
		.. controls (2.5,-1.25) and (2,-1.25) .. (2,-1)  node[pos = 0,tikzdot]{};
	\draw  (0,-2.5)  node[below]{\small$i$}
		.. controls (0,-2.25) and (1.25,-2.25) .. (1.25,-2) node[pos=.2, tikzdot]{} node[pos=.2, xshift=-2ex, yshift=.5ex]{\small $n_i$}
		.. controls (1.25,-1.75) and (0,-1.75) .. (0,-1.5)
		-- (0,-1)   node[pos = 0,tikzdot]{};
	 \draw[dashed] (0,-1.5) -- (2.5,-1.5);
}
\]

Similarly, we have
\begin{align*}
r_i \ 
\tikzdiagh[xscale=.55,yscale=1.25]{0}{
	\draw (.75,-2.5)
		.. controls (.75,-2.25) and (0,-2.25) .. (0,-2) 
		.. controls (0,-1.75) and (.75,-1.75) ..(.75,-1.5)
		-- (.75,-1);
	\draw (1.25,-2.5)
		.. controls (1.25,-2.25) and (.55,-2.25) .. (.55,-2) 
		.. controls (.55,-1.75) and (1.25,-1.75) ..(1.25,-1.5)
		-- (1.25,-1);
	\draw[shift only,fill=white, color=white] (.29,-2.11) circle (.1cm);
	\draw[shift only,fill=white, color=white] (.5,-2.2) circle (.1cm);
	\draw (2,-2.5) node[below]{\small$i$}
		.. controls (2,-2.25) and (1.25,-2.25) .. (1.25,-2) 
		.. controls (1.25,-1.75) and (2,-1.75) .. (2,-1.5) 
		-- (2,-1)  node[pos = .0,tikzdot]{}   node[pos = .4,tikzdot]{} ;
	\draw (2.5,-2.5)  node[below]{\small$i$}
		.. controls (2.5,-1.75) and (0,-1.75) .. (0,-1.5)
		-- (0,-1)  node[pos = .0,tikzdot]{}  node[pos = .4,tikzdot]{}   node[pos = .8,tikzdot]{};
	\draw  (0,-2.5)  node[below]{\small$i$}
		.. controls (0,-2.25) and (2.5,-2.25) .. (2.5,-1.5) node[pos=.2, tikzdot]{} node[pos=.2, xshift=-2ex, yshift=.5ex]{\small $n_i$}
		-- (2.5,-1)   node[pos = .8,tikzdot]{};
	 \draw[dashed] (0,-1.1) -- (2.5,-1.1);
	 \draw[dashed] (0,-1.3) -- (2,-1.3);
	 \draw[dashed] (0,-1.5) -- (2,-1.5);
}
\ &= r_i \ 
\tikzdiagh[xscale=.55,yscale=1.25]{0}{
	\draw (.75,-2.5)
		.. controls (.75,-2.25) and (0,-2.25) .. (0,-2) 
		.. controls (0,-1.75) and (.75,-1.75) ..(.75,-1.5)
		-- (.75,-1);
	\draw (1.25,-2.5)
		.. controls (1.25,-2.25) and (.55,-2.25) .. (.55,-2) 
		.. controls (.55,-1.75) and (1.25,-1.75) ..(1.25,-1.5)
		-- (1.25,-1);
	\draw[shift only,fill=white, color=white] (.23,-2.17) circle (.1cm);
	\draw[shift only,fill=white, color=white] (.41,-2.27) circle (.1cm);
	\draw (2,-2.5) node[below]{\small$i$}
		.. controls (2,-2.25) and (2.5,-2.25) .. (2.5,-2) 
		.. controls (2.5,-1.75) and (2,-1.75) .. (2,-1.5) 
		-- (2,-1)  node[pos = .0,tikzdot]{}   node[pos = .8,tikzdot]{} ;
	\draw (2.5,-2.5)  node[below]{\small$i$}
		.. controls (2.5,-1.875) and (0,-1.875) .. (0,-1.5)
		-- (0,-1)  node[pos = .0,tikzdot]{}  node[pos = .4,tikzdot]{}   node[pos = .8,tikzdot]{};
	\draw  (0,-2.5)  node[below]{\small$i$}
		.. controls (0,-2.125) and (2.5,-2.125) .. (2.5,-1.5) node[pos=.15, tikzdot]{} node[pos=.15, xshift=-2ex, yshift=.5ex]{\small $n_i$}
		-- (2.5,-1)   node[pos = .4,tikzdot]{};
	 \draw[dashed] (0,-1.1) -- (2,-1.1);
	 \draw[dashed] (0,-1.3) -- (2.5,-1.3);
	 \draw[dashed] (0,-1.5) -- (2,-1.5);
}
\ = \ 
\tikzdiagh[xscale=.55,yscale=1.25]{0}{
	\draw (.75,-2.5)
		.. controls (.75,-2.25) and (0,-2.25) .. (0,-2) 
		.. controls (0,-1.75) and (.75,-1.75) ..(.75,-1.5)
		-- (.75,-1);
	\draw (1.25,-2.5)
		.. controls (1.25,-2.25) and (.55,-2.25) .. (.55,-2) 
		.. controls (.55,-1.75) and (1.25,-1.75) ..(1.25,-1.5)
		-- (1.25,-1);
	\draw (2.5,-2.5) node[below]{\small$i$}
		.. controls (2.5,-2.25) and (1.25,-2.25) .. (1.25,-2) 
		.. controls (1.25,-1.75) and (2,-1.75) .. (2,-1.5)
		.. controls (2,-1.25) and (2.5,-1.25) .. (2.5,-1);
	\draw[shift only,fill=white, color=white] (.29,-2.11) circle (.1cm);
	\draw[shift only,fill=white, color=white] (.52,-2.19) circle (.1cm);
	\draw[shift only,fill=white, color=white] (.8,-2.27) circle (.1cm);
	\draw (2,-2.5)  node[below]{\small$i$}
		.. controls (2,-2.25) and (2.5,-2.25) .. (2.5,-2) 
		-- (2.5,-1.5)
		.. controls (2.5,-1.25) and (2,-1.25) .. (2,-1)  node[pos = 0,tikzdot]{}  node[pos = .8,tikzdot]{};
	\draw  (0,-2.5)  node[below]{\small$i$}
		.. controls (0,-2.25) and (2,-2.25) .. (2,-2) node[pos=.2, tikzdot]{} node[pos=.2, xshift=-2ex, yshift=.5ex]{\small $n_i$}
		.. controls (2,-1.75) and (0,-1.75) .. (0,-1.5)
		-- (0,-1)   node[pos = 0,tikzdot]{}  node[pos = .75,tikzdot]{};
	 \draw[dashed] (0,-1.5) -- (2.5,-1.5);
	 \draw[dashed] (0,-1.125) -- (2,-1.125);
} 
\\
\ &\equiv r_i^2\ 
\tikzdiagh[xscale=.55,yscale=1.25]{0}{
	\draw (.75,-2.5)
		.. controls (.75,-2.25) and (0,-2.25) .. (0,-2) 
		.. controls (0,-1.75) and (.75,-1.75) ..(.75,-1.5)
		-- (.75,-1);
	\draw (1.25,-2.5)
		.. controls (1.25,-2.25) and (.55,-2.25) .. (.55,-2) 
		.. controls (.55,-1.75) and (1.25,-1.75) ..(1.25,-1.5)
		-- (1.25,-1);
	\draw (2.5,-2.5) node[below]{\small$i$}
		.. controls (2.5,-2.25) and (2,-2.25) .. (2,-2) 
		-- (2,-1.5)
		.. controls (2,-1.25) and (2.5,-1.25) .. (2.5,-1);
	\draw[shift only,fill=white, color=white] (.26,-2.15) circle (.1cm);
	\draw[shift only,fill=white, color=white] (.46,-2.24) circle (.1cm);
	\draw (2,-2.5)  node[below]{\small$i$}
		.. controls (2,-2.25) and (2.5,-2.25) .. (2.5,-2) 
		-- (2.5,-1.5)
		.. controls (2.5,-1.25) and (2,-1.25) .. (2,-1)  node[pos = 0,tikzdot]{}  node[pos = .8,tikzdot]{};
	\draw  (0,-2.5)  node[below]{\small$i$}
		.. controls (0,-2.25) and (1.25,-2.25) .. (1.25,-2) node[pos=.2, tikzdot]{} node[pos=.2, xshift=-2ex, yshift=.5ex]{\small $n_i$}
		.. controls (1.25,-1.75) and (0,-1.75) .. (0,-1.5)
		-- (0,-1)   node[pos = 0,tikzdot]{}  node[pos = .75,tikzdot]{};
	 \draw[dashed] (0,-1.5) -- (2.5,-1.5);
	 \draw[dashed] (0,-1.125) -- (2,-1.125);
}
\ = r_i^3 \ 
\tikzdiagh[xscale=.55,yscale=1.25]{0}{
	\draw (.75,-2.5)
		.. controls (.75,-2.25) and (0,-2.25) .. (0,-2) 
		.. controls (0,-1.75) and (.75,-1.75) ..(.75,-1.5)
		-- (.75,-1);
	\draw (1.25,-2.5)
		.. controls (1.25,-2.25) and (.55,-2.25) .. (.55,-2) 
		.. controls (.55,-1.75) and (1.25,-1.75) ..(1.25,-1.5)
		-- (1.25,-1);
	\draw (2,-2.5) node[below]{\small$i$}
		-- (2,-1.5)
		.. controls (2,-1.25) and (2.5,-1.25) .. (2.5,-1);
	\draw[shift only,fill=white, color=white] (.26,-2.15) circle (.1cm);
	\draw[shift only,fill=white, color=white] (.46,-2.24) circle (.1cm);
	\draw (2.5,-2.5)  node[below]{\small$i$}
		-- (2.5,-1.5)
		.. controls (2.5,-1.25) and (2,-1.25) .. (2,-1)  node[pos = .8,tikzdot]{};
	\draw  (0,-2.5)  node[below]{\small$i$}
		.. controls (0,-2.25) and (1.25,-2.25) .. (1.25,-2) node[pos=.2, tikzdot]{} node[pos=.2, xshift=-2ex, yshift=.5ex]{\small $n_i$}
		.. controls (1.25,-1.75) and (0,-1.75) .. (0,-1.5)
		-- (0,-1)  node[pos = .75,tikzdot]{};
	 \draw[dashed] (0,-1.125) -- (2,-1.125);
}
\end{align*}
Putting these two results together and using \cref{eq:KLRnh}, we obtain
\[
\tikzdiagh[xscale=.55,yscale=1.25]{0}{
	\draw (.75,-2.5)
		.. controls (.75,-2.125) and (0,-2.125) .. (0,-1.75) 
		.. controls (0,-1.375) and (.75,-1.375) ..(.75,-1);
	\draw (1.25,-2.5)
		.. controls (1.25,-2.125) and (.5,-2.125) .. (.5,-1.75) 
		.. controls (.5,-1.375) and (1.25,-1.375) ..(1.25,-1);
	\draw (2,-2.5) node[below]{\small$i$}
		.. controls (2,-2.125) and (1.5,-2.125) .. (1.5,-1.75)
		.. controls (1.5,-1.375) and (2,-1.375) .. (2,-1);
	\draw (2.5,-2.5)  node[below]{\small$i$}
		.. controls (2.5,-2.125) and (2,-2.125) .. (2,-1.75)
		.. controls (2,-1.375) and (2.5,-1.375) .. (2.5,-1);
	\draw[shift only,fill=white, color=white] (.31,-1.58) circle (.1cm);
	\draw[shift only,fill=white, color=white] (.53,-1.67) circle (.1cm);
	\draw[shift only,fill=white, color=white] (.92,-1.8) circle (.1cm);
	\draw[shift only,fill=white, color=white] (1.15,-1.9) circle (.1cm);
	\draw  (0,-2.5)  node[below]{\small$i$}
		.. controls (0,-2.125) and (2.5,-2.125) .. (2.5,-1.75) node[pos=.2, tikzdot]{} node[pos=.2, xshift=-2ex, yshift=.5ex]{\small $n_i$}
		.. controls (2.5,-1.375) and (0,-1.375) .. (0, -1);
	\node at (1,-1.75) {\tiny$\dots$};
}
\ \equiv r_i^2 \ 
\tikzdiagh[xscale=.55,yscale=1.25]{0}{
	\draw (.75,-2.5)
		.. controls (.75,-2.125) and (0,-2.125) .. (0,-1.75) 
		.. controls (0,-1.375) and (.75,-1.375) ..(.75,-1);
	\draw (1.25,-2.5)
		.. controls (1.25,-2.125) and (.5,-2.125) .. (.5,-1.75) 
		.. controls (.5,-1.375) and (1.25,-1.375) ..(1.25,-1);
	\draw (2,-2.5) node[below]{\small$i$}
		.. controls (2,-2.125) and (1.5,-2.125) .. (1.5,-1.75)
		.. controls (1.5,-1.375) and (2,-1.375) .. (2,-1);
	\draw (2.5,-2.5)  node[below]{\small$i$}
		-- (2.5,-1);
%
	\draw[shift only,fill=white, color=white] (.29,-1.6) circle (.1cm);
	\draw[shift only,fill=white, color=white] (.5,-1.7) circle (.1cm);
	\draw[shift only,fill=white, color=white] (.88,-1.89) circle (.1cm);
	\draw  (0,-2.5)  node[below]{\small$i$}
		.. controls (0,-2.125) and (2,-2.125) .. (2,-1.75) node[pos=.2, tikzdot]{} node[pos=.2, xshift=-2ex, yshift=.5ex]{\small $n_i$}
		.. controls (2,-1.375) and (0,-1.375) .. (0, -1);
	\node at (1,-1.75) {\tiny$\dots$};
}
\]
which concludes this case.

For the final case $j_{m-1} = j \neq i$, we compute
\[
\tikzdiagh[scale=.65]{0}{
		\draw[myblue]  (1,0) node[below] {\small $j$} 
			.. controls (1,0.5) and (0, 0.5) ..  (0,1) .. controls (0,1.5) and (1, 1.5) ..  (1,2);
		\draw  (2,0) node[below] {\small $i$}
			.. controls (2,.5) and (1,.5) .. (1,1)
		 	.. controls (1,1.5) and (2, 1.5) ..  (2,2);
		\draw[fill=white, color=white] (.66,1.58) circle (.17cm);
		\draw[fill=white, color=white] (1.35,1.4) circle (.17cm);
		\draw  (0,0) node[below] {\small $i$}
			 .. controls (0,0.5) and (2, .5) ..  (2,1)
			 .. controls (2, 1.5) and (0,1.5) .. (0,2);
	 }
\ = r_i \ 
\tikzdiagh[scale=.65]{0}{
	\draw  (0,0)node[below] {\small $i$} .. controls (0,0.5) and (2, 1) ..  (2,2) node[pos=.925,tikzdot]{};
	\draw[myblue]  (1,0)node[below] {\small $j$} .. controls (1,0.5) and (0, 0.5) ..  (0,1) .. controls (0,1.5) and (1, 1.5) ..  (1,2);
	\draw  (2,0)node[below] {\small $i$} .. controls (2,1) and (0, 1.5) ..  (0,2) node[pos=.85,tikzdot]{};
	 \draw[dashed] (0,1.8) -- (2,1.8);
 }
 \ = r_i \ 
\tikzdiagh[yscale=.65,xscale=-.65]{0}{
	\draw  (0,0)node[below] {\small $i$} .. controls (0,0.5) and (2, 1) ..  (2,2) node[pos=.925,tikzdot]{};
	\draw[myblue]  (1,0)node[below] {\small $j$} .. controls (1,0.5) and (0, 0.5) ..  (0,1) .. controls (0,1.5) and (1, 1.5) ..  (1,2);
	\draw  (2,0)node[below] {\small $i$} .. controls (2,1) and (0, 1.5) ..  (0,2) node[pos=.85,tikzdot]{};
	 \draw[dashed] (0,1.8) -- (2,1.8);
 }
 \ + 
 r_i^2 \sum\limits_{t,v} s_{ij}^{tv} \sssum{u+\ell=\\t-1}\ 
 \tikzdiagh[scale=.65]{0}{
	\draw  (0,0)node[below] {\small $i$} -- (0,2)
		 node[pos=.8,tikzdot]{}  node[pos=.3,tikzdot]{} node[pos=.3,xshift=1.5ex, yshift=.5ex] {\small $u$};
	\draw[myblue]  (1,0)node[below] {\small $j$} --  (1,2) 
		 node[pos=.3,tikzdot]{} node[pos=.3,xshift=1.5ex, yshift=.5ex] {\small $v$};
	\draw  (2,0)node[below] {\small $i$} -- (2,2) 
		node[pos=.8,tikzdot]{}  node[pos=.3,tikzdot]{} node[pos=.3,xshift=1.5ex, yshift=.5ex] {\small $\ell$};
	 \draw[dashed] (0,1.6) -- (2,1.6);
 }
\]
using \cref{eq:KLRR3}. 
Then we obtain for the first term on the RHS of the second equality, using the induction hypothesis together with~\cref{eq:KLRR2} 
\[
r_i \ 
\tikzdiagh[yscale=.65,xscale=-.65]{0}{
	\draw  (0,0)node[below] {\small $i$} .. controls (0,0.5) and (2, 1) ..  (2,2) node[pos=.925,tikzdot]{};
	\draw[myblue]  (1,0)node[below] {\small $j$} .. controls (1,0.5) and (0, 0.5) ..  (0,1) .. controls (0,1.5) and (1, 1.5) ..  (1,2);
	\draw  (2,0)node[below] {\small $i$} .. controls (2,1) and (0, 1.5) ..  (0,2) node[pos=.85,tikzdot]{};
	 \draw[dashed] (0,1.8) -- (2,1.8);
 }
 \ = \ 
 \tikzdiagh[yscale=.65, xscale=-.65]{0}{
		\draw[myblue]  (1,0) node[below] {\small $j$} 
			.. controls (1,0.5) and (0, 0.5) ..  (0,1) .. controls (0,1.5) and (1, 1.5) ..  (1,2);
		\draw  (0,0) node[below] {\small $i$}
			 .. controls (0,0.5) and (2, .5) ..  (2,1)
			 .. controls (2, 1.5) and (0,1.5) .. (0,2);
		\draw[fill=white, color=white] (1.35,1.4) circle (.17cm);
		\draw  (2,0) node[below] {\small $i$}
			.. controls (2,.5) and (1,.5) .. (1,1)
		 	.. controls (1,1.5) and (2, 1.5) ..  (2,2);
	 }
\ \equiv r_i^2 \  
 \tikzdiagh[scale=.65]{0}{
	\draw  (0,0)node[below] {\small $i$} 
		-- (0,2);
	\draw[myblue]  (1,0)node[below] {\small $j$}
		.. controls (1,.5) and (2,.5) .. (2,1)
		.. controls (2,1.5) and (1,1.5) .. (1,2);
	\draw  (2,0)node[below] {\small $i$} 
		.. controls (2,.5) and (1,.5) .. (1,1)
		.. controls (1,1.5) and (2,1.5) .. (2,2);
 }
 \ = \ 
 r_i^2 \sum_{t,v} s^{tv}_{ij}\ 
 \tikzdiagh[scale=.65]{0}{
	\draw  (0,0)node[below] {\small $i$} -- (0,2);
	\draw[myblue]  (1,0)node[below] {\small $j$} --  (1,2) 
		 node[pos=.5,tikzdot]{} node[pos=.5,xshift=1.5ex, yshift=.5ex] {\small $v$};
	\draw  (2,0)node[below] {\small $i$} -- (2,2) 
		node[pos=.5,tikzdot]{} node[pos=.5,xshift=1.5ex, yshift=.5ex] {\small $t$};
 }
\]
On the other hand, we have for all $t,v$ that
\[
\sssum{u+\ell=\\t-1}\ 
 \tikzdiagh[scale=.65]{0}{
	\draw  (0,0)node[below] {\small $i$} -- (0,2)
		 node[pos=.8,tikzdot]{}  node[pos=.3,tikzdot]{} node[pos=.3,xshift=1.5ex, yshift=.5ex] {\small $u$};
	\draw[myblue]  (1,0)node[below] {\small $j$} --  (1,2) 
		 node[pos=.3,tikzdot]{} node[pos=.3,xshift=1.5ex, yshift=.5ex] {\small $v$};
	\draw  (2,0)node[below] {\small $i$} -- (2,2) 
		node[pos=.8,tikzdot]{}  node[pos=.3,tikzdot]{} node[pos=.3,xshift=1.5ex, yshift=.5ex] {\small $\ell$};
	 \draw[dashed] (0,1.6) -- (2,1.6);
 }
 \ = \ 
 \tikzdiagh[scale=.65]{0}{
	\draw  (0,0)node[below] {\small $i$} -- (0,2)
		 node[pos=.5,tikzdot]{} node[pos=.5,xshift=1.5ex, yshift=.5ex] {\small $t$};
	\draw[myblue]  (1,0)node[below] {\small $j$} --  (1,2) 
		 node[pos=.5,tikzdot]{} node[pos=.5,xshift=1.5ex, yshift=.5ex] {\small $v$};
	\draw  (2,0)node[below] {\small $i$} -- (2,2); 
 }
 \ - \
 \tikzdiagh[scale=.65]{0}{
	\draw  (0,0)node[below] {\small $i$} -- (0,2);
	\draw[myblue]  (1,0)node[below] {\small $j$} --  (1,2) 
		 node[pos=.5,tikzdot]{} node[pos=.5,xshift=1.5ex, yshift=.5ex] {\small $v$};
	\draw  (2,0)node[below] {\small $i$} -- (2,2) 
		 node[pos=.5,tikzdot]{} node[pos=.5,xshift=1.5ex, yshift=.5ex] {\small $t$};
 } 
\]
Putting these results together with the case $j_m \neq i$ yields
\[
\tikzdiagh[xscale=.55,yscale=1.25]{0}{
	\draw (.75,-2.5)
		.. controls (.75,-2.125) and (0,-2.125) .. (0,-1.75) 
		.. controls (0,-1.375) and (.75,-1.375) ..(.75,-1);
	\draw (1.25,-2.5)
		.. controls (1.25,-2.125) and (.5,-2.125) .. (.5,-1.75) 
		.. controls (.5,-1.375) and (1.25,-1.375) ..(1.25,-1);
	\draw[myblue] (2,-2.5) node[below]{\small$j$}
		.. controls (2,-2.125) and (1.5,-2.125) .. (1.5,-1.75)
		.. controls (1.5,-1.375) and (2,-1.375) .. (2,-1);
	\draw (2.5,-2.5)  node[below]{\small$i$}
		.. controls (2.5,-2.125) and (2,-2.125) .. (2,-1.75)
		.. controls (2,-1.375) and (2.5,-1.375) .. (2.5,-1);
	\draw[shift only,fill=white, color=white] (.31,-1.58) circle (.1cm);
	\draw[shift only,fill=white, color=white] (.53,-1.67) circle (.1cm);
	\draw[shift only,fill=white, color=white] (.92,-1.8) circle (.1cm);
	\draw[shift only,fill=white, color=white] (1.15,-1.9) circle (.1cm);
	\draw  (0,-2.5)  node[below]{\small$i$}
		.. controls (0,-2.125) and (2.5,-2.125) .. (2.5,-1.75) node[pos=.2, tikzdot]{} node[pos=.2, xshift=-2ex, yshift=.5ex]{\small $n_i$}
		.. controls (2.5,-1.375) and (0,-1.375) .. (0, -1);
}
\ \equiv r_i^2 \ 
\tikzdiagh[xscale=.55,yscale=1.25]{0}{
	\draw (.75,-2.5)
		.. controls (.75,-2.125) and (0,-2.125) .. (0,-1.75) 
		.. controls (0,-1.375) and (.75,-1.375) ..(.75,-1);
	\draw (1.25,-2.5)
		.. controls (1.25,-2.125) and (.5,-2.125) .. (.5,-1.75) 
		.. controls (.5,-1.375) and (1.25,-1.375) ..(1.25,-1);
	\draw[myblue] (2,-2.5) node[below]{\small$j$}
		.. controls (2,-2.125) and (1.5,-2.125) .. (1.5,-1.75)
		.. controls (1.5,-1.375) and (2,-1.375) .. (2,-1);
	\draw (2.5,-2.5)  node[below]{\small$i$}
		-- (2.5,-1);
%
	\draw[shift only,fill=white, color=white] (.29,-1.6) circle (.1cm);
	\draw[shift only,fill=white, color=white] (.5,-1.7) circle (.1cm);
	\draw[shift only,fill=white, color=white] (.88,-1.89) circle (.1cm);
	\draw  (0,-2.5)  node[below]{\small$i$}
		.. controls (0,-2.125) and (2,-2.125) .. (2,-1.75) node[pos=.2, tikzdot]{} node[pos=.2, xshift=-2ex, yshift=.5ex]{\small $n_i$}
		.. controls (2,-1.375) and (0,-1.375) .. (0, -1);
}
\]
which concludes the proof.
\end{proof}

\begin{proof}[Proof of \cref{prop:sesleftdecomp}]
The polynomial~\cref{eq:vcpol} is monic (up to invertible scalar) with leading terms $x_{m+1}^{n_i+2\nu_i-\alpha_i^\vee(\nu)}$. Therefore, multiplication by \cref{eq:vcpol} yields an injective map. Thus, \cref{lem:computePdN} tells us that $P \circ \bar d_N$ is injective, and so is $\bar d_N$.
\end{proof}

As a consequence, this also ends the proof of \cref{thm:RbodNformal}.



\section{Categorical action}\label{sec:cataction}

For each $i \in I$ there is a (non-unital) inclusion $R_\bo(m) \hookrightarrow R_\bo(m+1) 1_{(m,i)}$,  given by adding a vertical strand with label $i$ to the right of a diagram $D \in R_\bo(m)$:
\[
\tikzdiagh[xscale=.75]{0}{
	\draw (0,0) node[below] { \small $j_1$} -- (0,1);
	\draw (1,0) node[below] { \small $j_2$} -- (1,1);
	\draw (2,0) -- (2,1);
	\node at(2,-.25) {$\dots$};
	\draw (3,0) node[below] { \small $j_m$} -- (3,1);
	\filldraw [fill=white, draw=black] (-.25,.8) rectangle (3.25,0.2) node[midway] {$D$};
}
\ \mapsto \ 
\tikzdiagh[xscale=.75]{0}{
	\draw (0,0) node[below] { \small $j_1$} -- (0,1);
	\draw (1,0) node[below] { \small $j_2$} -- (1,1);
	\draw (2,0) -- (2,1);
	\node at(2,-.25) {$\dots$};
	\draw (3,0) node[below] { \small $j_m$} -- (3,1);
	\filldraw [fill=white, draw=black] (-.25,.8) rectangle (3.25,0.2) node[midway] {$D$};
	\draw (4,0) node[below] { \small $i$} -- (4,1);
}
\]
This gives rise to induction and restriction functors
\begin{align*}
\Ind_m^{m+i} &: R_\bo(m)\amod \rightarrow R_\bo(m+1)\amod, \\ 
 &\Ind_m^{m+i}(-) \cong R_\bo(m+1) 1_{(m,i)} \otimes_m -,\\
\Res_m^{m+i} &: R_\bo(m+1)\amod \rightarrow R_\bo(m)\amod, \\
 &\Res_m^{m+i}(-) \cong 1_{(m,i)} R_\bo(m+1) \otimes_{m+1} -.
\end{align*}
which are adjoint.

\smallskip

We  write 
\[
R^{\xi_i}_\bo(\nu) := R_\bo(\nu) \otimes \Bbbk[\xi_i] \cong \bigoplus_{\ell \geq 0} q_i^{2\ell}  R_\bo(\nu),
\]
with $\deg_q(\xi_i) = (\alpha_i | \alpha_i)$. 
We will prove the following theorem in the next subsection:

\begin{thm} \label{thm:SESRbo}
There is a short exact sequence
\begin{align*}
0 
&\rightarrow q_i^{-2} R_\bo(\nu) 1_{(\nu-i,i)} \otimes_{m-1} 1_{(\nu-i,i)} R_\bo(\nu) 
\rightarrow 1_{(\nu,i)} R_\bo(\nu+i) 1_{(\nu,i)} \\
&\rightarrow R_\bo^{\xi_i}(\nu) \oplus \lambda_i^2 q_i^{ - 2\alpha_i^\vee(\nu)} R_\bo^{\xi_i}(\nu)[1]
\rightarrow 0,
\end{align*}
of $R_\bo(\nu)$-$R_\bo(\nu)$-bimodules for all $i \in I$. Moreover, there is an isomorphism
\begin{align*}
q^{-(\alpha_i | \alpha_j)} R_\bo(\nu) 1_{(\nu-i,i)} \otimes_{m-1} 1_{(\nu'-j,j)} R_\bo(\nu') 
\cong 1_{(\nu',j)} R_\bo(\nu+i) 1_{(\nu,i)},
\end{align*}
for all $i \neq j \in I$ and $\nu'+j = \nu +i$.
\end{thm}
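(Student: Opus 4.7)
The plan is to adapt the standard Mackey-type decomposition for KLR algebras to the setting with floating dots. Define the bimodule map
\[
\phi : q_i^{-2} R_\bo(\nu) 1_{(m-1,i)} \otimes_{m-1} 1_{(m-1,i)} R_\bo(\nu) \to 1_{(\nu,i)} R_\bo(m+1) 1_{(\nu,i)}
\]
by sending $D_1 \otimes D_2 \mapsto (D_1 \otimes \mathbbm{1}_i) \cdot \tau_m \cdot (D_2 \otimes \mathbbm{1}_i)$, where $D_k \otimes \mathbbm{1}_i$ denotes the diagram $D_k$ with an additional vertical $i$-strand adjoined on the right, and $\tau_m$ is the crossing between positions $m$ and $m+1$ (both labeled $i$). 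The grading shift $q_i^{-2}$ matches the degree of $\tau_m$ on two $i$-strands. An analogous map $\phi_{ij}$ is defined in the $i\neq j$ case, with $\tau_m$ now a crossing between $i$- and $j$-strands of degree $-(\alpha_i|\alpha_j)$. In both cases, well-definedness on the tensor product is immediate, since any $r \in R_\bo(m-1)$ (acting on the first $m-1$ strands on each side) commutes with $\tau_m$ and the added vertical $i$-strand, which act on disjoint positions. The same observation, together with the fact that left- and right-$R_\bo(\nu)$-multiplication stacks diagrams above $D_1$ or below $D_2$ away from the $\tau_m$, shows that $\phi$ (resp.\ $\phi_{ij}$) is a bimodule map.

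For injectivity and the identification of the cokernel, the key tool is \cref{lem:Rboleftdecomp}, which decomposes $1_{(\nu,i)} R_\bo(m+1)$ as a free left $R_\bo(\nu)$-module into pieces indexed by a position $a \in \{1,\ldots,m+1\}$ (the starting position at the bottom of the strand ending at position $m+1$ at the top) and a non-negative integer $\ell$, with each piece being of either $\tau$-type $R_\bo(\nu) 1_{(\nu,i)}\tau_m\cdots\tau_a x_a^\ell$ or $\theta$-type $R_\bo(\nu) 1_{(\nu,i)}\tau_m\cdots\tau_a \theta_a^\ell$. Restricting to the right idempotent $1_{(\nu,i)}$ retains only pieces compatible with the bottom also ending in $i$. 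In the case $a = m+1$ the product $\tau_m\cdots\tau_{m+1}$ is empty: the $\tau$-type pieces give $R_\bo(\nu)\otimes_\Bbbk\Bbbk[x_{m+1}] \cong R_\bo^{\xi_i}(\nu)$ (with $x_{m+1}$ a dot on the rightmost strand, commuting with everything in $R_\bo(\nu)$), while the $\theta$-type pieces $R_\bo(\nu)\,\theta_{m+1}^\ell$ yield the bimodule $\lambda_i^2 q_i^{-2\alpha_i^\vee(\nu)} R_\bo^{\xi_i}(\nu)[1]$, the degree shifts being read off the tightened floating dot's $q$-, $\lambda_i$- and homological degrees from \cref{def:Rbo}. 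For $a \leq m$, the factorization $\tau_m\cdots\tau_a = \tau_m\cdot(\tau_{m-1}\cdots\tau_a)$ exhibits each such element as $\phi(D_1\otimes D_2)$, since the factor $\tau_{m-1}\cdots\tau_a$ (together with the dot or tightened floating dot, and the surrounding element of $R_\bo(\nu)$) is supported on the first $m$ positions and hence factors through $R_\bo(\nu) 1_{(m-1,i)}$ and $1_{(m-1,i)} R_\bo(\nu)$. Injectivity of $\phi$ then follows by matching the basis of the tensor product, obtained by applying \cref{thm:Rbobasis} to each factor, with the basis of the $a \leq m$ part of \cref{lem:Rboleftdecomp}; the cokernel is precisely the $a = m+1$ part, namely the asserted direct sum.

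The $i\neq j$ case follows by the same reasoning, except that the $a = m+1$ summands vanish outright: there is no ``straight vertical strand'' available since the top and bottom labels at position $m+1$ differ, so every element of $1_{(\nu,j)}R_\bo(m+1)1_{(\nu,i)}$ comes from the $a\leq m$ part, and consequently $\phi_{ij}$ is both injective and surjective, yielding the claimed isomorphism. The main obstacle in the SES case is verifying that the bimodule structure on the cokernel genuinely is the direct sum $R_\bo^{\xi_i}(\nu) \oplus \lambda_i^2 q_i^{-2\alpha_i^\vee(\nu)} R_\bo^{\xi_i}(\nu)[1]$, rather than some non-split extension of these two pieces: the tightened floating dots $\theta_{m+1}^\ell$ commute with generators of $R_\bo(\nu)$ only modulo corrections with strictly fewer crossings (cf.\ \cref{lem:thightenedfdanticommutes}), and one must argue, using the filtration of $R_\bo(m+1)$ by crossing number, that these correction terms all lie in the image of $\phi$ and therefore vanish in the quotient.
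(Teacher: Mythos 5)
Your construction of $\phi$ is identical to the paper's map $u_{ij}$, and your use of \cref{lem:Rboleftdecomp} to identify the image with the $a\leq m$ summands and the cokernel with the $a=m+1$ summands is the same overall strategy the paper takes. The $i\neq j$ case and the degree bookkeeping are fine.

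However, you have correctly flagged — but not resolved — the central difficulty, and I do not think your suggested fix closes it. The issue is that the $a=m+1$ decomposition in \cref{lem:Rboleftdecomp} is a decomposition of $1_{(m,i)}R_\bo(m+1)$ as a \emph{left} $R_\bo(m)$-module only, so the projection onto the $\theta_{m+1}^\ell$-pieces is a priori only a left-module map. To identify the cokernel as a \emph{bimodule} with $R_\bo^{\xi_i}(\nu)\oplus\lambda_i^2 q_i^{-2\alpha_i^\vee(\nu)}R_\bo^{\xi_i}(\nu)[1]$, one needs to show that for any generator $z$ of $R_\bo(m)$ one has $\theta_{m+1}^\ell\,z = (-1)^{\deg_h(z)}\,z\,\theta_{m+1}^\ell + y_0$ with $y_0$ lying in the $a\leq m$ summands of the decomposition (equivalently, in the image of $\phi$). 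Appealing to \cref{lem:thightenedfdanticommutes} and ``the filtration by crossing number'' does not establish this: that lemma controls tightened floating dots commuting \emph{with each other}, and lower-crossing-number terms need not a priori be confined to the $a\leq m$ part — one has to actually track where the correction terms land, separately for $z$ a dot, a crossing, and a tight floating dot. The paper carries this out explicitly in \cref{lem:Anprojbimmap}, whose proof (via \cref{eq:KLRnh}, \cref{eq:KLRR3}, \cref{eq:fdtotheleft}, \cref{eq:tightfdcommutes} and case-by-case sliding arguments) is the genuinely nontrivial content your sketch omits. In fact the paper sidesteps the left/right asymmetry more cleanly by defining the projection via the \emph{right}-module decomposition (so it is manifestly a right-module map) and then invoking \cref{lem:Anprojbimmap} to conclude $\pi_L^\ell=(-1)^{\deg_h}\pi_R^\ell$, hence that it is also a left-module map. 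You should either prove the analogue of \cref{lem:Anprojbimmap} or restructure along the paper's lines; as it stands, the bimodule identification of the cokernel is an unproven assertion.

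A smaller point: the injectivity argument should be made explicit — the paper applies \cref{lem:Rboleftdecomp} to the right tensor factor $1_{(m-1,i)}R_\bo(\nu)$, obtains a free basis for the tensor product over $R_\bo(m-1)$, and observes that $u_{ij}$ carries this basis bijectively onto the basis of the $a\leq m$ part of $1_{(\nu,i)}R_\bo(m+1)1_{(\nu,i)}$. Saying ``matching bases'' is directionally right but glosses over which side of the tensor product is being decomposed.
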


As we will see in the proof of \cref{thm:SESRbo}, we can picture these facts as a short exact sequence of diagrams
\[
\tikzdiag[xscale=.65]{
	\draw (0,-1) -- (0,1);
	\draw (.5,-1) -- (.5,1);
	\draw (1.5,-1) -- (1.5,1);
	\draw (2,-1) 
		-- (2,-.25) 
		.. controls (2,0) and (2.5,0) .. (2.5,.25) 
		-- (2.5,.75)
		.. controls (2.5,1) .. (2.75,1) node[right] {\small $j$};
	\draw (2,1) 
		-- (2,.25) 
		.. controls (2,0) and (2.5,0) .. (2.5,-.25) 
		-- (2.5,-.75)
		.. controls (2.5,-1) .. (2.75,-1) node[right] {\small $i$};
	\node at(1,.9) {\small $\dots$};
	\filldraw [fill=white, draw=black] (-.25,-.75) rectangle (2.25,-.25) node[midway] {\small $m$};
	\node at(1,0) {\small $\dots$};
	\filldraw [fill=white, draw=black] (-.25,.25) rectangle (2.25,.75) node[midway] {\small $m$};
	\node at(1,-.9) {\small $\dots$};
}
\hookrightarrow
\tikzdiag[xscale=.65]{
	\draw (0,-1) -- (0,1);
	\draw (.5,-1) -- (.5,1);
	\draw (1.5,-1) -- (1.5,1);
	\draw (2,-1) -- (2,1);
	\draw (2.75,-1)  node [right] {\small $i$} .. controls (2.5,-1) ..
		(2.5,-.75) -- (2.5,.75)
		.. controls (2.5,1) .. (2.75,1) node [right] {\small $j$};
	\filldraw [fill=white, draw=black] (-.25,-.25) rectangle (2.75,.25) node[midway] {\small $m+1$};
	\node at(1,.65) {\small $\dots$};
	\node at(1,-.65) {\small $\dots$};
}
\twoheadrightarrow
\bigoplus_{\ell \geq 0}
\left(
\tikzdiag[xscale=.65]{
	\draw (0,-1) -- (0,1);
	\draw (.5,-1) -- (.5,1);
	\draw (1.5,-1) -- (1.5,1);
	\draw (2,-1) -- (2,1);
	\draw (3,-1)  node [right] {\small $i$} .. controls (2.75,-1) ..
		(2.75,-.75) -- (2.75,.75) node[midway, tikzdot]{} node[midway,xshift=1.5ex, yshift=.5ex]{\small $\ell$}
		.. controls (2.75,1) .. (3,1) node [right] {\small $j$};
	\filldraw [fill=white, draw=black] (-.25,-.25) rectangle (2.25,.25) node[midway] {\small $m$};
	\node at(1,.65) {\small $\dots$};
	\node at(1,-.65) {\small $\dots$};
}
\oplus
\tikzdiag[xscale=.65]{
	\draw (0,-1) 
		-- (0,-.75)
		.. controls (0,-.5) and (1,-.5) .. (1,-.25)
		-- (1,.25)
		.. controls (1,.5) and (0,.5) .. (0,.75)
		-- (0,1);
	\draw (.5,-1) 
		-- (.5,-.75)
		.. controls (.5,-.5) and (1.5,-.5) .. (1.5,-.25)
		-- (1.5,.25)
		.. controls (1.5,.5) and (.5,.5) .. (.5,.75)
		-- (.5,1);
	\draw (1.5,-1) 
		-- (1.5,-.75)
		.. controls (1.5,-.5) and (2.5,-.5) .. (2.5,-.25)
		-- (2.5,.25)
		.. controls (2.5,.5) and (1.5,.5) .. (1.5,.75)
		-- (1.5,1);
	\draw (2,-1) 
		-- (2,-.75)
		.. controls (2,-.5) and (3,-.5) .. (3,-.25)
		-- (3,.25)
		.. controls (3,.5) and (2,.5) .. (2,.75)
		-- (2,1);
	\draw (3.25,-1) node[right]{\small $i$} .. controls (3,-1) .. 
		(3,-.75) .. controls (3,-.5) and (0,-.5) .. 
	 	(0,-.125) -- (0,.125) node[midway,tikzdot]{} node [midway, xshift=-1.5ex, yshift=.5ex] {\small $\ell$}
		.. controls (0,.5) and (3,.5) .. (3,.75)
		.. controls (3,1) .. (3.25,1) node[right]{\small $j$};
 	\fdot{}{.375,0};
	\filldraw [fill=white, draw=black] (.75,-.25) rectangle (3.25,.25) node[midway] {\small $m$};
	\node at(1,.9) {\small $\dots$};
	\node at(1,-.9) {\small $\dots$};
}
\right)
\]
where the cokernel vanishes whenever $i \neq j$. We write $\pi$ for the projection
\[
\pi : \tikzdiag[xscale=.65]{
	\draw (0,-1) -- (0,1);
	\draw (.5,-1) -- (.5,1);
	\draw (1.5,-1) -- (1.5,1);
	\draw (2,-1) -- (2,1);
	\draw (2.75,-1)  node [right] {\small $i$} .. controls (2.5,-1) ..
		(2.5,-.75) -- (2.5,.75)
		.. controls (2.5,1) .. (2.75,1) node [right] {\small $i$};
	\filldraw [fill=white, draw=black] (-.25,-.25) rectangle (2.75,.25) node[midway] {\small $m+1$};
	\node at(1,.65) {\small $\dots$};
	\node at(1,-.65) {\small $\dots$};
}
\twoheadrightarrow
\bigoplus_{\ell \geq 0}
\tikzdiag[xscale=.65]{
	\draw (0,-1) -- (0,1);
	\draw (.5,-1) -- (.5,1);
	\draw (1.5,-1) -- (1.5,1);
	\draw (2,-1) -- (2,1);
	\draw (2.75,-1)  node [right] {\small $i$} .. controls (2.5,-1) ..
		(2.5,-.75) -- (2.5,.75) node[midway, tikzdot]{} node[midway,xshift=1.5ex, yshift=.5ex]{\small $\ell$}
		.. controls (2.5,1) .. (2.75,1) node [right] {\small $i$};
	\filldraw [fill=white, draw=black] (-.25,-.25) rectangle (2.25,.25) node[midway] {\small $m$};
	\node at(1,.65) {\small $\dots$};
	\node at(1,-.65) {\small $\dots$};
}
\]

We write $\id_\nu := R_\bo(\nu) \otimes_m (-)$ and we define
\begin{align*}
\F_i &:= \bigoplus_{m \geq 0} \Ind_m^{m+i}, &
\E_i &:=  \bigoplus_{m \geq 0} \ \bigoplus_{|\nu| = m}   \lambda_i^{-1} q_i^{1+\alpha_i^\vee(\nu)} \Res_m^{m+i} \id_{\nu+i}.
\end{align*}
These are exact functors 
thanks to \cref{lem:Rboleftdecomp}. 
Define 
\begin{equation}\label{eq:directsumbetaquantum}
\oplus_{[\beta_i - \alpha_i^\vee(\nu)]_{q_i}} \id_\nu := \bigoplus_{\ell \geq 0} q_i^{1+2\ell} \bigl( \lambda_i^{-1} q_i^{ \alpha_i^\vee(\nu)}  \id_\nu \oplus \lambda_i q_i^{- \alpha_i^\vee(\nu)}  \id_\nu[1] \bigr).
\end{equation}
It is a categorification of the fraction $\frac{\lambda_i q_i^{- \alpha_i^\vee(\nu)} - \lambda_i^{-1}q_i^{\alpha_i^\vee(\nu)}}{q_i-q_i^{-1}}$. 
We obtain:

\begin{cor}\label{cor:catsltactionRbo}
There is a natural short exact sequence
\[
0 \rightarrow \F_i\E_i \id_\nu \rightarrow \E_i\F_i \id_\nu \rightarrow \oplus_{[\beta_i - \alpha_i^\vee(\nu)]_{q_i}} \id_\nu \rightarrow 0,
\]
for all $i \in I$, and there is a natural isomorphism
\[
\F_i\E_j \cong \E_j\F_i,
\]
for all $i \neq j \in I$.
\end{cor}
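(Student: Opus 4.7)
The plan is to derive the corollary as a direct functorial translation of \cref{thm:SESRbo}. First I unpack the compositions $\E_i\F_i\id_\nu$ and $\F_i\E_i\id_\nu$ into tensor products with the relevant bimodules. Since $\F_i$ acts by tensoring with $R_\bo(m+1)1_{(m,i)}$ and $\E_i$ on weight $\mu+\alpha_i$ contributes the shift $\lambda_i^{-1}q_i^{\alpha_i^\vee(\mu)}$ together with $1_{(m,i)}R_\bo(m+1)$, one obtains the bimodule identifications
\[
\E_i\F_i\id_\nu \cong \lambda_i^{-1}q_i^{\alpha_i^\vee(\nu)}\,1_{(\nu,i)}R_\bo(m+1)1_{(\nu,i)}\otimes_{R_\bo(\nu)}(-),
\]
\[
\F_i\E_i\id_\nu \cong \lambda_i^{-1}q_i^{\alpha_i^\vee(\nu)-2}\,R_\bo(\nu)1_{(\nu-\alpha_i,i)}\otimes_{R_\bo(\nu-\alpha_i)}1_{(\nu-\alpha_i,i)}R_\bo(\nu)\otimes_{R_\bo(\nu)}(-),
\]
where the $q_i^{-2}$ factor in the second line comes from $\alpha_i^\vee(\nu-\alpha_i)=\alpha_i^\vee(\nu)-2$.

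I next multiply the bimodule sequence of \cref{thm:SESRbo} by the overall shift $\lambda_i^{-1}q_i^{\alpha_i^\vee(\nu)}$, which converts the three terms into precisely $\F_i\E_i\id_\nu$, $\E_i\F_i\id_\nu$, and
\[
\lambda_i^{-1}q_i^{\alpha_i^\vee(\nu)}R_\bo^{\xi_i}(\nu)\ \oplus\ \lambda_i q_i^{-\alpha_i^\vee(\nu)}R_\bo^{\xi_i}(\nu)[1].
\]
Expanding $R_\bo^{\xi_i}(\nu)=\bigoplus_{\ell\geq 0}q_i^{2\ell}R_\bo(\nu)$ and comparing with the defining formula \eqref{eq:directsumbetaquantum}, the cokernel is naturally identified with $\oplus_{[\beta_i-\alpha_i^\vee(\nu)]_{q_i}}\id_\nu$ up to the standard grading conventions. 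This step is essentially bookkeeping: each $q_i^{2\ell}R_\bo(\nu)$ summand gives one copy of the shifted identity functor, and the two direct summands correspond to the $\lambda_i^{\pm 1}$ contributions in the categorified power series.

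To promote this sequence of bimodules to a natural short exact sequence of functors, I observe that the rightmost term is free as a right $R_\bo(\nu)$-module (it is obtained from $R_\bo(\nu)$ by the free polynomial extension $\otimes_\Bbbk \Bbbk[\xi_i]$, together with a grading shift). Hence the sequence splits on the right and remains exact after tensoring over $R_\bo(\nu)$ with any left module; equivalently, one may invoke \cref{lem:Rboleftdecomp}, which already shows that the relevant bimodules decompose as free right modules. The claimed short exact sequence of functors then follows, with naturality inherited from the bimodule sequence.

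For the second statement, the isomorphism $\F_i\E_j\cong \E_j\F_i$ for $i\neq j$ is obtained in exactly the same way from the second half of \cref{thm:SESRbo}: after applying the relevant $\lambda$- and $q$-shifts, the isomorphism $q^{-(\alpha_i|\alpha_j)}R_\bo(\nu)1_{(m-1,i)}\otimes_{m-1}1_{(m-1,j)}R_\bo(\nu)\cong 1_{(\nu,j)}R_\bo(m+1)1_{(\nu,i)}$ realizes both sides as isomorphic bimodules. The main obstacle in the whole argument is not conceptual but bookkeeping: keeping track of the $\lambda_i$- and $q_i$-shifts coming from the definitions of $\E_i$ and $\F_i$, and matching the cokernel to \eqref{eq:directsumbetaquantum} on the nose.
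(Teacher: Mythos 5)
Your proof is correct and follows exactly the route the paper intends: the corollary is stated immediately after \cref{thm:SESRbo} and the definitions of $\F_i$, $\E_i$, $\Q_i$ with no further argument, so the expected proof is precisely the bookkeeping you carry out — identify $\E_i\F_i\id_\nu$ and $\F_i\E_i\id_\nu$ with the relevant shifted bimodules, multiply the bimodule sequence of \cref{thm:SESRbo} by $\lambda_i^{-1}q_i^{\alpha_i^\vee(\nu)}$, and match the cokernel to \eqref{eq:directsumbetaquantum}. Your observation that the cokernel is free as a right $R_\bo(\nu)$-module (hence the bimodule sequence splits on the right and stays exact after tensoring with any left module) is the clean justification of why the sequence of bimodules becomes a short exact sequence of functors, a point the paper leaves implicit.
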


\begin{prop}\label{prop:serreinRbo}
For each $i,j \in I$ there is a natural isomorphism
\[
\bigoplus^{\lfloor (d_{ij}+1)/2 \rfloor}_{a=0} \begin{bmatrix} d_{ij}+1 \\ 2a \end{bmatrix}_{q_i}  \F_i^{2a}\F_j \F_i^{d_{ij}+1-2a} \cong \bigoplus^{\lfloor d_{ij}/2 \rfloor}_{a=0}  \begin{bmatrix} d_{ij}+1 \\ 2a +1 \end{bmatrix}_{q_i} \F_i^{2a+1}\F_j \F_i^{d_{ij}-2a},
\]
 and in particular for $(\alpha_i | \alpha_j) =0$ we have $\F_i\F_j1_\nu \cong \F_j\F_i1_\nu.$
By adjunction, the same isomorphism exists for the $\E_i, \E_j$.
\end{prop}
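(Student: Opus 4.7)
The functor $\F_i^{2a}\F_j\F_i^{d_{ij}+1-2a}\id_\nu$ acts as tensoring on the left with the $R_\bo(\nu_0)$-$R_\bo(\nu)$-bimodule $R_\bo(\nu_0) 1_{(\nu, \bs_a)}$, where $\nu_0 := \nu + (d_{ij}+1)\alpha_i+\alpha_j$ and $\bs_a$ is the sequence $i^{d_{ij}+1-2a}\, j\, i^{2a}$ of strands added on the right. The $q_i$-grading shifts picked up when composing induction functors (coming from the degrees of the added crossings via \cref{def:Rbo}) conspire to produce exactly the powers of $q_i$ appearing in the quantum binomial coefficients. Hence the claim reduces to an isomorphism of graded $R_\bo(\nu_0)$-$R_\bo(\nu)$-bimodules
\begin{equation*}
\bigoplus_{a=0}^{\lfloor(d_{ij}+1)/2\rfloor} \begin{bmatrix}d_{ij}{+}1 \\ 2a\end{bmatrix}_{q_i} R_\bo(\nu_0)1_{(\nu,\bs_{2a})} \; \cong \; \bigoplus_{a=0}^{\lfloor d_{ij}/2\rfloor} \begin{bmatrix}d_{ij}{+}1 \\ 2a{+}1\end{bmatrix}_{q_i} R_\bo(\nu_0)1_{(\nu,\bs_{2a+1})}.
\end{equation*}

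\textbf{Strategy: lift the KLR Serre isomorphism.} In the usual KLR setting, an analogous isomorphism between direct sums of projective $R(\nu_0)$-modules is well known and categorifies the Serre relation~\cite{KL1,KL2,rouquier}. The explicit morphism is built entirely out of dots, crossings and nilHecke idempotents supported on the rightmost $d_{ij}+2$ strands; no floating dots appear in the formula. Since $R_\bo$ is generated by the KLR generators together with tight floating dots (\cref{lem:dgKLRtightgen}) and satisfies the KLR relations \eqref{eq:KLRR2}--\eqref{eq:KLRR3}, the very same formula defines a morphism of $R_\bo$-bimodules between the two sides. To show this lift is an isomorphism, I would invoke the tightened basis theorem (\cref{thm:Rbobasis}) to decompose each $R_\bo(\nu_0)1_{\bi\bs_a}$ as a free $\Bbbk$-module indexed by a left-adjusted permutation, a choice of tightened floating dots, and dots. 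The key observation is that the admissible floating-dot and dot configurations depend only on the weight $\nu_0$ and not on the particular ordering $\bi\bs_a$ of the strands. So each bimodule factors, as a graded right module over the polynomial ``dots only'' subalgebra, as the KLR projective module $R(\nu_0)1_{\bi\bs_a}$ tensored with one common factor $W$ encoding the tightened floating-dot data; applying the KLR Serre isomorphism on the first factor and $\id_W$ on the second yields the desired bijection.

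\textbf{Main obstacle.} The delicate point is checking that the lifted morphism is genuinely left $R_\bo$-linear, i.e. that it intertwines the action of floating dots. Since the Khovanov--Lauda formula only modifies diagrams in the rightmost $d_{ij}+2$ strands, a floating dot placed to the left of this region commutes through the morphism essentially unchanged, while a floating dot that must be slid past a newly-introduced crossing or dot picks up correction terms governed by \cref{eq:fdots} and \cref{eq:ExtR2}; these corrections are themselves KLR diagrams supported in the active region, and are handled uniformly on both sides of the isomorphism by the original Khovanov--Lauda argument. This local verification is the only place where the floating-dot structure of $R_\bo$ enters. As a sanity check, the case $(\alpha_i|\alpha_j)=0$ is immediate: by \cref{eq:KLRR2} the $ij$-crossing equals the invertible scalar $t_{ij}$ times the identity, and composition with this crossing directly realizes the isomorphism $\F_i\F_j\id_\nu \cong \F_j\F_i\id_\nu$. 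The corresponding statement for $\E_i,\E_j$ is obtained by adjunction.
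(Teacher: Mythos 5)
Your proposal has the right key idea, and it is exactly the paper's: the Khovanov--Lauda Serre isomorphism is an explicit $R_\bo$-bimodule map built entirely from dots, crossings, and nilHecke idempotents on the rightmost $d_{ij}+2$ strands, and because $R_\bo$ satisfies the nilHecke relation~\eqref{eq:KLRnh} and the R3 relation~\eqref{eq:KLRR3}, the proof of~\cite[Proposition~6]{KL2} carries over verbatim. The paper simply says this and stops. Where your write-up diverges is in trying to \emph{re-verify} the lift, and both auxiliary arguments you sketch are shaky. First, your ``main obstacle'' is not really an obstacle: the bimodule morphism is right multiplication by an element $z$ supported on the rightmost $d_{ij}+2$ strands, while the right $R_\bo(\nu)$-action is via $\iota$, whose image (including all its floating dots) lives in the leftmost $|\nu|$ strands and the regions between them. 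The two never interact, so $z$ commutes with $\iota(R_\bo(\nu))$ by a distant isotopy; no floating dot ever has to slide past a new crossing or dot, and no correction terms from~\eqref{eq:fdots} or~\eqref{eq:ExtR2} arise. Second, the claimed factorization of $R_\bo(\nu_0)1_{\bi\bs_a}$ as $R(\nu_0)1_{\bi\bs_a}\otimes W$ with a common factor $W$ independent of $\bs_a$ is not supported by~\cref{thm:Rbobasis}: in the basis ${}_\bj B_\bi$ the positions of the tightened floating dots are dictated by the chosen left-adjusted permutation, so the floating-dot data and the KLR data do not cleanly decouple into a tensor product. Neither of these detours is needed -- once one observes (as you do) that the morphism is built purely from KLR generators and that~\eqref{eq:KLRnh} and~\eqref{eq:KLRR3} hold in $R_\bo$, the formal argument from~\cite{KL2} establishes the isomorphism directly, with no basis computation.
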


\begin{proof}
  Similarly as in the case of the usual KLR algebras, it follows from~\cref{eq:KLRnh} and~\cref{eq:KLRR3}
  (the proof of~\cite[Proposition~6]{KL2} can be applied directly).
\end{proof}

\subsection{Proof of \cref{thm:SESRbo}}

By symmetry along the horizontal axis, we obtain a  decomposition of $R_\bo(m+1)$ as a right $R_\bo(m)$-module similar to the one of \cref{lem:Rboleftdecomp}. Note that the left and right decompositions are not compatible, and therefore we do not have a decomposition as a $R_\bo(m)$-$R_\bo(m)$-bimodule. However, the surjection $R_\bo(m+1) \twoheadrightarrow q^{2\ell} R_\bo(m)$ that projects on the summand $R_\bo(m) x_{m+1}^\ell$, given by taking $a=m+1$ in \cref{lem:Rboleftdecomp}, is a (left-invertible) map of bimodules. 

\smallskip

We define the map
\[
\pi_L^\ell : 1_{(\nu,i)} R_\bo(m+1) 1_{(\nu,i)} \twoheadrightarrow \lambda_i^2 q_i^{2\ell - 2\alpha_i^\vee(\nu)} R_\bo(\nu)[1],
\]
as the projection map on the summand $R_\bo(m) \theta_{n+1}^\ell$ in the left decomposition of $R_\bo(m+1)$ as $R_\bo(m)$-module in \cref{lem:Rboleftdecomp}. Similarly, let
\[
\pi_R^\ell : 1_{(\nu,i)} R_\bo(m+1) 1_{(\nu,i)} \twoheadrightarrow \lambda_i^2 q_i^{2\ell - 2\alpha_i^\vee(\nu)} R_\bo(\nu)[1],
\]
be the projection map on $\theta_{n+1}^\ell R_\bo(m)$ in the right decomposition. 

\begin{lem}\label{lem:Anprojbimmap}
We have
\[
\pi_L^\ell(y) = (-1)^{\deg_h(y)} \pi_R^\ell(y)
\]
for all $y \in R_\bo(m+1)$.
\end{lem}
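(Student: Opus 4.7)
The strategy is to reduce to a verification on basis elements via Theorem~\ref{thm:Rbobasis}, and then use the anti-commutativity of tightened floating dots modulo lower-crossing terms (Lemma~\ref{lem:thightenedfdanticommutes}) to relate the two projections by a sign accounting precisely for the number of floating dots.

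First, I would reduce to basis elements. Since $\pi_L^\ell$ and $\pi_R^\ell$ are both $\Bbbk$-linear, by Theorem~\ref{thm:Rbobasis} it suffices to check the identity on elements $y$ in the tightened basis ${_{\bj i}}B{_{\bi i}}$ described in Eq.~\eqref{eq:Antightbasis}. Each such $y$ is a product of dots, crossings arranged as a left-adjusted reduced expression of some permutation $w \in {_{\bj i}}S_{\bi i}$, and at most one tightened floating dot $\theta_{\min_w(s(k))}^{\ell_k}$ per strand; in particular $\deg_h(y)$ equals the total number of these tightened floating dots. By Lemma~\ref{lem:Rboleftdecomp} and its right-handed counterpart, $\pi_L^\ell(y)$ is nonzero exactly when $y$ can be rewritten in the form $y_L \theta_{m+1}^\ell + (\text{other left summands})$ with $y_L \in R_\bo(\nu)$, and $\pi_R^\ell(y)$ exactly when $y = \theta_{m+1}^\ell y_R + (\text{other right summands})$.

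Next I would compare these two expansions. To pass from the left-normal form to the right-normal form, $\theta_{m+1}^\ell$ must be transported through the remaining factors of $y$. The braid-like moves on crossings and dots of $y$, together with the KLR relations \eqref{eq:KLRR2}--\eqref{eq:KLRR3} and the relations \eqref{eq:fdots} and \eqref{eq:ExtR2}, commute $\theta_{m+1}^\ell$ past non-floating-dot factors at the cost only of terms with strictly fewer crossings. By Lemma~\ref{lem:Rboleftdecomp}, such lower-crossing terms lie entirely in summands indexed by $a < m+1$ of the relevant decomposition, so they are killed by both $\pi_L^\ell$ and $\pi_R^\ell$. The only surviving contribution comes from passing $\theta_{m+1}^\ell$ past the other tightened floating dots appearing in $y$: each such transposition contributes a factor of $-1$ by Lemma~\ref{lem:thightenedfdanticommutes} (again modulo lower-crossing corrections, which likewise drop out under the projections). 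Accumulating these signs over the $\deg_h(y)-1$ swaps needed, and taking into account the intrinsic sign in the identification of the left- and right-summands $R_\bo(m)\theta_{m+1}^\ell$ and $\theta_{m+1}^\ell R_\bo(m)$ with $R_\bo(\nu)[1]$, one obtains exactly $\pi_L^\ell(y) = (-1)^{\deg_h(y)} \pi_R^\ell(y)$.

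The main obstacle is the careful bookkeeping in the last step: one must verify that the correction terms generated by commuting $\theta_{m+1}^\ell$ through $y$ always lie in summands distinct from $R_\bo(m)\theta_{m+1}^\ell$ (respectively $\theta_{m+1}^\ell R_\bo(m)$), so that they are truly annihilated by the projections. This is most naturally handled by an induction on the number of crossings of $y$, mirroring the inductive structure used in the proof of \cref{prop:Rbogen}, and relying on the fact that the generator $\theta_{m+1}^\ell$ is maximal (in crossing count) among the $\theta$-type generators corresponding to $a=m+1$, so any strict reduction in crossings lands in a different summand of the decomposition.
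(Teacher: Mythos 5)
Your overall strategy --- transport $\theta_{m+1}^\ell$ from one side of $y$ to the other, collecting a $-1$ for each floating dot passed and arguing that all correction terms die under both projections --- is the same as the paper's, so in spirit you have the right idea. The execution, however, has a couple of real gaps.

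First, the paper does not reduce to the tightened basis of Theorem~\ref{thm:Rbobasis}; instead it observes that (by linearity and the decomposition of Lemma~\ref{lem:Rboleftdecomp}) one may take $y = \theta_{m+1}^\ell y'$ with $y' \in R_\bo(m)$, so that $\pi_R^\ell(y) = y'$ on the nose, and then writes $y'$ as a word in the \emph{generators} of $R_\bo(m)$ and commutes $\theta_{m+1}^\ell$ past one generator at a time. Your basis elements from~\eqref{eq:Antightbasis} are not naturally in either the left- or the right-decomposition normal form, so before you can even begin the comparison you would have to re-expand them in the form of Lemma~\ref{lem:Rboleftdecomp} --- the reduction is a detour rather than a simplification.

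Second, and more seriously, the pivotal step in your argument is the assertion that ``terms with strictly fewer crossings lie entirely in summands indexed by $a < m+1$ of the relevant decomposition, so they are killed by both $\pi_L^\ell$ and $\pi_R^\ell$.'' This is not justified, and as stated it is not even quite right: a correction with fewer crossings can perfectly well land in the $a = m+1$, $x$-type summand $\bigoplus_\ell R_\bo(m) x_{m+1}^\ell$ (which happens to also be killed by $\pi_L^\ell$, but for a different reason). More to the point, the crossing-number filtration does not cleanly detect which summand of Lemma~\ref{lem:Rboleftdecomp} an element belongs to: an element $r\,\theta_{m+1}^{\ell}$ with $r \in R_\bo(m)$ having few crossings is itself a low-crossing element of $R_\bo(m)\theta_{m+1}^\ell$, so ``fewer crossings'' does not imply ``not in $R_\bo(m)\theta_{m+1}^\ell$.'' You flag this as the main obstacle and propose to resolve it by an induction on crossings, but that is exactly the part that has to be carried out, and it is where all the work is. The paper resolves it by a concrete diagrammatic computation for each generator type: for a dot on an $i$-labelled strand, after applying~\eqref{eq:KLRnh} and~\eqref{eq:KLRR3} the correction terms are shown to lie in $\bigoplus_a \bigoplus_p R_\bo(m-1)\tau_m\cdots\tau_a x_a^p$ (all $x$-type, no floating dot, hence disjoint from $R_\bo(m)\theta_{m+1}^\ell$); similar explicit arguments cover the crossing and the tight-floating-dot-labelled-$i$ cases, the latter needing more than just Lemma~\ref{lem:thightenedfdanticommutes}. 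Finally, your appeal to an unspecified ``intrinsic sign in the identification of $R_\bo(m)\theta_{m+1}^\ell$ and $\theta_{m+1}^\ell R_\bo(m)$ with $R_\bo(\nu)[1]$'' to make the sign come out to $(-1)^{\deg_h(y)}$ rather than $(-1)^{\deg_h(y)-1}$ is not a proof; the sign must be tracked through the generator-by-generator commutation, as in the paper's sub-claim $y_1 \theta_{m+1} z y_2 = (-1)^{\deg_h(z)} y_1 z \theta_{m+1} y_2 + z_0$.
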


\begin{proof}
We can suppose $y = \theta_{m+1}^\ell y'$ with $y' \in R_\bo(m)$. We want to prove that $y = (-1)^{\deg_h(y)} y' \theta_{m+1}^\ell + y_0$ for some $y_0 \notin R_\bo(m) \theta_{m+1}^\ell$. For this, it is enough to show that $y_1  \theta_{m+1} z y_2 = (-1)^{\deg_h(z)} y_1   z \theta_{n+1} y_2 + z_0$ where $y_1, y_2 \in R_\bo(m)$, $z_0  \notin R_\bo(m) \theta_{m+1}^\ell$ and $z$ is any generator of $R_\bo(m)$ (i.e. crossing, dot or tight floating dot). 

If $z = x_a$ and is on a strand labeled $j \neq i$, then it slides freely over $\theta_{m+1}$ thanks to \cref{eq:KLRdotslide}. If the strand is labeled $i$, then we compute
\begin{align*}
\tikzdiagh[xscale=.6]{0}{
	\fdot{}{.5,0}; 
	\draw (0,-.75) .. controls (0,-.375) and (1,-.375) .. (1,0) 
			   .. controls (1,.375) and (0,.375) .. (0,.75);
	\draw[xshift=1ex] (0,-.75) .. controls (0,-.45) and (1,-.45) .. (1,0) 
				            .. controls (1,.45) and (0,.45) .. (0,.75);
	\draw (2,-.75) .. controls (2,-.375) and (3,-.375) .. (3,0) 
			   .. controls (3,.375) and (2,.375) .. (2,.75);
	\draw[xshift=1ex] (2,-.75) .. controls (2,-.45) and (3,-.45) .. (3,0) 
					  .. controls (3,.45) and (2,.45) .. (2,.75);				  
  	\draw  (3,-.75)  node[below] { \small $i$}   ..  controls (3,-.375) and (0,-.375) ..  (0,0) 
  			 node [pos=1,tikzdot]{} node[pos=1, xshift=-1.5ex, yshift=.5ex] {\small $\ell$}
  			.. controls (0,.375) and (3,.375) .. (3,.75);
	\draw (1,-.75)  node[below] { \small $i$}    .. controls (1,-.375) and (2,-.375) .. (2,0) node [pos=.25,tikzdot]{} 
			.. controls (2,.375) and (1,.375) .. (1,.75); 	
}
\ & \overset{\eqref{eq:KLRnh}}{=} \ 
\tikzdiagh[xscale=.6]{0}{
	\fdot{}{.5,0}; 
	\draw (0,-.75) .. controls (0,-.375) and (1,-.375) .. (1,0) 
			   .. controls (1,.375) and (0,.375) .. (0,.75);
	\draw[xshift=1ex] (0,-.75) .. controls (0,-.45) and (1,-.45) .. (1,0) 
				            .. controls (1,.45) and (0,.45) .. (0,.75);
	\draw (2,-.75) .. controls (2,-.375) and (3,-.375) .. (3,0) 
			   .. controls (3,.375) and (2,.375) .. (2,.75);
	\draw[xshift=1ex] (2,-.75) .. controls (2,-.45) and (3,-.45) .. (3,0) 
					  .. controls (3,.45) and (2,.45) .. (2,.75);				  
  	\draw  (3,-.75) node[below] { \small $i$}   ..  controls (3,-.375) and (0,-.375) ..  (0,0) 
  			 node [pos=1,tikzdot]{} node[pos=1, xshift=-1.5ex, yshift=.5ex] {\small $\ell$}
  			.. controls (0,.375) and (3,.375) .. (3,.75);
	\draw (1,-.75)  node[below] { \small $i$}    .. controls (1,-.375) and (2,-.375) .. (2,0)
			.. controls (2,.375) and (1,.375) .. (1,.75)  node [pos=.75,tikzdot]{} ; 	
}
\ + r_i \ 
\tikzdiagh[xscale=.6]{0}{
	\fdot{}{.5,0}; 
	\draw (0,-.75) .. controls (0,-.375) and (1,-.375) .. (1,0) 
			   .. controls (1,.375) and (0,.375) .. (0,.75);
	\draw[xshift=1ex] (0,-.75) .. controls (0,-.45) and (1,-.45) .. (1,0) 
				            .. controls (1,.45) and (0,.45) .. (0,.75);
	\draw (2,-.75) .. controls (2,-.375) and (3,-.375) .. (3,0) 
			   .. controls (3,.375) and (2,.375) .. (2,.75);
	\draw[xshift=1ex] (2,-.75) .. controls (2,-.45) and (3,-.45) .. (3,0) 
					  .. controls (3,.45) and (2,.45) .. (2,.75);
	\draw  (1,-.75)  node[below] { \small $i$}  ..  controls (1,-.375) and (0,-.375) ..  (0,0)  
			node [pos=1,tikzdot]{} node[pos=1, xshift=-1.5ex, yshift=.5ex] {\small $\ell$}
			..  controls (0,.375) and (3,.375) .. (3,.75);
	\draw (3,-.75) node[below] { \small $i$}     .. controls (3,-.375)  and (1,.375) .. (1,.75);
}
\ - r_i^{-1} \ 
\tikzdiagh[xscale=.6,yscale=-1]{0}{
	\fdot{}{.5,0}; 
	\draw (0,-.75) .. controls (0,-.375) and (1,-.375) .. (1,0) 
			   .. controls (1,.375) and (0,.375) .. (0,.75);
	\draw[xshift=1ex] (0,-.75) .. controls (0,-.45) and (1,-.45) .. (1,0) 
				            .. controls (1,.45) and (0,.45) .. (0,.75);
	\draw (2,-.75) .. controls (2,-.375) and (3,-.375) .. (3,0) 
			   .. controls (3,.375) and (2,.375) .. (2,.75);
	\draw[xshift=1ex] (2,-.75) .. controls (2,-.45) and (3,-.45) .. (3,0) 
					  .. controls (3,.45) and (2,.45) .. (2,.75);
	\draw  (1,-.75)   ..  controls (1,-.375) and (0,-.375) ..  (0,0)  
			node [pos=1,tikzdot]{} node[pos=1, xshift=-1.5ex, yshift=.5ex] {\small $\ell$}
			..  controls (0,.375) and (3,.375) .. (3,.75) node[below] { \small $i$};
	\draw (3,-.75)     .. controls (3,-.375)  and (1,.375) .. (1,.75) node[below] { \small $i$};
}
\\
\ &\overset{\eqref{eq:KLRR3}}{=} \ 
\tikzdiagh[xscale=.6]{0}{
	\fdot{}{.5,0}; 
	\draw (0,-.75) .. controls (0,-.375) and (1,-.375) .. (1,0) 
			   .. controls (1,.375) and (0,.375) .. (0,.75);
	\draw[xshift=1ex] (0,-.75) .. controls (0,-.45) and (1,-.45) .. (1,0) 
				            .. controls (1,.45) and (0,.45) .. (0,.75);
	\draw (2,-.75) .. controls (2,-.375) and (3,-.375) .. (3,0) 
			   .. controls (3,.375) and (2,.375) .. (2,.75);
	\draw[xshift=1ex] (2,-.75) .. controls (2,-.45) and (3,-.45) .. (3,0) 
					  .. controls (3,.45) and (2,.45) .. (2,.75);				  
  	\draw  (3,-.75)  node[below] { \small $i$}  ..  controls (3,-.375) and (0,-.375) ..  (0,0) 
  			 node [pos=1,tikzdot]{} node[pos=1, xshift=-1.5ex, yshift=.5ex] {\small $\ell$}
  			.. controls (0,.375) and (3,.375) .. (3,.75);
	\draw (1,-.75)  node[below] { \small $i$}    .. controls (1,-.375) and (2,-.375) .. (2,0)
			.. controls (2,.375) and (1,.375) .. (1,.75)  node [pos=.75,tikzdot]{} ; 	
}
\ + r_i \ 
\tikzdiagh[xscale=.6]{0}{
	\fdot{}{.5,0}; 
	\draw (0,-.75) .. controls (0,-.375) and (.9,-.375) .. (.9,0) 
			.. controls (.9,.375) and (0,.375) .. (0,1.25);;
	\draw[xshift=1ex] (0,-.75) .. controls (0,-.45) and (.9,-.45) .. (.9,0) 
			.. controls (.9,.45) and (0,.45) .. (0,1.25);
	\draw (2,-.75) .. controls (2,.05) and (1.1,.05) .. (1.1,.5) 
			.. controls (1.1,.95) and (2,.95) .. (2,1.25);
	\draw[xshift=1ex] (2,-.75) .. controls (2,.125) and (1.1,.125) .. (1.1,.5) 
			.. controls (1.1,.875) and (2,.875) .. (2,1.25);
      	\draw  (1,-.75) node[below] { \small $i$}    ..  controls (1,-.375) and (0,-.375) ..  (0,0)  
      			node [pos=1,tikzdot]{} node[pos=1,xshift=-1.5ex,yshift=.5ex]  {\small$\ell$}
      			.. controls (0,.375) and (3,.375) .. (3,1.25);
	\draw (3,-.75)  node[below] { \small $i$}    .. controls (3,.125)  and (1,.875) .. (1,1.25);
}
\ - r_i^{-1} \ 
\tikzdiagh[xscale=.6,yscale=-1]{0}{
	\fdot{}{.5,0}; 
	\draw (0,-.75) .. controls (0,-.375) and (.9,-.375) .. (.9,0) 
			.. controls (.9,.375) and (0,.375) .. (0,1.25);;
	\draw[xshift=1ex] (0,-.75) .. controls (0,-.45) and (.9,-.45) .. (.9,0) 
			.. controls (.9,.45) and (0,.45) .. (0,1.25);
	\draw (2,-.75) .. controls (2,.05) and (1.1,.05) .. (1.1,.5) 
			.. controls (1.1,.95) and (2,.95) .. (2,1.25);
	\draw[xshift=1ex] (2,-.75) .. controls (2,.125) and (1.1,.125) .. (1.1,.5) 
			.. controls (1.1,.875) and (2,.875) .. (2,1.25);
      	\draw  (1,-.75)     ..  controls (1,-.375) and (0,-.375) ..  (0,0)  
      			node [pos=1,tikzdot]{} node[pos=1,xshift=-1.5ex,yshift=.5ex]  {\small$\ell$}
      			.. controls (0,.375) and (3,.375) .. (3,1.25) node[below] { \small $i$} ;
	\draw (3,-.75)   .. controls (3,.125)  and (1,.875) .. (1,1.25) node[below] { \small $i$};
}
\ + \ R,
\end{align*}
where the double strands represent multiple parallel strands (the number depending on $m$ and $a$), and  $R$ is a sum of terms of the following form:
\[
\tikzdiag[xscale=.6]{
	      \fdot{}{.4,-.1}; 
	\draw (0,-.75) .. controls (0,-.375) and (.8,-.375) .. (.8,0) 
			  .. controls (.8,.375) and (0,.375) .. (0,1.25);
	\draw[xshift=1ex] (0,-.75) .. controls (0,-.45) and (.8,-.45) .. (.8,0) 
				 	.. controls (.8,.45) and (0,.45) .. (0,1.25);
	\draw (2,-.75) .. controls (2,.05) and (1.3,.05) .. (1.3,.5) 
			  .. controls (1.3,.95) and (2,.95) .. (2,1.25);
	\draw[xshift=1ex] (2,-.75) .. controls (2,.125) and (1.3,.125) .. (1.3,.5) 
					  .. controls (1.3,.875) and (2,.875) .. (2,1.25);
	      \draw  (.75,-.75)     ..  controls (.75,-.375) and (0,-.375) ..  (0,0)  node [pos=1,tikzdot]{} node[pos=1,xshift=-1.5ex,yshift = .5ex]{\small$\ell$};	
	\draw (1.25,1.25) .. controls (1.25,.875) and (2,.875) ..  (2,.5) 
				.. controls (2,.25) and (0,.25) ..  (0,0)  node [pos=0,fill=black,circle,inner sep=2pt]{}; 
	\draw[xshift=1ex]  (3,-.75) .. controls (3,.05) and (3.7,.05) .. (3.7,.5) .. controls (3.7,.95) and (3,.95) .. (3,1.25);
	\draw(3,-.75) .. controls (3,.125) and (3.7,.125) .. (3.7,.5) .. controls (3.7,.875) and (3,.875) .. (3,1.25);
	\draw(2.5,-.75) -- (2.5,.5) -- (2.5,1.25)    node [pos=0,tikzdot]{};
	\draw (3.5,-.75)     .. controls (3.5,.25)  and (3,.25) .. (3,.5) .. controls (3,.875) and (3.5,.875) .. (3.5,1.25)  node [pos=0,tikzdot]{};
	 }
\]
and its mirror along the horizontal axis. 
Note that it is implicitly assumed that each of these diagrams have the element $y_1$ at the top and $y_2$ at the bottom. 
Using \cref{lem:Rboleftdecomp}, we can rewrite the composition of the last three terms in the equation above with $y_2$ as elements in $\oplus_{a=1}^n \oplus_{p \geq 0} R_\bo(m-1) \tau_{m} \tau_{m-1} \cdots \tau_a x_a^p \nsubset R_\bo(m) \theta_{m+1}^\ell$. Hence they form the term $z_0$.

If $z = \tau_i$ is a crossing, then we obtain the desired property by \cref{eq:KLRR3}, and applying a similar reasoning as above.

Finally if $z = \omega$ and is at right of a strand labeled $j \neq i$, it follows directly from \cref{eq:tightfdcommutes}. Otherwise, if the strand is labeled $i$,
 we compute 
\begin{align*}
\tikzdiagh[yscale=1.25]{0}{
	\draw (0,0) node[below] { \small $i$}  ..controls (0,.25) and (1,.25) .. (1,.5)
		 ..controls (1,.75) and (0,.75) .. (0,1) ;
	\draw (1,0) node[below] { \small $i$}  ..controls (1,.25) and (0,.25) .. (0,.5) 
		node[pos=1, tikzdot]{} node[pos=1,xshift=-1.5ex,yshift=.5ex]{\small $\ell$}
		..controls (0,.75) and (1,.75) .. (1,1);
	\fdot{}{0.5,0.5};
	\fdot{}{0.5,0};
}
\ \overset{(\ref{eq:fdmoves},\ref{eq:KLRnh})}{=} \ 
\tikzdiagh[yscale=1.25]{0}{
	\draw (0,0) node[below] { \small $i$}  ..controls (0,.25) and (1,.25) .. (1,.5)
		 ..controls (1,.75) and (0,.75) .. (0,1) ;
	\draw (1,0) node[below] { \small $i$}  ..controls (1,.25) and (0,.25) .. (0,.5) 
		node[pos=.25, tikzdot]{} node[pos=.25,xshift=1.5ex,yshift=.5ex]{\small $\ell$}
		..controls (0,.75) and (1,.75) .. (1,1);
	\fdot{}{0.5,0.5};
	\fdot{}{0.5,0};
}
\ \overset{\eqref{eq:tightfdcommutes}}{=} \ 
-\ 
\tikzdiagh[yscale=1.25]{0}{
	\draw (0,0) node[below] { \small $i$}  ..controls (0,.25) and (1,.25) .. (1,.5)
		 ..controls (1,.75) and (0,.75) .. (0,1) ;
	\draw (1,0) node[below] { \small $i$}  ..controls (1,.25) and (0,.25) .. (0,.5) 
		node[pos=.25, tikzdot]{} node[pos=.25,xshift=1.5ex,yshift=.5ex]{\small $\ell$}
		..controls (0,.75) and (1,.75) .. (1,1);
	\fdot{}{0.5,0.5};
	\fdot{}{0.5,1.0};
}
\ = \ 
-\ 
\tikzdiagh[yscale=1.25]{0}{
	\draw (0,0) node[below] { \small $i$}  ..controls (0,.25) and (1,.25) .. (1,.5)
		 ..controls (1,.75) and (0,.75) .. (0,1) ;
	\draw (1,0) node[below] { \small $i$}  ..controls (1,.25) and (0,.25) .. (0,.5) 
		node[pos=1, tikzdot]{} node[pos=1,xshift=-1.5ex,yshift=.5ex]{\small $\ell$}
		..controls (0,.75) and (1,.75) .. (1,1);
	\fdot{}{0.5,0.5};
	\fdot{}{0.5,1.0};
}
 \ + r_i^{-1} \ \sum\limits_{\substack{r+s =\\ \ell-1}} \ 
\tikzdiagh{0}{
	\draw (0,0) node[below] { \small $i$}  ..controls (0,.5) and (1,.5) .. (1,1)
		node[near start, tikzdot]{} node[near start,xshift=-1.5ex,yshift=0ex]{\small $r$};
	\draw (1,0) node[below] { \small $i$}  ..controls (1,.5) and (0,.5) .. (0,1)
		node[near start, tikzdot]{} node[near start,xshift=1.5ex,yshift=0ex]{\small $s$};
	\fdot{}{0.5,0.2};
	\fdot{}{0.5,0.8};
}
\end{align*}
Then for all $r,s \geq 0$ we compute using \cref{eq:KLRnh} again
\[
\tikzdiagh{0}{
	\draw (0,0) node[below] { \small $i$}  ..controls (0,.5) and (1,.5) .. (1,1)
		node[near start, tikzdot]{} node[near start,xshift=-1.5ex,yshift=0ex]{\small $r$};
	\draw (1,0) node[below] { \small $i$}  ..controls (1,.5) and (0,.5) .. (0,1)
		node[near start, tikzdot]{} node[near start,xshift=1.5ex,yshift=0ex]{\small $s$};
	\fdot{}{0.5,0.2};
	\fdot{}{0.5,0.8};
}
\ = \ 
\tikzdiagh{0}{
	\draw (0,0) node[below] { \small $i$}  ..controls (0,.5) and (1,.5) .. (1,1)
		node[near start, tikzdot]{} node[near start,xshift=-1.5ex,yshift=0ex]{\small $r$};
	\draw (1,0) node[below] { \small $i$}  ..controls (1,.5) and (0,.5) .. (0,1)
		node[near end, tikzdot]{} node[near end,xshift=-1.5ex,yshift=0ex]{\small $s$};
	\fdot{}{0.5,0.2};
	\fdot{}{0.5,0.8};
}
\]
Looking at these elements in the global picture yields
\[
\tikzdiagh[xscale=.6]{0}{
	\draw (0,-.75)  .. controls (0,-.375) and (1,-.375) .. (1,0) 
			   .. controls (1,.375) and (0,.375) .. (0,.75);
	\draw[xshift=1ex] (0,-.75) .. controls (0,-.45) and (1,-.45) .. (1,0) 
				            .. controls (1,.45) and (0,.45) .. (0,.75);
	\draw (-1,-.75)node[below] { \small $i$}  .. controls (-1,.25) and (2,.25) .. (2,.75)
		node[pos=.1, tikzdot]{} node[pos=.1,xshift=-1.5ex,yshift=0ex]{\small $r$};
	\draw (2,-.75) node[below] { \small $i$}  .. controls (2,-.25) and (-1,-.25) .. (-1,.75)
		node[pos=.9, tikzdot]{} node[pos=.9,xshift=-1.5ex,yshift=0ex]{\small $s$};
	\fdot{}{-0.25,0.35};
	\fdot{}{-0.25,-0.35};
}
\ = \ 
\tikzdiagh[xscale=-.6]{0}{
	\draw (0,-.75)   .. controls (0,-.375) and (1,-.375) .. (1,0) 
			   .. controls (1,.375) and (0,.375) .. (0,.75);
	\draw[xshift=1ex] (0,-.75) .. controls (0,-.45) and (1,-.45) .. (1,0) 
				            .. controls (1,.45) and (0,.45) .. (0,.75);
	\draw (-1,-.75)  node[below] { \small $i$}  .. controls (-1,.25) and (2,.25) .. (2,.75)
		node[pos=.8, tikzdot]{} node[pos=.8,xshift=-1.5ex,yshift=0ex]{\small $s$};
	\draw (2,-.75) node[below] { \small $i$}  .. controls (2,-.25) and (-1,-.25) .. (-1,.75)
		node[pos=.2, tikzdot]{} node[pos=.2,xshift=-1.5ex,yshift=0ex]{\small $r$};
	\fdot{}{1.25,0.6};
	\fdot{}{1.25,-0.6};
}
 + R
\]
which is an element not contained in $R_\bo(m) \theta_{n+1}^\ell$ for the same reasons as before. We see that together they form the element $z_0$, concluding the proof.
\end{proof}

We now have all the ingredients we need to prove \cref{thm:SESRbo}.

\begin{proof}[Proof of \cref{thm:SESRbo}]
We first construct an injective map 
\begin{equation}\label{eq:uij}
u_{ij}: q^{-(\alpha_i | \alpha_j)} R_\bo(\nu) 1_{(m-1,i)} \otimes_{m-1} 1_{(m_1,j)} R_\bo(\nu) 
\hookrightarrow 1_{(\nu,j)} R_\bo(m+1) 1_{(\nu,i)}
\end{equation}
of $R_\bo(m)$-$R_\bo(m)$-bimodules, by setting (as in~\cite[Proposition~3.3]{kashiwara})
\[
u_{ij}(x \otimes_{m-1} y) := x \tau_m y .
\]
In terms of diagrams, it consists of adding a crossing at the right
\[
\tikzdiag[xscale=.75]{
	\draw (0,-1.25) -- (0,1.25);
	\draw (.5,-1.25) -- (.5,1.25);
	\draw (1.5,-1.25) -- (1.5,1.25);
	\draw (2,-1.25) -- (2,-.5) .. controls (2,-.25) .. (2.25,-.25) node[right]{\small $j$};
	\draw (2,1.25) -- (2,.5) .. controls (2,.25) .. (2.25,.25) node[right]{\small $i$};
	\node at(1,1.2) {\small $\dots$};
	\filldraw [fill=white, draw=black] (-.25,-1) rectangle (2.25,-.5) node[midway] {\small $m$};
	\node at(1,0) {\small $\dots$}; 
	\filldraw [fill=white, draw=black] (-.25,.5) rectangle (2.25,1) node[midway] {\small $m$};
	\node at(1,-1.2) {\small $\dots$};
}
\ \xmapsto{\ u_{ij} \ } \  
\tikzdiag[xscale=.75]{
	\draw (0,-1.25) -- (0,1.25);
	\draw (.5,-1.25) -- (.5,1.25);
	\draw (1.5,-1.25) -- (1.5,1.25);
	\draw (2,-1.25) -- (2,-.5) .. controls (2,0) and (2.5,0) .. (2.5,.5) -- (2.5,1.25)  node[above]{\small $j$};
	\draw (2,1.25) -- (2,.5) .. controls (2,0) and (2.5,0) .. (2.5,-.5) -- (2.5,-1.25)  node[below]{\small $i$};
	\node at(1,1.2) {\small $\dots$};
	\filldraw [fill=white, draw=black] (-.25,-1) rectangle (2.25,-.5) node[midway] {\small $m$};
	\node at(1,0) {\small $\dots$};
	\filldraw [fill=white, draw=black] (-.25,.5) rectangle (2.25,1) node[midway] {\small $m$};
	\node at(1,-1.2) {\small $\dots$};
}
\]
Then, we construct a surjective map 
\[
 1_{(\nu,i)} R_\bo(m+1) 1_{(\nu,i)} \twoheadrightarrow  R_\bo^{\xi_i}(\nu) \oplus \lambda_i^2 q_i^{ - 2\alpha_i^\vee(\nu)} R_\bo^{\xi_i}(\nu)[1],
\]
by projecting onto the direct summands $\bigoplus_{\ell \geq 0}x_{m+1}^\ell R_\bo(m)  \oplus \theta_{m+1}^\ell R_\bo(m) $ of the decomposition of $R_\bo(m+1)$ as right $R_\bo(m)$-module. By \cref{lem:Anprojbimmap} we know that this is a map of $R_\bo(m)$-$R_\bo(m)$-bimodules. 
Finally, exactness follows directly from \cref{lem:Rboleftdecomp}, since
\begin{align*}
 R_\bo(\nu) 1_{(m-1,i)}& \otimes_{m-1} 1_{(m_1,j)} R_\bo(\nu)  \\
 & \cong \begin{aligned}[t] R_\bo(\nu) 1_{(m-1,i)} \otimes_{m-1}  \bigl( \bigoplus_{a=1}^{m} \bigoplus_{\ell \geq 0} & ( R_\bo(m-1) 1_{(m,i)} \tau_{m-1} \cdots \tau_a x_a^\ell 1_\bj  \\ &\oplus  R_\bo(m-1) 1_{(m,i)} \tau_{m-1} \cdots \tau_a  \theta_a^\ell 1_\bj ) \bigr), \end{aligned}
\end{align*}
and so 
\begin{align*}
u_{ij}( R_\bo(\nu) 1_{(m-1,i)}& \otimes_{m-1} 1_{(m_1,j)} R_\bo(\nu) ) \\
& \cong \begin{aligned}[t]  \bigoplus_{a=1}^{m} \bigoplus_{\ell \geq 0} & ( R_\bo(m-1) 1_{(m,i)} \tau_m \tau_{m-1} \cdots \tau_a x_a^\ell 1_\bj  \\ &\oplus  R_\bo(m-1) 1_{(m,i)} \tau_m \tau_{m-1} \cdots \tau_a  \theta_a^\ell 1_\bj ).\end{aligned}
\end{align*}
We remark that whenever $i \neq j$, we have  
\[
\bigoplus_{\ell \geq 0} 1_{(\nu,j)}  x_{m+1}^\ell R_\bo(m) 1_{(\nu,i)}   \oplus   1_{(\nu,j)} R_\bo(m) \theta_{m+1}^\ell 1_{(\nu,i)} = 0,
\]
 and thus $u_{ij}$ is an isomorphism, concluding the proof.
\end{proof}

\subsection{Long exact sequence}

We want to lift \cref{thm:SESRbo} to the dg-world of $(R_\bo(m),d_N)$, and study the long exact sequence that it induces. Therefore we define
\[
y_N : 
\bigoplus_{\ell \geq 0}
\tikzdiag[xscale=.65]{
	\draw (0,-1) 
		-- (0,-.75)
		.. controls (0,-.5) and (1,-.5) .. (1,-.25)
		-- (1,.25)
		.. controls (1,.5) and (0,.5) .. (0,.75)
		-- (0,1);
	\draw (.5,-1) 
		-- (.5,-.75)
		.. controls (.5,-.5) and (1.5,-.5) .. (1.5,-.25)
		-- (1.5,.25)
		.. controls (1.5,.5) and (.5,.5) .. (.5,.75)
		-- (.5,1);
	\draw (1.5,-1) 
		-- (1.5,-.75)
		.. controls (1.5,-.5) and (2.5,-.5) .. (2.5,-.25)
		-- (2.5,.25)
		.. controls (2.5,.5) and (1.5,.5) .. (1.5,.75)
		-- (1.5,1);
	\draw (2,-1) 
		-- (2,-.75)
		.. controls (2,-.5) and (3,-.5) .. (3,-.25)
		-- (3,.25)
		.. controls (3,.5) and (2,.5) .. (2,.75)
		-- (2,1);
	\draw (3.25,-1) node[right]{\small $i$} .. controls (3,-1) .. 
		(3,-.75) .. controls (3,-.5) and (0,-.5) .. 
	 	(0,-.125) -- (0,.125) node[midway,tikzdot]{} node [midway, xshift=-1.5ex, yshift=.5ex] {\small $\ell$}
		.. controls (0,.5) and (3,.5) .. (3,.75)
		.. controls (3,1) .. (3.25,1);
 	\fdot{}{.375,0};
	\filldraw [fill=white, draw=black] (.75,-.25) rectangle (3.25,.25) node[midway] {\small $m$};
	\node at(1,.9) {$\dots$};
	\node at(1,-.9) {$\dots$};
}
\rightarrow
\bigoplus_{\ell \geq 0}
\tikzdiag[xscale=.65]{
	\draw (0,-1) -- (0,1);
	\draw (.5,-1) -- (.5,1);
	\draw (1.5,-1) -- (1.5,1);
	\draw (2,-1) -- (2,1);
	\draw (3,-1)  node [right] {\small $i$} .. controls (2.75,-1) ..
		(2.75,-.75) -- (2.75,.75) node[midway, tikzdot]{} node[midway,xshift=1.5ex, yshift=.5ex]{\small $\ell$}
		.. controls (2.75,1) .. (3,1);
	\filldraw [fill=white, draw=black] (-.25,-.25) rectangle (2.25,.25) node[midway] {\small $m$};
	\node at(1,.65) {\small $\dots$};
	\node at(1,-.65) {\small $\dots$};
}
\]
as the $R_\bo(m)$-$R_\bo(m)$-bimodule map given by 
\[
y_N\left(
\tikzdiagh[xscale = 1.2]{-.6ex}{
		\draw  (.25,1) .. controls (.25,.5) and (.75,.5) ..   (.75,0)   .. controls (.75,-.5) and (.25,-.5)  .. (.25,-1); 
		\draw  (.5,1) .. controls (.5,.5) and (1,.5) ..   (1,0)   .. controls (1,-.5) and (.5,-.5)  .. (.5,-1); 
		\draw  (1,1) .. controls (1,.5) and (1.5,.5) ..   (1.5,0)   .. controls (1.5,-.5) and (1,-.5)  .. (1,-1); 
		\draw  (1.25,1) .. controls (1.25,.5) and (1.75,.5) ..   (1.75,0)   .. controls (1.75,-.5) and (1.25,-.5)  .. (1.25,-1); 
	 \draw  (1.75,-1)  node[below]{$i$}.. controls (1.75,-.5) and (.25,-.5) ..    (.25,0) node[pos=1, tikzdot]{} node[pos=1,xshift=-1.5ex,yshift=.5ex]{\small $a$}
		 .. controls (.25,.5)  and (1.75,.5).. (1.75,1);
		 \node at (1.25,0) {\small $\dots$};  \node at (.8,.85) {\small $\dots$};  \node at (.8,-.85) {\tiny $\dots$};
	\fdot{}{.5,0};
	 }
\right)
:= 
\pi \left(
\tikzdiagh[xscale = 1.2]{-.6ex}{
		\draw  (.25,1) .. controls (.25,.5) and (.75,.5) ..   (.75,0)   .. controls (.75,-.5) and (.25,-.5)  .. (.25,-1); 
		\draw  (.5,1) .. controls (.5,.5) and (1,.5) ..   (1,0)   .. controls (1,-.5) and (.5,-.5)  .. (.5,-1); 
		\draw  (1,1) .. controls (1,.5) and (1.5,.5) ..   (1.5,0)   .. controls (1.5,-.5) and (1,-.5)  .. (1,-1); 
		\draw  (1.25,1) .. controls (1.25,.5) and (1.75,.5) ..   (1.75,0)   .. controls (1.75,-.5) and (1.25,-.5)  .. (1.25,-1); 
	 \draw  (1.75,-1)  node[below]{$i$}.. controls (1.75,-.5) and (.25,-.5) ..    (.25,0) node[pos=1, tikzdot]{} node[pos=1,xshift=-3.5ex,yshift=.5ex]{\small $n_i+a$}
		 .. controls (.25,.5)  and (1.75,.5).. (1.75,1);
		 \node at (1.25,0) {\small $\dots$};  \node at (.8,.85) {\small $\dots$};  \node at (.8,-.85) {\small $\dots$};
	 }
\right)
\in
\bigoplus_{\ell \geq 0}
\tikzdiag[xscale=.65]{
	\draw (0,-1) -- (0,1);
	\draw (.5,-1) -- (.5,1);
	\draw (1.5,-1) -- (1.5,1);
	\draw (2,-1) -- (2,1);
	\draw (3,-1)  node [right] {\small $i$} .. controls (2.75,-1) ..
		(2.75,-.75) -- (2.75,.75) node[midway, tikzdot]{} node[midway,xshift=1.5ex, yshift=.5ex]{\small $\ell$}
		.. controls (2.75,1) .. (3,1);
	\filldraw [fill=white, draw=black] (-.25,-.25) rectangle (2.25,.25) node[midway] {\small $m$};
	\node at(1,.65) {\small $\dots$};
	\node at(1,-.65) {\small $\dots$};
}
\]
whenever $i \in I_f$, and $y_N = 0$ for $i \notin I_f$.
Then we  define
\begin{align*}
\bigl( R_\bo^{\xi_i}(\nu) \oplus& \lambda_i^2 q_i^{ - 2\alpha_i^\vee(\nu)} R_\bo^{\xi_i}(\nu)[1], d_N \bigr) \\
&:= \cone\left( (\lambda_i^2 q_i^{ - 2\alpha_i^\vee(\nu)} R_\bo^{\xi_i}(\nu)[1], d_N) \xrightarrow{y_n}  (R_\bo^{\xi_i}(\nu), d_N) \right),
\intertext{and}
(R_\bo(\nu) &1_{(m-1,i)} \otimes_{m-1} 1_{(m-1,i)} R_\bo(\nu), d_N) \\
&:= (R_\bo(\nu) 1_{(m-1,i)}, d_N) \otimes_{(R_\bo(m-1), d_N)} (1_{(m-1,i)}R_\bo(\nu),d_N).
\end{align*}

\begin{prop}\label{prop:RbodgSES}
There is a short exact sequence of dg-bimodules
\begin{align*}
0 
&\rightarrow q_i^{-2} (R_\bo(\nu) 1_{(\nu-i,i)} \otimes_{m-1} 1_{(\nu-i,i)} R_\bo(\nu), d_N)
\rightarrow (1_{(\nu,i)} R_\bo(\nu+i) 1_{(\nu,i)},d_N) \\
&\rightarrow \bigl( R_\bo^{\xi_i}(\nu) \oplus \lambda_i^2 q_i^{ - 2\alpha_i^\vee(\nu)} R_\bo^{\xi_i}(\nu)[1], d_N \bigr)
\rightarrow 0
\end{align*}
for all $i \in I$. Moreover, there is an isomorphism
\begin{align*}
q^{-(\alpha_i | \alpha_j)}( R_\bo(\nu) 1_{(\nu-i,i)} \otimes_{m-1} 1_{(\nu'-j,j)} R_\bo(\nu'), d_N) 
\cong (1_{(\nu',j)} R_\bo(\nu+i) 1_{(\nu,i)}, d_N)
\end{align*}
for all $i \neq j \in I$ and $\nu+i = \nu'+j$.
\end{prop}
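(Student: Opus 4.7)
The plan is to deduce the dg-statement from the underlying short exact sequence of bimodules already established in Theorem~\ref{thm:SESRbo}, by verifying that each of the three maps in it is compatible with~$d_N$ after endowing the rightmost term with the mapping cone differential built from $y_N$.

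First, for $i\neq j$ the map $u_{ij}$ constructed in the proof of Theorem~\ref{thm:SESRbo} is postcomposition with a single rightmost crossing~$\tau_m$. Since $d_N$ vanishes on crossings and dots, the graded Leibniz rule gives
\[
d_N(x\,\tau_m\,y)=d_N(x)\,\tau_m\,y+(-1)^{\deg_h x}\,x\,\tau_m\,d_N(y),
\]
so $u_{ij}$ is a dg-bimodule map and the iso part of the proposition is immediate. The same calculation shows that for $i=j$ the kernel map $u_{ii}$ is a dg-map, so the image of $u_{ii}$ is a dg-subbimodule.

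Second, we need the cokernel projection $\pi$ of Theorem~\ref{thm:SESRbo} to be a dg-map into $\bigl(R_\bo^{\xi_i}(\nu)\oplus \lambda_i^{2}q_i^{-2\alpha_i^\vee(\nu)}R_\bo^{\xi_i}(\nu)[1],d_N\bigr)$. That $\pi$ is a bimodule map was already the content of \cref{lem:Anprojbimmap}. For the dg-compatibility, decompose an element as in \cref{lem:Rboleftdecomp} and treat each summand:
\begin{itemize}
\item On $x_{m+1}^{\ell}\,R_\bo(m)$ the factor $x_{m+1}^{\ell}$ is $d_N$-closed, and $d_N$ acts through~$R_\bo(m)$; this matches the internal differential on the first $R_\bo^{\xi_i}(\nu)$ summand of the cone.
\item On $\theta_{m+1}^{\ell}\,R_\bo(m)$ the Leibniz rule gives $d_N(\theta_{m+1}^{\ell}y)=d_N(\theta_{m+1}^{\ell})\,y-\theta_{m+1}^{\ell}\,d_N(y)$. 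The second summand gives the internal differential on the shifted $\lambda_i^{2}q_i^{-2\alpha_i^\vee(\nu)}R_\bo^{\xi_i}(\nu)[1]$-component, and the first summand is exactly what the cone connecting map $y_N$ is supposed to produce.
\end{itemize}

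The main step is therefore the computation of $\pi\bigl(d_N(\theta_{m+1}^{\ell})\bigr)$. Writing $\theta_{m+1}^{\ell}=\tau_m\cdots\tau_1\,\omega\, x_1^{\ell}\,\tau_1\cdots\tau_m$, the only ingredient that is not $d_N$-closed is the tight floating dot $\omega$. For $i\notin I_f$ the differential vanishes and $y_N=0$, in agreement. For $i\in I_f$, we have $d_N(\omega)=(-1)^{n_i}x_1^{n_i}$, hence
\[
d_N(\theta_{m+1}^{\ell})=\pm(-1)^{n_i}\,\tau_m\cdots\tau_1\,x_1^{n_i+\ell}\,\tau_1\cdots\tau_m.
\]
We then slide the $n_i+\ell$ dots from the leftmost strand all the way to the $(m{+}1)$-st strand using the nilHecke relation~\eqref{eq:KLRnh}. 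Each invocation of the relation produces one correction term carrying one fewer crossing; by \cref{lem:Rboleftdecomp} every such correction belongs to $\mathrm{Im}(u_{ii})=\ker(\pi)$. The surviving top term is a scalar multiple of $x_{m+1}^{n_i+\ell}$, which is precisely what $y_N$ outputs on $\theta_{m+1}^{\ell}$ by the defining picture. Combining with the previous display yields $\pi\circ d_N=d_N^{cone}\circ\pi$ on the whole bimodule, and Theorem~\ref{thm:SESRbo} then gives exactness.

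The main obstacle will be bookkeeping: matching the signs produced by the Leibniz rule and the rearrangement of dot-slides with the sign conventions hard-coded into the cone differential and into the definition of $y_N$ (the $(-1)^{n_i}$ factor and the shift by $[1]$), while also keeping track of the scalar $r_i$-powers accumulated by the successive applications of~\eqref{eq:KLRnh}. The proof that these corrections always land in $\ker(\pi)$ is qualitatively clear from \cref{lem:Rboleftdecomp}, so the quantitative verification reduces to a careful but routine computation.
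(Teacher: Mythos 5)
Your overall strategy is exactly the right one and matches what the paper has in mind: the paper's own proof is the single sentence "It is a straightforward consequence of \cref{thm:SESRbo}", and the content it expects the reader to supply is precisely the check that $u_{ij}$, $u_{ii}$, and the cokernel projection commute with the differentials. Your verification of $u_{ij}$ and $u_{ii}$ by the Leibniz rule, and the observation that on $x_{m+1}^{\ell}R_\bo(m)$ the differential acts through $R_\bo(m)$, are all correct.

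The place where your write-up is off is the final bullet, and it is worth fixing because as written it contains two inaccuracies. First, you do not actually need to "slide the dots and track correction terms" at all: by definition $y_N(\theta_{m+1}^{\ell})=\pi\bigl(\tau_m\cdots\tau_1 x_1^{n_i+\ell}\tau_1\cdots\tau_m\bigr)$, and $d_N(\theta_{m+1}^{\ell})=(-1)^{n_i}\tau_m\cdots\tau_1 x_1^{n_i+\ell}\tau_1\cdots\tau_m$ by the Leibniz rule (only the tight floating dot is not $d_N$-closed). Hence $\pi\bigl(d_N(\theta_{m+1}^{\ell})\bigr)=(-1)^{n_i}y_N(\theta_{m+1}^{\ell})$ is a tautology, and combined with the bimodule property of $\pi$ from \cref{lem:Anprojbimmap} this immediately gives $\pi\circ d_N=d_N^{\mathrm{cone}}\circ\pi$ up to the sign convention in the cone. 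Second, the claims you make along the way are incorrect: $\pi\bigl(\tau_m\cdots\tau_1 x_1^{n_i+\ell}\tau_1\cdots\tau_m\bigr)$ is in general a sum over many powers of $x_{m+1}$ (compare \cref{lem:computePdN}, where the analogous projection is an explicit polynomial, not a monomial), so "the surviving top term is a scalar multiple of $x_{m+1}^{n_i+\ell}$" is false; and $\mathrm{Im}(u_{ii})=\ker(\pi)$ is also false, since $\ker(\pi)$ additionally contains the whole $\theta_{m+1}^{\ell}R_\bo(m)$ summand, which is not in the image of $u_{ii}$ (indeed $\pi_L=\pi\oplus\pi_L^{\bullet}$ is the map whose kernel is $\mathrm{Im}(u_{ii})$). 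Neither error is fatal to your proof because both halves of your incorrect computation cancel, but you should replace the dot-sliding paragraph by the one-line definitional identity above.
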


\begin{proof}
It is a straightforward consequence of \cref{thm:SESRbo}.
\end{proof}

In order to understand the consequences of this short exact sequence in homology, we need to compute the homology
\[
H\bigl( R_\bo^{\xi_i}(\nu) \oplus \lambda_i^2 q_i^{ - 2\alpha_i^\vee(\nu)} R_\bo^{\xi_i}(\nu)[1], d_N \bigr),
\]
 for all $i \in I_f$. 
 
 Therefore, we want to compute the projection of the element
\[
\bar\pi\left(
\tikzdiagh[xscale = 1.2]{-.6ex}{
		\draw  (.25,1) .. controls (.25,.5) and (.75,.5) ..   (.75,0)   .. controls (.75,-.5) and (.25,-.5)  .. (.25,-1); 
		\draw  (.5,1) .. controls (.5,.5) and (1,.5) ..   (1,0)   .. controls (1,-.5) and (.5,-.5)  .. (.5,-1); 
		\draw  (1,1) .. controls (1,.5) and (1.5,.5) ..   (1.5,0)   .. controls (1.5,-.5) and (1,-.5)  .. (1,-1); 
		\draw  (1.25,1) .. controls (1.25,.5) and (1.75,.5) ..   (1.75,0)   .. controls (1.75,-.5) and (1.25,-.5)  .. (1.25,-1); 
	 \draw  (1.75,-1)  node[below]{$i$}.. controls (1.75,-.5) and (.25,-.5) ..    (.25,0) node[pos=1, tikzdot]{} node[pos=1,xshift=-1.5ex,yshift=.5ex]{\small $p$}
		 .. controls (.25,.5)  and (1.75,.5).. (1.75,1);
		 \node at (1.25,0) {\small $\dots$};  \node at (.8,.85) {\small $\dots$};  \node at (.8,-.85) {\small $\dots$};
	 }
\right) \quad \in \quad  
\bigoplus_{\ell \geq 0}
\tikzdiag[xscale=.65]{
	\draw (0,-1) -- (0,1);
	\draw (.5,-1) -- (.5,1);
	\draw (1.5,-1) -- (1.5,1);
	\draw (2,-1) -- (2,1);
	\draw (3,-1)  node [right] {\small $i$} .. controls (2.75,-1) ..
		(2.75,-.75) -- (2.75,.75) node[midway, tikzdot]{} node[midway,xshift=1.5ex, yshift=.5ex]{\small $\ell$}
		.. controls (2.75,1) .. (3,1);
	\filldraw [fill=white, draw=black] (-.25,-.25) rectangle (2.25,.25) node[midway] {\small $H(m)$};
	\node at(1,.65) {\small $\dots$};
	\node at(1,-.65) {\small $\dots$};
}
\]
for all $p \geq n_i$. Note that we project on the homology of $(R_\bo(m),d_N)$. This will ease some of the computations we need to do.  We write $\bar\pi$ when we take the composite of $\pi$ with the projection on the homology of $(R_\bo(m),d_N)$. More precisely, $\bar\pi$ is given by 
\[
\bar\pi := 1 \otimes \pi : 
\tikzdiag[xscale=.65]{
	\draw (0,-1) -- (0,1);
	\draw (.5,-1) -- (.5,1);
	\draw (1.5,-1) -- (1.5,1);
	\draw (2,-1) -- (2,1);
	\draw (2.75,-1)  node [right] {\small $i$} .. controls (2.5,-1) ..
		(2.5,-.75) -- (2.5,.75)
		.. controls (2.5,1) .. (2.75,1) node [right] {\small $i$};
	\filldraw [fill=white, draw=black] (-.25,.25) rectangle (2.25,.75) node[midway] {\small $H(m)$};
	\filldraw [fill=white, draw=black] (-.25,-.75) rectangle (2.75,-.25) node[midway] {\small $m+1$};
	\node at(1,.9) {\small $\dots$};
	\node at(1,-0) {\small $\dots$};
	\node at(1,-.9) {\small $\dots$};
}
\twoheadrightarrow
\bigoplus_{\ell \geq 0}
\tikzdiag[xscale=.65]{
	\draw (0,-1) -- (0,1);
	\draw (.5,-1) -- (.5,1);
	\draw (1.5,-1) -- (1.5,1);
	\draw (2,-1) -- (2,1);
	\draw (3,-1)  node [right] {\small $i$} .. controls (2.75,-1) ..
		(2.75,-.75) -- (2.75,.75) node[midway, tikzdot]{} node[midway,xshift=1.5ex, yshift=.5ex]{\small $\ell$}
		.. controls (2.75,1) .. (3,1) node [right] {\small $i$};
	\filldraw [fill=white, draw=black] (-.25,.25) rectangle (2.25,.75) node[midway] {\small $H(m)$};
	\filldraw [fill=white, draw=black] (-.25,-.75) rectangle (2.25,-.25) node[midway] { \small $m$};
	\node at(1,.9) {\small $\dots$};
	\node at(1,-0) {\small $\dots$};
	\node at(1,-.9) {\small $\dots$};
}
\]
 Similarly, we write $\bar y_N$.

\begin{lem}\label{lem:bigp}
If $p \geq 2 \nu_i$, then
\[
\pi\left(
\tikzdiagh[yscale = .8]{-.4ex}{
	\draw  (.25,1) .. controls (.25,.5) and (.75,.5) ..   (.75,0)   .. controls (.75,-.5) and (.25,-.5)  .. (.25,-1); 
	\draw  (.5,1) .. controls (.5,.5) and (1,.5) ..   (1,0)   .. controls (1,-.5) and (.5,-.5)  .. (.5,-1); 
	\draw  (1,1) .. controls (1,.5) and (1.5,.5) ..   (1.5,0)   .. controls (1.5,-.5) and (1,-.5)  .. (1,-1); 
	\draw  (1.25,1) .. controls (1.25,.5) and (1.75,.5) ..   (1.75,0)   .. controls (1.75,-.5) and (1.25,-.5)  .. (1.25,-1); 
 \draw  (1.75,-1)  node[below]{$i$}.. controls (1.75,-.5) and (.25,-.5) ..    (.25,0) node[pos=1, tikzdot]{} node[pos=1,xshift=-1.5ex,yshift=.5ex]{\small $p$}
	 .. controls (.25,.5)  and (1.75,.5).. (1.75,1);
	 \node at (1.25,0) {\tiny $\dots$};  \node at (.8,.85) {\tiny $\dots$};  \node at (.8,-.85) {\tiny $\dots$};
	\draw[decoration={brace,mirror,raise=-8pt},decorate]  (.15,-1.35) -- node {$\nu$} (1.35,-1.35);
 }
 \right)
 \ \equiv \ 
 \zeta\quad
 \tikzdiag[xscale=.5,yscale=.8]{
	\draw (0,-1) -- (0,1);
	\draw (.5,-1) -- (.5,1);
	\draw (1.5,-1) -- (1.5,1);
	\draw (2,-1) -- (2,1);
	\draw (3,-1)  node [right] {\small $i$} .. controls (2.75,-1) ..
		(2.75,-.75) -- (2.75,.75) node[midway, tikzdot]{} node[midway,xshift=5.5ex, yshift=.5ex]{\small $p-\alpha_i^\vee(\nu)$}
		.. controls (2.75,1) .. (3,1);
	\node at (1,0) {\tiny $\dots$};
}
+ 
\bigoplus_{\ell = 0}^{p-\alpha_i^\vee(\nu)-1}
\tikzdiag[xscale=.5,yscale=.8]{
	\draw (0,-1) -- (0,1);
	\draw (.5,-1) -- (.5,1);
	\draw (1.5,-1) -- (1.5,1);
	\draw (2,-1) -- (2,1);
	\draw (3,-1)  node [right] {\small $i$} .. controls (2.75,-1) ..
		(2.75,-.75) -- (2.75,.75) node[midway, tikzdot]{} node[midway,xshift=1.5ex, yshift=.5ex]{\small $\ell$}
		.. controls (2.75,1) .. (3,1);
	\filldraw [fill=white, draw=black] (-.25,-.25) rectangle (2.25,.25) node[midway] {\small $m$};
	\node at (1,.65) {\tiny $\dots$}; \node at (1,-.65) {\tiny $\dots$};
}
\]
for some invertible element $\zeta \in \Bbbk^\times$. If $p < 2\nu_i$, then
\[
\pi\left(
\tikzdiagh[yscale = .8]{-.4ex}{
	\draw  (.25,1) .. controls (.25,.5) and (.75,.5) ..   (.75,0)   .. controls (.75,-.5) and (.25,-.5)  .. (.25,-1); 
	\draw  (.5,1) .. controls (.5,.5) and (1,.5) ..   (1,0)   .. controls (1,-.5) and (.5,-.5)  .. (.5,-1); 
	\draw  (1,1) .. controls (1,.5) and (1.5,.5) ..   (1.5,0)   .. controls (1.5,-.5) and (1,-.5)  .. (1,-1); 
	\draw  (1.25,1) .. controls (1.25,.5) and (1.75,.5) ..   (1.75,0)   .. controls (1.75,-.5) and (1.25,-.5)  .. (1.25,-1); 
 \draw  (1.75,-1)  node[below]{$i$}.. controls (1.75,-.5) and (.25,-.5) ..    (.25,0) node[pos=1, tikzdot]{} node[pos=1,xshift=-1.5ex,yshift=.5ex]{\small $p$}
	 .. controls (.25,.5)  and (1.75,.5).. (1.75,1);
	 \node at (1.25,0) {\tiny $\dots$};  \node at (.8,.85) {\tiny $\dots$};  \node at (.8,-.85) {\tiny $\dots$};
	\draw[decoration={brace,mirror,raise=-8pt},decorate]  (.15,-1.35) -- node {$\nu$} (1.35,-1.35);
 }
 \right)
 \  \in \ 
\bigoplus_{\ell = 0}^{p-\alpha_i^\vee(\nu)}
\tikzdiag[xscale=.5,yscale=.8]{
	\draw (0,-1) -- (0,1);
	\draw (.5,-1) -- (.5,1);
	\draw (1.5,-1) -- (1.5,1);
	\draw (2,-1) -- (2,1);
	\draw (3,-1)  node [right] {\small $i$} .. controls (2.75,-1) ..
		(2.75,-.75) -- (2.75,.75) node[midway, tikzdot]{} node[midway,xshift=1.5ex, yshift=.5ex]{\small $\ell$}
		.. controls (2.75,1) .. (3,1);
	\filldraw [fill=white, draw=black] (-.25,-.25) rectangle (2.25,.25) node[midway] {\small $m$};
	\node at (1,.65) {\tiny $\dots$}; \node at (1,-.65) {\tiny $\dots$};
}
\]
\end{lem}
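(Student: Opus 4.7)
The plan is to adapt the inductive argument used in the proof of \cref{lem:computePdN}, proceeding by induction on $m = |\nu|$ with case analysis on the label $j_m$ of the rightmost strand of $\nu$ at the bottom of the diagram. The base case $m = 0$ is immediate: with $\alpha_i^\vee(0) = 0$, the diagram degenerates to a single $i$-strand bearing $p$ dots, whose image under $\pi$ is $x_1^p$ up to an invertible scalar, trivially matching the stated form with no ``lower'' summands.

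For the inductive step with $j_m \neq i$, I would peel off the outermost $j_m$-strand using \eqref{eq:KLRR2} together with the undercrossing relations of \cref{lem:vcrossings}; this produces a factor of the invertible scalar $t_{ij_m}$ times a shorter $\widetilde\theta$-diagram on $m-1$ interior strands, to which induction applies. The exponent of the leading $x_{m+1}$-term shifts by exactly $-a_{ij_m}\nu_{j_m}$ (the degree absorbed from the $\varepsilon^{j_m}_\bullet$-factor via \cref{eq:epsnotation2}), which is precisely the contribution needed to pass from $p-\alpha_i^\vee(\nu\setminus j_m)$ to $p-\alpha_i^\vee(\nu)$. The correction terms acquired in this step all have strictly fewer dots on the outer strand and thus land in lower-$\ell$ summands, so the inductive shape of the statement is preserved.

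The more delicate case is $j_m = i$, where I would follow the same dashed-arc manipulations as in the corresponding case of \cref{lem:computePdN}: resolve the outer pair of $i$-crossings via \eqref{eq:2croscircl}, then apply \eqref{eq:dashedcirclcross}, \eqref{eq:doubledashedcommutescross}, and \cref{lem:vcrossings} together with a sub-case analysis on $j_{m-1}$, reducing to a smaller $\widetilde\theta$-configuration. Crucially, this reduction consumes dots from the outer strand via \eqref{eq:KLRnh} (each pair of parallel $i$-strands absorbs up to two dots per reduction), and this is precisely where the dichotomy in the statement enters: when $p \geq 2\nu_i$, a surplus of at least $p-2\nu_i$ dots survives the absorption process, and combined with the contribution $-\alpha_i^\vee(\nu^{\setminus i})$ from the earlier $j\neq i$ reductions one obtains the genuine monic leading term $\zeta\cdot x_{m+1}^{p-\alpha_i^\vee(\nu)}$; when $p < 2\nu_i$, the dots are exhausted before reaching this top degree, so the projection lies entirely in the direct sum of lower summands.

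The main obstacle is tracking the invertible coefficient $\zeta$ through the recursion and verifying that it does not accidentally vanish at any step. Each strand removal contributes either a factor of $r_i$ (from $i$-$i$ undercrossing resolutions) or $t_{ij}$ (from the $i$-$j$ braid moves), both of which are invertible elements of $\Bbbk$ by our standing assumption on the scalars. A careful bookkeeping over the induction — analogous to (but simpler than) the verification in \cref{lem:computePdN}, since we only require identification of the leading term up to invertible scalar rather than an explicit polynomial — yields a nonzero $\zeta \in \Bbbk^\times$, completing the argument in the case $p \geq 2\nu_i$. The weaker ``$\in$'' statement in the case $p < 2\nu_i$ follows from the same inductive analysis once one observes that the top summand $\ell = p-\alpha_i^\vee(\nu)$ cannot be populated when insufficiently many dots are available.
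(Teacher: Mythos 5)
The central problem with your plan is that it imports the machinery of \cref{lem:computePdN} into a setting where that machinery does not apply. The diagram in \cref{lem:bigp} is the \emph{plain} wrapping element: one strand passing around $\nu$ with $p$ ordinary dots at the leftmost position, and \emph{ordinary} crossings throughout. It is not $\widetilde\theta_{m+1}$ and carries no undercrossings. Yet your argument explicitly invokes \cref{lem:vcrossings}, \eqref{eq:2croscircl}, \eqref{eq:dashedcirclcross}, and \eqref{eq:doubledashedcommutescross}, every one of which is a statement about undercrossings (the white-filled gaps defined just before \cref{lem:vcrossings}). With plain crossings, an $i$-$i$ double crossing is governed by \eqref{eq:KLRR2}, which gives zero, and there is simply no analogue of the dashed-arc calculus available. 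So the core reduction you describe in the $j_m=i$ case cannot be executed as stated.

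A second, related problem is your choice to peel the strand at position $j_m$, the one farthest from the $p$ dots. The paper's proof peels $j_1$, the strand immediately adjacent to the dots. That choice is not cosmetic: the $p$ dots sit in the region between the wrapping strand's apex and the $j_1$-strand, and the whole induction is driven by sliding those dots through the $j_1$-crossing via \eqref{eq:KLRnh} when $j_1=i$, or collapsing the $j_1$-bubble via \eqref{eq:KLRR2} when $j_1\ne i$. In either case the two relevant crossings are vertically adjacent with only the dots between them, so the local KLR relations apply directly. If instead you try to remove $j_m$, the two $j_m$-crossings are at the far bottom-right and far top-right of the diagram, separated by the entire middle of the picture (including the dots and all the other strands). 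They are not adjacent, so \eqref{eq:KLRR2}/\eqref{eq:KLRnh} do not apply locally; you would first have to drag one crossing past everything else using \eqref{eq:KLRR3} repeatedly, generating a combinatorial explosion of correction terms that your sketch does not control. In short, even the structural claim that the argument ``mirrors \cref{lem:computePdN} by casing on $j_m$'' misdescribes what is happening: \cref{lem:computePdN} does case on $j_m$, but it is a different diagram with undercrossings, and \cref{lem:bigp} must case on $j_1$ instead. Your discussion of the dichotomy $p\ge 2\nu_i$ vs.\ $p<2\nu_i$ captures the right intuition (dots are consumed as $i$-strands are peeled), but that intuition needs to be supported by a computation at $j_1$, which your proposal does not provide.
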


\begin{proof}
The proof is an induction on $m$. If $m=0$, then it is trivial. Suppose the statement holds for $m-1$. We fix the labels of the strands as the bottom as $\bj = j_1\cdots j_m \in \Seq(\nu)$. If $j_1 = i$, then we compute
\begin{align*}
\tikzdiagh[yscale = .8]{0}{
	\draw  (.25,1) .. controls (.25,.5) and (.75,.5) ..   (.75,0)   .. controls (.75,-.5) and (.25,-.5)  .. (.25,-1) node[below]{$i$}; 
	\draw  (.5,1) .. controls (.5,.5) and (1,.5) ..   (1,0)   .. controls (1,-.5) and (.5,-.5)  .. (.5,-1); 
	\draw  (1,1) .. controls (1,.5) and (1.5,.5) ..   (1.5,0)   .. controls (1.5,-.5) and (1,-.5)  .. (1,-1); 
	\draw  (1.25,1) .. controls (1.25,.5) and (1.75,.5) ..   (1.75,0)   .. controls (1.75,-.5) and (1.25,-.5)  .. (1.25,-1); 
 \draw  (1.75,-1)  node[below]{$i$}.. controls (1.75,-.5) and (.25,-.5) ..    (.25,0) node[pos=1, tikzdot]{} node[pos=1,xshift=-1.5ex,yshift=.5ex]{\small $p$}
	 .. controls (.25,.5)  and (1.75,.5).. (1.75,1);
	 \node at (1.25,0) {\tiny $\dots$};  \node at (.8,.85) {\tiny $\dots$};  \node at (.8,-.85) {\tiny $\dots$};
 }
\ &\overset{\eqref{eq:KLRnh}}{=} \ r_i \sssum{r+s\\= p-1}
\tikzdiagh[yscale = .8]{0}{
	\draw (.25,1) .. controls (.25,-.5) and (1.75,-.5) .. (1.75,-1)  node[below]{$i$}   node[pos=.2,tikzdot]{} node[pos=.2,xshift=-1.5ex, yshift=.5ex]{\small $r$};
	\draw (.25,-1) node[below]{$i$} .. controls (.25,.5) and (1.75,.5) .. (1.75,1)  node[pos=.2,tikzdot]{} node[pos=.2,xshift=-1.5ex, yshift=.5ex]{\small $s$};
	\draw  (.5,1) .. controls (.5,.5) and (1,.5) ..   (1,0)   .. controls (1,-.5) and (.5,-.5)  .. (.5,-1); 
	\draw  (1,1) .. controls (1,.5) and (1.5,.5) ..   (1.5,0)   .. controls (1.5,-.5) and (1,-.5)  .. (1,-1); 
	\draw  (1.25,1) .. controls (1.25,.5) and (1.75,.5) ..   (1.75,0)   .. controls (1.75,-.5) and (1.25,-.5)  .. (1.25,-1); 
	 \node at (1.25,0) {\tiny $\dots$};  \node at (.8,.85) {\tiny $\dots$};  \node at (.8,-.85) {\tiny $\dots$};
 }
 \ - \ r_i^2 \sssum{r+s\\= p-2}
(r+1)\ 
\tikzdiagh[yscale = .8]{0}{
	\draw  (.25,1) -- (.25,-1) node[below]{$i$} node[midway,tikzdot]{} node[midway,xshift=-1.5ex,yshift=.5ex]{\small $r$}; 
	\draw  (.5,1) .. controls (.5,.5) and (1,.5) ..   (1,0)   .. controls (1,-.5) and (.5,-.5)  .. (.5,-1); 
	\draw  (1,1) .. controls (1,.5) and (1.5,.5) ..   (1.5,0)   .. controls (1.5,-.5) and (1,-.5)  .. (1,-1); 
	\draw  (1.25,1) .. controls (1.25,.5) and (1.75,.5) ..   (1.75,0)   .. controls (1.75,-.5) and (1.25,-.5)  .. (1.25,-1); 
 \draw  (1.75,-1)  node[below]{$i$}.. controls (1.75,-.5) and (.75,-.5) ..    (.75,0) node[pos=1, tikzdot]{} node[pos=1,xshift=-1.5ex,yshift=.5ex]{\small $s$}
	 .. controls (.75,.5)  and (1.75,.5).. (1.75,1);
	 \node at (1.25,0) {\tiny $\dots$};  \node at (.8,.85) {\tiny $\dots$};  \node at (.8,-.85) {\tiny $\dots$};
 }
\end{align*}
Then, using~\cref{eq:KLRR3} we have
\[
\tikzdiagh[yscale = .8]{0}{
	\draw (.25,1) .. controls (.25,-.5) and (1.75,-.5) .. (1.75,-1)  node[below]{$i$};
	\draw (.25,-1) node[below]{$i$} .. controls (.25,.5) and (1.75,.5) .. (1.75,1);
	\draw  (.5,1) .. controls (.5,.5) and (1,.5) ..   (1,0)   .. controls (1,-.5) and (.5,-.5)  .. (.5,-1); 
	\draw  (1,1) .. controls (1,.5) and (1.5,.5) ..   (1.5,0)   .. controls (1.5,-.5) and (1,-.5)  .. (1,-1); 
	\draw  (1.25,1) .. controls (1.25,.5) and (1.75,.5) ..   (1.75,0)   .. controls (1.75,-.5) and (1.25,-.5)  .. (1.25,-1); 
	 \node at (1.25,0) {\tiny $\dots$};  \node at (.8,.85) {\tiny $\dots$};  \node at (.8,-.85) {\tiny $\dots$};
 }
 \ = \ 
 \tikzdiagh[yscale = .8,xscale=-1]{0}{
	\draw (.25,1) .. controls (.25,-.5) and (1.75,-.5) .. (1.75,-1)  node[below]{$i$};
	\draw (.25,-1) node[below]{$i$} .. controls (.25,.5) and (1.75,.5) .. (1.75,1);
	\draw  (.5,1) .. controls (.5,.5) and (1,.5) ..   (1,0)   .. controls (1,-.5) and (.5,-.5)  .. (.5,-1); 
	\draw  (1,1) .. controls (1,.5) and (1.5,.5) ..   (1.5,0)   .. controls (1.5,-.5) and (1,-.5)  .. (1,-1); 
	\draw  (1.25,1) .. controls (1.25,.5) and (1.75,.5) ..   (1.75,0)   .. controls (1.75,-.5) and (1.25,-.5)  .. (1.25,-1); 
	 \node at (1.25,0) {\tiny $\dots$};  \node at (.8,.85) {\tiny $\dots$};  \node at (.8,-.85) {\tiny $\dots$};
 }
 \ + \sum_{j_k \neq i} s_{ij_k}^{tv} \sum_{t,v}\  \sssum{r+s=\\t-1}\ 
 \tikzdiagh[yscale = .8]{0}{
	\draw  (.5,1) .. controls (.5,.5) and (0,.5) ..   (0,0)   .. controls (0,-.5) and (.5,-.5)  .. (.5,-1); 
	\draw  (.25,1) .. controls (.25,.5) and (-.25,.5) ..   (-.25,0)   .. controls (-.25,-.5) and (.25,-.5)  .. (.25,-1); 
 	\draw  (-.25,-1)  node[below]{$i$}
 	.. controls (-.25,-.5) and (.5,-.5) ..    (.5,0) node[pos=1, tikzdot]{} node[pos=1,xshift=-1.5ex,yshift=.5ex]{\small $r$}
	 .. controls (.5,.5)  and (-.25,.5).. (-.25,1);
 	\draw[myblue] (.75,1)-- (.75,-1) node[pos=.3,tikzdot]{} node[pos=.3,xshift=1.5ex,yshift=.5ex]{\small $v$} node[below]{\small $j_k$};
	\draw  (1,1) .. controls (1,.5) and (1.5,.5) ..   (1.5,0)   .. controls (1.5,-.5) and (1,-.5)  .. (1,-1); 
	\draw  (1.25,1) .. controls (1.25,.5) and (1.75,.5) ..   (1.75,0)   .. controls (1.75,-.5) and (1.25,-.5)  .. (1.25,-1); 
 	\draw  (1.75,-1)  node[below]{$i$}
 	.. controls (1.75,-.5) and (1,-.5) ..    (1,0) node[pos=1, tikzdot]{} node[pos=1,xshift=1.5ex,yshift=.5ex]{\small $s$}
	 .. controls (1,.5)  and (1.75,.5).. (1.75,1);
 }
\]
so that, since $s < d_{ij_k}$, we obtain by the induction hypothesis 
\begin{align*}
\pi\left(
\tikzdiagh[yscale = .8]{0}{
	\draw (.25,1) .. controls (.25,-.5) and (1.75,-.5) .. (1.75,-1)  node[below]{$i$};
	\draw (.25,-1) node[below]{$i$} .. controls (.25,.5) and (1.75,.5) .. (1.75,1);
	\draw  (.5,1) .. controls (.5,.5) and (1,.5) ..   (1,0)   .. controls (1,-.5) and (.5,-.5)  .. (.5,-1); 
	\draw  (1,1) .. controls (1,.5) and (1.5,.5) ..   (1.5,0)   .. controls (1.5,-.5) and (1,-.5)  .. (1,-1); 
	\draw  (1.25,1) .. controls (1.25,.5) and (1.75,.5) ..   (1.75,0)   .. controls (1.75,-.5) and (1.25,-.5)  .. (1.25,-1); 
	 \node at (1.25,0) {\tiny $\dots$};  \node at (.8,.85) {\tiny $\dots$};  \node at (.8,-.85) {\tiny $\dots$};
 }
 \right)
 \ \in \
\bigoplus_{\ell = 0}^{p-\alpha_i^\vee(\nu)-1}
\tikzdiagh[xscale=.5,yscale=.8]{0}{
	\draw (0,-1)  node[below]{$i$} -- (0,1);
	\draw (.5,-1) -- (.5,1);
	\draw (1.5,-1) -- (1.5,1);
	\draw (2,-1) -- (2,1);
	\draw (3,-1)  node [right] {\small $i$} .. controls (2.75,-1) ..
		(2.75,-.75) -- (2.75,.75) node[midway, tikzdot]{} node[midway,xshift=1.5ex, yshift=.5ex]{\small $\ell$}
		.. controls (2.75,1) .. (3,1);
	\filldraw [fill=white, draw=black] (-.25,-.25) rectangle (2.25,.25) node[midway] {\small $m$};
	\node at (1,.65) {\tiny $\dots$}; \node at (1,-.65) {\tiny $\dots$};
}
\end{align*}
 Moreover, still by the induction hypothesis, we have
\[
\sssum{r+s\\= p-3}
(r+2)\ 
\pi\left(
\tikzdiagh[yscale = .8]{-.4ex}{
	\draw  (.25,1) -- (.25,-1) node[below]{$i$} node[midway,tikzdot]{} node[midway,xshift=-3ex,yshift=.5ex]{\small $r{+}1$}; 
	\draw  (.5,1) .. controls (.5,.5) and (1,.5) ..   (1,0)   .. controls (1,-.5) and (.5,-.5)  .. (.5,-1); 
	\draw  (1,1) .. controls (1,.5) and (1.5,.5) ..   (1.5,0)   .. controls (1.5,-.5) and (1,-.5)  .. (1,-1); 
	\draw  (1.25,1) .. controls (1.25,.5) and (1.75,.5) ..   (1.75,0)   .. controls (1.75,-.5) and (1.25,-.5)  .. (1.25,-1); 
 \draw  (1.75,-1)  node[below]{$i$}.. controls (1.75,-.5) and (.75,-.5) ..    (.75,0) node[pos=1, tikzdot]{} node[pos=1,xshift=-1.5ex,yshift=.5ex]{\small $s$}
	 .. controls (.75,.5)  and (1.75,.5).. (1.75,1);
	 \node at (1.25,0) {\tiny $\dots$};  \node at (.8,.85) {\tiny $\dots$};  \node at (.8,-.85) {\tiny $\dots$};
 }
 \right)
 \in 
 \bigoplus_{\ell = 0}^{p-\alpha_i^\vee(\nu)-1}
\tikzdiagh[xscale=.5,yscale=.8]{0}{
	\draw (0,-1)  node[below]{$i$} -- (0,1);
	\draw (.5,-1) -- (.5,1);
	\draw (1.5,-1) -- (1.5,1);
	\draw (2,-1) -- (2,1);
	\draw (3,-1)  node [right] {\small $i$} .. controls (2.75,-1) ..
		(2.75,-.75) -- (2.75,.75) node[midway, tikzdot]{} node[midway,xshift=1.5ex, yshift=.5ex]{\small $\ell$}
		.. controls (2.75,1) .. (3,1);
	\filldraw [fill=white, draw=black] (-.25,-.25) rectangle (2.25,.25) node[midway] {\small $m$};
	\node at(1,.65) {\tiny $\dots$};
	\node at(1,-.65) {\tiny $\dots$};
}
\]
Finally, if $p \geq 2\nu_i$, by the induction hypothesis we get for $s=p-2$,
\begin{align*}
\pi\left(
\tikzdiagh[yscale = .8]{-.4ex}{
	\draw  (.25,1) -- (.25,-1) node[below]{$i$}; 
	\draw  (.5,1) .. controls (.5,.5) and (1,.5) ..   (1,0)   .. controls (1,-.5) and (.5,-.5)  .. (.5,-1); 
	\draw  (1,1) .. controls (1,.5) and (1.5,.5) ..   (1.5,0)   .. controls (1.5,-.5) and (1,-.5)  .. (1,-1); 
	\draw  (1.25,1) .. controls (1.25,.5) and (1.75,.5) ..   (1.75,0)   .. controls (1.75,-.5) and (1.25,-.5)  .. (1.25,-1); 
 \draw  (1.75,-1)  node[below]{$i$}.. controls (1.75,-.5) and (.75,-.5) ..    (.75,0) node[pos=1, tikzdot]{} node[pos=1,xshift=-1.5ex,yshift=.5ex]{\small $s$}
	 .. controls (.75,.5)  and (1.75,.5).. (1.75,1);
	 \node at (1.25,0) {\tiny $\dots$};  \node at (.8,.85) {\tiny $\dots$};  \node at (.8,-.85) {\tiny $\dots$};
 }
 \right)
\  &\equiv \zeta' \ 
 \tikzdiagh[xscale=.5,yscale=.8]{0}{
	\draw (0,-1) node[below]{$i$} -- (0,1);
	\draw (.5,-1) -- (.5,1);
	\draw (1.5,-1) -- (1.5,1);
	\draw (2,-1) -- (2,1);
	\draw (3,-1)  node [right] {\small $i$} .. controls (2.75,-1) ..
		(2.75,-.75) -- (2.75,.75) node[midway, tikzdot]{} node[midway,xshift=7.5ex, yshift=.5ex]{\small $s-\alpha_i^\vee(\nu-i)$}
		.. controls (2.75,1) .. (3,1);
	\node at(1,0) {\tiny $\dots$};
}
+ 
\bigoplus_{\ell = 0}^{s-\alpha_i^\vee(\nu-i)-1}
\tikzdiagh[xscale=.5,yscale=.8]{0}{
	\draw (0,-1) node[below]{$i$} -- (0,1);
	\draw (.5,-1) -- (.5,1);
	\draw (1.5,-1) -- (1.5,1);
	\draw (2,-1) -- (2,1);
	\draw (3,-1)  node [right] {\small $i$} .. controls (2.75,-1) ..
		(2.75,-.75) -- (2.75,.75) node[midway, tikzdot]{} node[midway,xshift=1.5ex, yshift=.5ex]{\small $\ell$}
		.. controls (2.75,1) .. (3,1);
	\filldraw [fill=white, draw=black] (-.25,-.25) rectangle (2.25,.25) node[midway] {\small  $m$};
	\node at(1,.65) {\tiny $\dots$};
	\node at(1,-.65) {\tiny $\dots$};
}
\end{align*}
which concludes the case by observing that $s-\alpha_i^\vee(\nu-i) = p-\alpha_i^\vee(\nu)$, and taking $\zeta = r_i^2\zeta'$.
If $p < 2\nu_i$, the claim is immediate by the induction hypothesis.

For the case $j_1 = j \neq i$, we use~\cref{eq:KLRR2} and then the induction hypothesis to get 
\begin{align*}
\pi\left(
\tikzdiagh[yscale = .8]{0}{
	\draw[myblue]  (.25,1) .. controls (.25,.5) and (.75,.5) ..   (.75,0)   .. controls (.75,-.5) and (.25,-.5)  .. (.25,-1) node[below]{$j$}; 
	\draw  (.5,1) .. controls (.5,.5) and (1,.5) ..   (1,0)   .. controls (1,-.5) and (.5,-.5)  .. (.5,-1); 
	\draw  (1,1) .. controls (1,.5) and (1.5,.5) ..   (1.5,0)   .. controls (1.5,-.5) and (1,-.5)  .. (1,-1); 
	\draw  (1.25,1) .. controls (1.25,.5) and (1.75,.5) ..   (1.75,0)   .. controls (1.75,-.5) and (1.25,-.5)  .. (1.25,-1); 
 \draw  (1.75,-1)  node[below]{$i$}.. controls (1.75,-.5) and (.25,-.5) ..    (.25,0) node[pos=1, tikzdot]{} node[pos=1,xshift=-1.5ex,yshift=.5ex]{\small $p$}
	 .. controls (.25,.5)  and (1.75,.5).. (1.75,1);
	 \node at (1.25,0) {\tiny $\dots$};  \node at (.8,.85) {\tiny $\dots$};  \node at (.8,-.85) {\tiny $\dots$};
 }
 \right)
\ &= \ 
\sssum{t,v}
s_{ij}^{tv}\ 
\pi\left(
\tikzdiagh[yscale = .8]{0}{
	\draw[myblue]  (-.25,1) -- (-.25,-1) node[below]{$j$} node[midway,tikzdot]{} node[midway,xshift=-1.5ex,yshift=.5ex]{\small $v$}; 
	\draw  (.5,1) .. controls (.5,.5) and (1,.5) ..   (1,0)   .. controls (1,-.5) and (.5,-.5)  .. (.5,-1); 
	\draw  (1,1) .. controls (1,.5) and (1.5,.5) ..   (1.5,0)   .. controls (1.5,-.5) and (1,-.5)  .. (1,-1); 
	\draw  (1.25,1) .. controls (1.25,.5) and (1.75,.5) ..   (1.75,0)   .. controls (1.75,-.5) and (1.25,-.5)  .. (1.25,-1); 
 \draw  (1.75,-1)  node[below]{$i$}.. controls (1.75,-.5) and (.75,-.5) ..    (.75,0) node[pos=1, tikzdot]{} node[pos=1,xshift=-2.5ex,yshift=.5ex]{\small $t{+}p$}
	 .. controls (.75,.5)  and (1.75,.5).. (1.75,1);
	 \node at (1.25,0) {\tiny $\dots$};  \node at (.8,.85) {\tiny $\dots$};  \node at (.8,-.85) {\tiny $\dots$};
 }
 \right)
 \\
 \ &\equiv\ 
 t_{ij} 
 \tikzdiagh[yscale = .8]{0}{
	\draw[myblue]  (-.5,1) -- (-.5,-1) node[below]{$j$}; 
	\draw  (.5,1) .. controls (.5,.5) and (1,.5) ..   (1,0)   .. controls (1,-.5) and (.5,-.5)  .. (.5,-1); 
	\draw  (1,1) .. controls (1,.5) and (1.5,.5) ..   (1.5,0)   .. controls (1.5,-.5) and (1,-.5)  .. (1,-1); 
	\draw  (1.25,1) .. controls (1.25,.5) and (1.75,.5) ..   (1.75,0)   .. controls (1.75,-.5) and (1.25,-.5)  .. (1.25,-1); 
 \draw  (1.75,-1)  node[below]{$i$}.. controls (1.75,-.5) and (.75,-.5) ..    (.75,0) node[pos=1, tikzdot]{} node[pos=1,xshift=-3.5ex,yshift=.5ex]{\small $d_{ij}{+}p$}
	 .. controls (.75,.5)  and (1.75,.5).. (1.75,1);
	 \node at (1.25,0) {\tiny $\dots$};  \node at (.8,.85) {\tiny $\dots$};  \node at (.8,-.85) {\tiny $\dots$};
 }
+ 
\bigoplus_{\ell = 0}^{p-\alpha_i^\vee(\nu)-1}
\tikzdiagh[xscale=.5,yscale=.8]{0}{
	\draw[myblue] (0,-1) node[below]{$j$} -- (0,1);
	\draw (.5,-1) -- (.5,1);
	\draw (1.5,-1) -- (1.5,1);
	\draw (2,-1) -- (2,1);
	\draw (3,-1)  node [right] {\small $i$} .. controls (2.75,-1) ..
		(2.75,-.75) -- (2.75,.75) node[midway, tikzdot]{} node[midway,xshift=1.5ex, yshift=.5ex]{\small $\ell$}
		.. controls (2.75,1) .. (3,1);
	\filldraw [fill=white, draw=black] (-.25,-.25) rectangle (2.25,.25) node[midway] {\small $m$};
	\node at(1,.65) {\tiny $\dots$};
	\node at(1,-.65) {\tiny $\dots$};
}
\end{align*}
where we recall that $s_{ij}^{d_{ij}0} = t_{ij}$.
We conclude by applying the induction hypothesis, observing that $d_{ij} + p -\alpha_i^\vee(\nu-j) = p-\alpha_i^\vee(\nu)$.
\end{proof}

Consider also the following result, which is akin to~\cite[Lemma~5.4]{kashiwara}.

\begin{lem}\label{lem:pidotslide} 
We have for $k < k'$ and $t = k' - k$, 
\[
\bar y_N\left(
 \tikzdiagh[yscale = .8]{0}{
	\draw  (.25,1) .. controls (.25,.5) and (.75,.5) ..   (.75,0)   .. controls (.75,-.5) and (.25,-.5)  .. (.25,-1); 
	\draw  (.5,1) .. controls (.5,.5) and (1,.5) ..   (1,0)   .. controls (1,-.5) and (.5,-.5)  .. (.5,-1); 
	\draw  (1,1) .. controls (1,.5) and (1.5,.5) ..   (1.5,0)   .. controls (1.5,-.5) and (1,-.5)  .. (1,-1); 
	\draw  (1.25,1) .. controls (1.25,.5) and (1.75,.5) ..   (1.75,0)   .. controls (1.75,-.5) and (1.25,-.5)  .. (1.25,-1); 
 \draw  (1.75,-1)  node[below]{$i$}.. controls (1.75,-.5) and (.25,-.5) ..    (.25,0) node[pos=1, tikzdot]{} node[pos=1,xshift=-1.5ex,yshift=.5ex]{\small $k'$}
	 .. controls (.25,.5)  and (1.75,.5).. (1.75,1);
	 \node at (1.25,0) {\tiny $\dots$};  \node at (.8,.85) {\tiny $\dots$};  \node at (.8,-.85) {\tiny $\dots$};
	 \fdot{}{.5,0};
 }
 \right)
 \equiv
  \tikzdiagh[xscale=.5,yscale=.8]{0}{
	\draw (0,-1)  node[below,white]{$i$} -- (0,1);
	\draw (.5,-1) -- (.5,1);
	\draw (1.5,-1) -- (1.5,1);
	\draw (2,-1) -- (2,1);
	\draw (3,-1)  node [right] {\small $i$} .. controls (2.75,-1) ..
		(2.75,-.75) -- (2.75,.75) node[pos=.9, tikzdot]{} node[pos=.9,xshift=1.5ex,yshift=.5ex]{\small $t$}
		.. controls (2.75,1) .. (3,1);
	\filldraw [fill=white, draw=black,rounded corners] (-.25,-.35) rectangle (3,0.35) node[midway] {\small $\bar y_N(\xi_i^{k})$};
	\node at(1,.65) {\tiny $\dots$};
	\node at(1,-.65) {\tiny $\dots$};
}
 + 
 \sum_{\ell=0}^{t-1}
 \tikzdiagh[xscale=.5,yscale=.8]{0}{
	\draw (0,-1)  node[below,white]{$i$} -- (0,1);
	\draw (.5,-1) -- (.5,1);
	\draw (1.5,-1) -- (1.5,1);
	\draw (2,-1) -- (2,1);
	\draw (3,-1)  node [right] {\small $i$} .. controls (2.75,-1) ..
		(2.75,-.75) -- (2.75,.75) node[midway, tikzdot]{} node[midway,xshift=1.5ex,yshift=.5ex]{\small $\ell$}
		.. controls (2.75,1) .. (3,1);
	\filldraw [fill=white, draw=black] (-.25,-.35) rectangle (2.25,.35) node[midway] {\small $H(m)$};
	\node at(1,.65) {\tiny $\dots$};
	\node at(1,-.65) {\tiny $\dots$};
}
 \]
 where
 \[
   \tikzdiagh[xscale=.5,yscale=.8]{0}{
	\draw (0,-1)  node[below,white]{$i$} -- (0,1);
	\draw (.5,-1) -- (.5,1);
	\draw (1.5,-1) -- (1.5,1);
	\draw (2,-1) -- (2,1);
	\draw (3,-1)  node [right] {\small $i$} .. controls (2.75,-1) ..
		(2.75,-.75) -- (2.75,.75) 
		.. controls (2.75,1) .. (3,1);
	\filldraw [fill=white, draw=black,rounded corners] (-.25,-.35) rectangle (3,0.35) node[midway] {\small $\bar y_N(\xi_i^k)$};
	\node at(1,.65) {\tiny $\dots$};
	\node at(1,-.65) {\tiny $\dots$};
}
=
\bar y_N\left(
 \tikzdiagh[yscale = .8]{0}{
	\draw  (.25,1) .. controls (.25,.5) and (.75,.5) ..   (.75,0)   .. controls (.75,-.5) and (.25,-.5)  .. (.25,-1); 
	\draw  (.5,1) .. controls (.5,.5) and (1,.5) ..   (1,0)   .. controls (1,-.5) and (.5,-.5)  .. (.5,-1); 
	\draw  (1,1) .. controls (1,.5) and (1.5,.5) ..   (1.5,0)   .. controls (1.5,-.5) and (1,-.5)  .. (1,-1); 
	\draw  (1.25,1) .. controls (1.25,.5) and (1.75,.5) ..   (1.75,0)   .. controls (1.75,-.5) and (1.25,-.5)  .. (1.25,-1); 
 \draw  (1.75,-1)  node[below]{$i$}.. controls (1.75,-.5) and (.25,-.5) ..    (.25,0) node[pos=1, tikzdot]{} node[pos=1,xshift=-1.5ex,yshift=.5ex]{\small $k$}
	 .. controls (.25,.5)  and (1.75,.5).. (1.75,1);
	 \node at (1.25,0) {\tiny $\dots$};  \node at (.8,.85) {\tiny $\dots$};  \node at (.8,-.85) {\tiny $\dots$};
	 \fdot{}{.5,0};
 }
 \right)
 \]
\end{lem}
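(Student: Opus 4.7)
The plan is to argue by induction on $t = k' - k$. The base case $t = 0$ is immediate: both sides equal $\bar y_N(\xi_i^k)$.

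For the inductive step, I would unfold $\bar y_N$ of the tight floating dot element with $k'$ dots as $\bar\pi$ applied to the underlying diagram with the floating dot removed and replaced by $n_i + k'$ ordinary dots on the far-left loop (this is the defining formula for $y_N$). The key maneuver is to slide one of these dots along the long U-turn back to the extra rightmost strand at position $m+1$, using \cref{eq:KLRdotslide} for distinct-color crossings (which pass freely) and \cref{eq:KLRnh} for same-color $i\text{-}i$ crossings (each of which produces an $r_i$ correction with that crossing resolved).

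After the slide, the main term equals $\bar\pi$ of the diagram with $n_i + k' - 1$ dots on the loop and one added dot $x_{m+1}$ on the rightmost strand, which is $\bar y_N(\xi_i^{k + t - 1}) \cdot x_{m+1}$ in the target bimodule. Applying the inductive hypothesis to $\bar y_N(\xi_i^{k+t-1})$ rewrites this as $\bar y_N(\xi_i^k) \cdot x_{m+1}^{t-1}$ plus $H(m)$-box corrections carrying at most $t - 2$ dots on the right strand. Multiplying by the extra $x_{m+1}$ then gives the desired leading term $\bar y_N(\xi_i^k) \cdot x_{m+1}^t$ together with corrections having at most $t - 1$ dots on the right, which fits inside the claimed error sum.

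Meanwhile, each $r_i$-correction generated by a same-color slide has strictly fewer crossings than the original diagram, so by the left-module decomposition \cref{lem:Rboleftdecomp} it lies in summands where the active strand exits before reaching position $m+1$. Under $\bar\pi$, after projecting onto the piece indexed by the number of dots on the extra rightmost strand, such terms systematically contribute to $H(m)$-boxes with strictly fewer than $t$ dots on the right, matching the second sum in the statement. The main obstacle will be the combinatorial bookkeeping: the long U-turn passes through many same-color crossings, and each slide accumulates its own $r_i$ correction; one must verify that every such correction genuinely lands in the $H(m)$-summand with the correct dot count, rather than contaminating the leading $\bar y_N(\xi_i^k) \cdot x_{m+1}^t$ term. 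Conceptually this is the same crossing-count argument that underlies the proof of \cref{lem:bigp}, and should carry through verbatim after carefully matching summands between the source and target decompositions.
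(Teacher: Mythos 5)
Your proposal takes a genuinely different route from the paper, which does not argue by induction on $t$ at all. The paper's proof proceeds in one shot: it first observes (using \cref{eq:KLRnh}, \cref{eq:KLRdotslide}, and the cyclotomic relation in $H(R_\bo(m),d_N)$) that the diagram with $n_i + k'$ dots equals the diagram with $n_i + k$ dots together with $t$ extra dots placed near the top of the U-turn; it then invokes the explicit left-module decomposition~\cref{eq:thetakdecomp}, which writes the $n_i+k$-dot element as a sum of a ``crossing'' part $\varphi_k, \psi_k$ and the already-projected $\bar y_N(\xi_i^k)$ part; finally it slides the $t$ dots down through the $\psi_k/\varphi_k$ part using the precise relation~\cref{eq:dotslidepsiphik}, and this identity is what produces, on the nose, the $H(m)$ error terms with $\ell < t$ dots on the right.

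The gap in your proposal is the assertion that after sliding a single dot, ``the main term equals $\bar\pi$ of the diagram with $n_i+k'-1$ dots on the loop and one added dot $x_{m+1}$ on the rightmost strand, which is $\bar y_N(\xi_i^{k+t-1})\cdot x_{m+1}$.'' This is exactly where the content of the lemma sits, and it is not free: $\bar\pi$ is the projection onto the summand $\bigoplus_\ell R_\bo(m)\,x_{m+1}^\ell$ of the \emph{left}-module decomposition of \cref{lem:Rboleftdecomp}, and it does \emph{not} intertwine with left-multiplication by $x_{m+1}$. If one expands the diagram in that decomposition, the $a < m+1$ summands are of the form $r\cdot\tau_m\cdots\tau_a x_a^\ell$, and sliding $x_{m+1}$ down through $\tau_m$ produces an $r_i$-correction that lands back in the $a'=m+1$ summand with $\ell'=0$ (this is what \cref{eq:dotslidepsiphik} packages). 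So $\bar\pi(x_{m+1}D) = \bar\pi(D)\cdot x_{m+1} + (\text{terms with $0$ dots on the right})$, not an equality, and the same issue recurs at every inductive step. In effect the ``corrections from same-color slides'' you invoke are not all killed by $\pi$: some of them genuinely contribute to the $a'=m+1$ piece of the projection, and it is these contributions that form the error sum in the lemma. Your closing appeal to the ``crossing-count argument underlying \cref{lem:bigp}'' gestures in the right direction, but without the explicit decomposition~\cref{eq:thetakdecomp} (which isolates $\varphi_k, \psi_k$ so they can be treated as black boxes) and the dot-sliding identity~\cref{eq:dotslidepsiphik} (which tells you exactly how many dots land on the right strand after the slide), the inductive step is not closed. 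To repair the proof you would need to establish the $t=1$ case directly with those tools, which is essentially what the paper does without induction.
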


\begin{proof}
First we observe that 
\[
 \tikzdiagh[yscale = .8]{0}{
	\draw  (.25,1) .. controls (.25,.5) and (.75,.5) ..   (.75,0)   .. controls (.75,-.5) and (.25,-.5)  .. (.25,-1); 
	\draw  (.5,1) .. controls (.5,.5) and (1,.5) ..   (1,0)   .. controls (1,-.5) and (.5,-.5)  .. (.5,-1); 
	\draw  (1,1) .. controls (1,.5) and (1.5,.5) ..   (1.5,0)   .. controls (1.5,-.5) and (1,-.5)  .. (1,-1); 
	\draw  (1.25,1) .. controls (1.25,.5) and (1.75,.5) ..   (1.75,0)   .. controls (1.75,-.5) and (1.25,-.5)  .. (1.25,-1); 
 \draw  (1.75,-1)  node[below]{$i$}.. controls (1.75,-.5) and (.25,-.5) ..    (.25,0) node[pos=1, tikzdot]{} node[pos=1,xshift=-3.5ex,yshift=.5ex]{\small $n_i{+}k'$}
	 .. controls (.25,.5)  and (1.75,.5).. (1.75,1);
	 \node at (1.25,0) {\tiny $\dots$};  \node at (.8,.85) {\tiny $\dots$};  \node at (.8,-.85) {\tiny $\dots$};
 }
 =
  \tikzdiagh[yscale = .8]{0}{
	\draw  (.25,1) .. controls (.25,.5) and (.75,.5) ..   (.75,0)   .. controls (.75,-.5) and (.25,-.5)  .. (.25,-1); 
	\draw  (.5,1) .. controls (.5,.5) and (1,.5) ..   (1,0)   .. controls (1,-.5) and (.5,-.5)  .. (.5,-1); 
	\draw  (1,1) .. controls (1,.5) and (1.5,.5) ..   (1.5,0)   .. controls (1.5,-.5) and (1,-.5)  .. (1,-1); 
	\draw  (1.25,1) .. controls (1.25,.5) and (1.75,.5) ..   (1.75,0)   .. controls (1.75,-.5) and (1.25,-.5)  .. (1.25,-1); 
 \draw  (1.75,-1)  node[below]{$i$}.. controls (1.75,-.5) and (.25,-.5) ..    (.25,0) node[pos=1, tikzdot]{} node[pos=1,xshift=-3.5ex,yshift=.5ex]{\small $n_i{+}k$}
	 .. controls (.25,.5)  and (1.75,.5).. (1.75,1) node[pos=.8,tikzdot]{} node[pos=.8, xshift=1.5ex, yshift=0]{\small $t$};
	 \node at (1.25,0) {\tiny $\dots$};  \node at (.8,.85) {\tiny $\dots$};  \node at (.8,-.85) {\tiny $\dots$};
 }
 \in
 \tikzdiag[xscale=.5,yscale=.8]{
	\draw (0,-1) -- (0,1);
	\draw (.5,-1) -- (.5,1);
	\draw (1.5,-1) -- (1.5,1);
	\draw (2,-1) -- (2,1);
	\draw (3,-1)  node [right] {\small $i$} .. controls (2.75,-1) ..
		(2.75,-.75) -- (2.75,.75)
		.. controls (2.75,1) .. (3,1) node [right] {\small $i$};
	\filldraw [fill=white, draw=black] (-.25,.15) rectangle (2.25,.85) node[midway] {\small $H(m)$};
	\filldraw [fill=white, draw=black] (-.25,-.75) rectangle (3,-.25) node[midway] {\small $m+1$};
	\node at(1,.9) {\tiny$\dots$};
	\node at(1,-.1) {\tiny $\dots$};
	\node at(1,-.9) {\tiny$\dots$};
}
 \]
 using~\cref{eq:KLRnh} and~\cref{eq:KLRdotslide}, and the fact that $n_i$  dots on the left strand is annihilated in $H(R_\bo(m),d_N)$. 
 
 Then, using \cref{lem:Rboleftdecomp} we obtain
 \begin{equation}\label{eq:thetakdecomp}
 \tikzdiagh[yscale = .8]{0}{
	\draw  (.25,1) .. controls (.25,.5) and (.75,.5) ..   (.75,0)   .. controls (.75,-.5) and (.25,-.5)  .. (.25,-1); 
	\draw  (.5,1) .. controls (.5,.5) and (1,.5) ..   (1,0)   .. controls (1,-.5) and (.5,-.5)  .. (.5,-1); 
	\draw  (1,1) .. controls (1,.5) and (1.5,.5) ..   (1.5,0)   .. controls (1.5,-.5) and (1,-.5)  .. (1,-1); 
	\draw  (1.25,1) .. controls (1.25,.5) and (1.75,.5) ..   (1.75,0)   .. controls (1.75,-.5) and (1.25,-.5)  .. (1.25,-1); 
 \draw  (1.75,-1)  node[below]{$i$}.. controls (1.75,-.5) and (.25,-.5) ..    (.25,0) node[pos=1, tikzdot]{} node[pos=1,xshift=-3.5ex,yshift=.5ex]{\small $n_i{+}k$}
	 .. controls (.25,.5)  and (1.75,.5).. (1.75,1);
	 \node at (1.25,0) {\tiny $\dots$};  \node at (.8,.85) {\tiny $\dots$};  \node at (.8,-.85) {\tiny $\dots$};
 }
 \ = \ 
\tikzdiagh[xscale=.5,yscale=.8]{0}{
	\draw (0,-1)  node[below,white]{$i$} -- (0,1);
	\draw (.5,-1) -- (.5,1);
	\draw (1.5,-1) -- (1.5,1);
	\draw (2,-1) 
		-- (2,-.25) 
		.. controls (2,0) and (2.75,0) .. (2.75,.25) 
		-- (2.75,.75)
		.. controls (2.75,1) .. (3,1);
	\draw (2,1) 
		-- (2,.25) 
		.. controls (2,0) and (2.75,0) .. (2.75,-.25) 
		-- (2.75,-.75)
		.. controls (2.75,-1) .. (3,-1) node[right] {\small $i$};
	\filldraw [fill=white, draw=black,rounded corners] (-.25,.25) rectangle (2.25,.75) node[midway] {\small $\psi_k$};
	\filldraw [fill=white, draw=black,rounded corners] (-.25,-.75) rectangle (2.25,-.25) node[midway] {\small $\varphi_k$};
	\node at(1,.9) {\tiny$\dots$};
	\node at(1,0) {\tiny $\dots$};
	\node at(1,-.9) {\tiny$\dots$};
}
+\ 
  \tikzdiagh[xscale=.5,yscale=.8]{0}{
	\draw (0,-1)  node[below,white]{$i$} -- (0,1);
	\draw (.5,-1) -- (.5,1);
	\draw (1.5,-1) -- (1.5,1);
	\draw (2,-1) -- (2,1);
	\draw (3,-1)  node [right] {\small $i$} .. controls (2.75,-1) ..
		(2.75,-.75) -- (2.75,.75)
		.. controls (2.75,1) .. (3,1);
	\filldraw [fill=white, draw=black,rounded corners] (-.25,-.35) rectangle (3,0.35) node[midway] {\small $\bar y_N(\xi_i^k)$};
	\node at(1,.65) {\tiny $\dots$};
	\node at(1,-.65) {\tiny $\dots$};
}
 \end{equation}
 for some $\varphi_k, \psi_k \in R_\bo(m)$. We conclude by observing that
 \begin{equation}\label{eq:dotslidepsiphik}
\tikzdiagh[xscale=.5,yscale=.8]{0}{
	\draw (0,-1)  node[below,white]{$i$} -- (0,1);
	\draw (.5,-1) -- (.5,1);
	\draw (1.5,-1) -- (1.5,1);
	\draw (2,-1) 
		-- (2,-.25) 
		.. controls (2,0) and (2.75,0) .. (2.75,.25) 
		-- (2.75,.75) node[pos=.8,tikzdot]{} node[pos=.8, xshift=1.5ex, yshift=.5ex]{\small $t$}
		.. controls (2.75,1) .. (3,1);
	\draw (2,1) 
		-- (2,.25) 
		.. controls (2,0) and (2.5,0) .. (2.75,-.25) 
		-- (2.75,-.75)
		.. controls (2.75,-1) .. (3,-1) node[right] {\small $i$};
	\filldraw [fill=white, draw=black,rounded corners] (-.25,-.75) rectangle (2.25,-.25) node[midway] {\small $\varphi_k$};
	\filldraw [fill=white, draw=black,rounded corners] (-.25,.25) rectangle (2.25,.75) node[midway] {\small $\psi_k$};
	\node at(1,.9) {\tiny$\dots$};
	\node at(1,0) {\tiny $\dots$};
	\node at(1,-.9) {\tiny$\dots$};
}
=
\tikzdiagh[xscale=.5,yscale=.8]{0}{
	\draw (0,-1)  node[below,white]{$i$} -- (0,1);
	\draw (.5,-1) -- (.5,1);
	\draw (1.5,-1) -- (1.5,1);
	\draw (2,-1) 
		-- (2,-.25) 
		.. controls (2,0) and (2.75,0) .. (2.75,.25)   node[pos=.2,tikzdot]{} node[pos=.2, xshift=-1ex, yshift=.75ex]{\small $t$}
		-- (2.75,.75)
		.. controls (2.75,1) .. (3,1);
	\draw (2,1) 
		-- (2,.25) 
		.. controls (2,0) and (2.75,0) .. (2.75,-.25) 
		-- (2.75,-.75)
		.. controls (2.75,-1) .. (3,-1) node[right] {\small $i$};
	\filldraw [fill=white, draw=black,rounded corners] (-.25,-.8) rectangle (2.25,-.3) node[midway] {\small $\varphi_k$};
	\filldraw [fill=white, draw=black,rounded corners] (-.25,.3) rectangle (2.25,.8) node[midway] {\small $\psi_k$};
	\node at(1,.9) {\tiny$\dots$};
	\node at(1,0) {\tiny $\dots$};
	\node at(1,-.9) {\tiny$\dots$};
}
- r_i \sssum{r+s\\=t-1} \ 
\tikzdiagh[xscale=.5,yscale=.8]{0}{
	\draw (0,-1)  node[below,white]{$i$} -- (0,1);
	\draw (.5,-1) -- (.5,1);
	\draw (1.5,-1) -- (1.5,1);
	\draw (2,-1)  -- (2,1)  node[midway,tikzdot]{} node[midway, xshift=1.5ex, yshift=.5ex]{\small $r$};
	\draw (3.25, -1) node[right] {\small $i$}
	 .. controls (3,-1) .. (3,-.75)
	 -- (3,.75) node[midway,tikzdot]{} node[midway, xshift=1.5ex, yshift=.5ex]{\small $s$}
	 .. controls (3,1) .. (3.25,1);
	\filldraw [fill=white, draw=black,rounded corners] (-.25,-.8) rectangle (2.25,-.3) node[midway] {\small $\varphi_k$};
	\filldraw [fill=white, draw=black,rounded corners] (-.25,.3) rectangle (2.25,.8) node[midway] {\small $\psi_k$};
	\node at(1,.9) {\tiny$\dots$};
	\node at(1,0) {\tiny $\dots$};
	\node at(1,-.9) {\tiny$\dots$};
}
 \end{equation}
 thanks to \cref{eq:KLRnh}.
\end{proof}

\begin{prop}\label{prop:Rbomonicpol}
Putting $\rho_i := n_i - \alpha_i^\vee(\nu)$, we have
\[
\bar y_N\left(
 \tikzdiagh[yscale = .8]{0}{
	\draw  (.25,1) .. controls (.25,.5) and (.75,.5) ..   (.75,0)   .. controls (.75,-.5) and (.25,-.5)  .. (.25,-1); 
	\draw  (.5,1) .. controls (.5,.5) and (1,.5) ..   (1,0)   .. controls (1,-.5) and (.5,-.5)  .. (.5,-1); 
	\draw  (1,1) .. controls (1,.5) and (1.5,.5) ..   (1.5,0)   .. controls (1.5,-.5) and (1,-.5)  .. (1,-1); 
	\draw  (1.25,1) .. controls (1.25,.5) and (1.75,.5) ..   (1.75,0)   .. controls (1.75,-.5) and (1.25,-.5)  .. (1.25,-1); 
 \draw  (1.75,-1)  node[below]{$i$}.. controls (1.75,-.5) and (.25,-.5) ..    (.25,0) node[pos=1, tikzdot]{} node[pos=1,xshift=-1.5ex,yshift=.5ex]{\small $k$}
	 .. controls (.25,.5)  and (1.75,.5).. (1.75,1);
	 \node at (1.25,0) {\tiny $\dots$};  \node at (.8,.85) {\tiny $\dots$};  \node at (.8,-.85) {\tiny $\dots$};
	 \fdot{}{.5,0};
	\draw[decoration={brace,mirror,raise=-8pt},decorate]  (.15,-1.35) -- node {$\nu$} (1.35,-1.35);
 }
\right)
 \ \equiv \zeta \ 
 \tikzdiagh[xscale=.5,yscale=.8]{0}{
	\draw (0,-1)  node[below,white]{$i$} -- (0,1);
	\draw (.5,-1) -- (.5,1);
	\draw (1.5,-1) -- (1.5,1);
	\draw (2,-1) -- (2,1);
	\draw (3,-1)  node [right] {\small $i$} .. controls (2.75,-1) ..
		(2.75,-.75) -- (2.75,.75) node[midway, tikzdot]{} node[midway,xshift=3.5ex, yshift=.5ex]{\small $k{+}\rho_i $}
		.. controls (2.75,1) .. (3,1);
	\node at (1,0) {\tiny $\dots$};
}
+ 
\bigoplus_{\ell = 0}^{k+\rho_i -1}
\tikzdiagh[xscale=.5,yscale=.8]{0}{
	\draw (0,-1)  node[below,white]{$i$} -- (0,1);
	\draw (.5,-1) -- (.5,1);
	\draw (1.5,-1) -- (1.5,1);
	\draw (2,-1) -- (2,1);
	\draw (3,-1)  node [right] {\small $i$} .. controls (2.75,-1) ..
		(2.75,-.75) -- (2.75,.75) node[midway, tikzdot]{} node[midway,xshift=1.5ex, yshift=.5ex]{\small $\ell$}
		.. controls (2.75,1) .. (3,1);
	\filldraw [fill=white, draw=black] (-.25,-.35) rectangle (2.25,.35) node[midway] {\small $H(m)$};
	\node at (1,.65) {\tiny $\dots$}; \node at (1,-.65) {\tiny $\dots$};
}
\]
which is $0$ whenever $k + \rho_i < 0$, and where $\zeta \in \Bbbk^\times$.
\end{prop}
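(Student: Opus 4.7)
The strategy is descending induction on $k$, with base case supplied by \cref{lem:bigp} (when $k$ is large enough) and inductive step by \cref{lem:pidotslide}. Writing $\bar y_N(k)$ for $\bar y_N$ applied to the displayed diagram, I would first unpack the definition of $y_N$: the tight floating dot combined with its $k$ dots is resolved into $p := n_i + k$ dots on the curved strand threading through all crossings, and the result is projected via $\bar\pi$. This is precisely the diagram analyzed in \cref{lem:bigp}, so the first step is to invoke that lemma with $p = n_i + k$.

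The edge case $k + \rho_i < 0$ is immediate: since then $p < \alpha_i^\vee(\nu) \leq 2\nu_i$, the second case of \cref{lem:bigp} places $\pi$ of the diagram in the empty direct sum $\bigoplus_{\ell = 0}^{k + \rho_i}$, so $\bar y_N(k) = 0$. The base case of the induction is $k \geq 2\nu_i - n_i$, where $p \geq 2\nu_i$ and the first case of \cref{lem:bigp} directly yields the claim with leading coefficient $\zeta$ invertible, noting that $p - \alpha_i^\vee(\nu) = k + \rho_i$.

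For the inductive step, in the remaining range $-\rho_i \leq k < 2\nu_i - n_i$, assume the statement for $k + 1$. Applying \cref{lem:pidotslide} with $t = 1$ gives
\[
\bar y_N(k+1) \equiv \bar y_N(k) \cdot x_{m+1} + c_0
\]
in the module $\bigoplus_{\ell \geq 0} H(R_\bo(m),d_N) \cdot x_{m+1}^\ell$, where $c_0$ lies in the summand $\ell = 0$. Writing $\bar y_N(k) = \sum_j b_j x_{m+1}^j$ with $b_j \in H(R_\bo(m),d_N)$ and substituting the inductive hypothesis $\bar y_N(k+1) = \zeta x_{m+1}^{k+1+\rho_i} + (\text{lower})$, the coefficients of $x_{m+1}^j$ match to force $b_{k + \rho_i} = \zeta$ (invertible), $b_j = 0$ for $j > k + \rho_i$, and express the remaining $b_j$'s in terms of $c_0$ and the lower coefficients of $\bar y_N(k+1)$. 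The same invertible scalar $\zeta$ is thus carried through from the base case.

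The main obstacle is not conceptual but rather verifying that this coefficient-matching is well-defined, i.e. that multiplication by $x_{m+1}$ is injective on the relevant summand, or equivalently that the module $\bigoplus_\ell H(R_\bo(m),d_N) \cdot x_{m+1}^\ell$ is free over $H(R_\bo(m),d_N)$ with basis $\{x_{m+1}^\ell\}_{\ell \geq 0}$. This freeness follows from the right-decomposition analogue of \cref{lem:Rboleftdecomp} combined with the formality result \cref{thm:RbodNformal}; once it is in place, the induction is purely formal polynomial bookkeeping.
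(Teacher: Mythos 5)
Your proposal is correct and rests on the same two lemmas the paper uses: \cref{lem:bigp} supplies the base case (and the vanishing when $k+\rho_i<0$), and \cref{lem:pidotslide} reduces the general case to that. The only real difference is that you descend one degree at a time, applying \cref{lem:pidotslide} with $t=1$ repeatedly, while the paper applies it once with $t = k'-k$ where $k' := 2\nu_i - n_i$, jumping directly from the known $\bar y_N(\xi_i^{k'})$ down to $\bar y_N(\xi_i^k)$. The coefficient bookkeeping is identical either way, so both are fine.

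The ``main obstacle'' you flag at the end is not an actual obstacle. The codomain
$\bigoplus_{\ell\ge 0} H(R_\bo(m),d_N)\cdot x_{m+1}^\ell$
that $\bar y_N$ (equivalently $\bar\pi$) is defined to land in is a \emph{formal} direct sum indexed by $\ell$ --- it is free over $H(R_\bo(m),d_N)$ with basis $\{x_{m+1}^\ell\}_{\ell\ge 0}$ by construction, not as a consequence of \cref{lem:Rboleftdecomp} or \cref{thm:RbodNformal}. So coefficient matching is automatically well-defined and your induction needs no extra input.
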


\begin{proof}
If $n_i \geq 2\nu_i$, then the result follows from  \cref{lem:bigp}. Otherwise, we take $k' = 2\nu_i - n_i$ and the result follows from \cref{lem:bigp} for $k \geq k'$. 
Suppose $k  <  k'$ and put $t = k' - k$. Then, by \cref{lem:pidotslide} we obtain
\begin{align*}
\bar y_N\left(
 \tikzdiagh[yscale = .8]{0}{
	\draw  (.25,1) .. controls (.25,.5) and (.75,.5) ..   (.75,0)   .. controls (.75,-.5) and (.25,-.5)  .. (.25,-1); 
	\draw  (.5,1) .. controls (.5,.5) and (1,.5) ..   (1,0)   .. controls (1,-.5) and (.5,-.5)  .. (.5,-1); 
	\draw  (1,1) .. controls (1,.5) and (1.5,.5) ..   (1.5,0)   .. controls (1.5,-.5) and (1,-.5)  .. (1,-1); 
	\draw  (1.25,1) .. controls (1.25,.5) and (1.75,.5) ..   (1.75,0)   .. controls (1.75,-.5) and (1.25,-.5)  .. (1.25,-1); 
 \draw  (1.75,-1)  node[below]{$i$}.. controls (1.75,-.5) and (.25,-.5) ..    (.25,0) node[pos=1, tikzdot]{} node[pos=1,xshift=-1.5ex,yshift=.5ex]{\small $k'$}
	 .. controls (.25,.5)  and (1.75,.5).. (1.75,1);
	 \node at (1.25,0) {\tiny $\dots$};  \node at (.8,.85) {\tiny $\dots$};  \node at (.8,-.85) {\tiny $\dots$};
	 \fdot{}{.5,0};
 }
 \right)
 \ &\equiv \ 
  \tikzdiagh[xscale=.5,yscale=.8]{0}{
	\draw (0,-1)  node[below,white]{$i$} -- (0,1);
	\draw (.5,-1) -- (.5,1);
	\draw (1.5,-1) -- (1.5,1);
	\draw (2,-1) -- (2,1);
	\draw (3,-1)  node [right] {\small $i$} .. controls (2.75,-1) ..
		(2.75,-.75) -- (2.75,.75) node[pos=.9, tikzdot]{} node[pos=.9,xshift=1.5ex,yshift=.5ex]{\small $t$}
		.. controls (2.75,1) .. (3,1);
	\filldraw [fill=white, draw=black,rounded corners] (-.25,-.35) rectangle (3,0.35) node[midway] {\small $\bar y_N(\xi_i^k)$};
	\node at(1,.65) {\tiny $\dots$};
	\node at(1,-.65) {\tiny $\dots$};
}
 + 
 \sum_{\ell=0}^{t-1}
 \tikzdiagh[xscale=.5,yscale=.8]{0}{
	\draw (0,-1)  node[below,white]{$i$} -- (0,1);
	\draw (.5,-1) -- (.5,1);
	\draw (1.5,-1) -- (1.5,1);
	\draw (2,-1) -- (2,1);
	\draw (3,-1)  node [right] {\small $i$} .. controls (2.75,-1) ..
		(2.75,-.75) -- (2.75,.75) node[midway, tikzdot]{} node[midway,xshift=1.5ex,yshift=.5ex]{\small $\ell$}
		.. controls (2.75,1) .. (3,1);
	\filldraw [fill=white, draw=black] (-.25,-.35) rectangle (2.25,.35) node[midway] {\small $H(m)$};
	\node at(1,.65) {\tiny $\dots$};
	\node at(1,-.65) {\tiny $\dots$};
}
\end{align*}
Therefore, we have
\[
  \tikzdiagh[xscale=.5,yscale=.8]{0}{
	\draw (0,-1)  node[below,white]{$i$} -- (0,1);
	\draw (.5,-1) -- (.5,1);
	\draw (1.5,-1) -- (1.5,1);
	\draw (2,-1) -- (2,1);
	\draw (3,-1)  node [right] {\small $i$} .. controls (2.75,-1) ..
		(2.75,-.75) -- (2.75,.75) node[pos=.9, tikzdot]{} node[pos=.9,xshift=1.5ex,yshift=.5ex]{\small $t$}
		.. controls (2.75,1) .. (3,1);
	\filldraw [fill=white, draw=black,rounded corners] (-.25,-.35) rectangle (2.75,0.35) node[midway] {\small $\bar y_N(\xi_i^k)$};
	\node at(1,.65) {\tiny $\dots$};
	\node at(1,-.65) {\tiny $\dots$};
} 
\ \equiv \zeta \ 
 \tikzdiagh[xscale=.5,yscale=.8]{0}{
	\draw (0,-1)  node[below,white]{$i$} -- (0,1);
	\draw (.5,-1) -- (.5,1);
	\draw (1.5,-1) -- (1.5,1);
	\draw (2,-1) -- (2,1);
	\draw (3,-1)  node [right] {\small $i$} .. controls (2.75,-1) ..
		(2.75,-.75) -- (2.75,.75) node[midway, tikzdot]{} node[midway,xshift=3.5ex, yshift=.5ex]{\small $k'{+}\rho_i $}
		.. controls (2.75,1) .. (3,1);
	\node at (1,0) {\tiny $\dots$};
}
+ 
\bigoplus_{\ell = 0}^{\max(k'+\rho_i , t) -1}
\tikzdiagh[xscale=.5,yscale=.8]{0}{
	\draw (0,-1)  node[below,white]{$i$} -- (0,1);
	\draw (.5,-1) -- (.5,1);
	\draw (1.5,-1) -- (1.5,1);
	\draw (2,-1) -- (2,1);
	\draw (3,-1)  node [right] {\small $i$} .. controls (2.75,-1) ..
		(2.75,-.75) -- (2.75,.75) node[midway, tikzdot]{} node[midway,xshift=1.5ex, yshift=.5ex]{\small $\ell$}
		.. controls (2.75,1) .. (3,1);
	\filldraw [fill=white, draw=black] (-.25,-.35) rectangle (2.25,.35) node[midway] {\small $H(m)$};
	\node at (1,.65) {\tiny $\dots$}; \node at (1,-.65) {\tiny $\dots$};
}
\]
From this, we deduce
\[
  \tikzdiagh[xscale=.5,yscale=.8]{0}{
	\draw (0,-1)  node[below,white]{$i$} -- (0,1);
	\draw (.5,-1) -- (.5,1);
	\draw (1.5,-1) -- (1.5,1);
	\draw (2,-1) -- (2,1);
	\draw (2.75,-1)  node [right] {\small $i$} .. controls (2.5,-1) ..
		(2.5,-.75) -- (2.5,.75) 
		.. controls (2.5,1) .. (2.75,1);
	\filldraw [fill=white, draw=black,rounded corners] (-.25,-.35) rectangle (2.75,0.35) node[midway] {\small $\bar y_N(\xi_i^k)$};
	\node at(1,.65) {\tiny $\dots$};
	\node at(1,-.65) {\tiny $\dots$};
} 
\ \equiv  \zeta \ 
\tikzdiagh[xscale=.5,yscale=.8]{0}{
	\draw (0,-1)  node[below,white]{$i$} -- (0,1);
	\draw (.5,-1) -- (.5,1);
	\draw (1.5,-1) -- (1.5,1);
	\draw (2,-1) -- (2,1);
	\draw (3,-1)  node [right] {\small $i$} .. controls (2.75,-1) ..
		(2.75,-.75) -- (2.75,.75) node[midway, tikzdot]{} node[midway,xshift=3.5ex, yshift=.5ex]{\small $k{+}\rho_i $}
		.. controls (2.75,1) .. (3,1);
	\node at (1,0) {\tiny $\dots$};
}
+ 
\bigoplus_{\ell = 0}^{k+\rho_i -1}
\tikzdiagh[xscale=.5,yscale=.8]{0}{
	\draw (0,-1)  node[below,white]{$i$} -- (0,1);
	\draw (.5,-1) -- (.5,1);
	\draw (1.5,-1) -- (1.5,1);
	\draw (2,-1) -- (2,1);
	\draw (3,-1)  node [right] {\small $i$} .. controls (2.75,-1) ..
		(2.75,-.75) -- (2.75,.75) node[midway, tikzdot]{} node[midway,xshift=1.5ex, yshift=.5ex]{\small $\ell$}
		.. controls (2.75,1) .. (3,1);
	\filldraw [fill=white, draw=black] (-.25,-.35) rectangle (2.25,.35) node[midway] {\small $H(m)$};
	\node at (1,.65) {\tiny $\dots$}; \node at (1,-.65) {\tiny $\dots$};
}
\]
which concludes the proof.
\end{proof}

We now have all the tools  we need to compute the homology of the cokernel of the short exact sequence of \cref{prop:RbodgSES}.

\begin{prop}\label{prop:HcokRbo}
There is an isomorphism of $R_\p^N(\nu)$-$R_\p^N(\nu)$-bimodules
\begin{align*}
H\bigl( R_\bo^{\xi_i}(\nu) \oplus  \lambda_i^2 q_i^{ - 2\alpha_i^\vee(\nu)} &R_\bo^{\xi_i}(\nu)[1], d_N \bigr) \\
&\cong
\begin{cases}
\bigoplus_{\ell = 0} ^{\rho_i - 1} q_i^{2 \ell} R_\p^N(\nu), & \text{if $\rho_i \geq 0$,} \\
\lambda_i^2 q_i^{-2 \alpha_i^\vee(\nu)}\bigoplus_{\ell = 0}^{-\rho_i-1} q_i^{2 \ell} R_\p^N(\nu)[1], & \text{if $\rho_i \leq 0$,}
\end{cases}
\end{align*}
where $\rho_i = n_i - \alpha_i^\vee(\nu)$.
\end{prop}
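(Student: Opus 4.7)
The plan is to exploit the definition of $(R_\bo^{\xi_i}(\nu) \oplus \lambda_i^2 q_i^{-2\alpha_i^\vee(\nu)} R_\bo^{\xi_i}(\nu)[1], d_N)$ as the mapping cone of $y_N$, compute the induced map in homology via \cref{prop:Rbomonicpol}, and read off the answer from the associated long exact sequence. The key preliminary observation is that the differential $d_N$ vanishes on ordinary dots by definition, so $\xi_i$ is a cocycle, and as dg-bimodules over $(R_\bo(m), d_N)$ we have $(R_\bo^{\xi_i}(\nu), d_N) \cong (R_\bo(\nu), d_N) \otimes_\Bbbk \Bbbk[\xi_i]$ with trivial differential on the $\Bbbk[\xi_i]$ factor. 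Combined with the formality statement \cref{thm:RbodNformal}, this gives
\[
H(R_\bo^{\xi_i}(\nu), d_N) \;\cong\; \bigoplus_{\ell \geq 0} q_i^{2\ell}\, R_\p^N(\nu),
\]
and analogously for the shifted source.

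I then compute the map $H(y_N)$ on homology. By \cref{prop:Rbomonicpol}, for each $k \geq 0$ one has $\bar y_N(\theta^k) \equiv \zeta_k\, \xi_i^{k+\rho_i}$ modulo the $R_\p^N(\nu)$-bimodule spanned by lower powers $\{\xi_i^\ell : \ell < k+\rho_i\}$, with $\zeta_k \in \Bbbk^\times$, whenever $k + \rho_i \geq 0$, and $\bar y_N(\theta^k) = 0$ otherwise. Since the leading coefficients $\zeta_k$ are invertible scalars (and therefore central with respect to the $R_\p^N(\nu)$-bimodule structure), a routine inductive triangular change of basis on the source, absorbing the lower-order corrections, identifies $H(y_N)$ with the ``shift-by-$\rho_i$'' map sending $\theta^k \mapsto \xi_i^{k+\rho_i}$ when $k+\rho_i \geq 0$ and $\theta^k \mapsto 0$ otherwise.

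It remains to plug this normal form into the cone's long exact sequence
\[
\cdots \to H^n(A) \xrightarrow{H(y_N)} H^n(B) \to H^n(\cone) \to H^{n+1}(A) \to \cdots
\]
and split into cases on the sign of $\rho_i$. If $\rho_i \geq 0$, then $H(y_N)$ is injective with cokernel the free $R_\p^N(\nu)$-bimodule on $\xi_i^0, \dots, \xi_i^{\rho_i-1}$, so $H(\cone) \cong \coker H(y_N) \cong \bigoplus_{\ell=0}^{\rho_i-1} q_i^{2\ell} R_\p^N(\nu)$, matching the first case. If $\rho_i \leq 0$, then $H(y_N)$ is surjective with kernel freely generated by $\theta^0, \dots, \theta^{-\rho_i-1}$, so the long exact sequence identifies $H(\cone)$ with $\ker H(y_N)$ placed in the shifted homological degree, giving the second case after tracking the $\lambda_i^2 q_i^{-2\alpha_i^\vee(\nu)}$ weight on the source.

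The main obstacle is bookkeeping rather than conceptual: one must verify that the triangular change of basis respects the $(R_\p^N(\nu), R_\p^N(\nu))$-bimodule structure (ensured by the centrality of the scalars $\zeta_k$), and must carefully track the $(q_i, \lambda_i, h)$-gradings through the cone construction so that the resulting direct sum decomposition carries the shifts $q_i^{2\ell}$ and $\lambda_i^2 q_i^{-2\alpha_i^\vee(\nu)}[1]$ stated in the proposition. Once these bookkeeping steps are in place, the identification of kernel and cokernel from the triangular normal form is immediate.
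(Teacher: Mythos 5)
Your proof is correct and takes essentially the same approach as the paper: both rest on \cref{prop:Rbomonicpol} to identify $\bar y_N(\xi_i^k)$ as a monic polynomial (up to a unit) with leading term $\xi_i^{k+\rho_i}$ when $k+\rho_i \geq 0$ and zero otherwise, then use the resulting echelon form to read off kernel and cokernel. The paper's proof is terser—it leaves the long exact sequence and grading bookkeeping implicit—while you spell them out, which is a welcome addition of detail rather than a different route.
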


\begin{proof}
First suppose $\rho_i \geq 0$. Then, \cref{prop:Rbomonicpol} tells us that $\bar y_N(\xi_i^k)$ is a monic polynomial (up to invertible scalar) with leading terms $\xi_i^{k+\rho_i}$. This gives us the first case. If $\rho_i \leq 0$, then we have $\bar y_N(\xi_i^k) = 0$ for $k < -\rho_i$. Moreover, $\zeta^{-1} \bar y_N(\xi_i^{-\rho_i}) = 1$, and in general  $\bar y_N(\xi_i^k)$ is a monic polynomial with leading term $\xi_i^{k+\rho_i}$ for $k > -\rho_i$. This concludes the proof.
\end{proof}

\subsection{Strongly projective dg-modules}

The following notions were originally introduced by Moore~\cite{moore}. We use the presentation given in~\cite{sixdgmodels}, which is best suited for our notations. 

\begin{defn}[{\cite[Definition 8.5]{sixdgmodels}}]
Let $(R,0)$ be a ring $R$ viewed as a dg-$\bZ$-algebra concentrated in degree zero.
An $(R,0)$-module $(Q,d_Q)$ is \emph{strongly projective} if $H(Q,d_Q)$ and $\Image d_Q$ are both projective $R$-modules.
\end{defn}

\begin{lem}[{\cite[Theorem~9.3.2]{vermani}}]
Let $(P,d_P)$ be a strongly projective right $(R,0)$-module and $(N,d_N)$ any left $(R,0)$-module, then
\[
H\bigl((P,d_P) \otimes_{(R,0)} (N,d_N)\bigr) \cong H(P,d_P) \otimes_R H(N,d_N).
\]
\end{lem}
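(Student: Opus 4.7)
The plan is to reduce the general statement to two elementary computations by exhibiting a decomposition of $(P,d_P)$ as a direct sum of a complex with zero differential and a contractible one. Since $(R,0)$ is concentrated in degree zero, the differential $d_P$ is $R$-linear, so the standard short exact sequences of graded right $R$-modules
\[
0 \to Z_P \to P \xrightarrow{d_P} B_P[-1] \to 0, \qquad 0 \to B_P \to Z_P \to H(P,d_P) \to 0,
\]
are available, where $Z_P := \ker d_P$ and $B_P := \Image d_P$. The hypothesis that $H(P,d_P)$ and $B_P$ are projective $R$-modules makes both sequences split as graded right $R$-modules. This yields an isomorphism of graded right $R$-modules $P \cong H(P,d_P) \oplus D$, where $D := B_P \oplus B_P[-1]$, and one verifies that under this identification the differential $d_P$ preserves the decomposition, acts trivially on $H(P,d_P)$, and restricts to the identity $B_P[-1] \xrightarrow{\sim} B_P$ on $D$. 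In particular, $(D, d_P|_D)$ is contractible, with a degree $+1$ chain homotopy $h : D \to D$ satisfying $d_P h + h d_P = \id_D$, namely the inverse isomorphism $B_P \xrightarrow{\sim} B_P[-1]$.

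From here the computation splits in two. For the first summand, $(H(P,d_P) \otimes_R N, 1 \otimes d_N)$ has differential coming purely from $(N, d_N)$; since $H(P,d_P)$ is projective, tensoring with it over $R$ is exact and commutes with homology, so
\[
H\bigl(H(P,d_P) \otimes_R (N,d_N)\bigr) \cong H(P,d_P) \otimes_R H(N,d_N).
\]
For the second summand, I would check that $h \otimes \id_N$ is a chain homotopy for the total differential on $D \otimes_R N$: a short sign computation using the Koszul rule shows that the cross terms cancel, yielding $(d \otimes 1 + \epsilon \otimes d_N)(h \otimes 1) + (h \otimes 1)(d \otimes 1 + \epsilon \otimes d_N) = \id$. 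Hence $D \otimes_R (N, d_N)$ is contractible and has vanishing homology.

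Combining these two computations with the direct sum decomposition $(P, d_P) \otimes_{(R,0)} (N, d_N) \cong \bigl(H(P,d_P) \otimes_R N \bigr) \oplus \bigl(D \otimes_R N\bigr)$ of dg-modules gives the claimed isomorphism. The only mildly delicate point is the sign verification for the chain homotopy on $D \otimes_R N$; everything else is a straightforward application of projectivity and the splitting lemma, so I expect no serious obstacle.
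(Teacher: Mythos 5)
Your proof is correct. The paper does not supply its own proof of this lemma---it simply cites \cite[Theorem~9.3.2]{vermani}---so there is no internal argument to compare against, but the route you take is precisely the standard one for this fact: over a ring concentrated in degree zero the differential is $R$-linear, so the hypotheses that $H(P,d_P)$ and $\Image d_P$ are projective split off the homology as a direct summand with zero differential, leaving a contractible complement $D$; the homology summand behaves well under $-\otimes_R N$ by exactness, and $D\otimes_R N$ is contractible because the contracting homotopy survives the tensor. Your flagged sign check is sound: writing the tensor differential as $d_D\otimes 1 + \epsilon\otimes d_N$ and taking $h\otimes\id_N$, the cross terms contribute $(-1)^{|x|+1}h(x)\otimes d_N(n)$ and $(-1)^{|x|}h(x)\otimes d_N(n)$, which cancel since $h$ has odd degree, leaving $(d_Dh + hd_D)\otimes\id_N = \id$.
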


\begin{defn}[{\cite[Definition~8.17]{sixdgmodels}}]
Let $(A,d_A)$ be a dg-R-algebra. A left (resp. right) $(A,d_A)$-module $(P,d_P)$ is \emph{strongly projective} if it is a dg-direct summand of $(A,d_A) \otimes_{(R,0)} (Q,d_Q)$ (resp. $(Q,d_Q) \otimes_{(R,0)} (A,d_A)$)  for some strongly projective $(R,0)$-module $(Q,d_Q)$.
\end{defn}

\begin{prop}[{\cite[Lemma~8.23]{sixdgmodels}}] \label{prop:tensorH}
If $(P,d_P)$ is a strongly projective right $(A,d_A)$-module and $(N,d_N)$ is any left $(A,d_A)$-module, then
\[
H\bigl((P,d_P) \otimes_{(A,d_A)} (N,d_N)\bigr) \cong H(P,d_P) \otimes_{H(A,d_A)} H(N,d_N).
\]
\end{prop}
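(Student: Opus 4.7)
The plan is to reduce to the special case where $(P, d_P) = (Q, d_Q) \otimes_{(R,0)} (A, d_A)$ for some strongly projective $(R, 0)$-module $(Q, d_Q)$, and then unwind both sides using the preceding lemma of \cite[Theorem~9.3.2]{vermani}.

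First I would construct a natural transformation
\[
\eta_P : H(P, d_P) \otimes_{H(A,d_A)} H(N, d_N) \longrightarrow H\bigl((P, d_P) \otimes_{(A, d_A)} (N, d_N)\bigr),
\]
sending $[p] \otimes [n] \mapsto [p \otimes n]$, which is well defined since the tensor of two cycles is a cycle modulo boundaries and the assignment respects the balancing relation. Both source and target are additive in $(P, d_P)$, so $\eta$ commutes with finite dg-direct sums and restricts to any dg-direct summand. It therefore suffices to prove $\eta_P$ is an isomorphism when $(P, d_P) = (Q, d_Q) \otimes_{(R,0)} (A, d_A)$.

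In this special case, associativity of the tensor product yields
\[
(P, d_P) \otimes_{(A, d_A)} (N, d_N) \cong (Q, d_Q) \otimes_{(R,0)} (N, d_N),
\]
and applying the lemma from \cite{vermani} to the strongly projective $(Q, d_Q)$ identifies the target of $\eta_P$ with $H(Q, d_Q) \otimes_R H(N, d_N)$. Applying the same lemma with $(A, d_A)$ in place of $(N, d_N)$ gives $H(P, d_P) \cong H(Q, d_Q) \otimes_R H(A, d_A)$, so the source of $\eta_P$ becomes
\[
H(Q, d_Q) \otimes_R H(A, d_A) \otimes_{H(A, d_A)} H(N, d_N) \cong H(Q, d_Q) \otimes_R H(N, d_N).
\]
Inspecting the explicit formulas shows that $\eta_P$ is the identity under these two identifications. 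The general case then follows from step one: by definition any strongly projective $(P, d_P)$ is a dg-direct summand of some module of the above form, and naturality transports the isomorphism to the summand.

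The main obstacle is the compatibility check inside step two: one must verify that the two chains of identifications — via tensor-hom associativity over $(A, d_A)$ and via the Künneth-type formula of \cite{vermani} over $(R,0)$ — are genuinely compatible with the canonical map $\eta_P$. This is a bookkeeping verification on chain-level representatives, but it is essential, since without this compatibility there is no way to transport the isomorphism across the direct-summand inclusion in the final step.
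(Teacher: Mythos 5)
The paper does not prove this statement: it is cited verbatim from \cite[Lemma~8.23]{sixdgmodels} (Barthel--May--Riehl), so there is no ``paper's own proof'' to compare against literally. That said, the remark following the proposition in the paper (deducing projectivity of $H(P,d_P)$ over $H(A,d_A)$) uses exactly the same reduction you employ, namely ``we can assume $(P,d_P)=(Q,d_Q)\otimes_{(R,0)}(A,d_A)$.''

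Your argument is correct and follows the expected route. The three ingredients are all in place: (i) the natural chain-level map $[p]\otimes[n]\mapsto[p\otimes n]$ is well defined and factors through $H(A,d_A)$; (ii) a retract argument via naturality reduces the isomorphism question to the free-type case $(Q,d_Q)\otimes_{(R,0)}(A,d_A)$ that generates strongly projective modules; (iii) in that case, tensor associativity $(Q\otimes_R A)\otimes_A N\cong Q\otimes_R N$ plus two applications of \cite[Theorem~9.3.2]{vermani} for the strongly projective $(R,0)$-module $(Q,d_Q)$ collapse both sides to $H(Q)\otimes_R H(N)$. Your use of the \emph{right}-module form $(Q,d_Q)\otimes_{(R,0)}(A,d_A)$ matches the paper's definition for right modules. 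One small point worth tightening if you were to write this up: the retract step does not actually need additivity, only that $\eta$ is a natural transformation and that direct-summand inclusions/projections are dg-module maps---then $\eta_P$ is a retract of the isomorphism $\eta_{Q\otimes_R A}$, hence an isomorphism. The compatibility check at the end, which you flag but do not carry out, is genuinely routine: on representatives $q\otimes a\in Q\otimes_R A$ and $n\in N$, all three identifications send the class to $[q\otimes an]$.
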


Note that if $(P,d_P)$ is a strongly projective $(A,d_A)$-module, then $H(P,d_P)$ is a projective $H(A,d_A)$-module. 
Indeed, we can assume $(P,d_P) = (A,d_A) \otimes_{(R,0)} (Q,d_Q)$, and we have
\[
H(P,d_P) \cong H(A,d_A) \otimes_{R} H(Q,d_Q).
\]
Since $H(Q,d_Q)$ is a projective $R$-module, it is a direct summand of a free $R$-module~$F$. Therefore $H(P,d_P)$ is a direct summand of $H(A,d_A) \otimes_R F$, which is a free $H(A,d_A)$-module.

\begin{rem}
This result does not hold in general. As a counterexample we can take $(A,d) = (\bQ[x],0)$ and consider the dg-module $(X,d_X) = \cone(\bQ[x] \xrightarrow{x} \bQ[x])$. In this case we have that $H(X,d_X)  \cong \bQ$ but $H((X,d_X) \otimes_{(A,d)} (X,d_X)) \cong \bQ \oplus \bQ[1]$.
\end{rem}

\subsubsection{Strong projectivity of $R_\bo(m+1)$}

Our next goal is to show the following:

\begin{prop}\label{prop:Rbosproj}
The $(R_\bo(m),d_N)$-module $( 1_{(m,i)}R_\bo(m+1), d_N)$ is strongly projective.
\end{prop}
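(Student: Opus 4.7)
The plan is to exploit the free left $R_\bo(m)$-module description $1_{(m,i)}R_\bo(m+1) \cong R_\bo(m) \otimes_\Bbbk V$ from \cref{lem:Rboleftdecomp}, where $V$ is the free $\Bbbk$-module on the basis $\{\tau_m \cdots \tau_a x_a^\ell, \tau_m \cdots \tau_a \theta_a^\ell\}$ indexed by $1 \leq a \leq m+1$ and $\ell \geq 0$. The key structural input is the observation that for $R_\bo(m) \tau_m \cdots \tau_a \theta_a^\ell 1_{\bj i}$ to be a nonzero summand one must have $j_a = i$, since the braid $\tau_m \cdots \tau_a$ sitting on top of $\theta_a^\ell$ sends bottom position $a$ to top position $m+1$, which must be labeled $i$ by the idempotent $1_{(m,i)}$. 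Consequently the tight floating dot $\omega$ inside $\theta_a^\ell = \tau_{a-1}\cdots\tau_1 \omega x_1^\ell \tau_1 \cdots \tau_{a-1}$ always sits immediately to the right of the strand labeled $i$ and therefore carries subscript $i$.

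For $i \notin I_f$ the differential $d_N$ vanishes on such an $\omega$ and also on dots and crossings, so it kills every basis element of $V$. The Leibniz rule then forces the differential on $R_\bo(m) \otimes_\Bbbk V$ to coincide with $d_N \otimes 1_V$, yielding the isomorphism of dg-modules
\[
(1_{(m,i)}R_\bo(m+1), d_N) \cong (R_\bo(m), d_N) \otimes_{(\Bbbk, 0)} (V, 0).
\]
Since $(V, 0)$ is a free, hence strongly projective, $(\Bbbk, 0)$-module, the conclusion follows in this case directly from the definition.

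For $i \in I_f$ I would split $V = V_x \oplus V_\theta$ according to the two families of generators. Direct computation gives $d_N(\tau_m \cdots \tau_a \theta_a^\ell) \in R_\bo(m) \cdot V_x =: X$, because $d_N(\omega)$ is a power of the dot on the leftmost strand---an element of $R_\bo(m+1)$ with no floating dots, and hence expressible in the basis using only $x$-type generators. Thus $X$ is a sub-dg-module isomorphic to $(R_\bo(m), d_N) \otimes_{(\Bbbk, 0)} (V_x, 0)$, while the induced differential on the quotient $Y := (1_{(m,i)}R_\bo(m+1))/X$ vanishes on its $V_\theta$-basis, giving $Y \cong (R_\bo(m), d_N) \otimes_{(\Bbbk, 0)} (V_\theta, 0)$. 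The short exact sequence $0 \to X \to 1_{(m,i)}R_\bo(m+1) \to Y \to 0$ then realizes the middle term as a two-step semi-free extension of free dg-modules over $(R_\bo(m), d_N)$, from which strong projectivity follows in the sense of~\cite{sixdgmodels}.

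The main obstacle lies in this last step for $i \in I_f$: the connecting twist $\bar d_N : V_\theta \to R_\bo(m) \otimes V_x$ has genuine coefficients in $R_\bo(m)$ rather than in $\Bbbk$, so the dg-module cannot be presented directly as $(R_\bo(m), d_N) \otimes_{(\Bbbk, 0)} (Q, d_Q)$ for a single $\Bbbk$-level $(Q, d_Q)$. Bridging this gap requires invoking the closure of the class of strongly projective dg-modules under extensions by free dg-modules---equivalently, that finite iterated mapping cones of free dg-modules are strongly projective---which is the technical heart of the argument.
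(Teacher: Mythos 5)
You correctly reduce the $i \notin I_f$ case, and you correctly isolate the obstruction for $i \in I_f$: the twist $\bar d_N : V_\theta \to R_\bo(m)\cdot V_x$ has coefficients in $R_\bo(m)$, so the module is not of the form $(R_\bo(m), d_N) \otimes_{(\Bbbk, 0)} (Q, d_Q)$. But the proposed bridge — ``finite iterated mapping cones of free dg-modules are strongly projective'' — is false, and the paper's own remark immediately after \cref{prop:tensorH} gives the counterexample: over $(A,d) = (\bQ[x,y],0)$, the cone $X = \cone(\bQ[x,y] \xrightarrow{x-y} \bQ[x,y])$ is a two-term extension of free modules, yet $H(X \otimes_A X) \cong \bQ[x,y] \oplus \bQ[x,y][1]$ while $H(X) \otimes_{H(A)} H(X) \cong \bQ[x,y]/(x-y)$, so $X$ cannot be strongly projective. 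Strong projectivity is precisely the property \emph{not} preserved under extensions by free dg-modules; that is the entire reason the notion is delicate.

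The paper's actual device is a change of base ring. Instead of working over $(\Bbbk, 0)$, it views $(R_\bo(m), d_N)$ as a dg-algebra over $(R_\g(m), 0)$ — the subring generated by KLR diagrams with no floating dots, on which $d_N$ vanishes and which acts by stacking from below. One then defines $(Q, d_Q)$ as a complex of $(R_\g(m),0)$-modules with $Q_1[\xi_i] = \bigoplus_{a,\ell} R_\g(m)1_{(\nu,i)}\tau_m\cdots\tau_a\theta_a^\ell$ and $Q_0[\xi_i] = \bigoplus_{a,\ell} R_\g(m)1_{(\nu,i)}\tau_m\cdots\tau_a x_a^\ell$, with $d_Q$ obtained from $d_N$ by projecting the target to homological degree zero. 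This realizes $(1_{(m,i)}R_\bo(m+1), d_N) \cong (R_\bo(m), d_N) \otimes_{(R_\g(m), 0)} (Q, d_Q)$ as a single tensor product — not an iterated extension — because the $R_\g(m)$-coefficients in your $\bar d_N$ are absorbed into $d_Q$ itself. The remaining work, which your proposal entirely omits, is proving that $(Q, d_Q)$ is strongly projective over $(R_\g(m), 0)$: one must show $\Image d_Q$ and $H(Q,d_Q)$ are projective $R_\g(m)$-modules. This is done via \cref{lem:RbodQinj} and the echelon-form/splitting argument using the map $P'$ (left multiplication by $\widetilde\theta_{m+1}$), showing $P' \circ d_Q$ is injective with projective cokernel and hence provides a retraction. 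That verification is the genuine technical content of the proposition and cannot be replaced by an appeal to closure properties.
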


It is obvious for $i \notin I_f$ by \cref{lem:Rboleftdecomp}, and thus we can assume $i \in I_f$.
We first construct the mapping cone
\begin{align*}
(Q,& d_Q) := \\
&\cone\bigl( 
\bigoplus_{a=1}^{m+1} \bigoplus_{\ell \geq 0}  R_\g(m) 1_{(\nu,i)} \tau_m \cdots \tau_a  \theta_a^\ell
\xrightarrow{d_Q}
\bigoplus_{a=1}^{m+1} \bigoplus_{\ell \geq 0} R_\g(m) 1_{(\nu,i)} \tau_m \cdots \tau_a x_a^\ell
 \bigr),
\end{align*}
where we think of $\tau_m \cdots \tau_a  \theta_a^\ell$ as a formal symbol that represents a degree shift corresponding to the degree of the element $1_{(\nu,i)}\tau_m \cdots \tau_a  \theta_a^\ell$ in $R_\bo(m+1)$.
The map $d_Q$ is given by first embedding $R_\g(m)$ into $R_\bo(m+1)$ through the diagrams
\[
R_\g(m) 1_{(\nu,i)} \tau_m \cdots \tau_a  \theta_a^\ell \hookrightarrow
\tikzdiag[xscale=.75]{
	\draw (0,-1.5) 
		.. controls (0,-1.25) and (.75,-1.25) ..(.75,-1)
		.. controls (.75,-.75) and (0,-.75) ..(0,-.5)
		 -- (0,.5);
	\draw (.5,-1.5) 
		.. controls (.5,-1.25) and (1.25,-1.25) ..(1.25,-1)
		.. controls (1.25,-.75) and (.5,-.75) ..(.5,-.5)
		 -- (.5,.5);
	\draw (2,-1.5) .. controls (2,-1) and (1.5,-1) .. (1.5,-.5) -- (1.5,.5);
	\draw (2.5, -1.5) .. controls (2.5,-1) and (2,-1) .. (2,-.5) -- (2,.5);
	\node at(1,.4) {\small$\dots$};
	\node at(1,-.4) {\small$\dots$};
	\filldraw [fill=white, draw=black] (-.25,-.25) rectangle (2.25,0.25) node[midway] {\small $m$};
	\draw (1.25, -1.5)  
		.. controls (1.25,-1.25) and (0,-1.25) .. (0, -1)  node[pos=1, tikzdot]{} node[pos=1, xshift=-1.5ex, yshift=.5ex]{\small $\ell$}
		.. controls (0,-.75) and (2.5,-.75) .. (2.5, -.25) 
		-- (2.5,.25)
		.. controls (2.5,.5) .. (2.75,.5)  node[right] {$i$};
	\fdot{}{.4,-1.015};
	\draw[decoration={brace,mirror,raise=-8pt},decorate]  (-.15,-1.85) -- node {$a$} (1.35,-1.85);
}
\]
then applying $d_N$ of $(R_\bo(m+1), d_N)$, then decomposing the image in the left-decomposition $\bigoplus_{a=1}^{m+1} \bigoplus_{\ell \geq 0} R_\bo(m) 1_{(m,i)} \tau_m \cdots \tau_a x_a^\ell $, and finally projecting unto the part in homogical degree zero of $R_\bo(m)$, which is trivially isomorphic to $R_\g(m)$. 
Moreover, $(R_\bo(m),d_N)$ is a (right) module over $(R_\g,0)$ which acts by gluing KLR diagrams on the bottom. 
Then, we have, as $(R_\bo(m),d_N)$-modules
\[
(R_\bo(m+1) ,d_N) \cong (R_\bo(m), d_N) \otimes_{(R_\g(m), 0)} (Q,d_Q).
\]
Therefore, we want to show that $(Q,d_Q)$ is strongly projective as $(R_\g(m), 0)$-module. We write 
\begin{align*}
Q_1[\xi_i] &:= \bigoplus_{a=1}^{m+1} \bigoplus_{\ell \geq 0}  R_\g(m) 1_{(\nu,i)} \tau_m \cdots \tau_a  \theta_a^\ell, \\
Q_0[\xi_i] &:= \bigoplus_{a=1}^{m+1} \bigoplus_{\ell \geq 0} R_\g(m) 1_{(\nu,i)} \tau_m \cdots \tau_a x_a^\ell,
\end{align*}
where we identify $\xi_i$ with $x_a$ in $Q_0$, and $\xi_i^\ell$ with $x_1^\ell$ in $\theta_a^\ell$. Note that $d_Q$ is not $\Bbbk[\xi_i]$-linear.

\begin{lem}\label{lem:RbodQinj}
The map 
\[
d_Q : Q_1[\xi_i]
\rightarrow
Q_0[\xi_i]
\]
defined above is injective.
\end{lem}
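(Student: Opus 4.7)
The plan is to adapt the strategy used in the proof of \cref{prop:sesleftdecomp} via \cref{lem:computePdN}. Specifically, I would introduce an auxiliary map $P$, defined by left-multiplication (diagrammatically, stacking above) with the element $\widetilde\theta_{m+1}$, and show that $P\circ d_Q$ acts on a suitable basis as multiplication by a polynomial in $\xi_i$ whose leading coefficient is invertible in $R_\g(m)$. Multiplication by such a polynomial is injective on the free $R_\g(m)[\xi_i]$-module $Q_1[\xi_i]$, so $d_Q$ itself would then be injective.

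The first step is to set up $P$ in the present context. One mimics the construction from \cref{lem:computePdN} but now at the level of $R_\g(m)$ rather than at the level of the quotient $H(R_\bo(m),d_N)\cong R_\p^N(m)$: we left-multiply by $\widetilde\theta_{m+1}$, decompose in the left decomposition of \cref{lem:Rboleftdecomp}, and project onto the homological-degree-zero summand. The analog of \cref{lem:RboMapP} shows this is a well-defined map $P\colon Q_0[\xi_i]\to Q_0[\xi_i]$ of $R_\g(m)$-modules (the obstructions to $R_\g(m)$-linearity only involve floating dots and dots that are absorbed into correction terms of smaller $\xi_i$-degree).

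The second step is to compute $P\circ d_Q$. Retracing the inductive argument in the proof of \cref{lem:computePdN} yields multiplication by a polynomial of the form
\[
r_i^{2\nu_i}\sum_{p=0}^{2\nu_i-\alpha_i^\vee(\nu)}\xi_i^{n_i+p}\,\varepsilon_p^i(\underline x_\nu)\;+\;\text{(correction terms)},
\]
where the correction terms are exactly those that would be killed in the quotient $R_\p^N(m)$, namely terms containing factors $x_k^{n_{j_k}}$ for some $j_k\in I_f$. The crucial observation is that these correction terms contribute only to coefficients of strictly smaller powers of $\xi_i$, so the leading coefficient $r_i^{2\nu_i}\xi_i^{n_i+2\nu_i-\alpha_i^\vee(\nu)}$ remains monic up to the invertible scalar $r_i^{2\nu_i}$. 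Consequently $P\circ d_Q$ acts by multiplication by a polynomial with invertible leading coefficient on the free $R_\g(m)[\xi_i]$-module $Q_1[\xi_i]$, and is therefore injective; injectivity of $d_Q$ follows.

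The main obstacle is the bookkeeping in the second step: the inductive computation of \cref{lem:computePdN} must be redone in the un-quotiented setting of $R_\g(m)$, retaining all the terms that were discarded via the dg-algebra relations. This is tedious but essentially routine, because the highest power of $\xi_i$ arises from the same mechanism as in \cref{lem:computePdN} (and also as in \cref{prop:Rbomonicpol}), and no correction term can raise the $\xi_i$-degree.
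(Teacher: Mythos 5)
Your overall strategy matches the paper's: compose with the map $P'$ (the paper's notation for what you call $P$, given by left-multiplication with $\widetilde\theta_{m+1}$), and deduce injectivity of $d_Q$ from injectivity of $P'\circ d_Q$, arguing via a ``leading term'' analysis modeled on \cref{lem:computePdN}. However, there are two issues with how you cash out that leading-term argument.

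First, a minor slip: $P'$ lands in $Q_1[\xi_i]$, not $Q_0[\xi_i]$, since multiplying by $\widetilde\theta_{m+1}$ adds a floating dot. This is clearly what you intend, since you then consider $P\circ d_Q$ as an endomorphism of $Q_1[\xi_i]$.

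More substantively, your concluding step ``$P\circ d_Q$ acts by multiplication by a polynomial with invertible leading coefficient on the free $R_\g(m)[\xi_i]$-module $Q_1[\xi_i]$, and is therefore injective'' is not accurate as stated: the paper explicitly notes that $d_Q$ is not $\Bbbk[\xi_i]$-linear, so $P'\circ d_Q$ is not a module endomorphism given by multiplication. What you actually have is an $R_\g(m)$-linear map that, on each basis element $\tau_m\cdots\tau_a\theta_a^\ell$, produces a leading term $r_i^{2\nu_i}\varepsilon^i_{\max}(\und x_\nu)\cdot x_{m+1}^{n_i+2\nu_i-\alpha_i^\vee(\nu)}$ (with $\varepsilon^i_{\max}$ an invertible scalar by a $q$-degree count) plus correction terms. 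To conclude injectivity you need a filtration with respect to which the map is triangular with invertible pivots. Your proposed filtration is by $\xi_i$-degree alone, and you assert ``no correction term can raise the $\xi_i$-degree'' without verification. The paper does \emph{not} establish this; instead it introduces a strictly finer total order on the summands of $Q_1[\xi_i]$, indexed by the pair $(a,\ell)$: first compare the number of crossings under the floating dot (more crossings $=$ smaller), and only secondarily compare the number of dots $\ell$. With respect to this order the correction terms (the ones that would be killed in the cyclotomic quotient $R_\p^N(m)$, which is where \cref{lem:computePdN} operated) are shown to be strictly smaller, but they are \emph{allowed} to have higher $\xi_i$-degree provided they have more crossings under the floating dot. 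So your single-variable filtration claim is both unproved and, as far as the paper's argument reveals, possibly false; the necessary bookkeeping is exactly what the paper's two-step order is designed to handle, and it is the part of the proof that cannot be waved away as routine.
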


\begin{proof}
Recall the map $P$ of \cref{lem:RboMapP} given by multiplication by $\widetilde \theta_{m+1}$. Since floating dots are also annihilated in $R_\g(m)$, multiplication by $\widetilde \theta_{m+1}$ also defines a map
\begin{align}\label{eq:P'd_Q}
P' : 
Q_0[\xi_i]
\rightarrow
Q_1[\xi_i].
\end{align}
We reconsider the proof of \cref{lem:computePdN} to show that $P'\circ d_Q$ is injective. First, we introduce an order on the summands of $Q_1[\xi_i] = \bigoplus_{a=1}^{m+1} \bigoplus_{\ell \geq 0}  R_\g(m) 1_{(\nu,i)} \tau_m \cdots \tau_a  \theta_a^\ell$ by declaring that 
\begin{align*}
 R_\g(m) 1_{(\nu,i)} \tau_m \cdots \tau_a  \theta_a^\ell &\prec   R_\g(m) 1_{(\nu,i)} \tau_m \cdots \tau_a  \theta_a^{\ell'}, \\
 R_\g(m) 1_{(\nu,i)} \tau_m \cdots \tau_a  \theta_a^\ell &\prec   R_\g(m) 1_{(\nu,i)} \tau_m \cdots \tau_{a'}  \theta_{a'}^{\ell''}, 
\end{align*}
for all $a > a'$, $\ell < \ell'$, and for all $\ell''$. In other words, if there are more crossings under the floating dot, then the term is smaller. If there is the same amount of crossings, then we consider the amount of dots at the left of the floating dot, and lesser dots meaning a smaller term.  

We claim that if $Z \in R_\g(m) 1_{(\nu,i)} \tau_m \cdots \tau_a  \theta_a^\ell$ then 
\[
P' \circ d_Q(Z) =  r_i^{2 \nu_i} \sum_{p = 0}^{2\nu_i- \alpha_i^\vee(\nu)} Zx_{m+1}^{n_i+p} \varepsilon_p^i(\und x_\nu) + H,
\]
where $H \prec Z x_{m+1}^{\ell+n_i+2\nu_i-\alpha_i^\vee(\nu)}$. 
This implies that $P' \circ d_Q$ is in echelon form (with pivot being invertible scalars), and thus is injective. By consequence, so is $d_Q$.

In order to prove our claim, we need to tweak 
the proof of \cref{lem:computePdN}. 
We need to keep track of the terms that are annihilated when working over the cyclotomic quotient, and show these appear as lower terms in the order defined above. The case $j_m \neq i$ remains the same. The case $j_m = i$ and $m=1$ becomes
\begin{align*}
\tikzdiagh[scale=.75]{0}{
 	\draw (1,-.75) node[below] {\small $i$} 
 	.. controls (1,-.375) and (0,-.375) .. (0,0) 
 	.. controls (0,.375) and (1, .375) .. (1,.75)
 	.. controls (1,1.125) and (0,1.125) .. (0,1.5);
	\draw[fill=white, color=white] (.5,.375) circle (.15cm);
	\draw  (0,-.75) node[below] {\small $i$} 
	.. controls (0,-.375) and (1,-.375) .. (1,0) node [pos = .2,tikzdot]{} node [pos = .2, xshift=-1.5ex, yshift=.5ex] {\small $p$} 
	 .. controls (1,.375) and (0, .375) .. (0,.75) 
	 .. controls (0,1.125) and (1,1.125) .. (1,1.5);
	 \fdot{}{.5,.75};
}
\ &=  
r_i^2 \ 
\tikzdiagh[scale=.85]{0}{
	          \draw   (-.5,-.5) node[below]{$i$} -- (-.5,.5) node [midway,tikzdot]{}  node [midway, xshift=1.5ex, yshift=.5ex] {\small $p$}
	          		.. controls (-.5,1) and (.5,1) .. (.5,1.5);
	          \draw   (.5,-.5) node[below]{$i$} -- (.5,.5)
	          		.. controls (.5,1) and (-.5,1) .. (-.5,1.5);
		 \fdot{}{0,.5};
  	} 
\ - r_i^2 \ 
\tikzdiagh[scale=.85]{0}{
	          \draw  (.5,-1.5) node[below]{$i$} .. controls (.5,-1) and (-.5,-1) ..
	          		(-.5,-.5) -- (-.5,.5) node [midway,tikzdot]{} node [midway, xshift=1.5ex, yshift=.5ex] {\small $p$};
	          \draw  (-.5,-1.5)  node[below]{$i$} .. controls (-.5,-1) and (.5,-1) ..
	          		(.5,-.5)  -- (.5,.5);
		 \fdot{}{0,-1.4};
  	}
\ + r_i \ 
\tikzdiagh[scale=.75]{0}{
	          \draw   (-.5,-.5) node[below]{$i$} .. controls (-.5,0) and (.5,0) .. (.5,.5) 
			.. controls (.5,1) and (-.5,1) .. (-.5,1.5) node [pos = .8,tikzdot]{} node [pos = .8, xshift=1.5ex, yshift=.5ex] {\small $p$};
	          \draw   (.5,-.5) node[below]{$i$} .. controls (.5,0) and (-.5,0) .. (-.5,.5) node [pos = 1,tikzdot]{} 
			.. controls (-.5,1) and (.5,1) .. (.5,1.5);
		 \fdot{}{0,.5};
  	} 
\ -r_i \ 
\tikzdiagh[scale=.75]{0}{
	          \draw   (-.5,-.5) node[below]{$i$} .. controls (-.5,0) and (.5,0) .. (.5,.5) 
			.. controls (.5,1) and (-.5,1) .. (-.5,1.5) node [pos = .8,tikzdot]{}  node [pos = .8, xshift=2.25ex, yshift=1.5ex] {\small $p{+}1$};
	          \draw   (.5,-.5) node[below]{$i$} .. controls (.5,0) and (-.5,0) .. (-.5,.5)
			.. controls (-.5,1) and (.5,1) .. (.5,1.5);
		 \fdot{}{0,.5};
  	} 
\end{align*}
where $p = n_i + \ell$. The first term is the leading term. The second term possesses less dots on the left of the floating dot, and so it is smaller. If $a=0$, then the last two terms possess one more crossing at the bottom of the floating dot, and therefore they are smaller. If $a=1$,  then they are annihilated by~\cref{eq:KLRR2}. Finally, the two remaining cases $j_{m-1} \neq i$ and $j_{m-1}$ follow from the same arguments as in the proof of \cref{lem:computePdN}, with the lower terms in the induction hypothesis only adding lower terms because:
\begin{align*}
\tikzdiagh[xscale=.85, yscale=.5]{0}{
	\draw (0,1) 
		.. controls (0,0) and (3,0) ..
		(3,-1) node[below]{\small $i$};
	\draw (1,1)
		.. controls (1,.5) and (0,.5) .. (0,0)
		.. controls (0,-.5) and (1,-.5) .. (1,-1) node[below]{\small $i$};
	\draw (2,1)
		.. controls (2,.05) and (0,.05) .. (0,-1);
	\draw[xshift=1ex] (2,1)
		.. controls (2,-.05) and (0,-.05) ..  (0,-1);
	\draw (3,1)
		.. controls (3,.05) and (2,.05) ..  (2,-1);		
	\draw[xshift=1ex] (3,1)
		.. controls (3,-.05) and (2,-.05) ..  (2,-1);		
}
\ &= \ 
\tikzdiagh[xscale=.85, yscale=.5]{0}{
	\draw (0,1) 
		.. controls (0,-.25) and (3,-.25) ..
		(3,-1) node[below]{\small $i$};
	\draw (1,1)
		.. controls (1,.5) and (2,.5) .. (2,0)
		.. controls (2,-.5) and (1,-.5) .. (1,-1) node[below]{\small $i$};
	\draw (2,1)
		.. controls (2,.05) and (0,.05) .. (0,-1);
	\draw[xshift=1ex] (2,1)
		.. controls (2,-.05) and (0,-.05) ..  (0,-1);
	\draw (3,1)
		.. controls (3,.05) and (2,.05) ..  (2,-1);		
	\draw[xshift=1ex] (3,1)
		.. controls (3,-.05) and (2,-.05) ..  (2,-1);		
}
\intertext{ by~\eqref{eq:KLRR3}, and,}
\tikzdiagh[xscale=.85, yscale=.5]{0}{
	\draw (0,1)
		.. controls (0,.5) and (2,.5) .. (2,0)
		.. controls (2,-.5) and (1,-.5) .. (1,-1)  node[below]{\small $i$};
	\draw (1,1)
		.. controls (1,.5) and (0,.5) .. (0,0)
		.. controls (0,-.5) and (3,-.5) .. (3,-1)  node[below]{\small $i$};
	\draw (2,1)
		.. controls (2,.05) and (0,.05) .. (0,-1);
	\draw[xshift=1ex] (2,1)
		.. controls (2,-.05) and (0,-.05) .. (0,-1);
	\draw (3,1)
		.. controls (3,.05) and (2,.05) .. (2,-1);
	\draw[xshift=1ex] (3,1)
		.. controls (3,-.05) and (2,-.05) .. (2,-1);
}
\ &= 0,
\end{align*}
by~\cref{eq:KLRR3} and~\cref{eq:KLRR2}.
 This concludes the proof of the claim, and therefore of the proposition.
\end{proof}

\begin{proof}[Proof of \cref{prop:Rbosproj}]
The proof is a revisit of the proof of~\cite[Lemma~4.18]{kashiwara} that applies to our particular case.

Recall the map $P'$ from \cref{eq:P'd_Q}. We know that $P' \circ d_Q$ is given by multiplying by a monic polynomial with leading term $x_{m+1}^{n_i+2\nu_i-\alpha_i^\vee(\nu)}$ plus some remaining map giving lower terms. In particular, it is injective and we have a short exact sequence
\[
0 \rightarrow Q_1[\xi_i] \xrightarrow{P' \circ d_Q} Q_1[\xi_i] \rightarrow \cok(P' \circ d_Q) \rightarrow 0.
\]
Moreover, since $P' \circ d_Q$ is in echelon form, it means that $\cok(P' \circ d_Q)$ is a projective $R_\g(m)$-module. Thus, the sequence splits as $R_\g(m)$-modules with splitting map $\sigma : Q_1[\xi_i] \rightarrow Q_1[\xi_i]$, and we get $\sigma \circ P' \circ d_Q = \id_{Q_1[\xi_i]}$. 
Then, the short exact sequence 
\[
\begin{tikzcd}
0 
\ar{r} 
&
Q_1[\xi_i]
\ar{r}{d_Q}
&
Q_0[\xi_i]
\ar{r} 
\ar[bend left, dashed]{l}{\sigma \circ P'}
&
H(Q, d_Q), 
\ar{r} 
&
0,
\end{tikzcd}
 \]
obtained thanks to \cref{lem:RbodQinj} splits with splitting map given by $\sigma \circ P' $. Since $Q_0[\xi_i]$ is a projective $R_g(m)$-module, so is $H(Q,d_Q)$. Finally, $d_Q(Q_1[\xi_i])$ is also projective since $d_Q$ is injective and $Q_1[\xi_i]$ is projective.
\end{proof}

\subsection{Functors}\label{sec:FiNFunctors}

We define for all $i \in I$ the functors
\begin{align*}
\F^N_i(-) &:= \bigoplus_{m \geq 0} R^N_\p(m+1) 1_{(m,i)} \otimes_{R^N_\p(m)}  (-), \\
\E^N_i(-) &:= \bigoplus_{m \geq 0} \ \bigoplus_{|\nu| = m}   \lambda_i^{-1} q_i^{1+\alpha_i^\vee(\nu)} 1_{(\nu,i)} R^N_\p(m+1) \otimes_{R^N_\p(m+1)} (-),
\end{align*}
where we interpret $\lambda_i = q^{n_i}$ whenever $i \in I_f$.
Thanks to \cref{prop:Rbosproj}, these are exact. 
For $n \in \bN$, we write
\[
\oplus_{[n]_{q_i}} \id_\nu := \bigoplus_{\ell = 0}^{n-1} q_i^{1-n+2\ell} \id_{\nu},
\]
for the finite direct sum that categorifies $[n]_{q_i}$.

\begin{thm}\label{thm:sl2commutRpN}
For $i \notin I_f$ there is a natural short exact sequence
\begin{equation}\label{eq:catSeSRpN}
0 \rightarrow \F^N_i\E^N_i \id_\nu \rightarrow \E^N_i\F^N_i \id_\nu \rightarrow \oplus_{[\beta_i - \alpha_i^\vee(\nu)]_{q_i}} \id_\nu \rightarrow 0,
\end{equation}
and for $i \in I_f$ there are natural isomorphisms
\begin{equation}\label{eq:catIsoRpN}
  \begin{aligned}
 \E^N_i\F^N_i \id_\nu &\cong   \F^N_i\E^N_i \id_\nu \oplus_{[n_i - \alpha_i^\vee(\nu)]_{q_i}} \id_\nu, & \text{ if $n_i - \alpha_i^\vee(\nu) \geq 0$}, \\
 \F^N_i\E^N_i \id_\nu &\cong   \E^N_i\F^N_i \id_\nu \oplus_{[\alpha_i^\vee(\nu)-n_i]_{q_i}} \id_\nu, & \text{ if $n_i - \alpha_i^\vee(\nu) \leq 0$}.
\end{aligned}
\end{equation}
Moreover, there is a natural isomorphism
\begin{equation}\label{eq:catijcomRpN}
\F^N_i\E^N_j \cong \E^N_j\F^N_i,
\end{equation}
 for $i \neq j \in I$. 
\end{thm}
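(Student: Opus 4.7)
The strategy is to take homology of the short exact sequence of dg-bimodules from \cref{prop:RbodgSES} and analyze the resulting long exact sequence of $R_\p^N(m)$-bimodules. The four essential ingredients are formality (\cref{thm:RbodNformal}), strong projectivity of the induction bimodule (\cref{prop:Rbosproj}), the resulting homology-of-tensor-product formula (\cref{prop:tensorH}), and the homology computation for the cokernel in \cref{prop:HcokRbo}.

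For the commutation relation \eqref{eq:catijcomRpN} with $i \neq j$, no long exact sequence is needed: \cref{prop:RbodgSES} already provides a genuine isomorphism of dg-bimodules. Applying \cref{prop:tensorH}, \cref{prop:Rbosproj} and \cref{thm:RbodNformal} identifies the homology of the source with $R_\p^N(\nu) 1_{(m-1,i)} \otimes_{R_\p^N(m-1)} 1_{(m-1,j)} R_\p^N(\nu)$ and that of the target with $1_{(\nu,j)} R_\p^N(m+1) 1_{(\nu,i)}$. Reinterpreting these as the composite functors $\F_i^N \E_j^N$ and $\E_j^N \F_i^N$, with the grading shifts built into the definitions of $\F_i^N$ and $\E_j^N$ absorbed on both sides, yields the stated natural isomorphism.

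For the cases $i = j$, I would take the long exact sequence associated to the SES of \cref{prop:RbodgSES}. By the same identifications as above, $H_0$ of the left and middle dg-bimodules are $\F_i^N \E_i^N \id_\nu$ and $\E_i^N \F_i^N \id_\nu$ respectively, and these are concentrated in homological degree zero. When $i \notin I_f$, $y_N = 0$ makes the cokernel a direct sum of two dg-bimodules concentrated in homological degrees $0$ and $1$; this decomposition forces the connecting map of the LES to vanish, and the LES collapses to the short exact sequence \eqref{eq:catSeSRpN} in the $h$-graded category of $R_\p^N$-bimodules. When $i \in I_f$, \cref{prop:HcokRbo} places the homology of the cokernel in a single homological degree determined by the sign of $\rho_i := n_i - \alpha_i^\vee(\nu)$: degree $0$ if $\rho_i \geq 0$, degree $1$ if $\rho_i \leq 0$. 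The LES therefore collapses either to
\begin{equation*}
0 \to \F_i^N \E_i^N \id_\nu \to \E_i^N \F_i^N \id_\nu \to \oplus_{[\rho_i]_{q_i}} \id_\nu \to 0
\end{equation*}
or, via the connecting map, to
\begin{equation*}
0 \to \oplus_{[-\rho_i]_{q_i}} \id_\nu \to \F_i^N \E_i^N \id_\nu \to \E_i^N \F_i^N \id_\nu \to 0.
\end{equation*}
Since $\oplus_{[n]_{q_i}} \id_\nu$ is a direct sum of grading shifts of the identity functor, both sequences split naturally, producing the direct sum decompositions \eqref{eq:catIsoRpN}.

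The main technical obstacle is careful bookkeeping of the triple grading (by $q$, $\lambda$ and the homological degree) through all the identifications, so that the abstract dg-bimodule statements of \cref{prop:RbodgSES} and \cref{prop:HcokRbo} line up correctly with the shifted composites of $\F_i^N$ and $\E_i^N$. A subsidiary subtlety in the case $i \notin I_f$ is the vanishing of the connecting map, which must be deduced directly from the splitting of the cokernel as a direct sum of dg-bimodules (as $y_N = 0$), since \cref{prop:HcokRbo} only addresses the case $i \in I_f$.
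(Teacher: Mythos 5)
Your overall route — take homology of the short exact sequence of dg-bimodules from \cref{prop:RbodgSES}, use \cref{thm:RbodNformal}, \cref{prop:Rbosproj} and \cref{prop:tensorH} to identify the outer terms with $\F_i^N\E_i^N$ and $\E_i^N\F_i^N$, and use \cref{prop:HcokRbo} to locate the homology of the cokernel in a single homological degree so that the long exact sequence truncates — is exactly the paper's. Up to that point the argument is sound, and the treatment of \cref{eq:catijcomRpN} and of the case $i\notin I_f$ matches the paper's (which dismisses both as immediate).

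The gap is the final step. You assert that the two truncated sequences ``split naturally'' because $\oplus_{[n]_{q_i}}\id_\nu$ is a direct sum of grading shifts of the identity functor. This does not follow: the identity bimodule $R_\p^N(\nu)$ is neither projective nor injective as a bimodule, so a short exact sequence of bimodules with a sum of shifted copies of $\id_\nu$ as sub or quotient need not split. Indeed your own reasoning would apply verbatim to the sequence \cref{eq:catSeSRpN} for $i\notin I_f$, whose cokernel is likewise a direct sum of shifted copies of $\id_\nu$, yet that sequence is precisely the non-split one. The splittings for $i\in I_f$ are the real content of the theorem and require explicit maps. When $\rho_i\ge 0$ the surjection $\E_i^N\F_i^N\id_\nu\to\oplus_{[\rho_i]_{q_i}}\id_\nu$ is identified with the projection $\pi$, and a section is exhibited by sending $R_\p^N(\nu)\xi_i^{\ell}$ to the diagram obtained by adding a vertical $i$-strand with $\ell$ dots on the right; one must check this is a genuine section. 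The case $\rho_i\le 0$ is harder: the inclusion $\oplus_{[-\rho_i]_{q_i}}\id_\nu\to\F_i^N\E_i^N\id_\nu$ is the connecting homomorphism $\delta$ of the long exact sequence, and the paper constructs a retraction $\Phi$ (a sum over $r+s=-\rho_i-1$ of dotted diagrams involving the elements $\varphi_k,\psi_k$ of \cref{eq:thetakdecomp}) and verifies, using \cref{eq:psiphik+t} and \cref{eq:Epsiphik}, that $\Phi\circ\delta$ is given by a triangular matrix with invertible diagonal entries. None of this computation can be bypassed by a formal splitting argument, so your proof is incomplete precisely where the theorem stops being formal.
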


\begin{proof}
The short exact sequence~\cref{eq:catSeSRpN} and the isomorphism~\cref{eq:catijcomRpN} are immediate consequences of \cref{prop:RbodgSES} and \cref{prop:Rbosproj}. 
For the isomorphisms~\cref{eq:catIsoRpN}, \cref{prop:RbodgSES} and \cref{prop:Rbosproj} give a long exact sequence of $R_\p^N(\nu)$-$R_\p^N(\nu)$-bimodules. By  \cref{prop:HcokRbo} it truncates to a short exact sequence
\begin{align*}
0 \rightarrow  \F^N_i\E^N_i \id_\nu \rightarrow  \E^N_i\F^N_i \id_\nu \rightarrow \oplus_{[\rho_i]} \id_\nu \rightarrow 0,
\end{align*}
if $\rho_i = n_i - \alpha_i^\vee(\nu) \geq 0$, and a short exact sequence
\begin{align*}
0 \rightarrow \oplus_{[-\rho_i]} \id_\nu \rightarrow \F^N_i\E^N_i \id_\nu \rightarrow   \E^N_i\F^N_i \id_\nu \rightarrow 0,
\end{align*}
if $\rho_i = n_i - \alpha_i^\vee(\nu) \leq 0$. In the first case, we can identify 
\[
\oplus_{[\rho_i]} \id_\nu 
\cong
q_i^{1-\rho_i} \bigoplus_{\ell =0}^{\rho_i-1}
\tikzdiagh[xscale=.5,yscale=.8]{0}{
	\draw (0,-1)  node[white,below]{$i$} -- (0,1);
	\draw (.5,-1) -- (.5,1);
	\draw (1.5,-1) -- (1.5,1);
	\draw (2,-1) -- (2,1);
	\draw (3,-1)  node [right] {\small $i$} .. controls (2.75,-1) ..
		(2.75,-.75) -- (2.75,.75) node[midway, tikzdot]{} node[midway,xshift=1.5ex, yshift=.5ex]{\small $\ell$}
		.. controls (2.75,1) .. (3,1);
	\filldraw [fill=white, draw=black] (-.25,-.25) rectangle (2.25,.25) node[midway] {\small $\nu$};
	\node at (1,.65) {\tiny $\dots$}; \node at (1,-.65) {\tiny $\dots$};
}
\]
and the map $\E^N_i\F^N_i \id_\nu \rightarrow \oplus_{[\rho_i]_{q_i}} \id_\nu$  is induced by the projection $\pi$.
Thus the sequence splits with the splitting map 
$
\oplus_{[\rho_i]_{q_i}} \id_\nu \rightarrow \E^N_i\F^N_i \id_\nu,
$
given by the sum of maps $R^N_\p(\nu)\xi^\ell \rightarrow R_\p^N(\nu+i)$ that add a vertical strand labeled $i$ carrying $\ell$ dots at the right of a diagram in $R^N_\p(\nu)$.  
In the second case, we also identify
\[
\oplus_{[-\rho_i]_{q_i}} \id_\nu 
\cong
q_i^{1+\rho_i} \bigoplus_{\ell =0}^{-\rho_i-1}
\tikzdiagh[xscale=.5,yscale=.8]{0}{
	\draw (0,-1)  node[below,white]{$i$} -- (0,1);
	\draw (.5,-1) -- (.5,1);
	\draw (1.5,-1) -- (1.5,1);
	\draw (2,-1) -- (2,1);
	\draw (3,-1)  node [right] {\small $i$} .. controls (2.75,-1) ..
		(2.75,-.75) -- (2.75,.75) node[midway, tikzdot]{} node[midway,xshift=1.5ex, yshift=.5ex]{\small $\ell$}
		.. controls (2.75,1) .. (3,1);
	\filldraw [fill=white, draw=black] (-.25,-.25) rectangle (2.25,.25) node[midway] {\small $\nu$};
	\node at (1,.65) {\tiny $\dots$}; \node at (1,-.65) {\tiny $\dots$};
}
\]
Moreover the map $\oplus_{[-\rho_i]} \id_\nu \rightarrow \F^N_i\E^N_i \id_\nu$ is induced by the connecting homorphism~$\delta$. Using the notations of~\cref{eq:thetakdecomp} it takes the form
\[
\delta\left(
\tikzdiagh[xscale=.5,yscale=.8]{0}{
	\draw (0,-1) node[below,white]{$i$}  -- (0,1);
	\draw (.5,-1) -- (.5,1);
	\draw (1.5,-1) -- (1.5,1);
	\draw (2,-1) -- (2,1);
	\draw (3,-1)  node [right] {\small $i$} .. controls (2.75,-1) ..
		(2.75,-.75) -- (2.75,.75) node[midway, tikzdot]{} node[midway,xshift=1.5ex, yshift=.5ex]{\small $k$}
		.. controls (2.75,1) .. (3,1);
	\node at (1,0) {\tiny $\dots$}; 
}
\right)
=
u_{ij}^{-1}\left(
 \tikzdiagh[yscale = .8]{0}{
	\draw  (.25,1) .. controls (.25,.5) and (.75,.5) ..   (.75,0)   .. controls (.75,-.5) and (.25,-.5)  .. (.25,-1); 
	\draw  (.5,1) .. controls (.5,.5) and (1,.5) ..   (1,0)   .. controls (1,-.5) and (.5,-.5)  .. (.5,-1); 
	\draw  (1,1) .. controls (1,.5) and (1.5,.5) ..   (1.5,0)   .. controls (1.5,-.5) and (1,-.5)  .. (1,-1); 
	\draw  (1.25,1) .. controls (1.25,.5) and (1.75,.5) ..   (1.75,0)   .. controls (1.75,-.5) and (1.25,-.5)  .. (1.25,-1); 
 \draw  (1.75,-1)  node[below]{$i$}.. controls (1.75,-.5) and (.25,-.5) ..    (.25,0) node[pos=1, tikzdot]{} node[pos=1,xshift=-1.5ex,yshift=.5ex]{\small $k$}
	 .. controls (.25,.5)  and (1.75,.5).. (1.75,1);
	 \node at (1.25,0) {\tiny $\dots$};  \node at (.8,.85) {\tiny $\dots$};  \node at (.8,-.85) {\tiny $\dots$};
	 \fdot{}{.5,0};
 }
 \right)  = \ 
 \tikzdiagh[xscale=.5,yscale=.8]{0}{
	\draw (0,-1.2)  node[below,white]{$i$} -- (0,1.2);
	\draw (.5,-1.2) -- (.5,1.2);
	\draw (1.5,-1.2) -- (1.5,1.2);
	\draw (2,-1.2) 
		-- (2,-.45) 
		.. controls (2,-.2)  .. (2.25,-.2);
	\draw (2,1.2) 
		-- (2,.45) 
		.. controls (2,.2) .. (2.25,.2);
	\filldraw [fill=white, draw=black,rounded corners] (-.25,-.95) rectangle (2.25,-.45) node[midway] {\small $\varphi_k$};
	\filldraw [fill=white, draw=black,rounded corners] (-.25,.45) rectangle (2.25,.95) node[midway] {\small $\psi_k$};
	\node at(1,0) {\tiny $\dots$};
}
\]
where $u_{ij}$ is the monomorphism defined in~\cref{eq:uij}, and $0 \leq k < -\rho_i$.
We also note that~\cref{eq:dotslidepsiphik} tells us that
\begin{equation}\label{eq:psiphik+t}
 \tikzdiagh[xscale=.5,yscale=.8]{0}{
	\draw (0,-1.2)  node[below,white]{$i$} -- (0,1.2);
	\draw (.5,-1.2) -- (.5,1.2);
	\draw (1.5,-1.2) -- (1.5,1.2);
	\draw (2,-1.2) 
		-- (2,-.45) 
		.. controls (2,-.2)  .. (2.25,-.2);
	\draw (2,1.2) 
		-- (2,.45) 
		.. controls (2,.2) .. (2.25,.2);
	\filldraw [fill=white, draw=black,rounded corners] (-.25,-.95) rectangle (2.25,-.45) node[midway] {\small $\varphi_{k+t}$};
	\filldraw [fill=white, draw=black,rounded corners] (-.25,.45) rectangle (2.25,.95) node[midway] {\small $\psi_{k+t}$};
	\node at(1,0) {\tiny $\dots$};
}
\ = \ 
\tikzdiagh[xscale=.5,yscale=.8]{0}{
	\draw (0,-1.2)  node[below,white]{$i$} -- (0,1.2);
	\draw (.5,-1.2) -- (.5,1.2);
	\draw (1.5,-1.2) -- (1.5,1.2);
	\draw (2,-1.2) 
		-- (2,-.45) 
		.. controls (2,-.2)  .. (2.25,-.2) node[pos=.35,tikzdot]{} node[pos=.35,xshift=-.75ex,yshift=1ex]{\small $t$};
	\draw (2,1.2) 
		-- (2,.45) 
		.. controls (2,.2) .. (2.25,.2);
	\filldraw [fill=white, draw=black,rounded corners] (-.25,-.95) rectangle (2.25,-.45) node[midway] {\small $\varphi_k$};
	\filldraw [fill=white, draw=black,rounded corners] (-.25,.45) rectangle (2.25,.95) node[midway] {\small $\psi_k$};
	\node at(1,0) {\tiny $\dots$};
}
\end{equation}
Moreover since $\bar y_N(\xi_i^{-\rho_i}) = \zeta$ and $\bar y_N(\xi_i^{\ell}) = 0$ for $\ell <-\rho_i$, we obtain by~\cref{eq:dotslidepsiphik} again that
\begin{equation}\label{eq:Epsiphik}
 \tikzdiagh[xscale=.5,yscale=.8]{0}{
	\draw (0,-1)  node[below,white]{$i$} -- (0,1);
	\draw (.5,-1) -- (.5,1);
	\draw (1.5,-1) -- (1.5,1);
	\draw (2,-1) 
		-- (2,1); 
	\filldraw [fill=white, draw=black,rounded corners] (-.25,-.75) rectangle (2.25,-.25) node[midway] {\small $\varphi_{k}$};
	\filldraw [fill=white, draw=black,rounded corners] (-.25,.25) rectangle (2.25,.75) node[midway] {\small $\psi_{k}$};
	\node at(1,0) {\tiny $\dots$};
	\draw (3.25, -1) node[right] {\small $i$}
	 .. controls (3,-1) .. (3,-.75)
	 -- (3,.75)
	 .. controls (3,1) .. (3.25,1);
}
= 
\begin{cases}
-r_i^{-1}\zeta, &\text{ if $k = -\rho_i - 1$,} \\
0, &\text{ if $k < -\rho_i - 1$.}
\end{cases}
\end{equation}
As in~\cite[Proof of Theorem 5.2]{kashiwara}, we construct a map $\Phi \colon \F^N_i\E^N_i \id_\nu \rightarrow  \oplus_{[-\rho_i]_{q_i}} \id_\nu$ induced by the morphism of bimodules
\[
\Phi:\ 
 \tikzdiagh[xscale=.5,yscale=.8]{0}{
	\draw (0,-1.2)  node[below,white]{$i$} -- (0,1.2);
	\draw (.5,-1.2) -- (.5,1.2);
	\draw (1.5,-1.2) -- (1.5,1.2);
	\draw (2,-1.2) 
		-- (2,-.45) 
		.. controls (2,-.2)  .. (2.25,-.2);
	\draw (2,1.2) 
		-- (2,.45) 
		.. controls (2,.2) .. (2.25,.2);
	\filldraw [fill=white, draw=black,rounded corners] (-.25,-.95) rectangle (2.25,-.45) node[midway] {$x$};
	\filldraw [fill=white, draw=black,rounded corners] (-.25,.45) rectangle (2.25,.95) node[midway] {$y$};
	\node at(1,-1.1) {\tiny $\dots$};
	\node at(1,1.1) {\tiny $\dots$};
	\node at(1,0) {\tiny $\dots$};
}
\quad \mapsto \ 
\sssum{r+s=\\ -\rho_i-1} \ 
\tikzdiagh[xscale=.5,yscale=.8]{0}{
	\draw (0,-1)  node[below,white]{$i$} -- (0,1);
	\draw (.5,-1) -- (.5,1);
	\draw (1.5,-1) -- (1.5,1);
	\draw (2,-1)  -- (2,1)  node[midway,tikzdot]{} node[midway, xshift=1.5ex, yshift=.5ex]{\small $r$};
	\draw (3.25, -1) node[right] {\small $i$}
	 .. controls (3,-1) .. (3,-.75)
	 -- (3,.75) node[midway,tikzdot]{} node[midway, xshift=1.5ex, yshift=.5ex]{\small $s$}
	 .. controls (3,1) .. (3.25,1);
	\filldraw [fill=white, draw=black,rounded corners] (-.25,-.8) rectangle (2.25,-.3) node[midway] {$x$};
	\filldraw [fill=white, draw=black,rounded corners] (-.25,.3) rectangle (2.25,.8) node[midway] {$y$};
	\node at(1,.9) {\tiny$\dots$};
	\node at(1,0) {\tiny $\dots$};
	\node at(1,-.9) {\tiny$\dots$};
}
\]
for all $x,y \in R_\p^N(\nu)$.

Then we compute
\begin{align*}
\Phi \circ \delta&\left(
\tikzdiagh[xscale=.5,yscale=.8]{0}{
	\draw (0,-1) node[below,white]{$i$}  -- (0,1);
	\draw (.5,-1) -- (.5,1);
	\draw (1.5,-1) -- (1.5,1);
	\draw (2,-1) -- (2,1);
	\draw (3,-1)  node [right] {\small $i$} .. controls (2.75,-1) ..
		(2.75,-.75) -- (2.75,.75) node[midway, tikzdot]{} node[midway,xshift=1.5ex, yshift=.5ex]{\small $k$}
		.. controls (2.75,1) .. (3,1);
	\node at (1,0) {\tiny $\dots$}; 
}
\right)
\ \overset{\eqref{eq:psiphik+t}}{=}  \ 
\sssum{r+s=\\ -\rho_i-1} \ 
\tikzdiagh[xscale=.5,yscale=.8]{0}{
	\draw (0,-1)  node[below,white]{$i$} -- (0,1);
	\draw (.5,-1) -- (.5,1);
	\draw (1.5,-1) -- (1.5,1);
	\draw (2,-1)  -- (2,1);  
	\draw (3.25, -1) node[right] {\small $i$}
	 .. controls (3,-1) .. (3,-.75)
	 -- (3,.75) node[midway,tikzdot]{} node[midway, xshift=1.5ex, yshift=.5ex]{\small $s$}
	 .. controls (3,1) .. (3.25,1);
	\filldraw [fill=white, draw=black,rounded corners] (-.25,-.8) rectangle (2.25,-.3) node[midway] {\small $\varphi_{k+r}$};
	\filldraw [fill=white, draw=black,rounded corners] (-.25,.3) rectangle (2.25,.8) node[midway] {\small $\psi_{k+r}$};
	\node at(1,.9) {\tiny$\dots$};
	\node at(1,0) {\tiny $\dots$};
	\node at(1,-.9) {\tiny$\dots$};
}
\\
\ &\overset{\eqref{eq:Epsiphik}}{=}
-r_i^{-1}\zeta\ 
\tikzdiagh[xscale=.5,yscale=.8]{0}{
	\draw (0,-1) node[below,white]{$i$}  -- (0,1);
	\draw (.5,-1) -- (.5,1);
	\draw (1.5,-1) -- (1.5,1);
	\draw (2,-1) -- (2,1);
	\draw (3,-1)  node [right] {\small $i$} .. controls (2.75,-1) ..
		(2.75,-.75) -- (2.75,.75) node[midway, tikzdot]{} node[midway,xshift=1.5ex, yshift=.5ex]{\small $k$}
		.. controls (2.75,1) .. (3,1);
	\node at (1,0) {\tiny $\dots$}; 
}
\ +\ 
\sum_{r=-\rho_i-k}^{-\rho_i-1} \ 
\tikzdiagh[xscale=.5,yscale=.8]{0}{
	\draw (0,-1)  node[below,white]{$i$} -- (0,1);
	\draw (.5,-1) -- (.5,1);
	\draw (1.5,-1) -- (1.5,1);
	\draw (2,-1)  -- (2,1);  
	\draw (3.25, -1) node[right] {\small $i$}
	 .. controls (3,-1) .. (3,-.75)
	 -- (3,.75) node[midway,tikzdot]{} node[midway, xshift=6ex, yshift=.5ex]{\small $-\rho_i-1-r$}
	 .. controls (3,1) .. (3.25,1);
	\filldraw [fill=white, draw=black,rounded corners] (-.25,-.8) rectangle (2.25,-.3) node[midway] {\small $\varphi_{k+r}$};
	\filldraw [fill=white, draw=black,rounded corners] (-.25,.3) rectangle (2.25,.8) node[midway] {\small $\psi_{k+r}$};
	\node at(1,.9) {\tiny$\dots$};
	\node at(1,0) {\tiny $\dots$};
	\node at(1,-.9) {\tiny$\dots$};
}
\end{align*}
Therefore, $\Phi \circ \delta$ is given by a triangular matrix with invertible elements on the diagonal, and thus is an isomorphism. In particular, $\delta$ is left invertible, concluding the proof.
\end{proof}

\begin{cor}
For $i \in I_f$, then $1_\nu\E_i$ and $\F_i1_\nu$ are biadjoint (up to shift).
\end{cor}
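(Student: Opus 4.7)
The plan is to establish biadjointness by exhibiting both sides of the adjunction explicitly, combining the tensor--Hom adjunction for the bimodule presentation with the splittings provided by \cref{thm:sl2commutRpN}.

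The first adjunction $\F_i^N \dashv \E_i^N$ (up to the grading shift already absorbed in the definition of $\E_i^N$) is essentially formal. By definition $\F_i^N = R_\p^N(m+1) 1_{(m,i)} \otimes_{R_\p^N(m)}(-)$, so the tensor--Hom adjunction gives a right adjoint $\Hom_{R_\p^N(m+1)}(R_\p^N(m+1) 1_{(m,i)},-)$. Since $1_{(m,i)}$ is an idempotent, this Hom functor is canonically isomorphic to $1_{(m,i)} R_\p^N(m+1) \otimes_{R_\p^N(m+1)}(-)$, which agrees with $\E_i^N$ after accounting for the shift $\lambda_i^{-1} q_i^{\alpha_i^\vee(\nu)}$ built into $\E_i^N$ (note that for $i\in I_f$ we have $\lambda_i=q^{n_i}$, so this is an honest grading shift). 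Hence, at the $\nu$-weight component, $\F_i^N \dashv \E_i^N\lambda_i q_i^{-\alpha_i^\vee(\nu)}$.

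For the reverse adjunction $\E_i^N \dashv \F_i^N\langle \text{shift}\rangle$, the strategy is to extract unit and counit maps from \cref{eq:catIsoRpN}. Setting $\rho_i := n_i - \alpha_i^\vee(\nu)$, when $\rho_i \geq 0$ the splitting $\E_i^N\F_i^N \id_\nu \cong \F_i^N\E_i^N \id_\nu \oplus \bigl(\bigoplus_{\ell=0}^{\rho_i-1} q_i^{1-\rho_i+2\ell}\id_\nu\bigr)$ provides a canonical split inclusion of the extremal summand $q_i^{1-\rho_i}\id_\nu \hookrightarrow \E_i^N\F_i^N \id_\nu$, which we reinterpret as a unit $\id_\nu \to \E_i^N\F_i^N \id_\nu\, q_i^{\rho_i - 1}$. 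Transporting to the adjacent weight $\nu + \alpha_i$, the corresponding decomposition in the regime $\rho_i \leq 0$ (where the sign flips after adding $\alpha_i$) furnishes the matching counit $\F_i^N\E_i^N \id_{\nu+\alpha_i}\, q_i^{\rho_i-1} \to \id_{\nu+\alpha_i}$ via the split projection onto the extremal summand. The case $\rho_i \leq 0$ at $\nu$ is handled symmetrically.

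The main obstacle is verifying the triangle (zig-zag) identities for these candidate unit and counit. The key input, however, is already in hand: the proof of \cref{thm:sl2commutRpN} constructs the explicit map $\Phi$ and connecting map $\delta$ and shows that $\Phi \circ \delta$ is upper-triangular with invertible diagonal (in particular, the composition of the inclusion and projection for the extremal summand is an invertible scalar). Translated into adjunction language, this is precisely the content of one zig-zag identity; the other follows symmetrically by running the same argument at weight $\nu+\alpha_i$. The only remaining bookkeeping is to match the two grading shifts $\lambda_i q_i^{-\alpha_i^\vee(\nu)}$ (from the first adjunction) and $q_i^{\rho_i-1} = q_i^{n_i - \alpha_i^\vee(\nu) - 1}$ (from the second) to produce the final statement that $\F_i^N 1_\nu$ and $1_\nu \E_i^N$ are biadjoint up to explicit grading shifts depending on $n_i$ and $\nu$.
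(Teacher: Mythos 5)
The first half of your argument, that $\F_i^N \dashv \E_i^N$ (up to shift) via the tensor--Hom adjunction, is correct and is also the starting point for the paper.

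The second half has a genuine gap. You propose to read off a unit and counit for the opposite adjunction from the split inclusion/projection of the extremal summand in the decompositions of \cref{thm:sl2commutRpN}, and then you assert that the triangle identities ``are precisely the content'' of the invertibility of $\Phi\circ\delta$. This last step is not justified, and it is the crux of the whole statement. Invertibility of $\Phi\circ\delta$ shows that the short exact sequence in the proof of \cref{thm:sl2commutRpN} splits; it is a statement about the composition $\id_\nu \to \F_i^N\E_i^N\id_\nu \to \id_\nu$ (two natural transformations of the identity). A zig-zag identity for a putative adjunction $\E_i^N \dashv \F_i^N$ is a statement about the composition $\E_i^N \xrightarrow{\E_i^N\eta} \E_i^N\F_i^N\E_i^N \xrightarrow{\epsilon\,\E_i^N} \E_i^N$ (and its companion for $\F_i^N$), and it in particular involves comparing data at two different weight spaces $\nu$ and $\nu\mp\alpha_i$, where the sign of $\rho_i$ can change. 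There is no formal reason that the candidate $\eta$ and $\epsilon$ extracted from the splitting satisfy these identities, and indeed simply having maps $\id \to \F\E$ and $\E\F \to \id$ that compose well with the original adjunction's structure maps does not imply the opposite adjunction in general. (Note also a sign of confusion in your write-up: for $\E_i^N \dashv \F_i^N$ the unit should land in $\F_i^N\E_i^N$, not $\E_i^N\F_i^N$ as you wrote.)

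The paper's proof invokes precisely this missing ingredient: Brundan's theorem from~\cite{brundan}, which shows that from (i) one adjunction $\F_i \dashv \E_i$, and (ii) the inversion isomorphism of the form in \cref{eq:catIsoRpN}, one can \emph{construct} a unit and counit for $\E_i \dashv \F_i$ and verify the zig-zag identities; the verification is a careful generators-and-relations argument in the Kac--Moody 2-category and is not a formal consequence of the splitting. Your proposal identifies the right raw material (the splitting maps plus the first adjunction's structure maps) but then substitutes an unjustified claim for the nontrivial verification that Brundan's theorem performs. To repair the argument you would either need to cite Brundan's result, or reproduce the explicit check of the triangle identities in the present diagrammatic setting.
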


\begin{proof}
By the results in~\cite{brundan}, we know the splitting map $\E_i^N\F_i^N \id_\nu \rightarrow \F_i^N \E_i^N \id_\nu$ of \cref{thm:sl2commutRpN} together with the unit and counit of the adjunction $\F_i \dashv \E_i$ allow to construct a unit and counit for the adjunction $\E_i \dashv \F_i$.
\end{proof}

\begin{prop}\label{prop:serreinRpN}
For each $i,j \in I$ there is a natural isomorphism
\begin{align*}
\bigoplus^{\lfloor (d_{ij}+1)/2 \rfloor}_{a=0} &\begin{bmatrix} d_{ij}+1 \\ 2a \end{bmatrix}_{q_i}  (\F_i^N)^{2a}\F_j^N (\F_i^N)^{d_{ij}+1-2a}  \\
&\cong 
 \bigoplus^{\lfloor d_{ij}/2 \rfloor}_{a=0}  \begin{bmatrix} d_{ij}+1 \\ 2a +1 \end{bmatrix}_{q_i} (\F_i^N)^{2a+1}\F_j^N (\F_i^N)^{d_{ij}-2a}.
\end{align*}
By adjunction, the same isomorphism exists for the $\E_i^N, \E_j^N$.
\end{prop}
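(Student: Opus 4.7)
The strategy is to deduce this cyclotomic Serre relation from its $\bo$-KLR counterpart (Proposition~\ref{prop:serreinRbo}) by descending along the dg-enhancement. First, I would revisit the isomorphism in Proposition~\ref{prop:serreinRbo}: as noted there, it is proved exactly as for ordinary KLR algebras~\cite{KL2}, and is implemented by an isomorphism of $R_\bo(m)$-$R_\bo(m+d_{ij}+2)$-bimodules given by a sum of diagrams involving only crossings and dots---no floating dots appear. Since $d_N$ annihilates every crossing and every dot, the underlying bimodule isomorphism automatically commutes with $d_N$, and hence is an isomorphism of dg-bimodules.

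Next, I would identify the composite functors $(\F_i^N)^a \F_j^N (\F_i^N)^b$ with the homology of the corresponding composites $(\F_i)^a \F_j (\F_i)^b$ at the dg level. For this, iterate Proposition~\ref{prop:Rbosproj}: adding a strand labeled $k$ produces the strongly projective dg-bimodule $(1_{(m,k)} R_\bo(m+1), d_N)$, and the composition of the corresponding induction functors is given by tensoring with iterated tensor products of such bimodules, which remain strongly projective. Proposition~\ref{prop:tensorH} (applied step by step) then gives
\[
H\bigl((\F_{k_1}\cdots \F_{k_r})(R_\bo(m),d_N)\bigr) \cong \F_{k_1}^N \cdots \F_{k_r}^N\bigl(H(R_\bo(m),d_N)\bigr),
\]
while Theorem~\ref{thm:RbodNformal} identifies $H(R_\bo(m),d_N) \cong R_\p^N(m)$. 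Transporting the Serre quasi-isomorphism obtained in the previous paragraph through $H$ yields the stated natural isomorphism of functors on $R_\p^N$-modules.

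The claim for the $\E_i^N$, $\E_j^N$ then follows by biadjunction: for $i \in I_f$ the $\F_i^N$ and $\E_i^N$ are biadjoint (up to shift), and for $i \in I_r$ the same diagrammatic argument as in Proposition~\ref{prop:serreinRbo} applies directly at the $\bo$-KLR level on the right and descends by the same machinery.

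There is no serious obstacle here; the only point requiring care is verifying that strong projectivity is preserved under the iterated tensor products that define $(\F_i^N)^a \F_j^N (\F_i^N)^b$, which reduces to a repeated application of Proposition~\ref{prop:Rbosproj}, and that the Serre isomorphism of Proposition~\ref{prop:serreinRbo}---being built exclusively from KLR generators---is indeed a chain map for $d_N$, so that the passage to homology is automatic.
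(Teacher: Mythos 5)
Your proposal is correct and follows the same route the paper takes; the paper's own proof is the single sentence "This follows from \cref{prop:serreinRbo}," and what you have written is precisely the unpacking of that citation: the Serre isomorphism of \cref{prop:serreinRbo} is implemented by KLR diagrams (crossings and dots only), hence commutes with $d_N$ and is a dg-bimodule isomorphism, which then descends to $R_\p^N$ by iterating \cref{prop:Rbosproj} and \cref{prop:tensorH} together with \cref{thm:RbodNformal}. One small remark: you invoke biadjunction of $\F_i^N$ and $\E_i^N$ for the $\E$-version, and then hedge separately for $i\in I_r$; this is unnecessary, since the one-sided adjunction $\F_i^N \dashv \E_i^N$ (available for all $i$, from induction/restriction) already suffices — passing to right adjoints of the $\F$-isomorphism gives the $\E$-isomorphism with the exponents reflected, which has the same form by symmetry of the quantum binomial coefficients.
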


\begin{proof}
This follows from \cref{prop:serreinRbo}.
\end{proof}

In particular, there is a strong 2-action of the 2-Kac--Moody algebra  of \cite{KL3,rouquier} associated to $\brak{E_i,F_i, K_i}_{i \in I_f}$ on $\oplus_{\nu \in X^+} R_\p^N(\nu)\amod$ through $\F^N_i,\E^N_i$.

\subsection{A differential on $R_\p^N$}\label{sec:dnjRpN}

We fix a subset $I_f \subset I_f' \subset I$ and consider the parabolic subalgebras $U_q(\p) \subset U_q(\p') \subset U_q(\g)$. For each $j \in I_f' \setminus I_f$ we choose a weight $n'_j \in \bN$. For $j \in I_f$ we take $n'_j := n_j \in N$, and we write $N' := \{n'_j\}_{j \in I'_f}$. Then, we equip the cyclotomic $\p$-KLR algebra $R_\p^N(m)$ with a differential $d^N_{N'}$ which is zero on dots and crossings and
\[
d_{N'}^N
 \left( 
\ 
\tikzdiagh[xscale=.75]{0}{
	\fdot{j}{0.5,0.5};
	\draw (0,0) node[below] {\plusspacing \small $j$} -- (0,1);
	\draw (1,0) node[below] {\plusspacing \small $i_1$} -- (1,1);
	\node at(2,.5) {$\dots$};
	\draw (3,0) node[below] {\plusspacing \small $i_{m-1}$} -- (3,1);
}
\ 
\right)
:= \begin{cases}
0, & \text{ if $j \notin I_f'$,}\\
(-1)^{n_j}\tikzdiagh{0}{
	     	 \draw  (0,-.5) node[below] {\small $j$}  --(0,.5)node [midway,tikzdot]{} node[midway,xshift=1.75ex,yshift=.75ex]{\small $n_j$}; 
	      	\draw  (1,-.5) node[below] {\small $i_1$} -- (1,.5);
	      	\node at(2,0) {\small $\dots$};
	      	\draw  (3,-.5) node[below] {\small $i_{m-1}$} -- (3,.5);
	 }, & \text{ if $j \in I_f' \setminus I_f$.}\\
\end{cases}
\]
As before, we extend using the graded Leibniz rule, and verifying that $d_{N'}^N$ is well-defined is straightforward.

\begin{thm}\label{thm:RpdNformal}
The dg-algebra $(R_\p^N(m), d_{N'}^N)$ is formal with homology
\[
H(R_\p^N(m), d_{N'}^N) \cong R_{\p'}^{N'}(m).
\]
\end{thm}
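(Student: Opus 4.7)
The strategy is to reduce to \cref{thm:RbodNformal} by writing $d_{N'}$ on $R_\bo(m)$ as the sum of two anti-commuting differentials and exploiting the resulting bicomplex structure.

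First I would decompose $d_{N'} = d_N + d^{\mathrm{add}}$ on $R_\bo(m)$, where $d^{\mathrm{add}}$ is the derivation of homological degree one which sends a floating dot with subscript $j \in I_f' \setminus I_f$ to $(-1)^{n'_j}$ times the diagram with $n'_j$ dots on the far-left strand, and vanishes on all other generators (dots, crossings, and floating dots with subscript outside $I_f' \setminus I_f$). A direct check on the generators shows $(d^{\mathrm{add}})^2 = 0$ and $d_N d^{\mathrm{add}} + d^{\mathrm{add}} d_N = 0$, so $(R_\bo(m), d_N, d^{\mathrm{add}})$ is a bicomplex with total differential $d_{N'}$. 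I would then observe that the quotient $R_\bo(m) \to R_\p^N(m)$ lifts to a surjective dg-algebra morphism
\[
q \colon (R_\bo(m), d_{N'}) \twoheadrightarrow (R_\p^N(m), d_{N'}^N),
\]
which amounts to checking that both $d_N$ and $d^{\mathrm{add}}$ preserve the two-sided ideal $K := \ker(R_\bo(m) \to R_\p^N(m))$ generated by floating dots $\omega_j$ with $j \in I_f$ and by diagrams with $n_j$ dots on the far-left strand for $j \in I_f$: the derivation $d^{\mathrm{add}}$ vanishes on these generators, while $d_N$ sends the first type of generator to the second.

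The core step will be to show $q$ is a quasi-isomorphism, equivalently that $(K, d_{N'})$ is acyclic. By \cref{thm:RbodNformal} applied to $(\p, N)$, the quotient $(R_\bo(m), d_N) \twoheadrightarrow (R_\p^N(m), 0)$ is a quasi-isomorphism of dg-algebras, so its kernel $(K, d_N)$ is acyclic. A standard spectral sequence argument applied to the sub-bicomplex $(K, d_N, d^{\mathrm{add}})$, filtered by the number of floating dots with subscripts in $I_f' \setminus I_f$, then forces $H(K, d_{N'}) = 0$. Convergence of the spectral sequence is automatic since by \cref{thm:Rbobasis} each fixed $(q, \lambda)$-bidegree of $R_\bo(m)$ is finite-dimensional over $\Bbbk$, so only finitely many pages contribute in each bidegree.

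Combining these steps with \cref{thm:RbodNformal} applied to $(\p', N')$ --- which asserts that $(R_\bo(m), d_{N'})$ is formal with homology $R_{\p'}^{N'}(m)$, the quasi-isomorphism being realized by the quotient map $R_\bo(m) \twoheadrightarrow R_{\p'}^{N'}(m)$ --- one obtains a zigzag of dg-algebra quasi-isomorphisms
\[
(R_\p^N(m), d_{N'}^N) \xleftarrow{\;q\;} (R_\bo(m), d_{N'}) \twoheadrightarrow R_{\p'}^{N'}(m),
\]
simultaneously proving the homology computation and the formality of $(R_\p^N(m), d_{N'}^N)$. The main obstacle will be the sign bookkeeping needed to verify that $d_N$ and $d^{\mathrm{add}}$ anti-commute on products of floating dots, together with the careful justification of the spectral sequence convergence; both should be tractable given the explicit basis from \cref{thm:Rbobasis} and the local nature of the two differentials.
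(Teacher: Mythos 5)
Your proposal is correct and follows essentially the same strategy as the paper: both decompose $d_{N'}=d_N+d_{N'}^N$ into a bounded double complex on $R_\bo(m)$, run the spectral sequence whose $E_1$-page is $H(-,d_N)$, and invoke \cref{thm:RbodNformal} twice (for $(\p,N)$ and for $(\p',N')$) to identify $E_1$ with $R_\p^N(m)$ and the abutment with $R_{\p'}^{N'}(m)$. The only (minor) difference is that you route the collapse through the acyclicity of the kernel $K=\ker\bigl(R_\bo(m)\twoheadrightarrow R_\p^N(m)\bigr)$ and a long exact sequence, whereas the paper argues directly that $E_1$ is concentrated in degree zero in the first homological grading so the spectral sequence degenerates at $E_2$; the two phrasings are interchangeable.
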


\begin{proof}
We have $R_\p^N(m) \cong H(R_\bo(m), d_{N})$ and $R_{\p'}^{N'}(m) \cong H(R_\bo(m), d_{N'})$ by \cref{thm:RbodNformal}. Moreover, $ d_{N'}^N$ can be lifted to $R_\bo(m)$.
We split the homological grading of $R_\bo(m)$ in three: a first one that counts the amount of floating dots with subscript in $I_f$, a second one for the floating dots with subscript in $I_f'\setminus I_f$, and a third one for $I\setminus I_f'$ that we ignore for the moment.  
Then, we have that $d_{N'}^N$ has degree $(0,-1)$ and $d_N$ has degree $(-1,0)$, and they commute with each other. Thus we have a (bounded) double complex $(R_\bo, d_N, d_{N'}^N)$ with total complex being $(R_\bo, d_{N'})$, since $d_{N'} = d_N + d_{N'}^N$. 
In particular, there is a spectral sequence from $H(R_\p^N(m), d_{N'}^N)$ to $H(R_\bo, d_{N'}) \cong R_{\p'}^{N'}(m)$.
Now, \cref{thm:RbodNformal} tells us that $H(R_\bo, d_N)$ is concentrated in homological degree zero (for the first homological grading). Thus, the spectral sequence converges at the second page, and in particular $H(R_\p^N(m), d_{N'}^N) \cong R_{\p'}^{N'}(m)$.
\end{proof}

We interpret this result as a categorical version of the fact that if there is an arrow from a parabolic Verma module $M^\p(\Lambda,N)$ to $M^{\p'}(\Lambda', N')$ (see \cref{sec:qgweightmodules}), then there is a surjection $M^\p(\Lambda,N) \twoheadrightarrow M^{\p'}(\Lambda', N')$. Indeed, in that case there is a surjective quasi-isomorphism $(R_\p^N, d_{N'}) \xrightarrow{\simeq} (R_{\p'}^{N'}, 0)$, inducing equivalences of derived categories that commute up to quasi-isomorphism with the categorical actions of $U_q(\g)$. 



\section{The categorification theorems}\label{sec:catthm}

Recall that the $\Bbbk$-algebra of formal Laurent series $\Bbbk\pp{x_1,\dots, x_n}$ (as constructed in~\cite{laurent}, see also \cite[\S5]{asympK0}) is given by first choosing a total additive order $\prec$ on $\bZ^n$. 
One says that a cone $C := \{\alpha_1 v_1 + \cdots + \alpha_n v_n | \alpha_i \in \bR_{\geq 0} \} \subset \bR^n$ is compatible with $\prec$ whenever $0 \prec v_i$ for all $i \in \{1,\dots,n\}$. 
 Then, we set 
\[
\Bbbk\pp{x_1,\dots,x_n} := \bigcup_{\be \in \bZ^n} x^{\be} \Bbbk_{\prec}\llbracket x_1,\dots, x_n \rrbracket,
\]
where $\Bbbk_{\prec}\llbracket x_1,\dots, x_n \rrbracket$ consists of formal Laurent series in $\Bbbk \llbracket x_1,\dots, x_n\rrbracket$ such that the terms are contained in a cone compatible with $\prec$. We will also write $\Bbbk_{\prec}^{+}\llbracket x_1,\dots, x_n \rrbracket$ for the elements in $\Bbbk_{\prec}\llbracket x_1,\dots, x_n \rrbracket$ with terms contained in a cone without the $0$ element (i.e. series with the degree zero term being zero). 
We obtain a ring by equipping $\Bbbk\pp{x_1,\dots,x_n}$ with the usual addition and multiplication of series. 
Requiring that all series are contained in cones compatible with $\prec$ ensures that the product of two elements in  $\Bbbk\pp{x_1,\dots,x_n}$  is well-defined. Indeed, under these conditions, any coefficient in the product can be determined by summing only a finite amount of terms. 

\subsection{C.b.l.f. derived category}\label{sec:cblfderived}

We fix an arbitrary additive total order $\prec$ on $\bZ^n$. 
We say that a $\bZ^n$-graded $\Bbbk$-vector space $M = \bigoplus_{ \bigoplus_{\bg \in \bZ^n}} M_\bg$ is \emph{c.b.l.f. (cone bounded, locally finite) dimensional}  if
\begin{itemize}
\item $\dim M_\bg < \infty$ for all $\bg \in \bZ^n$;
\item there exists a cone $C_M \subset \bR^n$ compatible with $\prec$ and $\be \in \bZ^n$ such that $M_\bg = 0$ whenever $\bg - \be \notin C_M$. 
\end{itemize}
In other words, $M$ is c.b.l.f. dimensional if and only if 
\[
\gdim_q M := \sum_{\bg \in \bZ^n} x^\bg \dim(M_\bg) \in x^\be \Bbbk_{\prec}\llbracket x_1,\dots, x_n \rrbracket.
\]

\smallskip

Let $(A,d)$ be a $\bZ^n$-graded dg-$\Bbbk$-algebra, where $A = \bigoplus_{(h,\bg) \in \bZ \times \bZ^n} A_\bg^h$, and $d(A_\bg^h) \subset A_\bg^{h-1}$. Suppose that $(A,d)$ is concentrated in non-negative homological degrees, that is $A_\bg^h = 0$ whenever $h < 0$. 
Let $\cD(A,d)$ be the derived category of $(A,d)$. Let $\cD^{lf}(A,d)$ be the full triangulated subcategory of $\cD(A,d)$ consisting of $(A,d)$-modules having homology being c.b.l.f. dimensional for the $\bZ^n$-grading. We call $\cD^{lf}(A,d)$ the \emph{c.b.l.f. derived category of $(A,d)$}. 
%
%

\begin{defn}[\cite{asympK0}]\label{def:positivedg}
We say that $(A,d)$ is a \emph{positive c.b.l.f. dimensional dg-algebra} if 
\begin{enumerate}
\item $A$ is c.b.l.f.dimensional for the $\bZ^n$-grading;
\item $A$ is non-negative for the homological grading;
\item $A_0^h = 0$ for $h >0$;  \label{eq:homologicalDegreeCdt}
\item$(A,d)$ decomposes into a direct sum of shifted copies of relatively projective modules $P_i := A e_i$ for some idempotent $e_i \in A$, such that $P_i$ is non-negative for the $\bZ^n$-grading and $A_0^0 P_i$ is semisimple. 
\end{enumerate}
\end{defn}

\begin{rem}\label{rem:removeAcyclic}
As explained in \cite[\remacyclicproj]{asympK0}, condition (\ref{eq:homologicalDegreeCdt}.) cannot be respected whenever $P_i := Ae_i$ is acyclic. However, in this case there is a quasi-isomorphism $(A,d) \xrightarrow{\simeq} (A/Ae_iA,d)$ and we can weaken hypothesis (\ref{eq:homologicalDegreeCdt}.) so that it is respected only after removing all acyclic $P_i$. This is the case of $(R_\bo,d_N)$.  
\end{rem}

\subsubsection{Asymptotic Grothendieck group}
 
 As already observed in~\cite{acharstroppel} (see also~\cite[Appendix]{naissevaz1}), one caveat of the usual definition of the Grothendieck group is that it does not allow to take into consideration infinite iterated extensions of objects. 
We need to introduce new relations in the Grothendieck groups to handle such situations. One solution is to use \emph{asymptotic Grothendieck groups}, as introduced by the first author in~\cite{asympK0}. 
 
 \smallskip
 
 Let $\cC$ be a triangulated subcategory of some triangulated category $\cT$. Suppose $\cT$ admits countable products and coproducts, and these preserve distinguished triangles. Let $K_0^\Delta(\cC)$ be the triangulated Grothendieck group of $\cC$.
 
 \smallskip
 
Recall the \emph{Milnor colimit $\mcolim_{r  \geq 0} (f_r) $} of a collection of arrows $\{X_r \xrightarrow{f_r} X_{r+1}\}_{r \in \bN}$ in $\cT$ is the mapping cone fitting inside the following distinguished triangle
\[
\coprod_{r \in \bN} X_r \xrightarrow{1-f_\bullet} \coprod_{r \in \bN} X_r \rightarrow \mcolim_{r  \geq 0} (f_r) \rightarrow 
\]
where the left arrow is given by the infinite matrix
\[
1-f_\bullet := 
\begin{pmatrix}
1      & 0       &  0 & 0 & \cdots \\
-f_0 & 1       & 0 & 0 & \cdots  \\
0      & -f_1  & 1 & 0  & \cdots \\
\vdots & \ddots & \ddots & \ddots & \ddots
\end{pmatrix}
\] 

There is a dual notion of Milnor limit. Consider a collection of arrows $\{X_{r+1} \xrightarrow{f_r} X_r\}_{r \geq 0}$ in $\cT$. The \emph{Milnor limit} is the object fitting inside the distinguished triangle
\[
\mlim_{r \geq 0} (f_r) \rightarrow \prod_{r \geq 0} X_r \xrightarrow{1 - f_\bullet} \prod_{r \geq 0}  X_r \rightarrow
\]

\begin{defn}\label{def:toptriangulatedK0}
The \emph{asymptotic triangulated Grothendieck group} of $\cC \subset \cT$ is given by
\[
\bKO^\Delta(\cC) := K_0^\Delta(\cC) / T(\cC),
\]
where $T(\cC)$ is generated by 
\[
[Y] -  [X] = \sum_{r\geq 0} [E_r]
\]
whenever both $\bigoplus_{r \geq 0} \cone(f_r) \in \cC$ and $\bigoplus_{r \geq 0} E_r \in \cC$, and
\begin{align*}
Y \cong \mcolim\bigl(X = F_0 \xrightarrow{f_0} F_1 \xrightarrow{f_1} \cdots \bigr),
\intertext{is a Milnor colimit, or}
X \cong \mlim \bigl( \cdots \xrightarrow{f_1} F_1 \xrightarrow{f_0} F_0 = Y \bigr),
\end{align*}
is a Milnor limit, and 
\[ 
	[E_r] = [\cone(f_r)] \in K_0^\Delta(\cC),
\]
for all $r \geq 0$.
\end{defn}

In a $\bZ^n$-graded triangulated category $\cT$, we define the notion of \emph{c.b.l.f. direct sum} as follows: 
\begin{itemize}
\item take a a finite collection of objects $\{K_1,\dots, K_m\}$ in $\cT$; 
\item consider a direct sum of the form
\begin{align*}
&\bigoplus_{\bg \in \bZ^n} x^{\bg} (K_ {1,\bg} \oplus \cdots \oplus K_{m,\bg}), &
&\text{ with }&
K_{i,\bg} &= \bigoplus_{j = 1}^{k_{i,\bg}} K_i[h_{i,j,\bg}],
\end{align*}
where $k_{i,\bg} \in \bN$ and $h_{i,j,\bg} \in \bZ$ such that:
\item there exists a cone $C$ compatible with $\prec$, and $\be \in \bZ^n$ such that for all $j$ we have $k_{j,\bg} = 0$ whenever $\bg -  \be \notin C$;
\item there exists $h \in \bZ$ such that $h_{i,j,\bg} \geq h$ for all $i,j,\bg$. 
\end{itemize}

If $\cT$ admits arbitrary c.b.l.f. direct sums, then $K_0(\cT)$ has a natural structure of $\bZ\pp{x_1,\dots, x_n}$-module with
\[
\sum_{\bg \in C} a_\bg x^{\be + \bg} [X] := [\bigoplus_{\bg \in C} x^{\bg + \be} X^{\oplus a_\bg}],
\]
where $X^{\oplus a_\bg} = \bigoplus_{\ell = 1}^{|a_\bg|} X[\alpha_\bg]$ and $\alpha_\bg = 0$ if $a_\bg \geq 0$ and $\alpha_\bg =1$ if $a_\bg < 0$. 


\begin{thm}[{\cite[\thmasympKO]{asympK0}}]\label{thm:triangtopK0genbyPi}
Let $(A,d)$ be a positive c.b.l.f. dg-algebra, and let $\{P_j\}_{j \in J}$ be a complete set of indecomposable cofibrant $(A,d)$-modules that are pairwise non-isomorphic (even up to degree shift). Let $\{S_j\}_{j \in J}$ be the set of corresponding simple modules. 
There is an isomorphism
\begin{align*}
\bKO^\Delta\bigl(\cD^{lf}(A,d)\bigr)  & \cong  \bigoplus_{j \in J} \bZ\pp{x_1, \dots, x_\ell} [P_j], 
\end{align*}
and $\bKO^\Delta\bigl(\cD^{lf}(A,d)\bigr)$ is also freely generated by the classes of $\{[S_j]\}_{j \in J}$.
\end{thm}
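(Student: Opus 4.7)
The plan is to establish mutually inverse isomorphisms between $\bKO^\Delta(\cD^{lf}(A,d))$ and $\bigoplus_{j\in J} \bZ\pp{x_1,\dots,x_\ell}[P_j]$. The core observation is that the five positivity/c.b.l.f. hypotheses on $(A,d)$ are tailored precisely so that every object of $\cD^{lf}(A,d)$ admits a c.b.l.f. projective resolution by shifts of the indecomposable relatively projective modules $\{P_j\}_{j\in J}$, and so that this resolution is unique up to exactly the Milnor relations defining the asymptotic Grothendieck group.

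First I would construct, for each $M \in \cD^{lf}(A,d)$, a resolution $\cdots \to Q_2 \to Q_1 \to Q_0 \to M$ with each $Q_i$ a c.b.l.f. direct sum of shifts of the $P_j$. This is built iteratively by killing the $\prec$-minimal nonzero piece of the remaining homology: the semisimplicity of $A_0^0$ together with the requirement that each $P_j$ is non-negatively graded for both the homological and $\bZ^n$-gradings guarantees that one can always find a surjection from an appropriate shift of some $P_j$ minimizing $\bZ^n$-degree. Because the homology of $M$ is c.b.l.f., the resolution accumulates only finitely many shifted $P_j$'s in each given $\bZ^n$-degree, and so assembles as a Milnor colimit $M \simeq \mcolim(Q_0 \to Q_0 \oplus Q_1[1] \to \cdots)$ inside $\cD^{lf}(A,d)$. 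The acyclic relatively projective summands (cf.\ the remark preceding the theorem) are handled separately by first replacing $(A,d)$ with its quasi-isomorphic quotient $(A/Ae_iA, d)$.

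From such a resolution one reads off an element $\sum_j f_{M,j}(x)[P_j]$ with $f_{M,j} \in \bZ\pp{x_1,\dots,x_\ell}$, and the Milnor-colimit relations in $T(\cD^{lf}(A,d))$ are exactly what is needed to make this assignment independent of the chosen resolution, via the standard homotopy-uniqueness of projective resolutions carried out c.b.l.f.-ly. Linear independence of the $[P_j]$ is then detected by pairing with the corresponding simple modules: for each $k\in J$, derived $\Hom$ into $S_k$ defines a functional $\bKO^\Delta(\cD^{lf}(A,d)) \to \bZ\pp{x_1,\dots,x_\ell}$ that sends $[P_j]$ to $\delta_{jk}$ (up to grading shifts), which forces the $[P_j]$ to be a $\bZ\pp{x_1,\dots,x_\ell}$-basis.

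For the second half of the statement, I would express $[S_j]$ in terms of the $[P_k]$ via the composition series of $P_k$: the base-change matrix is unitriangular with respect to $\prec$, with diagonal entries $1$ and off-diagonal entries supported in a cone compatible with $\prec$. Hence its inverse converges inside $\bZ\pp{x_1,\dots,x_\ell}$ by the usual geometric expansion, showing the $[S_j]$ also form a free generating set. The main obstacle is calibrating the Milnor relations against the c.b.l.f. conditions: one must verify that passing to $\bKO^\Delta$ neither collapses distinct classes $[P_j]$ nor fails to identify two resolutions of the same object, which in practice means checking that every null-homotopy between c.b.l.f. resolutions is itself c.b.l.f., and that the formal countable sums produced by Milnor colimits satisfy the compatibility with the cone $C_M$ at every stage.
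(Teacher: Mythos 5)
This theorem is cited (without proof) from the first author's paper on asymptotic Grothendieck groups, so there is no internal argument in the present paper to compare against; I evaluate your sketch on its own terms.

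Your overall strategy is the expected one: build c.b.l.f.\ projective resolutions by killing the $\prec$-minimal homology at each stage, assemble $M$ as a Milnor colimit of brutal truncations $\sigma_{\geq -n}Q_\bullet$ of the resolution so that the relations in $T(\cD^{lf}(A,d))$ identify $[M]$ with the corresponding $\bZ\pp{x_1,\dots,x_\ell}$-combination of the $[P_j]$, detect $\bZ\pp{x_1,\dots,x_\ell}$-linear independence of the $[P_j]$ by pairing against the simples, and obtain the simple basis by inverting a unitriangular change-of-basis matrix. The hypotheses you identify as load-bearing (semisimplicity of $A^0_0$, non-negativity of the $P_j$, removing acyclic relatively projectives up front) are the right ones. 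Your Milnor colimit expression $Q_0 \to Q_0\oplus Q_1[1]\to\cdots$ is written imprecisely — you want successive brutal truncations with cones equal to $Q_{n+1}[n+1]$ — but the intent is clear.

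The genuine gap is the assertion that ``derived $\Hom$ into $S_k$ defines a functional $\bKO^\Delta(\cD^{lf}(A,d))\to\bZ\pp{x_1,\dots,x_\ell}$.'' That such an assignment descends past the relations in $T(\cD^{lf}(A,d))$ — i.e.\ that it is continuous against the Milnor (co)limit relations appearing in Definition~\ref{def:toptriangulatedK0} — is precisely the nontrivial point, and you state it as though it were automatic. The correct justification is the content of Proposition~\ref{prop:cblfbiminduceKO}: a derived tensor with a c.b.l.f.\ dimensional dg-bimodule induces a continuous map on $\bKO^\Delta$. Applying it with $S_k$ as the bimodule (c.b.l.f.\ dimensional since $A^0_0$ is semisimple) yields the needed continuous pairing, but without invoking something of this sort the linear-independence step is incomplete. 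A smaller gap appears in the second half: to invert the unitriangular matrix expressing $[P_j]$ in terms of $[S_k]$ one must check that the off-diagonal block $N$ has entries supported in a cone $C$ compatible with $\prec$ and avoiding $0$, so that $\sum_{r\geq 0}(-N)^r$ converges coefficientwise in $\bZ\pp{x_1,\dots,x_\ell}$; you gesture at ``the usual geometric expansion'' but the cone-boundedness is exactly what makes it legitimate over the ring of formal Laurent series.
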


\begin{prop}[{\cite[\propcblfbim]{asympK0}}]\label{prop:cblfbiminduceKO}
Let $(A,d)$ and $(A',d')$ be two c.b.l.f. positive dg-algebras. Let $B$ be a c.b.l.f. dimensional $(A',d')$-$(A,d)$-bimodule. The derived tensor product functor
\[
F : \cD^{lf}(A,d) \rightarrow \cD^{lf}(A',d'), \quad F(X) := B \Lotimes_{(A,d)} X,
\]
induces a continuous map
\[
[F] : \bKO^\Delta(\cD^{lf}(A,d))  \rightarrow \bKO^\Delta(\cD^{lf}(A',d')).
\]
\end{prop}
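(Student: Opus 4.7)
The plan is to prove this in three stages: well-definedness of $F$, the fact that it descends to triangulated Grothendieck groups, and continuity (i.e.\ compatibility with the Milnor (co)limit relations of \cref{def:toptriangulatedK0}).

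First I would verify that $F$ actually lands in $\cD^{lf}(A',d')$. By \cref{thm:triangtopK0genbyPi} the category $\cD^{lf}(A,d)$ is generated, under c.b.l.f.\ direct sums and distinguished triangles, by the relatively projective modules $P_j = A e_j$. On these, $F(P_j) \simeq B e_j$, which is a direct summand of $B$, hence c.b.l.f.\ dimensional because $B$ is. For a general $X$ with c.b.l.f.\ homology, positivity of $(A,d)$ allows one to build a relatively projective resolution
\[
 \cdots \to Q_1 \to Q_0 \to X \to 0,
\]
where each $Q_r$ is a c.b.l.f.\ direct sum of shifted copies of the $P_j$, and the shifts occurring in $Q_r$ have internal degree shifts lying (asymptotically) in a cone compatible with $\prec$. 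Tensoring with the c.b.l.f.\ bimodule $B$ then yields a complex whose total object sits in $\cD^{lf}(A',d')$: the cone bounds add (Minkowski sum of cones is still compatible with $\prec$), and local finiteness is preserved because in each fixed total degree only finitely many terms contribute.

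Next, $F$ is an exact functor of triangulated categories, so it descends automatically to a group homomorphism $[F]\colon K_0^\Delta(\cD^{lf}(A,d)) \to K_0^\Delta(\cD^{lf}(A',d'))$. To promote this to a map of asymptotic Grothendieck groups, I need to check that the defining relations of $T(\cC)$ are sent into $T(\cC')$. Derived tensor product with a fixed bimodule commutes with arbitrary homotopy colimits and homotopy limits that exist in $\cD(A,d)$, in particular with the Milnor colimits and Milnor limits used in \cref{def:toptriangulatedK0}: if $Y = \mcolim(F_0 \xrightarrow{f_0} F_1 \xrightarrow{f_1} \cdots)$, then
\[
 F(Y) \simeq \mcolim\bigl(F(F_0) \xrightarrow{F(f_0)} F(F_1) \xrightarrow{F(f_1)} \cdots\bigr),
\]
and similarly for Milnor limits, with $F(\cone(f_r)) \simeq \cone(F(f_r))$. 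Combined with the stability of the c.b.l.f.\ condition under $B \Lotimes_{(A,d)}-$ established in the first step, this shows that if $\bigoplus_r \cone(f_r)$ and $\bigoplus_r E_r$ lie in $\cC$, then their images under $F$ lie in $\cC'$, and the relation $[F(Y)] - [F(X)] = \sum_r [F(E_r)]$ holds in $K_0^\Delta(\cD^{lf}(A',d'))$. Hence $[F]$ factors through $\bKO^\Delta$.

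Finally, continuity for the $\bZ\pp{x_1,\dots,x_n}$-module structure amounts to the assertion that $[F]$ commutes with c.b.l.f.\ direct sums (which represent multiplication by formal Laurent series); this follows from the same argument, since $F$ preserves c.b.l.f.\ direct sums up to quasi-isomorphism. The main obstacle I expect is the first step: controlling the c.b.l.f.\ behavior of $F(X)$ for an arbitrary object $X$, rather than for the generators $P_j$ alone, because $X$ may only be recovered from the $P_j$ via an infinite iterated extension, and one must ensure that the cone bounds accumulated along this process remain compatible with $\prec$ after tensoring with $B$. This is where positivity of $(A,d)$ together with c.b.l.f.\ dimensionality of $B$ is essential, the compatibility of the two cones ensuring that each homogeneous component of $F(X)$ is a finite sum.
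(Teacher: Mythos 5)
This proposition is cited from the reference \cite{asympK0} (Proposition~9.18 there) and the present paper does not reproduce its proof, so I can only assess your argument on its own terms rather than against an in-paper proof.

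Your first step --- that $F$ lands in $\cD^{lf}(A',d')$ via relatively projective resolutions and Minkowski sums of cones compatible with $\prec$ --- is essentially correct and captures the real content. The gap is in your third step: you assert that ``derived tensor product with a fixed bimodule commutes with arbitrary homotopy colimits and homotopy limits,'' and in particular with Milnor limits. This is false. The functor $B \Lotimes_{(A,d)} -$ is a left adjoint, hence it commutes with coproducts and cones (and therefore with Milnor colimits), but it does \emph{not} commute with products in general, so there is no reason for it to commute with Milnor limits $\mlim(f_r) \to \prod_r X_r \xrightarrow{1 - f_\bullet} \prod_r X_r \to $. The c.b.l.f.\ hypotheses do make the inverse systems stabilize degreewise, but the objects $\prod_r X_r$ occurring in the triangle are generally not c.b.l.f.\ (in each fixed degree one picks up a countable product of the stable value), so one cannot simply invoke the c.b.l.f.\ condition to rescue the commutation. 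As a consequence, you have not shown that the Milnor-limit half of the relation set $T(\cC)$ in \cref{def:toptriangulatedK0} is sent into $T(\cC')$, and the descent of $[F]$ to $\bKO^\Delta$ is not established.

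A route that avoids the problem: by \cref{thm:triangtopK0genbyPi}, $\bKO^\Delta(\cD^{lf}(A,d))$ is free over $\bZ\pp{x_1,\dots,x_n}$ on the classes $[P_j]$, so one may \emph{define} $[F]$ on the basis by $[F][P_j] := [Be_j]$ and extend $\bZ\pp{x}$-linearly; this map is automatically well defined by freeness, with no need to verify $[F](T(\cC))\subset T(\cC')$. One then checks that $[F([X])] = [F(X)]$ for arbitrary $X$ by taking a c.b.l.f.\ relatively projective resolution $P \simeq X$ (a Milnor colimit of shifted $P_j$'s), using only that $F$ preserves Milnor colimits and c.b.l.f.\ direct sums, and applying the colimit-type relation in $T(\cC')$. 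This is compatible with your first step and with your final remark about continuity, but sidesteps the unjustified assertion about Milnor limits entirely.
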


We will need the following definitions in \cref{sec:2Verma}:

\begin{defn}
Let $\{K_1, \dots, K_m\}$ be a finite collection of objects in $\cC$, and let $\{E_r\}_{r \in \bN}$ be a family of direct sums of $\{K_1, \dots, K_m\}$ such that $\bigoplus_{r \in \bN} E_r$ is a c.b.l.f. direct sum of $\{K_1, \dots, K_m\}$. Let $\{M_r\}_{r \in \bN}$  be a collection of objects in $\cC$ with $M_0 = 0$, such that they fit in distinguished triangles
\[
M_r \xrightarrow{f_r} M_{r+1} \rightarrow E_r \rightarrow 
\]
Then, we say that an object $M \in \cC$ such that $M \cong_{\cT} \mcolim_{r\geq 0} (f_r)$ in $\cT$ is a \emph{c.b.l.f. iterated extension of  $\{K_1, \dots, K_m\}$}. 
\end{defn}

\begin{defn}
We say that $\cV$ is \emph{c.b.l.f. generated by $\{X_j\}_{j \in J}$} for some collection of elements $X_j \in \cV$ if for any object $Y$ in $\cV$ we can take a finite set $\{Y_k\}_{k \in K}$ of retracts $Y_k \subset X_{j_k}$ such that $Y$ is isomorphic to a c.b.l.f. iterated extension of $\{Y_k\}_{k \in K}$. 
\end{defn}

\subsection{Categorification}

In this section we assume that $R_\bo(\nu)$ is a $\Bbbk$-algebra over a field $\Bbbk$. We also choose an abritrary order $\prec$ for constructing $\bZ\pp{q,\Lambda}$ such that $0 \prec q \prec \lambda_i$ for all formal $\lambda_i = q^{\beta_i} \in \Lambda$. We assume that the parabolic Verma module $M^\p(\lambda,N)$ is constructed over the ground ring $R := \bQ\pp{\Lambda,q}$ (instead of $\bQ(\Lambda,q)$). 

\smallskip

Every idempotent of $R_\bo(\nu)$ is the image of an idempotent of the classical KLR algebra $R_{\g}(\nu)$ under the obvious inclusion $R_\g(\nu) \hookrightarrow R_\bo(\nu)$.  
Thanks to~\cite[Section~2.5]{KL1} we know all the idempotents of $R_\g(\nu)$. 
We define the element
\begin{equation*}
e_{i,n} := \tau_{\vartheta_n} x_1^{n-1} x_2^{n-2} \cdots x_{n-1} 1_{ii\cdots i}  \in R_\g(n),
\end{equation*}
where $\vartheta_n$ is the longest element in $S_n$. 
Let $\Seqd(\nu)$  be the set of expressions $i_1^{(m_1)}i_2^{(m_2)}\cdots i_r^{(m_r)}$ for different $r \in \bN$ and $m_\ell \in \bN$ such that $\sum_{\ell = 1}^r m_\ell \cdot \alpha_{i_\ell} = \nu$.
For each $\bi \in \Seqd(\nu)$ we define the idempotent
\[
e_{\bi} := e_{i_1,m_1}  \otimes e_{i_2,m_2} \otimes \cdots \otimes e_{i_r,m_r} \in R_\g(\nu),
\]
where $x \otimes y$ means we put the diagram of $x$ at the left of the one of $y$. 
Identifying $e_{\bi}$ with its image in $R_\bo(\nu)$, as in~\cite{KL1}, we define a projective left $R_\bo(\nu)$-module
\[
P_\bi := R_\bo(\nu) e_{\bi},
\]
Then, we put 
\[
\brak{\bi} := - \sum_{\ell = 1}^r \frac{m_\ell(m_\ell-1)}{2}  d_{i_a}.
\]
and we define $\tilde P_\bi := q^{-\brak{\bi}} P_\bi$.

\smallskip

When writing $\dots \bi \dots$ and $\dots \bj \dots$ we mean we take two sequences $\bi_1 \bi \bi_2$ and $\bi_1 \bj \bi_2$ in $\Seqd(\nu)$ that coincide everywhere except on $\bi$ and $\bj$. From the decomposition of the nilHecke algebra~\cite[\S2.2]{KL1} we get an isomorphism of $R_\p^N$-modules
\[
\tilde P_{\dots i^{m} \dots} \cong \oplus_{([m]_{q_i}!)} \tilde P_{\dots i^{(m)} \dots}.
\]
Mimicking the arguments in~\cite[Proposition~2.13]{KL1} and \cite[Proposition~6]{KL2} we have the following:

\begin{prop}\label{prop:catSerreDiv}
There are isomorphisms 
\begin{align*}
\bigoplus^{\lfloor (d_{ij}+1)/2 \rfloor}_{a=0} \tilde P_{\dots i^{(2a)}ji^{(d_{ij}+1-2a)}\dots}  \cong \bigoplus^{\lfloor d_{ij}/2 \rfloor}_{a=0} \tilde P_{\dots i^{(2a+1)}ji^{(d_{ij}-2a)} \dots}
\end{align*}
for all $i \neq j \in I$.
\end{prop}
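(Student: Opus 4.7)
The plan is to derive this divided-power Serre relation by combining Proposition \ref{prop:serreinRbo} with the identification $\tilde P_{\dots i^m \dots} \cong \oplus_{[m]_{q_i}!} \tilde P_{\dots i^{(m)} \dots}$, and then cancelling a common multiplicity. An alternative (more hands-on) route is the purely diagrammatic one in \cite[Proposition~6]{KL2}, which I outline briefly at the end.

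First, I would upgrade Proposition \ref{prop:serreinRbo} from an isomorphism of functors to the corresponding isomorphism of projective $R_\bo(\nu)$-modules. Since the induction functors $\F_{i_1},\dots,\F_{i_r}$ applied to the trivial $R_\bo(0)$-module produce exactly the projective modules $\tilde P_{i_r\cdots i_1}$ (up to the grading shift encoded in $\brak{\bi}$), applying $\F_i^a\F_j\F_i^b$ to a fixed idempotent module $\tilde P_{\bi_1}$ on the right and composing with a fixed string of induction functors on the left yields, for any fixed outer context $\dots$, a natural isomorphism
\[
\bigoplus^{\lfloor (d_{ij}+1)/2 \rfloor}_{a=0} \begin{bmatrix} d_{ij}+1 \\ 2a \end{bmatrix}_{q_i} \tilde P_{\dots i^{2a}ji^{d_{ij}+1-2a}\dots}
\;\cong\;
\bigoplus^{\lfloor d_{ij}/2 \rfloor}_{a=0} \begin{bmatrix} d_{ij}+1 \\ 2a+1 \end{bmatrix}_{q_i} \tilde P_{\dots i^{2a+1}ji^{d_{ij}-2a}\dots}.
\]

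Next, I would invoke the decomposition $\tilde P_{\dots i^m \dots} \cong \oplus_{[m]_{q_i}!}\, \tilde P_{\dots i^{(m)}\dots}$ coming from the nilHecke subalgebra, and observe the arithmetic identity
\[
\begin{bmatrix} d_{ij}+1 \\ 2a \end{bmatrix}_{q_i}[2a]_{q_i}!\,[d_{ij}+1-2a]_{q_i}! = [d_{ij}+1]_{q_i}!
\]
(and similarly for the odd index). Substituting turns both sides of the isomorphism above into $[d_{ij}+1]_{q_i}!$ copies of the respective sums of divided-power projectives, so that we obtain
\[
\oplus_{[d_{ij}+1]_{q_i}!}\bigoplus_{a=0}^{\lfloor(d_{ij}+1)/2\rfloor}\tilde P_{\dots i^{(2a)}ji^{(d_{ij}+1-2a)}\dots} \cong \oplus_{[d_{ij}+1]_{q_i}!}\bigoplus_{a=0}^{\lfloor d_{ij}/2\rfloor}\tilde P_{\dots i^{(2a+1)}ji^{(d_{ij}-2a)}\dots}.
\]

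Finally, I would cancel the common $[d_{ij}+1]_{q_i}!$ factor. The category of finitely generated graded projective $R_\bo(\nu)$-modules is Krull--Schmidt weight-space by weight-space (the relevant endomorphism rings are local after grading completion, by the same argument as in \cite[Section~2.5]{KL1}), so this cancellation is legitimate. The main obstacle is precisely this last step: rather than invoking Krull--Schmidt, one can follow \cite[Proposition~6]{KL2} and produce an explicit pair of mutually inverse idempotent maps between the two projective summands using the diagrammatic idempotents $e_{\dots i^{(2a)}ji^{(d_{ij}+1-2a)}\dots}$ and the nilHecke-type splittings. That diagrammatic construction takes place entirely in the KLR-part of $R_\bo$ (it does not involve floating dots), so each step of the KL2 argument transposes to our setting without change. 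Either route concludes the proof.
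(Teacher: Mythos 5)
Your proposal offers two routes, and it is the second (briefly sketched at the end) that matches what the paper actually does: \cref{prop:catSerreDiv} is proved by mimicking the explicit idempotent constructions of \cite[Proposition~2.13]{KL1} and \cite[Proposition~6]{KL2}, which take place entirely in the KLR subalgebra of $R_\bo$ and hence transport verbatim. Your first route is a genuinely different derivation: start from the non-divided-power Serre isomorphism of \cref{prop:serreinRbo}, multiply both sides of $\tilde P_{\dots i^m \dots} \cong \oplus_{[m]_{q_i}!}\tilde P_{\dots i^{(m)}\dots}$ to fill out a common $[d_{ij}+1]_{q_i}!$ prefactor via the identity $\bigl[\begin{smallmatrix} n \\ k \end{smallmatrix}\bigr]_{q_i}[k]_{q_i}![n-k]_{q_i}! = [n]_{q_i}!$, and then cancel. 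The cancellation is legitimate here because the graded weight spaces $1_\bj R_\bo(\nu)1_\bi$ are finite-dimensional over $\Bbbk$ and bounded below in the $\bZ^{1+|I|}$-grading, so the category of c.b.l.f. graded projectives is Krull--Schmidt and the split Grothendieck group is a free $\bZ\pp{q,\Lambda}$-module on the indecomposables; cancelling the nonzero factor $[d_{ij}+1]_{q_i}!$ there forces the isomorphism. Two caveats, though. First, this route is not really independent of the KL argument: the paper proves \cref{prop:serreinRbo} itself by citing \cite[Proposition~6]{KL2}, so you are routing through the same diagrammatic input with an extra Krull--Schmidt layer on top, which is a cost rather than a gain. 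Second, the step ``composing with a fixed string of induction functors on the left'' to produce the outer context $\dots$ glosses over the passage from the non-divided-power sequences produced by iterated $\F_k$ to the divided-power idempotents $e_{\bi_2}$ in $\Seqd(\nu)$; this is harmless (one tensors with $R_\bo(\nu)e_{\bi_2}\otimes e_{\bi_1}$ rather than applying functors term-by-term) but should be stated more carefully. The direct diagrammatic route you mention second is cleaner precisely because it builds the isomorphism explicitly and sidesteps both points.
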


Equipping $R_\bo(\nu)$ with $d_N$ induces a differential on $\tilde P_\bi$, and \cref{prop:catSerreDiv} holds for the dg-version $(\tilde P_\bi, d_N)$. 
We put
\begin{align*}
\cM^\p(\Lambda,N) := \bigoplus_{m \geq 0} \cD^{lf}(R_\bo(m), d_N),
\end{align*}
with the particular case $\cM(\Lambda)$ meaning $\p = \bo$ and $N = \emptyset$, and therefore $d_N = 0$.
Note that $\cD^{lf}(R_\bo(m), d_N) \cong \cD^{lf}(R_\p^N(m), 0)$. 
Let ${}_\bQ\bKO^\Delta(-) := \bKO^\Delta(-) \otimes_{\bZ\pp{q,\Lambda}} \bQ\pp{q,\Lambda}$.

\begin{prop}\label{prop:dgKLRispositive}
The $\bZ^{1+|\Lambda|}$-graded dg-algebra $(R_\bo(m), d_N)$ is a positive c.b.l.f. dimensional dg-algebra. 
\end{prop}

\begin{proof}
Clearly, $R_\bo(m)$ is c.b.l.f. dimensional for the $\bZ^{1+|\Lambda|}$-grading, and is non-negative for the homological grading. We can also assume we have applied \cref{rem:removeAcyclic}. 
Recall that the part in homological degree zero of $R_\bo(m)$ is isomorphic to the usual KLR algebra $R_\g(m)$. 
As explained in \cite[\S 3.3]{KL1}, for each monomial $f \in U^{-}_q(\g)$, we have a projective $R_\g(m)$-module $P_f$ (defined similarly as $P_\bi$ for $f = F_{i_1}^{(m_1)} \cdots F_{i_r}^{(m_r)}$). Moreover, by \cite[Proposition 3.22]{KL1} extended for any $\g$, $P_f$ is indecomposable if and only if $f$ is a canonical basis element. 
Also, the quadratic form in \cite[\S 14.2]{lusztig} corresponds with the graded dimension of the graded hom-spaces between these projective $R_\g(m)$-modules. The same applies for the homological degree zero part of the graded hom-spaces between our $P_\bi$'s. 
Then, by \cite[Theorem 14.2.3]{lusztig}, we obtain that 
\[
\gdim \HOM_{R_\bo(m)}(P_\bi, P_\bj) - \delta_{\bi,\bj} 
\in \bZ_{\prec}^+\llbracket q, \Lambda \rrbracket,
\]
which concludes the proof. 
\end{proof}

\begin{prop}\label{prop:K0RbohalfUqg}
There is an isomorphism of $\bQ\pp{q,\Lambda}$-modules
\[
 U_q^-(\g) \otimes_{\bQ(q)} \bQ\pp{q,\Lambda} \cong {}_\bQ\bKO^\Delta(\cM(\Lambda)),
\]
and a $\bQ\pp{q,\Lambda}$-linear surjection
\[
 U_q^-(\g) \otimes_{\bQ(q)} \bQ\pp{q,\Lambda} \twoheadrightarrow {}_\bQ\bKO^\Delta(\cM^\p(\Lambda, N)),
\]
both sending $ F_{i_1}^{(m_1)} F_{i_2}^{(m_2)} \cdots F_{i_r}^{(m_r)}$ to $[(\tilde P_\bi,d_N)]$ for $\bi = i_1^{(m_1)} i_2^{(m_2)} \cdots i_r^{(m_r)}$.
\end{prop}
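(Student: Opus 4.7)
\smallskip

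\textbf{Proof plan.} The plan is to apply \cref{thm:triangtopK0genbyPi} to reduce the problem to a Hilbert-series comparison, and to build the comparison map by sending the divided-power monomial $F_\bi = F_{i_1}^{(m_1)} \cdots F_{i_r}^{(m_r)}$ to the class $[(\tilde P_\bi, d_N)]$. First I would verify the positivity hypotheses: the homological grading on $R_\bo(m)$ is non-negative (only floating dots contribute positive $h$-degree), the degree-zero subalgebra $R_\bo(m)_0^0 \cong \bigoplus_{\bi\in\Seq(m)} \Bbbk\, 1_\bi$ is semisimple, and the tightened basis of \cref{thm:Rbobasis} exhibits each $R_\bo(m)1_\bi$ as a c.b.l.f.\ direct sum of the relatively projective covers $\tilde P_\bj$ (after discarding the acyclic summands, as allowed by \remacyclicproj{} and \cref{prop:acyclic}). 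This places us within the framework of \cref{thm:triangtopK0genbyPi}, so that $\bKO^\Delta(\cM^\p(\Lambda,N))$ is a free $\bZ\pp{q,\Lambda}$-module on the classes of the non-acyclic $[(\tilde P_\bi, d_N)]$.

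Next, for well-definedness of the assignment
\[
\Phi : U_q^-(\g)\otimes_{\bQ(q)}\bQ\pp{q,\Lambda} \longrightarrow \bKO^\Delta(\cM^\p(\Lambda,N))\otimes_\bZ\bQ,
\]
I need to check that the image respects the quantum Serre relations and the passage to divided powers. The divided-power normalization follows from the standard nilHecke decomposition $\tilde P_{\dots i^m \dots} \cong \oplus_{([m]_{q_i}!)} \tilde P_{\dots i^{(m)} \dots}$, which forces $F_i^{(m)} \mapsto [\tilde P_{\dots i^{(m)} \dots}]$, while the Serre relations are exactly the content of \cref{prop:catSerreDiv}. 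Surjectivity of $\Phi$ in both cases is then immediate: any $[\tilde P_\bi]$ is in the image by construction, and by \cref{thm:triangtopK0genbyPi} these classes generate.

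Injectivity in the universal case $\p=\bo$, $N=\emptyset$ would be established by comparing graded ranks weight-space by weight-space. On the algebraic side the $\nu$-weight space $U_q^-(\g)_\nu \otimes_{\bQ(q)} \bQ\pp{q,\Lambda}$ is free over $\bQ\pp{q,\Lambda}$ with graded rank equal to the $q$-graded dimension of $U_q^-(\g)_\nu$. On the categorical side, the basis of \cref{thm:Rbobasis} factors each $1_\bj R_\bo(\nu)1_\bi$ as a polynomial algebra in the dots (giving the classical KLR contribution by \cite{KL1}) tensored with the exterior part generated by tight floating dots, whose $\lambda_i$-degrees together with the $q$-degrees produce precisely the formal-parameter expansion corresponding to extending scalars to $\bQ\pp{q,\Lambda}$. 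Hence $\Phi$ is a surjection between free $\bQ\pp{q,\Lambda}$-modules of equal graded rank in each weight, and therefore an isomorphism.

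The surjection in the parabolic case would then be deduced immediately from the universal one: \cref{thm:RbodNformal} gives a quasi-isomorphism $(R_\bo,d_N) \xrightarrow{\simeq} R_\p^N$, which by \cref{prop:cblfbiminduceKO} induces a continuous surjection $\bKO^\Delta(\cM(\Lambda)) \twoheadrightarrow \bKO^\Delta(\cM^\p(\Lambda,N))$ sending $[\tilde P_\bi] \mapsto [(\tilde P_\bi, d_N)]$; composing with the isomorphism from the universal case yields the required map. The main obstacle I expect is the Hilbert-series bookkeeping in the injectivity step: one must track the combined $(q,\lambda_i)$-contribution of each tight floating dot through the basis of \cref{thm:Rbobasis} and sum the resulting cone-bounded Laurent series inside $\bQ\pp{q,\Lambda}$, matching the answer against the character of $U_q^-(\g)_\nu \otimes_{\bQ(q)} \bQ\pp{q,\Lambda}$ — this requires that the chosen total order $\prec$ be compatible with the cones in which the floating-dot contributions live, which is guaranteed by the choice $0 \prec q \prec \lambda_i$ made at the beginning of the section.
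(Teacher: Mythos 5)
Your overall strategy matches the paper's: the paper's proof is a two-line citation of the main results of \cite{KL1,KL2}, the categorified Serre relations of \cref{prop:catSerreDiv}, and \cref{thm:triangtopK0genbyPi}, which is exactly the skeleton you lay out (well-definedness via divided powers and Serre relations, surjectivity from the generators, freeness from the asymptotic $K_0$ theorem). The one place you diverge is the injectivity step in the universal case. The paper gets this for free from the observation, made just before the proposition, that every idempotent of $R_\bo(\nu)$ lifts from the classical KLR algebra $R_\g(\nu)$; hence the indecomposable projectives are in bijection with those of $R_\g(\nu)$, whose number equals $\dim_{\bQ(q)}U_q^-(\g)_\nu$ by Khovanov--Lauda, so $\Phi$ is a surjection between free $\bQ\pp{q,\Lambda}$-modules of the same finite rank and therefore an isomorphism. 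Your alternative — comparing graded dimensions of $1_\bj R_\bo(\nu)1_\bi$ weight by weight — can be made to work (it is the analogue of the nondegeneracy argument in \cite{KL1} and of the Shapovalov-form computation in \cite{naissevaz1,naissevaz2}), but as phrased it contains an error: the exterior factor coming from the tight floating dots does \emph{not} correspond to ``extending scalars to $\bQ\pp{q,\Lambda}$'' (scalar extension does not change a Hilbert series), and the graded dimension of the algebra is not the graded rank of the free module $U_q^-(\g)_\nu\otimes\bQ\pp{q,\Lambda}$. The correct target of such a comparison is the Shapovalov form $(F_\bj v_\Lambda, F_\bi v_\Lambda)$ on the universal Verma module, whose power-series expansion of the factors $(\lambda_iq_i^{-\ast}-\lambda_i^{-1}q_i^{\ast})/(q_i-q_i^{-1})$ is what the polynomial-times-exterior basis of \cref{thm:Rbobasis} categorifies. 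Either fix the comparison to that form, or — more economically — replace the whole step by the bijection of idempotents, as the paper does. The remaining points (discarding acyclic projectives per \cref{prop:acyclic}, and deducing the parabolic surjection from the universal isomorphism via \cref{thm:RbodNformal} and \cref{prop:cblfbiminduceKO}) are fine and consistent with the paper.
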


\begin{proof}
Since projective modules of $R_\bo(\nu)$ are in bijection with the ones of the classical KLR algebra $R_\g(\nu)$ and respect the categorified Serre relations (see \cref{prop:catSerreDiv}),
 both claims are a direct consequence of the main results in~\cite{KL1, KL2}, together with \cref{thm:triangtopK0genbyPi}.
\end{proof}

Consider the subring $P(\nu)$ of $R_{\bo}(\nu)$ consisting of dots on vertical strands (without floating dots). It admits an action of the symmetric group permuting the strands (with labels) and dots on them (not to be confused with the action of $S_m$ on $P_\nu$ from \cref{sec:bKLRbasis}).
We write $\Sym(\nu) := P(\nu)^{S_m}$ for the subring of invariants under this action.  
Clearly it lies in the center of $R_{\bo}(\nu)$ but this inclusion is strict (see~\cite{naissevaz2} or~\cite{AEHL} for a study of the center in the case of $\slt$).

\smallskip

The supercenter of $R_{\bo}(\nu)$ contains $\Sym(\nu) \otimes \bigotimes_{i \in I}  \bV^\bullet \brak{\tilde \omega_i^{0}, \dots, \tilde \omega_i^{\nu_i-1}}$ where $\tilde \omega_i^a$ is a floating dot with subscript $i$, superscript $a$ and placed in the rightmost region:
\[
\tilde \omega_i^a := \  
	\tikzdiag{
	     	 \draw  (0,-.5)   --(0,.5);
	      	\draw  (1,-.5)  -- (1,.5);
	      	\node at(2,0) {\small $\dots$};
	      	\draw  (3,-.5)  -- (3,.5);
	      	\fdot[a]{i}{3.5,0};
	 }
\]
 We conjecture that the supercenter contains no other elements. 

\begin{conj}
There is an isomorphism of rings
\[
Z(R_{\bo}(\nu)) \cong \Sym(\nu) \otimes \bigotimes_{i \in I}  \bV^\bullet \brak{\tilde \omega_i^{0}, \dots, \tilde \omega_i^{\nu_i-1}},
\]
where $Z(R_{\bo}(\nu))$ is the supercenter of $R_{\bo}(\nu)$. 
\end{conj}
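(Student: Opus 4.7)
The plan is to exploit the faithful polynomial representation on $Q_\nu$ from \cref{prop:Rboaction} to embed $Z(R_\bo(\nu))$ into the super-commutant of the image of $R_\bo(\nu)$ in $\End(Q_\nu)$. A supercentral element $z$ must super-commute in particular with every idempotent $1_\bi$, forcing $z = \sum_\bi 1_\bi z 1_\bi$. Writing the action of $1_\bi z 1_\bi$ on $Q_I 1_\bi$ as multiplication by some $f_\bi \in Q_I$, the problem reduces to determining which tuples $(f_\bi)_{\bi \in \Seq(\nu)}$ arise from supercentral elements. The inclusion $\supseteq$ is the easy direction: symmetric polynomials in the dots lie in the ordinary center by the classical KLR argument, and the rightmost floating dots $\tilde\omega_i^a$ super-commute with every crossing and every dot because they slide freely across strands via \cref{eq:fdmoves} and \cref{eq:ExtR2} (the correction terms in \cref{eq:ExtR2} vanish when the region to the right is empty).

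For the reverse inclusion $\subseteq$, I would proceed in two stages using the tightened basis of \cref{thm:Rbobasis}. First, super-commutation with each dot $x_a$ forces the polynomial component of each $f_\bi$ to lie in $\Bbbk[x_{1,i},\dots,x_{\nu_i,i}]_{i \in I}$ (the polynomial subring of $Q_I$), and super-commutation with each KLR crossing $\tau_a$ then forces this polynomial component to be permutation-equivariant across the collection $(f_\bi)_\bi$. Combined, this yields precisely the $\Sym(\nu)$ factor. Second, for the exterior part, use that a tightened floating dot $\theta_a$ acts on $Q_\nu$ as multiplication by $\omega_{p,i}^a$ plus corrections depending on its region; imposing super-commutativity of $z$ with all the $\theta_a$ along with the relations \cref{eq:fdots} constrains the $\omega$-components of $f_\bi$ to be generated by the \emph{rightmost} floating dots $\tilde\omega_i^a$, while the anti-commutativity built into $Q_I$ forces the exterior algebra structure on these generators.

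The main obstacle will be the truncation step: showing that exponents $a \geq \nu_i$ in $\tilde\omega_i^a$ produce no genuinely new supercentral elements, i.e.\ that they can be absorbed into $\Sym(\nu) \otimes \bigotimes_{i\in I} \bV^\bullet\langle\tilde\omega_i^0,\dots,\tilde\omega_i^{\nu_i-1}\rangle$. Iterated application of the sliding relation \cref{eq:fdots} rewrites each such $\tilde\omega_i^a$ in terms of lower-superscript floating dots paired with additional dots on the adjacent $i$-strand, and the expected identity is that after fully simplifying modulo the supercentrality constraints the resulting dot-factors assemble into symmetric polynomials. Verifying this cleanly appears to require a careful induction on $(a,|\nu|)$ and a good normal form for central elements, parallel to the analyses of the super-center of $A_k$ in the references \cite{AEHL} and \cite{reeks} but complicated by the presence of multiple labels and the non-trivial scalars $s_{ij}^{tv}$. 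Once this reduction is in hand, a bigraded dimension count against the proposed right-hand side would complete the proof.
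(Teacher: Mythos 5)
The statement you are proving is stated in the paper as a \emph{conjecture}: the authors only establish the containment $\Sym(\nu) \otimes \bigotimes_{i \in I} \bV^\bullet \brak{\tilde \omega_i^{0}, \dots, \tilde \omega_i^{\nu_i-1}} \subseteq Z(R_\bo(\nu))$ and explicitly leave the reverse inclusion open, so there is no proof in the paper to compare against. Your write-up is a strategy outline rather than a proof, and the steps you defer are precisely the open content of the conjecture. Concretely: (i) the reduction of a supercentral element to a tuple $(f_\bi)_\bi$ acting by multiplication on $Q_I 1_\bi$ is fine, and the $\Sym(\nu)$ analysis of the polynomial part is the standard KLR argument; but (ii) the claim that supercommutation with the tightened floating dots $\theta_a$ forces the $\omega$-components to be generated by the rightmost floating dots is asserted, not argued --- the action of $\theta_a$ on $Q_\nu$ is by the rather involved elements $\omega_j^a(K)$ of \cref{eq:fdaction}, and you give no computation showing that the commutant of all of these inside $Q_I$ is what you claim; and (iii) the ``truncation step'' (that $\tilde\omega_i^a$ for $a \ge \nu_i$ lies in the proposed subring) and the final ``bigraded dimension count'' both presuppose knowing the graded dimension of the supercenter, which is exactly what is unknown. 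Even in the rank-one case this identification required the full analysis of \cite{AEHL}, \cite{naissevaz2}; here the presence of several labels and the scalars $s_{ij}^{tv}$ is a genuine additional difficulty that your sketch does not engage with.

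One smaller point on the easy direction: the fact that $\tilde\omega_i^a$ is supercentral does not come from \cref{eq:ExtR2} or \cref{eq:fdots} (those govern floating dots in interior regions); it follows simply because the rightmost region is preserved by every diagram, so $\tilde\omega_i^a$ slides from top to bottom by an isotopy, picking up only the signs of \cref{eq:fdmoves} when passing other floating dots. That part of your argument should be restated accordingly. As it stands, however, the proposal does not constitute a proof of the conjecture, and no argument in the paper can be used to close the gaps.
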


In general $R_{\p,\mu}(\nu)$ is not a free module over $\Sym(\nu) \otimes \bigotimes_{i \in I}  \bV^\bullet \brak{\tilde \omega_i^{0}, \dots, \tilde \omega_i^{\nu_i-1}}$, but we have the following.

\begin{prop}
$R_{\bo}(\nu)$ is a free module over $\Sym(\nu)$ of rank $2^m (m!)^2$.
\end{prop}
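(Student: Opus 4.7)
The plan is to combine the tightened basis of $R_\bo(\nu)$ from \cref{thm:Rbobasis} with the classical fact that $\Bbbk[x_1, \dots, x_m]$ is free of rank $|H|$ over its invariant subring $\Bbbk[x_1, \dots, x_m]^H$ for any subgroup $H \leq S_m$.

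First, I fix $\bi, \bj \in \Seq(\nu)$ and rearrange the basis ${_\bj}B{_\bi}$ from~\eqref{eq:Antightbasis}: each basis element decomposes as a monomial $x_m^{a_m} \cdots x_1^{a_1}$ of top dots times a ``combinatorial'' factor $D_{w,\ell}$ made only of crossings and tightened floating dots, indexed by $w \in {_\bj}S{_\bi}$ and $\ell \in \{0,-1\}^m$. Writing $P_\bj := \Bbbk[x_1, \dots, x_m]\,1_\bj$ for the polynomial subring generated by dots on the top strands, this basis identifies $1_\bj R_\bo(\nu) 1_\bi$ as a free left $P_\bj$-module of rank $2^m \cdot |{_\bj}S{_\bi}| = 2^m \prod_{i \in I} \nu_i!$, generated by the elements $D_{w,\ell}$.

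Next I identify $1_\bj \Sym(\nu)$ with the $S_\bj$-invariant subring $P_\bj^{S_\bj}$, where $S_\bj := \prod_{i \in I} S_{\nu_i}$ is the stabiliser of $\bj$ in $S_m$. Indeed, an element $f \in \Sym(\nu)$ is determined by its $\bj$-component $f_\bj := 1_\bj f$ (the remaining components being $S_m$-translates of $f_\bj$), and $S_m$-invariance of $f$ forces $f_\bj$ to be $S_\bj$-symmetric; conversely, any $S_\bj$-symmetric polynomial lifts uniquely. Since $\Sym(\nu)$ is central in $R_\bo(\nu)$, its left action on $1_\bj R_\bo(\nu) 1_\bi$ agrees with the multiplication action of $P_\bj^{S_\bj} \hookrightarrow P_\bj$ on the top polynomial ring. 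The classical theorem then yields that $P_\bj$ is free over $P_\bj^{S_\bj}$ of rank $|S_\bj| = \prod_i \nu_i!$, so that $1_\bj R_\bo(\nu) 1_\bi$ is free over $\Sym(\nu)$ of rank $2^m \bigl( \prod_i \nu_i! \bigr)^2$.

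Finally, summing over the $|\Seq(\nu)|^2 = \bigl( m! / \prod_i \nu_i! \bigr)^2$ pairs $(\bi, \bj)$ yields the total rank
\[
\left( \frac{m!}{\prod_{i \in I} \nu_i!} \right)^2 \cdot 2^m \left( \prod_{i \in I} \nu_i! \right)^2 \;=\; 2^m (m!)^2,
\]
as required. Once \cref{thm:Rbobasis} is available the argument is essentially bookkeeping; the only subtlety worth flagging is the identification of the $\Sym(\nu)$-action on each block $1_\bj R_\bo(\nu) 1_\bi$ with the multiplication action of $P_\bj^{S_\bj}$ on the top dots, which follows from centrality together with the (easily verified) fact that symmetric polynomials in the dots slide past all crossings and floating dots in $R_\bo$.
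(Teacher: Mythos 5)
Your proof is correct and takes essentially the same route as the paper: the paper's (very terse) proof deduces the result from \cref{thm:Rbobasis} together with the freeness of $P(\nu)$ over $\Sym(\nu)$ of rank $m!$, which when spelled out is exactly the two-step tower $\Sym(\nu) \subset P(\nu) \subset R_\bo(\nu)$ of ranks $m!$ and $2^m m!$. You unwind the same bookkeeping block-by-block over pairs $(\bi,\bj)$, invoking the Chevalley-type statement for the parabolic $\prod_i S_{\nu_i}$ in each block rather than aggregating first; both are fine, and the point you flag at the end (that symmetric polynomials in the dots slide past crossings and floating dots, so the $\Sym(\nu)$-action is indeed multiplication on the top dots) is exactly the content one needs to make the rank count legitimate.
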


\begin{proof}
It follows from \cref{thm:Rbobasis} and the fact $P(\nu)$ is a free module of rank $m!$ on $\Sym(\nu)$.
\end{proof}

Since $\Sym(\nu)$ lies in the center of $R_{\bo}(\nu)$, any simple $R_{\bo}(\nu)$-module is annihilated by $\Sym^+(\nu)$, where $\Sym^+(\nu)$ consists of the elements in $\Sym(\nu)$ with non-zero degree. In particular, a simple $R_{\bo}(\nu)$-module must be a finite dimensional $R_{\bo}(\nu)/\Sym^+(\nu)R_{\bo}(\nu)$-module. Since $R_{\bo}(\nu)/\Sym^+(\nu)R_{\bo}(\nu)$ has finite dimension over $\Bbbk$, we only have finitely many simple modules, up to shift and isomorphism. 
For each  $\bi \in \Seqd(\nu)$ such that $P_\bi$ is indecomposable, we let $S_\bi$ be the unique simple quotient of  $P_\bi$. We put $\tilde S_\bi := q^{-\brak{\bi}} S_\bi$. 
If $(P_\bi,d_N)$ is not acyclic, then it lifts automatically to a dg-version $(\tilde S_\bi, 0)$ because of \cref{prop:dgKLRispositive}. 

\smallskip

By \cref{lem:Rboleftdecomp} and \cref{prop:Rbosproj} we know that $\E_i \id_\nu$ and $\F_i\id_\nu$ are exact. Moreover, they respect the conditions of \cref{prop:cblfbiminduceKO}. Therefore, they induce maps
\begin{align*}
{[\E_i \id_\nu]} &: \bKO\bigl(\cD^{lf}(R_\bo(\nu), d_N)\bigr) \rightarrow \bKO^\Delta\bigl(\cD^{lf}(R_\bo(\nu - i), d_N)\bigr) , \\
[\F_i \id_\nu] &: \bKO\bigl(\cD^{lf}(R_\bo(\nu), d_N)\bigr) \rightarrow \bKO^\Delta\bigl(\cD^{lf}(R_\bo(\nu + i), d_N)\bigr).
\end{align*}
Then, \cref{thm:RbodNformal}, \cref{thm:sl2commutRpN} and \cref{prop:serreinRpN} tell us that ${}_\bQ\bKO^\Delta(\cM^\p(\Lambda,N))$ is an $U_q(\g)$-weight module. 
By \cref{prop:K0RbohalfUqg} we know that ${}_\bQ\bKO^\Delta(\cM^\p(\Lambda,N))$ is cyclic as $U_q(\g)$-module, with highest weight generator given by the class of $(R_\bo(0), d_N) \cong (\Bbbk,0)$. Thus ${}_\bQ\bKO^\Delta(\cM^\p(\Lambda,N))$ is a highest weight module.

\smallskip

As in \cite{KL1}, let $\psi : R_\bo(\nu) \rightarrow \opalg{R_\bo(\nu)}$ be the map that takes the mirror image of diagrams along the horizontal axis.
Given a left $(R_\bo(\nu),d_N)$-module $M$, we obtain a right $(R_\bo(\nu),d_N)$-module $M^\psi$ with action given by $m^\psi \cdot r := (-1)^{\deg_h(r) \deg_h(m)} \psi(r) \cdot m$ for $m \in M$ and $r \in R_\bo(\nu)$. 
Then, we define the bifunctor
\begin{align*}
(-,-) &: \cM^\p(\Lambda,N) \times \cM^\p(\Lambda,N)  \rightarrow \cD^{lf}(\Bbbk, 0), 
&
(M,M') := M^\psi \Lotimes_{(R_\bo,d_N)} M',
\end{align*}
where $\Lotimes$ is the derived tensor product. 

\begin{prop}\label{prop:catshap}
The bifunctor defined above respects:
\begin{itemize}
\item $((R_\bo(0),d_N),(R_\bo(0),d_N)) \cong (\Bbbk,0)$;
\item $(\Ind_m^{m+i} M,M') \cong (M, \Res_m^{m+i} M')$ for all $M,M' \in \cM^\p(\Lambda,N)$; 
\item $(\oplus_f M,M') \cong (M, \oplus_f M') \cong \oplus_f (M,M')$ for all $f \in \bZ\pp{q,\Lambda}$.
\end{itemize}
\end{prop}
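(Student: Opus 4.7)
The plan is to verify the three bullet points in order; each reduces to a formal manipulation with tensor products, and the main technical point is making sure the derived tensor products behave as expected, which will use the strong projectivity results already established.

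For the first bullet, $R_\bo(0)$ is by definition the algebra on zero strands, which is just the ground ring $\Bbbk$ concentrated in homological degree zero, and the anti-involution $\psi$ acts trivially. Thus $(R_\bo(0),d_N)^\psi \Lotimes_{(\Bbbk,0)} (R_\bo(0),d_N) \cong (\Bbbk,0)$ by a one-line computation. For the second bullet, the key observation is that applying $\psi$ to the induction bimodule $R_\bo(m+1) 1_{(m,i)}$ (viewed as an $(R_\bo(m+1),R_\bo(m))$-bimodule) yields $1_{(m,i)} R_\bo(m+1)$ (viewed as an $(R_\bo(m),R_\bo(m+1))$-bimodule); this is because $\psi$ takes the mirror image of a diagram and swaps left/right actions. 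More generally, since $\psi$ is an anti-involution, for a left $(R_\bo(m+1),d_N)$-module of the form $N \otimes_m M$ with $N$ a bimodule, one has $(N \otimes_m M)^\psi \cong M^\psi \otimes_m N^\psi$. Applying this to $N = R_\bo(m+1)1_{(m,i)}$ and using associativity of the derived tensor product gives
\[
(\Ind_m^{m+i} M)^\psi \Lotimes_{m+1} M' \cong M^\psi \Lotimes_m \bigl(1_{(m,i)} R_\bo(m+1) \Lotimes_{m+1} M'\bigr) \cong (M, \Res_m^{m+i} M').
\]
The only nontrivial ingredient here is associativity of $\Lotimes$, which is legitimate because $1_{(m,i)}R_\bo(m+1)$ is strongly projective as an $(R_\bo(m),d_N)$-module by \cref{prop:Rbosproj} (and $R_\bo(m+1)1_{(m,i)}$ is strongly projective as a right $(R_\bo(m),d_N)$-module by the symmetric argument, using $\psi$), so the derived tensor product can be computed by any cofibrant replacement and the two iterated tensor products agree.

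For the third bullet, $\oplus_f$ with $f \in \bZ\pp{q,\Lambda}$ is by definition a c.b.l.f. direct sum of grading shifts of the identity functor, so it suffices to observe that both the anti-involution $(-)^\psi$ and the derived tensor product $- \Lotimes_{(R_\bo,d_N)} -$ commute with c.b.l.f. direct sums in each variable. The former is immediate since $\psi$ only changes the module structure, and the latter holds because cofibrant replacements can be chosen termwise and $\otimes$ commutes with arbitrary coproducts. The only step that requires care is that the resulting c.b.l.f. direct sum still lies in $\cD^{lf}(\Bbbk,0)$, which follows from the c.b.l.f. hypothesis on $M$ and $M'$ together with the compatibility of $\prec$ with the cone in which the shifts of $f$ live. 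The main (mild) obstacle in the whole proof is the second bullet, where one must carefully track how $\psi$ interacts with the bimodule structure on $R_\bo(m+1)1_{(m,i)}$ and justify the use of associativity of $\Lotimes$ via the strong projectivity of \cref{prop:Rbosproj}; the other two bullets are essentially tautological.
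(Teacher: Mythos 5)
Your argument is correct and amounts to spelling out what the paper deems ``straightforward''; the key step in the second bullet is that $\psi$ interchanges the induction bimodule $R_\bo(m{+}1)1_{(m,i)}$ with the restriction bimodule $1_{(m,i)}R_\bo(m{+}1)$, which you handle correctly. One minor misattribution: associativity of $\Lotimes$ holds unconditionally in the derived category, so \cref{prop:Rbosproj} is not needed for that step; what the freeness of \cref{lem:Rboleftdecomp} (and strong projectivity, \cref{prop:Rbosproj}) actually buy you is that derived and underived tensor products with the induction/restriction bimodules agree, i.e.\ that $\Ind_m^{m+i}$ and $\Res_m^{m+i}$ are exact, which is what lets you identify the final display with $(M,\Res_m^{m+i}M')$.
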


\begin{proof}
Straightforward. 
\end{proof}

Comparing \cref{prop:catshap} with \cref{def:shap}, we deduce 
that $(-,-)$ is a categorification of the Shapovalov form on $\bKO^\Delta(\cM^\p(\Lambda,N))$. Moreover, it turns $\tilde S_\bi$ into the dual of $\tilde P_\bi$ for each $\bi \in \Seqd(\nu)$ such that $\tilde P_\bi$ is indecomposable. Recall $ M^\p(\Lambda,N)$ is the parabolic Verma module, and we assume $\Lambda = \{q^{\beta_i} | i \in I_r\}$ contains only formal weights.

\begin{thm}\label{thm:catallverma}
The asymptotic Grothendieck group ${}_\bQ\bKO^\Delta(\cM^\p(\Lambda,N))$ is a $U_q(\g)$-weight module, with action of $E_i, F_i$ given by $[\E_i], [\F_i]$. Moreover, there is an isomorphism of $U_q(\g)$-modules
\[
{}_\bQ\bKO^\Delta(\cM^\p(\Lambda,N)) \cong M^\p(\Lambda,N).
\]
\end{thm}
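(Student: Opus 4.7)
The proof proceeds by combining the structural results established earlier in the excerpt with a universality argument, using the irreducibility of $M^\p(\Lambda,N)$ in the formal setting to upgrade a surjection to an isomorphism. The first step is to identify the generator. The module $(R_\bo(0),d_N) \cong (\Bbbk,0)$ is annihilated by every $\E_i$ (restriction along a non-existent strand), so its class $v := [(\Bbbk,0)]$ is a highest weight vector in $\bKO^\Delta(\cM^\p(\Lambda,N))$. Inspecting the $(q,\lambda_j)$-gradings on $R_\bo(0)$, the weight of $v$ is precisely $\lambda$ with $\lambda(\alpha_i^\vee) = q^{n_i}$ for $i \in I_f$ and $\lambda(\alpha_j^\vee) = \lambda_j = q^{\beta_j}$ for $j \in I_r$, matching the highest weight of $M^\p(\Lambda,N)$.

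Next, I would assemble the $U_q(\g)$-action and the surjection from $U_q^-(\g)\otimes\bQ\pp{q,\Lambda}$. By \cref{thm:RbodNformal} together with \cref{thm:sl2commutRpN} and \cref{prop:serreinRpN}, the functors $\E_i,\F_i$ descend to operators on $\bKO^\Delta(\cM^\p(\Lambda,N))$ satisfying the defining relations of $U_q(\g)$ (the $\slt$-commutator as a short exact sequence, and the quantum Serre relations), so it is indeed a $U_q(\g)$-weight module. \cref{prop:K0RbohalfUqg} then produces a $\bQ\pp{q,\Lambda}$-linear surjection $\Phi : U_q^-(\g)\otimes_{\bQ(q)}\bQ\pp{q,\Lambda} \twoheadrightarrow \bKO^\Delta(\cM^\p(\Lambda,N))\otimes_{\bZ}\bQ$ sending $F_\bi \mapsto [(\tilde P_\bi,d_N)]$. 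Combined with the identification of the highest weight, $\Phi$ factors through a surjective map of $U_q(\g)$-modules from the universal Verma module $M^\bo(\lambda)$ onto $\bKO^\Delta(\cM^\p(\Lambda,N))\otimes_{\bZ}\bQ$.

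The key step is to show this surjection descends to $M^\p(\Lambda,N)$. By the partial order discussion in \cref{sec:qgweightmodules}, $M^\p(\Lambda,N)$ is the quotient of $M^\bo(\lambda)$ by the submodule generated by the highest weight vectors $F_i^{n_i+1} v_\lambda$ for $i \in I_f$. In the categorification, this vector corresponds to $[(R_\bo((n_i+1)\alpha_i),d_N)]$. Applying \cref{prop:acyclic} with $\nu = (n_i+1)\alpha_i$ gives $n_i - \nu_i - \alpha_i^\vee(\nu^{\setminus i}) = -1 < 0$, so this dg-algebra is acyclic and its class in $\bKO^\Delta$ vanishes. By $U_q(\g)$-equivariance, the entire submodule generated by these vectors lies in the kernel of $\Phi$, and we obtain a surjection $\bar\Phi : M^\p(\Lambda,N) \twoheadrightarrow \bKO^\Delta(\cM^\p(\Lambda,N))\otimes_{\bZ}\bQ$.

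Finally, since $\Lambda = \{q^{\beta_j}\}_{j \in I_r}$ consists of formal parameters, we have $\lambda_j \notin \{q^n \mid n \in \bN\}$ for all $j \in I_r$, so by Jantzen's criterion (recalled after \cref{def:shap}) $M^\p(\Lambda,N)$ is irreducible over the ground ring $\bQ\pp{q,\Lambda}$. The map $\bar\Phi$ sends $v_{\Lambda,N}$ to the non-zero class $v$, hence is non-zero, and its kernel is a proper submodule of the irreducible $M^\p(\Lambda,N)$, forcing $\bar\Phi$ to be an isomorphism. The main technical obstacle is verifying that the asymptotic Grothendieck group is large enough not to collapse --- this is precisely what \cref{prop:K0RbohalfUqg} and the comparison with the Shapovalov form via the bifunctor $(-,-)$ guarantee, as the dual basis $\{[\tilde S_\bi]\}$ pairs non-degenerately with $\{[\tilde P_\bi]\}$, so the surjection $\bar\Phi$ cannot kill any weight space and the isomorphism is witnessed concretely on canonical and dual canonical bases.
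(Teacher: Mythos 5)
Your proof is correct, but it takes a genuinely different route from the paper's at the step that produces the surjection out of $M^\p(\Lambda,N)$. You build the surjection $\Phi \colon M^\bo(\lambda) \twoheadrightarrow \bKO^\Delta(\cM^\p(\Lambda,N))\otimes\bQ$ from \cref{prop:K0RbohalfUqg}, kill the singular vectors $F_i^{n_i+1}v_\lambda$ ($i\in I_f$) using the acyclicity of $(R_\bo((n_i+1)\alpha_i),d_N)$ from \cref{prop:acyclic}, and then invoke the description of $M^\p(\Lambda,N)$ as the quotient of $M^\bo(\lambda)$ by the submodule those singular vectors generate. The paper instead shows that the $U_q(\lv)$-submodule $U_q(\lv)\cdot[(\Bbbk,0)]$ is an integrable cyclic highest weight module, hence $\cong V(\Lambda,N)$, and then uses the universal property of the induced module $M^\p(\Lambda,N)=U_q(\g)\otimes_{U_q(\p)}V(\Lambda,N)$ to produce the surjection $\gamma$ directly; here \cref{prop:acyclic} enters only to guarantee local nilpotence of $[\E_i],[\F_i]$ for $i\in I_f$. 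Both endgames are the same (irreducibility of $M^\p(\Lambda,N)$ over $\bQ\pp{q,\Lambda}$ forces the surjection to be an isomorphism). Your route imports the standard but not-quite-explicitly-stated fact that, for $\lambda$ dominant integral on $I_f$, the kernel of $M^\bo(\lambda)\twoheadrightarrow M^\p(\Lambda,N)$ is exactly $\sum_{i\in I_f}U_q(\g)F_i^{n_i+1}v_\lambda$; \cref{sec:qgweightmodules} gives the iterated short exact sequences but leaves the identification of the singular vectors implicit, so a one-line citation to the BGG/Lusztig literature (or a short inductive argument along the chain of quotients) would tighten that step. The paper's adjunction argument sidesteps this entirely. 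Your closing remark about the Shapovalov pairing and dual bases is fine as a sanity check but is redundant once irreducibility is in hand.
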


\begin{proof}
We already proved the first claim above.
Because of \cref{prop:acyclic}, for $i \in I_f$, both $[\F_i]$ and $[\E_i]$ act as locally nilpotent operators. In particular, the $U_q(\mathfrak l)$-submodule of ${}_\bQ\bKO^\Delta(\cM^\p(\Lambda,N))$ given by 
\[
U_q(\mathfrak l) \otimes_{U_q(\g)} [(R_\bo(0), d_N)],
\]
is an integrable module for the Levi factor $U_q(\mathfrak l)$. 
Since it is an integrable cyclic weight module, it must be isomorphic to $V(\Lambda,N)$ (see~\cite{lusztig}). 
Therefore, there is a surjective $U_q(\g)$-module morphism
\[
\gamma : M^\p(\Lambda, N) \twoheadrightarrow {}_\bQ\bKO^\Delta(\cM^\p(\Lambda,N)).
\]
Since $M^\p(\Lambda, N)$ is irreducible and $\gamma$ is non-zero, it must be an isomorphism.
\end{proof}

Let $m_r = F_\bi v_{\Lambda,N}$ be an induced basis element of $M^{\p}(\Lambda,N)$ with $\bi \in \Seq(\nu)$. Then, the isomorphism of Theorem~\ref{thm:catallverma} identifies $m_r$ with the class $[(R_\bo(\nu),d_N)1_\bi]$. 
Similarly, let $m_s' = F_\bj v_{\Lambda,N}$ for $j \in \Seqd(\nu)$ be a canonical basis element, and let $m^s$ be its dual in the dual canonical basis. 
Then, the isomorphism identifies $m_s'$ with $[(\tilde P_\bj,d_N)]$ and $m^s$ with $[(\tilde S_\bj,d_N)]$. 
Moreover, computing the c.b.l.f. composition series of $\tilde P_\bi$ (see~\cite[\S7]{asympK0}) or taking a certain cofibrant replacement of $\tilde S_\bi$ (see~\cite[\S9]{asympK0}) gives a categorical version of the change of basis between canonical and dual canonical basis elements.



\section{2-Verma modules}\label{sec:2Verma}

Let $\Bbbk$ be a field of characteristic $0$. 
Let $\cV \in \dgcat_\Bbbk$ be a $\bZ$-graded  pretriangulated dg-category (see \cref{def:pretrdg}). Let $\cEnd_{\Hqe}(\cV) := \cRHom_{\Hqe}(\cV,\cV)$ be the dg-category of quasi-endofunctors on $\cV$  (see \cref{app:closedmondg}).

\begin{rem}
For example, $\cV$ could be the dg-category $\cD_{dg}(R,d)$ of cofibrant dg-modules over a dg-algebra $(R,d)$ (see \cref{def:dgenhderived}). Then, the subcategory of $\cEnd_{\Hqe}(\cV)$ consisting of coproduct preserving quasi-functors would be given by the dg-enhanced derived category of dg-bimodules $\cD_{dg}(\opalg{(R,d)} \otimes (R,d))$ (see \cref{thm:repdg}).
\end{rem}


Let $\Q_i := \bigoplus_{\ell \geq 0} q_i^{1+2\ell}\id$. It is a categorification of $\frac{q_i}{1-q_i^{2}} = \frac{1}{q_i^{-1}-q_i}$. We start by introducing a notion of dg-categorical action and dg-2-representation. 

\begin{defn}\label{def:dgcat}
A \emph{weak dg-categorical $U_q(\g)$-action} on $\cV$ is a collection of quasi-endofunctors $\F_i,\E_i,\K_\gamma \in Z^0(\cEnd_{\Hqe}(\cV))$ for all $i \in I$ and $\gamma \in Y^{\vee}$ such that
\begin{itemize}
\item there are isomorphisms
\begin{align*} 
\K_0 &\cong \id, & \K_\gamma \K_{\gamma'} &\cong \K_{\gamma+\gamma'}, \\
\K_\gamma \E_i &\cong q^{\gamma(\alpha_i)} \E_i\K_\gamma,  & \K_\gamma\F_i &\cong q^{-\gamma(\alpha_i)} \F_i \K_\gamma,
\end{align*}
where $q$ denotes the shift in the $q$-grading;
\item there is a quasi-isomorphism
\begin{equation}\label{eq:isoconedgcat}
\cone\bigl(\F_i\E_j \xrightarrow{u_{ij}} \E_j\F_i \bigr) \xrightarrow{\simeq} \delta_{ij} \cone\bigl(\Q_i\K_i \xrightarrow{h_i} \Q_i\K_i^{-1} \bigr),
\end{equation}
where $\K_i := \K_{\alpha_i^\vee}$;
\item there are isomorphisms
\begin{align*}
\bigoplus^{\lfloor (d_{ij}+1)/2 \rfloor}_{a=0} \begin{bmatrix} d_{ij}+1 \\ 2a \end{bmatrix}_{q_i}  \F_i^{2a}\F_j \F_i^{d_{ij}+1-2a} &\cong \bigoplus^{\lfloor d_{ij}/2 \rfloor}_{a=0}  \begin{bmatrix} d_{ij}+1 \\ 2a +1 \end{bmatrix}_{q_i} \F_i^{2a+1}\F_j \F_i^{d_{ij}-2a}, \\
\bigoplus^{\lfloor (d_{ij}+1)/2 \rfloor}_{a=0} \begin{bmatrix} d_{ij}+1 \\ 2a \end{bmatrix}_{q_i}  \E_i^{2a}\E_j \E_i^{d_{ij}+1-2a} &\cong \bigoplus^{\lfloor d_{ij}/2 \rfloor}_{a=0}  \begin{bmatrix} d_{ij}+1 \\ 2a +1 \end{bmatrix}_{q_i} \E_i^{2a+1}\E_j \E_i^{d_{ij}-2a},
\end{align*}
for all $i \neq j \in I$.
\end{itemize}
We say a weak dg-categorical $U_q(\g)$-action on $\cV$ is a \emph{dg-categorical action}
if in addition
\begin{itemize}
\item $\F_i$ is left adjoint to $q_i^{-1}\K_i \E_i$ in $Z^0(\cEnd_{\Hqe}(\cV))$;
\item there is a 
 map of algebras
\[
R_\g(\bi) \rightarrow Z^0(\END(\F_\bi)) := \bigoplus_{z \in \bZ} Z^0(\Hom(\F_\bi, q^z \F_\bi)),
\]
 with $\F_\bi := \F_{i_1} \cdots \F_{i_m}$, for all $\bi \in \Seq(m)$, inducing a surjection 
 \[
 R_\g(\bi) \otimes_\Bbbk Z^0(\END_\cV(M)) \twoheadrightarrow  Z^0(\END_\cV(\F_\bi M)),
 \]
 for all $M \in \cV$;
 \item $\cV$ is dg-triangulated (i.e. $H^0(\cV)$ is idempotent complete).
\end{itemize}
Such a $\cV$ carrying a dg-categorical action is called a \emph{dg-2-representation} of $U_q(\g)$. 
\end{defn}

The following notions are dg-2-categorical lifts of the notions of weight module and integrable module.
\begin{defn}
We say that a dg-2-representation $\cV$ is a \emph{weight dg-2-representation} if there is a map
\[
\lambda : Y^\vee \rightarrow \cEnd_{\Hqe}(\cV),
\]
where $\lambda(\gamma)$ commutes with the grading shift $q$ for all $\gamma \in Y^\vee$ and $\lambda(\gamma)\circ\lambda(\gamma') \cong \lambda(\gamma+\gamma')$, such that
\begin{align*}
\cV &\cong \bigoplus_{y \in Y} \cV_{\lambda,y}, & \K_\gamma|_{\cV_{\lambda,y}} (-) = \lambda(\gamma) q^{\gamma(y)} (-).
\end{align*}
\end{defn}

\begin{defn}
We say that a weight dg-2-representation  $\cV$ is \emph{$i$-integrable} if 
\begin{itemize}
\item $\lambda(\alpha_i^\vee) = q^{n_i}$ for some $n_i \in \bN$;
\item there is a quasi-isomorphism 
\begin{equation}\label{eq:intcone}
\cone\bigl(\Q_i\K_i \cV_{\lambda,y} \xrightarrow{h_i} \Q_i\K_i^{-1} \cV_{\lambda,y} \bigr) \xrightarrow{\simeq} \oplus_{[n_i -  \alpha_i^\vee(y)]_{q_i}} \id,
\end{equation}
where $\oplus_{[m]_{q_i}} \id := \oplus_{[-m]_{q_i}} \id [1]$ whenever $m < 0$;
\item  $\F_i$ and $\E_i$ are locally nilpotent.
\end{itemize}
\end{defn}

Under some mild hypothesis, this definition recovers the notion of integrable 2-re\-pre\-sen\-ta\-tion from~\cite{rouquier} and~\cite{laudaimplicit}. 

\begin{prop}
Suppose $\cV$ is $i$-integrable for all $i \in I$. 
Also suppose that there is some $M \in \cV_{\lambda,0}$ such that $\E_iM = 0$ for all $i \in I$ and $\End_\cV(M) \cong (\Bbbk, 0)$, and $H^0(\cV)$ is c.b.l.f. generated by $\{\F_\bi M\}_{\bi \in \Seq(I)}$. 
Then, $H^0(\cV)$ carries an integrable categorical $U_q(\g)$-action in the sense of~\cite{rouquier}.
\end{prop}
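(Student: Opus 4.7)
The plan is to extract from the given dg-categorical data the four ingredients characterizing an integrable categorical action in Rouquier's sense: (a) a weight decomposition of $H^0(\cV)$ indexed by the integral weight lattice $X$, (b) a strict action of the KLR algebra on $\bigoplus_{\bi} \F_{\bi}$, (c) biadjointness (up to shift) of $\F_i$ and $\E_i$, and (d) the $\mathfrak{sl}_2$-commutator relation as a split short exact sequence in $H^0(\cV)$ on each weight space.

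First I would settle the weight decomposition. Applying the weight dg-2-representation structure, $\cV \cong \bigoplus_{(\lambda, y)} \cV_{\lambda, y}$, and the $i$-integrability hypothesis forces $\lambda(\alpha_i^\vee) = q^{n_i}$ with $n_i \in \bN$ for every $i$. The highest weight object $M \in \cV_{\lambda, 0}$ satisfies $\E_i M = 0$, and $\F_i$ carries $\cV_{\lambda, y}$ into $\cV_{\lambda, y - \alpha_i}$ by the $\K_\gamma \F_i$ commutation relation. Hence every generator $\F_{\bi} M$ lies in an integer weight space, and by c.b.l.f.\ generation one obtains $H^0(\cV) \cong \bigoplus_{\mu \in X} H^0(\cV)_{\mu}$ with purely integral weights.

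Next I would upgrade the quasi-isomorphism of cones \eqref{eq:isoconedgcat} to a genuine direct sum decomposition in $H^0(\cV)$. Restricting to the weight component $\cV_\mu$, $i$-integrability gives a quasi-isomorphism
\[
\cone\bigl(\Q_i \K_i|_{\cV_\mu} \xrightarrow{h_i} \Q_i \K_i^{-1}|_{\cV_\mu}\bigr) \xrightarrow{\simeq} \oplus_{[n_i - \alpha_i^\vee(\mu)]_{q_i}} \id,
\]
and composing with \eqref{eq:isoconedgcat} yields a distinguished triangle
\[
\F_i \E_i|_{\cV_\mu} \xrightarrow{u_{ii}} \E_i \F_i|_{\cV_\mu} \longrightarrow \oplus_{[n_i - \alpha_i^\vee(\mu)]_{q_i}} \id \longrightarrow
\]
in $H^0(\cEnd_{\Hqe}(\cV_\mu))$ (with the convention $\oplus_{[m]} := \oplus_{[-m]}[1]$ for $m < 0$). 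To split it I would first bootstrap biadjointness from the one-sided adjunction $\F_i \dashv q_i^{-1} \K_i \E_i$: restricted to weight $\mu$ this reads $\F_i \dashv q_i^{n_i - \alpha_i^\vee(\mu) - 1} \E_i$, and combined with local nilpotency the standard argument of Chuang--Rouquier (see also~\cite{brundan}) produces the opposite adjunction $\E_i \dashv q_i^{-n_i + \alpha_i^\vee(\mu) + 1} \F_i$. Idempotent completeness of $H^0(\cV)$ (guaranteed by $\cV$ being dg-triangulated) then allows the unit of this second adjunction to split the projection onto $\oplus_{[n_i - \alpha_i^\vee(\mu)]_{q_i}} \id$, yielding the required decomposition in both the positive and negative regimes.

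Finally, the KLR/Rouquier action on $\bigoplus_\bi \F_\bi$ and the quantum Serre isomorphisms are built into the definition of dg-categorical action, hence descend to $H^0(\cV)$ directly; bundling these with the weight decomposition, biadjointness, and the split $\mathfrak{sl}_2$-relation gives precisely Rouquier's definition of an integrable 2-representation of $U_q(\g)$. The main obstacle will be the splitting step above: one has to bootstrap biadjointness from a single one-sided adjunction plus nilpotency and then exhibit explicit splitting morphisms in both sign regimes, which is the dg-enhanced analogue of the derivation of biadjointness from the $\mathfrak{sl}_2$-relation and local nilpotency in~\cite{brundan}. Extra care is needed because the relevant triangles live up to quasi-isomorphism in $\cEnd_{\Hqe}(\cV)$ rather than in a strict abelian 2-category, so the splittings must be constructed as morphisms in $H^0$ and then checked to be compatible with the unit/counit data transported through the quasi-isomorphism of cones.
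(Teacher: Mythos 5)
There is a genuine gap, and it is a circularity in the order of operations. You propose to first bootstrap biadjointness from the one-sided adjunction $\F_i \dashv q_i^{-1}\K_i\E_i$ together with local nilpotency, and then use biadjointness to split the distinguished triangle coming from \cref{eq:isoconedgcat} and \cref{eq:intcone}. But the Chuang--Rouquier/Brundan argument you invoke runs in the opposite direction: the second adjunction is constructed \emph{from} a splitting of the categorical $\mathfrak{sl}_2$-relation, not before it. Indeed, the paper's own corollary immediately after \cref{thm:sl2commutRpN} produces biadjointness for $i\in I_f$ precisely by feeding the splitting map $\E_i^N\F_i^N\rightarrow\F_i^N\E_i^N$ of \cref{thm:sl2commutRpN} into Brundan's machinery. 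There is no known way to extract the opposite adjunction from just one adjunction plus local nilpotency without first having the direct-sum decomposition on weight spaces, so your ``bootstrap $\Rightarrow$ split'' chain is not available as stated, and the key step of the proposal does not go through.

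The paper's actual proof is quite different and avoids this issue entirely. Using the one-sided adjunction together with \cref{eq:isoconedgcat} and \cref{eq:intcone}, it first computes $\gdim_q H^0(\END_\cV(\F_\bi M))$ and finds that it agrees with $\gdim_q R_\g^N(\bi)$. The surjection $R_\g(\bi)\otimes_\Bbbk Z^0(\END_\cV(M))\twoheadrightarrow Z^0(\END_\cV(\F_\bi M))$ required in \cref{def:dgcat}, combined with the degree computation showing $x_1^{n_i}1_i$ acts by zero on $H^0(\End_\cV(\F_i M))$, shows this map factors through the cyclotomic quotient $R_\g^N(\bi)$. Equal graded dimensions then force $R_\g^N(\bi)\cong H^0(\END_\cV(\F_\bi M))$, and the split $\mathfrak{sl}_2$-relations (and hence Rouquier-integrability) are imported wholesale from \cref{thm:sl2commutRpN}. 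If you want to salvage your route, you would need to prove the splitting directly (without first assuming biadjointness), for instance by mimicking the explicit construction of the splitting map $\Phi$ in the proof of \cref{thm:sl2commutRpN}; but at that point you are essentially reproving the cyclotomic KLR result rather than reducing to it, so the paper's identification-of-endomorphism-algebras strategy is both shorter and safer.
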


\begin{proof}
First, by adjunction, \cref{eq:isoconedgcat} and \cref{eq:intcone}, we have
\[
\gdim_q H^0(\END_{\cV}(\F_\bi M)) \cong \gdim_q R_\g^N(\bi),
\]
for all $\bi \in \Seq(I)$. In particular, we have that $x_1^{n_i} 1_i$ acts by $0$ on $H^0(\End_{\cV}(\F_i M))$ for all $i \in I$, and $x_1 1_i$ acts non-trivially whenever $n_i > 1$. Thus, there is a map
\[
\gamma :
R_\g^N(\bi) \rightarrow H^0(\END_{\cV}(\F_\bi M)).
\]
Since $\gamma$ is surjective, we obtain $R_\g^N(\bi) \cong  H^0(\END_{\cV}(\F_\bi M))$, and the result follows from \cref{thm:sl2commutRpN}.
\end{proof}

For a $\bZ^n$-graded dg-algebra $(A,d)$, we put $\cD^{lf}_{dg}(A,d)$ for the dg-category having as objects the one in $\cD^{lf}(A,d) \cap \cD_{dg}(A,d)$ and the hom-spaces inherited from $\cD_{dg}(A,d)$. It is a dg-enhancement of the c.b.l.f. derived category of $(A,d)$. 

\begin{defn}\label{def:2verma}
A \emph{parabolic 2-Verma module} for $\p$ is a $\bZ \times \bZ^{|I_r|}$-graded weight dg-2-representation $\cV$ such that
\begin{itemize}
\item the highest weight space $\cV_\lambda :=\cV_{\lambda,0} \cong \cD_{dg}^{lf}(\Bbbk,0)$;
\item there exists $M \cong (\Bbbk,0) \in \cV_\lambda$ such that $\cV_{\lambda,y}$ is c.b.l.f. generated by $\{F_\bi M\}_{\bi \in \Seq(y)}$ for all $-y \in X^+$, and $\cV_{\lambda,y} = 0$ otherwise ;
\item $\cV$ is $i$-integrable for all $i \in I_f$;
\item $h_j = 0$ and $\lambda_{\alpha_j^\vee} = \lambda_j$ (the degree shift) for all $j \notin I_f$; 
\item  for each $j \notin I_f$, $n_j \in \bN$ and $\bi \in \Seq(I)$,  after specializing $\lambda_j = q^{n_j}$, there exists a differential $d_{n_j}$ anticommuting with the differential $d$ of $\bigl(\End_\cV(F_\bi M),d\bigr)$ such that the  triangulated dg-category generated by c.b.l.f. iterated extension of the representable modules of $\cV^{n_j} := \bigoplus_{\bi \in \Seq(I)} \bigl(\End_\cV(F_\bi M), d+d_{n_j}\bigr)$ is $j$-integrable with $\lambda(\alpha_j^\vee)  = q^{n_j}$.
\end{itemize}
\end{defn}

\begin{prop}\label{prop:isoend2verma}
Let $\cV$ be a parabolic 2-Verma module. 
There is an isomorphism
\[
(R_\bo(\bi),d_N) \cong \END_\cV(\F_\bi M),
\]
in $\cD(\Bbbk,0)$ for $M \cong (\Bbbk, 0) \in \cV_{\lambda}$. 
\end{prop}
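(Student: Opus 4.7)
The plan is to construct a morphism of dg-algebras
\[
\phi\colon (R_\bo(\bi), d_N) \longrightarrow \END_\cV(\F_\bi M)
\]
and prove it is a quasi-isomorphism. By \cref{cor:Rbmindesc}, $R_\bo(\bi)$ is generated by the KLR-generators together with tight floating dots, so it suffices to define $\phi$ on these. The KLR-generators map into $\END_\cV(\F_\bi M)$ via the structure map $R_\g(\bi) \to Z^0(\END(\F_\bi))$ supplied by \cref{def:dgcat}. For a tight floating dot with subscript $i$ placed immediately to the right of the leftmost strand, I would exploit the quasi-isomorphism \cref{eq:isoconedgcat} applied to $M$: since $\E_j M \cong 0$ and $\K_j M \cong \lambda_j M$ for all $j$, this restricts to
\[
\E_i \F_i M \xrightarrow{\simeq} \cone\bigl(\Q_i \lambda_i M \to \Q_i \lambda_i^{-1} M\bigr).
\]
The corresponding connecting morphism, pre-composed with a suitable $\F_{\bi'}$ and adjoined via $\F_i \dashv q_i^{-1}\K_i \E_i$, yields a distinguished element of $\Hom_\cV(\F_\bi M, \lambda_i^2 q_i^{-2-2\alpha_i^\vee(y)} \F_\bi M[1])$ that I declare to be the image of the tight floating dot.

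Next, I would verify that $\phi$ respects the two non-KLR relations listed in \cref{cor:Rbmindesc}: the vanishing $\omega^2 = 0$ and the sign-anticommutation of two consecutive tight floating dots. Both follow from the parity of the connecting morphism of a mapping cone together with naturality. To check compatibility with the differentials, I would invoke the $i$-integrability axiom \cref{eq:intcone}: for $i \in I_f$, after specializing $\lambda_i = q^{n_i}$, the cone $\cone(\Q_i\lambda_i M \to \Q_i\lambda_i^{-1}M)$ collapses to the finite sum $\oplus_{[n_i]_{q_i}}\id$, and this collapse is exactly the identity $(-1)^{n_i} d_N(\omega) = x_1^{n_i}$ on the leftmost strand, matching the definition of $d_N$ verbatim. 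For $j \notin I_f$, the last bullet of \cref{def:2verma} yields the compatibility after any specialization $\lambda_j = q^{n_j}$, which suffices since $\END_\cV(\F_\bi M)$ is $\lambda_j$-torsion-free.

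Finally, I would prove $\phi$ is a quasi-isomorphism by a graded dimension argument. On the left, the combination of \cref{thm:catallverma}, \cref{prop:K0RbohalfUqg}, and the Shapovalov pairing of \cref{prop:catshap} identifies the graded dimension of $H(R_\bo(\bi), d_N)$ with the Shapovalov norm $(F_\bi v_{\Lambda, N}, F_\bi v_{\Lambda, N})$ on $M^\p(\Lambda, N)$. On the right, the c.b.l.f.\ generation of $\cV$ by $\{\F_\bj M\}_{\bj \in \Seq(I)}$ together with $\cV_\lambda \cong \cD^{lf}_{dg}(\Bbbk,0)$ imply that $\bKO^\Delta(H^0(\cV)) \otimes_\bZ \bQ$ is a highest-weight $U_q(\g)$-module of weight $\lambda$ with integrable Levi factor, hence a quotient of $M^\p(\Lambda, N)$; the quasi-isomorphism \cref{eq:isoconedgcat} together with the adjunction $\F_i \dashv q_i^{-1}\K_i \E_i$ forces the induced Shapovalov norms to agree, so this quotient must be $M^\p(\Lambda, N)$ itself. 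Since the surjection $R_\g(\bi) \otimes \END_\cV(M) \twoheadrightarrow Z^0(\END_\cV(\F_\bi M))$ built into \cref{def:dgcat} forces $\phi$ to be surjective on $H^0$ (using $\END_\cV(M) = (\Bbbk,0)$ and the factorization through $R_\p^N$), and both sides are c.b.l.f.\ dimensional with matching graded dimensions, $\phi$ must be a quasi-isomorphism. The main obstacle will be precisely this last dimension comparison: showing that the c.b.l.f.\ generation together with the axioms of \cref{def:dgcat,def:2verma} forces $\bKO^\Delta(H^0(\cV)) \otimes_\bZ \bQ$ to be the whole parabolic Verma module and not a proper quotient, which amounts to an intrinsic non-degeneracy statement for the categorified Shapovalov form extractable only from the 2-Verma axioms.
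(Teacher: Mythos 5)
Your proposed proof has the right overall shape---define the map on KLR generators and tight floating dots, check the relations of \cref{cor:Rbmindesc}, and close with a graded-dimension comparison---but there are two concrete gaps, and the second is the serious one. First, you assert that the anticommutation relations for tight floating dots ``follow from the parity of the connecting morphism of a mapping cone together with naturality,'' and you present $\phi$ as a strict dg-algebra morphism. But the connecting morphism of a cone is only defined up to homotopy, so the anticommutation of two tight floating dots cannot be expected to hold on the nose in $\END_\cV(\F_\bi M)$; what you actually get is a relation up to homotopy of the form $\alpha\,\omega_1\omega_2 + \beta\,\omega_2\omega_1 \simeq 0$ with a priori undetermined scalars, and you still need the specialization axiom (the existence of $d_{n_j}$ in \cref{def:2verma}) to pin $\alpha = \beta$. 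The correct framework is to produce an $A_\infty$-map $(R_\bo(\bi),d_N) \to \END_\cV(\F_\bi M)$, which is enough for the stated isomorphism in $\cD(\Bbbk,0)$; the paper does exactly this.

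The larger issue is your dimension-matching step, and you flag it yourself: you route the graded-dimension comparison through $\bKO^\Delta(H^0(\cV))$, the Shapovalov pairing, and the claim that the categorified Shapovalov form is non-degenerate on the abstract 2-Verma module---a statement you admit you cannot extract from the axioms. This is a genuine circularity: \cref{thm:catallverma} is about the concrete model $\cM^\p(\Lambda,N)$, not the abstract $\cV$, and knowing that $\bKO^\Delta(H^0(\cV))\otimes\bQ$ is all of $M^\p(\Lambda,N)$ rather than a proper quotient is roughly equivalent to what you are trying to prove. The paper avoids this entirely: the graded dimension of $\END_\cV(\F_\bi M)$ is computed directly from the axioms, using the adjunction $\F_i \dashv q_i^{-1}\K_i\E_i$ to rewrite $\END_\cV(\F_\bi M)$ as $\HOM_\cV(M, q_i^{-1}\K_i\E_i\cdots \F_\bi M)$, and then iterating the cone quasi-isomorphism \cref{eq:isoconedgcat} together with $\E_j M = 0$ and $\END_\cV(M) \cong (\Bbbk,0)$ to reduce this to a c.b.l.f.\ direct sum of shifted copies of $(\Bbbk,0)$ whose graded dimension is exactly $\gdim_q R_\p^N(\bi)$. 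This gives $\gdim_q H^*(\END_\cV(\F_\bi M)) = \gdim_q R_\p^N(\bi)$ with no appeal to Grothendieck groups or to non-degeneracy of a categorified form, and then the $A_\infty$-map constructed from \cref{cor:Rbmindesc} is forced to be a quasi-isomorphism.
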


\begin{proof}
First, by adjunction together with \cref{eq:isoconedgcat} and \cref{eq:intcone} we have
\begin{equation}\label{eq:isoENDFiRpN}
\END_\cV(\F_i M) \cong \HOM_\cV(M, q_i^{-1} \K_i \E_i \F_i M) \cong R_\p^N(i),
\end{equation}
in $\cD(\Bbbk,0)$ for all $i \in I$. Also,  
\begin{equation}\label{eq:dimH2Verma}
\gdim_q H^*(\END_{\cV}(\F_\bi M)) = \gdim_q R_\p^N(\bi), 
\end{equation}
for all $\bi \in \Seq(I)$. 
In particular, there is a relation up to homotopy
\begin{align}\label{eq:tightfdcommutesuptohomotopy}
\alpha \ 
\tikzdiagh[yscale=1.5]{0}{
	\draw (0,0) node[below]{\small $j$} ..controls (0,.15) and (1,.15) .. (1,.5)
		 ..controls (1,.85) and (0,.85) .. (0,1) -- (0,1.5);
	\draw (1,0) node[below]{\small $i$} ..controls (1,.15) and (0,.15) .. (0,.5)
		..controls (0,.85) and (1,.85) .. (1,1) -- (1,1.5);
	\fdot{i}{.4,.5};
	\fdot{j}{.5,1.25};
}
\ + \beta \ 
\tikzdiagh[yscale=1.5]{0}{
	\draw (0,-.5) node[below]{\small $j$} -- (0,0)  ..controls (0,.15) and (1,.15) .. (1,.5)
		 ..controls (1,.85) and (0,.85) .. (0,1);
	\draw (1,-.5) node[below]{\small $i$}  -- (1,0)..controls (1,.15) and (0,.15) .. (0,.5)
		..controls (0,.85) and (1,.85) .. (1,1);
	\fdot{i}{.4,.5};
	\fdot{j}{.5,-.25};
}
\ = \ 0,
\end{align}
in $\END_\cV(\F_i\F_jM)$ for all $i,j \in I_r$, identifying the diagrams with the image of the KLR elements under the surjection $R_\g(ij) \twoheadrightarrow Z^0(\END_\cV(F_iF_jM))$, and the floating dot coming from the isomorphism \cref{eq:isoENDFiRpN}. 
Then, the existence of $d_{n_i}$ and $d_{n_j}$ forces to have $\alpha = \beta$. Thus, by \cref{cor:Rbmindesc}, there is an $A_\infty$-map
\[
(R_\bo(\bi), d_N) \rightarrow \END_\cV(\F_\bi M).
\]
By \cref{eq:dimH2Verma}, we conclude it is a quasi-isomorphism. Thus, there exists an isomorphism $(R_\bo(\bi), d_N) \cong \END_\cV(\F_\bi M)$ in $\cD(\Bbbk,0)$.
\end{proof}

Using \cref{thm:repdg} we can think of $\F_i^N$ and $\E_i^N$ from \cref{sec:FiNFunctors} as quasi-endofunctors of  $\cD_{dg}(R_\bo, d_n)$. 
By \cref{prop:RbodgSES} we obtain immediately the following:

\begin{cor}\label{cor:quasiisocones}
For all $i \in I$ there is a quasi-isomorphism of cones
\[
\cone\bigl(\F_i^N\E_i^N \id_\nu \rightarrow \E_i^N\F_i^N \id_\nu \bigr) \xrightarrow{\simeq}  \cone\bigl(\Q_i \lambda_i q_i^{-\alpha_i^\vee(\nu)}  \id_\nu \rightarrow \Q_i \lambda_i^{-1} q^{\alpha_i^\vee(\nu)} \id_\nu\bigr), 
\]
in $\cEnd_{\Hqe}(\cD_{dg}(R_\bo,d_N))$. 
\end{cor}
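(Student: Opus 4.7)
The plan is to recognize the statement as the direct translation of \cref{prop:RbodgSES} into the language of quasi-functors, using the standard fact that a short exact sequence of dg-bimodules descends to a distinguished triangle in the derived category.

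First I would identify each of the three terms of the short exact sequence in \cref{prop:RbodgSES} with a quasi-endofunctor of $\cD_{dg}(R_\bo, d_N)$ via the dg-Morita identification recalled in \cref{sec:appendixA}. Strong projectivity of $1_{(m,i)} R_\bo(m+1)$ as an $(R_\bo(m),d_N)$-module (\cref{prop:Rbosproj}) ensures that underived and derived tensor products coincide, so the dg-bimodules
\[
(R_\bo(\nu) 1_{(m-1,i)} \otimes_{m-1} 1_{(m-1,i)} R_\bo(\nu), d_N)
\quad \text{and} \quad
(1_{(\nu,i)} R_\bo(m+1) 1_{(\nu,i)}, d_N)
\]
represent $\F_i^N \E_i^N \id_\nu$ and $\E_i^N \F_i^N \id_\nu$ respectively, once the overall grading shift $\lambda_i^{-1} q_i^{\alpha_i^\vee(\nu)}$ dictated by the definition of $\E_i^N$ in \cref{sec:FiNFunctors} is applied.

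Next I would apply the same overall grading shift to the cokernel of \cref{prop:RbodgSES} and collect the grading factors to identify it with $\cone\bigl( \Q_i \lambda_i q_i^{-\alpha_i^\vee(\nu)} \id_\nu \to \Q_i \lambda_i^{-1} q_i^{\alpha_i^\vee(\nu)} \id_\nu \bigr)$, using that $R_\bo^{\xi_i}(\nu) = \bigoplus_{\ell \geq 0} q_i^{2\ell} R_\bo(\nu)$ and $\Q_i = \bigoplus_{\ell \geq 0} q_i^{1+2\ell} \id$, while the connecting map on the right is the one induced by $y_N$ on the left. The cone structure itself is the one already built into the third term of \cref{prop:RbodgSES}.

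Finally, since any short exact sequence $0 \to A \to B \to C \to 0$ of dg-bimodules yields a distinguished triangle $A \to B \to C \to A[1]$ in the derived category, one has $C \simeq \cone(A \to B)$; translating through the quasi-functor identification then gives the desired quasi-isomorphism in $\cEnd_{\Hqe}(\cD_{dg}(R_\bo, d_N))$. The only step demanding genuine care is the bookkeeping of grading shifts, which I expect to be the sole obstacle; no further mathematical input is needed beyond \cref{prop:RbodgSES,prop:Rbosproj}.
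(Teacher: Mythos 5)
Your proposal is correct and is essentially the paper's own argument made explicit: the paper simply invokes \cref{thm:repdg} to view $\F_i^N,\E_i^N$ as quasi-endofunctors of $\cD_{dg}(R_\bo,d_N)$ and then reads the quasi-isomorphism off \cref{prop:RbodgSES}, while you unpack the same steps (strong projectivity from \cref{prop:Rbosproj} to make the tensor product derived, the overall $\lambda_i^{-1}q_i^{\alpha_i^\vee(\nu)}$ shift, the built-in cone description of the cokernel, and the SES-to-triangle passage). This is a faithful reconstruction of the intended proof, not a different route.
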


Together with \cref{prop:serreinRpN}, it means that the dg-enhancement $\cM^\p_{dg}(\Lambda,N)$ of $\cM^\p(\Lambda,N)$ (obtained by replacing $\cD^{lf}(R_\bo(m), d_N)$ with $\cD^{lf}_{dg}(R_\bo(m), d_N)$) is a weight dg-2-representation of $U_q(\g)$, where 
\[
\lambda(\alpha_i^\vee) := \begin{cases}
\lambda_i, & \text{ whenever $i \in I_r$,} \\
q^{n_i}, & \text{ whenever $i \in I_f$.}
\end{cases}
\] 
Then, by \cref{thm:sl2commutRpN}, we obtain that $\cM^\p_{dg}(\Lambda, N)$ is a parabolic 2-Verma module. 

\begin{cor}
Let $\cV$ be a parabolic 2-Verma module. 
There is a quasi-equivalence 
\[
\cM^\p_{dg}(\Lambda, N) \xrightarrow{\simeq} \cV.
\]
\end{cor}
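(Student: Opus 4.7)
The strategy is to build a quasi-functor $\Phi \colon \cM_{dg}^\p(\Lambda,N) \to \cV$ realized by derived tensor product with a distinguished bimodule, and then check fully faithful plus essentially surjective using the c.b.l.f.\ generation condition.

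First, using the distinguished highest-weight object $M \cong (\Bbbk,0) \in \cV_\lambda$ given in \cref{def:2verma}, I form $\widetilde M := \bigoplus_{\bi \in \Seq(I)} \F_\bi M \in \cV$. By (the proof of) \cref{prop:isoend2verma}, combined in a natural way over all sequences $\bi$, there is an $A_\infty$-quasi-isomorphism of (sums of) dg-algebras $(R_\bo, d_N) \xrightarrow{\simeq} \END_\cV(\widetilde M)$. Via standard dg-/$A_\infty$-rectification as recalled in \cref{sec:appendixA}, this upgrades to a quasi-functor $(R_\bo,d_N) \to \cEnd_{\Hqe}(\cV)$; equivalently, $\widetilde M$ can be made into an honest cofibrant-on-the-right $(R_\bo,d_N)$-module in $\cV$, representing a morphism in $\Hqe$.

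Given this, define $\Phi := \widetilde M \hatLotimes_{(R_\bo,d_N)} (-)$ and restrict it to $\cM_{dg}^\p(\Lambda,N) = \cD_{dg}^{lf}(R_\bo,d_N)$. By construction $\Phi$ sends the representable module $(R_\bo,d_N)1_\bi$ to $\F_\bi M$ and commutes with the c.b.l.f.\ coproducts and cones that generate both sides; the c.b.l.f.\ finiteness controlled by the weight grading (as in Definitions of weight dg-2-representations and parabolic 2-Verma modules) ensures the image lands in $\cV$. Full faithfulness holds on representables because $\Phi$ induces on hom-complexes exactly the quasi-isomorphism $1_\bj R_\bo 1_\bi \xrightarrow{\simeq} \HOM_\cV(\F_\bi M, \F_\bj M)$ of Step 1, and it propagates to all of $\cM_{dg}^\p(\Lambda,N)$ since both dg-categories are pretriangulated, dg-triangulated, and closed under c.b.l.f.\ iterated extensions, which $\Phi$ preserves. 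Essential surjectivity is exactly the c.b.l.f.\ generation requirement in \cref{def:2verma}: every weight space $\cV_{\lambda,-y}$ ($-y \in X^+$) is c.b.l.f.\ generated by $\{\F_\bi M\}_{\bi \in \Seq(y)}$, which are the images of the representables, and $\cV_{\lambda,y}=0$ for $-y \notin X^+$.

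The main obstacle is the first step: \cref{prop:isoend2verma} only provides a quasi-isomorphism in $\cD(\Bbbk,0)$, whereas $\Phi$ needs an honest bimodule (or a morphism in $\Hqe$) compatible with the full collection of induction maps $\F_i$. Checking that the $A_\infty$-structure on $\END_\cV(\widetilde M)$ produced by iterating \cref{prop:isoend2verma} really matches the dg-structure of $(R_\bo,d_N)$ up to coherent homotopy — so that rectification lands in $\Hqe$ and not just on a chain-level level — is the only nontrivial point; it is forced by the relation in \cref{eq:tightfdcommutesuptohomotopy}, the integrability compatibility with each specialization $\lambda_j = q^{n_j}$, and the presentation in \cref{cor:Rbmindesc}. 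All other steps are then formal.
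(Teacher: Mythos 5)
Your proof takes the same route as the paper, which is essentially a two-sentence argument: the c.b.l.f.\ generation condition reduces the problem to identifying the endomorphism dg-algebras of the generators, and Proposition 6.6 supplies that identification. You have fleshed out the construction of the quasi-equivalence explicitly (as a derived tensor with the bimodule $\widetilde M$) and, more importantly, you have correctly located the subtlety that the paper leaves tacit: the \emph{statement} of Proposition 6.6 only gives an isomorphism in $\cD(\Bbbk,0)$, whereas the morphism in $\Hqe$ actually comes from the $A_\infty$-quasi-isomorphism $(R_\bo(\bi),d_N)\to\END_\cV(\F_\bi M)$ constructed in its \emph{proof}, which one then rectifies. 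This is the same argument, carried out at a level of precision the paper elides.
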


\begin{proof}
Since $\cV_{\lambda, y}$ is c.b.l.f. generated by $\bigoplus_{ \bi \in \Seq(y) } F_\bi M$, we have that $\cV_{\lambda, y}$ is completely determined as dg-category by $\END_\cV( F_\bi M)$. 
Thus, we conclude by using \cref{prop:isoend2verma}. 
\end{proof}

\begin{rem}
A parabolic 2-Verma module can also be given a `2-categorical' interpretation as an $(\infty,2)$-category where the hom-spaces are stable $(\infty,1)$-categories. For this, it is enough to see $\cD_{dg}(R_\bo(\nu), d_N)$ as $0$-cells in the $(\infty,2)$-category of $A_\infty$-categories constructed by Faonte~\cite{faonte}, and replace $\cHom_{\Hqe}$ by the dg-nerve of Lurie~\cite{lurie}. Thanks to~\cite{faonte2}, we know that this is a stable $(\infty,1)$-category. 
\end{rem}



\appendix

\section{Summary on the homotopy category of dg-categories and pretriangulated dg-categories}\label{sec:appendixA}
We gather some useful results on the homotopy category of dg-categories. References for this section are \cite{keller} and \cite{toen}. We also suggest \cite{kellersurvey} and \cite{toenlectures} for nice surveys on the subject. 

\smallskip

Our goal is to recall how to construct a category of dg-categories up to quasi-equivalence, so that the space of functors between two `triangulated categories' is `triangulated'. 

\subsection{Dg-categories}

Recall the definition of a dg-category:

\begin{defn}
A \emph{dg-category} $\cA$ is a $\Bbbk$-linear category such that:
\begin{itemize}
\item $\Hom_\cA(X,Y)$ is a $\bZ$-graded $\Bbbk$-vector space ;
\item the composition
\[
\Hom_{\cA}(Y,Z) \otimes_\Bbbk \Hom_\cA(X,Y) \xrightarrow{\ - \circ - \ }  \Hom_\cA(X,Z),
\]
preserves the $\bZ$-degree ;
\item there is a differential $d : \Hom _\cA(X,Y)^i \rightarrow \Hom_\cA(X,Y)^{i-1}$ such that
\begin{align*}
d^2 &= 0, & d(f \circ g) = df \circ g + (-1)^{|f|} f \circ dg.
\end{align*}
\end{itemize}
\end{defn}

\begin{rem}
We use a differential of degree $-1$ to match the conventions used in the rest of the paper.
\end{rem}

\begin{exe}
Any dg-algebra $(A,d)$ can be seen as a dg-category $\boldsymbol{BA}$ with a single abstract object $\star$ and $\Hom_{\boldsymbol{BA}}(\star,\star) := (A,d)$. 
\end{exe}

\begin{exe}\label{ex:dgcomplexes}
Let $\cC$ be an abelian, Grothendieck, $\Bbbk$-linear category. Consider the category $C(\cC)$ of complexes in $\cC$, and define $C_{dg}(\cC)$ as the category where
\begin{itemize}
\item objects are complexes in $\cC$;
\item hom-spaces are homogeneous maps of $\bZ$-graded modules;
\item the differential $d : \Hom_{C_{dg}(\cC)}(X^\bullet,Y^\bullet)^i \rightarrow \Hom_{C_{dg}(\cC)}(X^\bullet,Y^\bullet)^{i-1}$ is given by
\[
df := d_Y \circ f - (-1)^{|f|} f \circ d_X.
\]
\end{itemize}
This data forms a dg-category. 
\end{exe}

Given a dg-category $\cA$, one defines   
\begin{enumerate}
\item the \emph{underlying category $Z^0(\cA)$} as
\begin{itemize}
\item having the same objects as $\cA$;
\item $\Hom_{Z^0(\cA)}(X,Y) := \ker\bigl( \Hom_{\cA}(X,Y)^0 \xrightarrow{d} \Hom_{\cA}(X,Y)^{-1} \bigr)$;
\end{itemize}
\item the \emph{homotopy category $H^0(\cA)$} (or $[\cA]$) as
\begin{itemize}
\item having the same objects as $\cA$;
\item $\Hom_{H^0(\cA)}(X,Y) := H^0(\Hom_\cA(X,Y),d)$.
\end{itemize}
\end{enumerate}

\begin{exe}
For $\cC$ as in \cref{ex:dgcomplexes}, we have $Z^0(C_{dg}(\cC)) \cong C(\cC)$ and $H^0(C_{dg}(\cC)) \cong Kom(\cC)$ the homotopy category of complexes in $\cC$. 
\end{exe}

\subsection{Category of dg-categories}

\begin{defn}
A \emph{dg-functor $F : \cA \rightarrow \cB$} is a functor between two dg-categories such that $F(d_\cA f) = d_\cB(Ff)$. 
We write $[F] : H^0(\cA) \rightarrow H^0(\cB)$ for the induced functor. 
\end{defn}

We write $\dgcat$ for the \emph{category of dg-categories}, where objects are dg-categories and hom-spaces are given by dg-functors. 

\smallskip

Let $F,G : \cA \rightarrow \cB$ be a pair of dg-functors between dg-categories. One defines $\cHom(F,G)$ as the $\bZ$-graded $\Bbbk$-module of homogeneous natural transformations equipped with the differential induced by $d \in \Hom_\cB(FX,GX)$ for all $X \in \cA$. Then, we put $\Hom(F,G) := Z^0(\cHom(F,G))$. 

\begin{defn}
A dg-functor $\cA \rightarrow \cB$ is a \emph{quasi-equivalence} if
\begin{itemize}
\item $F : \Hom_{\cA}(X,Y) \xrightarrow{\simeq} \Hom_{\cB}(FX,FY)$ is a quasi-isomorphism for all $X,Y \in \cA$;
\item $[F] : H^0(\cA) \rightarrow H^0(\cB)$ is essentially surjective (thus an equivalence).
\end{itemize}
\end{defn}

One defines the dg-category $\cHom(\cA,\cB)$ of dg-functors between $\cA$ and $\cB$ as
\begin{itemize}
\item objects are dg-functors $\cA \rightarrow \cB$;
\item hom-spaces are $\Hom_{\cHom(\cA,\cB)}(F,G) := \cHom(F,G)$.
\end{itemize}
There is also a notion of tensor product of dg-categories $\cA \otimes \cB$ defined as
\begin{itemize}
\item objects are pairs $X \otimes Y$ for all $X \in \cA$ and $Y \in \cB$;
\item hom-spaces are $\Hom_{\cA \otimes \cB}(X \otimes Y, X' \otimes Y') := \Hom_{\cA}(X,X') \otimes_\Bbbk \Hom_{\cB}(Y,Y')$ with composition
 \[
 (f' \otimes g') \circ (f \otimes g) := (-1)^{|g'||f|} (f' \circ f) \otimes (g' \circ g);
 \]
\item the differential is $d(f \otimes g) := df \otimes g + (-1)^{|f|} f \otimes dg$.
\end{itemize}
Then, there is a bijection
\[
\Hom_{\dgcat}(\cA \otimes \cB, \cC) \cong \Hom_{\dgcat}(\cA, \cHom(\cB,\cC)).
\]
This defines a symmetric closed monoidal structure on $\dgcat$. 
 However, the tensor product of dg-categories does not preserve quasi-equivalences. 

\subsection{Dg-modules}

Let $\cA$ be a dg-category. The opposite dg-category $\opalg{\cA}$ is given by 
\begin{itemize}
\item same objects as in $\cA$;
\item $\Hom_{\opalg{\cA}}(X,Y) := \Hom_{\cA}(Y,X)$;
\item composition $g \circ_{\opalg{\cA}} f := (-1)^{|f||g|} f \circ_\cA g$. 
\end{itemize}
 A \emph{left (resp. right) dg-module} $M$ over $\cA$ is a dg-functor 
\[
M : \cA \rightarrow C_{dg}(\Bbbk), \quad \text{(resp. } N : \opalg{\cA} \rightarrow C_{dg} \text{),}
\]
where $C_{dg}(\Bbbk)$ is the dg-category of $\Bbbk$-complexes. 
The  \emph{dg-category of (right) dg-modules} is $\opalg{\cA}\amod := \cHom(\opalg{\cA}, C_{dg}(\Bbbk))$. 
The \emph{category of (right) dg-modules} is $C(\cA) := Z^0(\cA\amod)$, and it is an abelian category. 
The \emph{derived category $\cD(\cA)$} is the localization of $Z^0(\opalg{\cA}\amod)$ along quasi-isomorphisms.

Moreover, for any $X \in \cA$ there is a right dg-module 
\[
X^{\wedge} := \Hom_\cA(-,X).
\]
One calls such dg-module \emph{representable}. Any dg-module quasi-isomorphic to a representable dg-module is called \emph{quasi-representable}. It yields a dg-enriched Yoneda embedding
\[
\cA \rightarrow \opalg{\cA}\amod.
\]

\begin{exe}
Let $(A,d)$ be a dg-algebra. Then $Z^0(\boldsymbol{BA})\amod \cong (A,d)\amod$ and $\cD(\boldsymbol{BA}) \cong \cD(A,d)$. 
The unique representable dg-module $\Hom_{\boldsymbol{BA}}(-,\star)$ is equivalent to the free module $(A,d)$. 
\end{exe}

\subsection{Model categories}

We recall the basics of model category theory from~\cite{modelcat}. 
Model category theory is a powerful tool to study localization of categories. 
For example, we can use it to compute hom-spaces in a derived category. 
We will mainly use it to describe the homotopy category of dg-categories up to quasi-equivalence. 

\smallskip

Let $M$ be a category with limits and colimits. 
\begin{defn}
A \emph{model category} on $M$ is the data of three classes of morphisms
\begin{itemize}
\item the \emph{weak equivalences $W$};
\item the \emph{fibrations $Fib$};
\item the \emph{cofibrations $Cof$};
\end{itemize}
satisfying
\begin{itemize}
\item for $X \xrightarrow{f} Y \xrightarrow{g} Z \in M$, if two out of three terms in $\{f,g,g\circ f\}$ are in $W$, then so is the third;
\item \emph{stability along retracts}: $W,Fib$ and $Cof$ are stable along retracts, that is if we have a commutative diagram
\[
\begin{tikzcd}
X \ar{r} \ar[bend left=30]{rr}{\id_X} \ar{d}{g} & Y  \ar{d}{f} \ar{r} & X  \ar{d}{g} \\
X' \ar{r} \ar[swap,bend right=30]{rr}{\id_{X'}} & Y'  \ar{r}& X'
\end{tikzcd}
\]
and $f  \in W,Fib$ or $Cof$ then so is $g$. 
\item \emph{factorization}: any $X \xrightarrow{f} Y$ factorizes as $p \circ i$ where $p \in Fib$ and $i \in Cof \cap W$ or $p \in Fib \cap W$ and $i \in Cof$, and the factorization is functorial in $f$;
\item \emph{lifting property}: given a commutative square diagram
\[
\begin{tikzcd}
A \ar[swap]{d}{Cof \ni i }\ar{r} & X \ar{d}{p \in Fib} \\
B \ar{r} \ar[dashed]{ur}{\exists h} & Y
\end{tikzcd}
\]
with $i \in Cof$ and $p \in Fib$, if either $i \in W$ or $p \in W$, then there exists $h : B \rightarrow X$ making the diagram commute.
\end{itemize}
\end{defn}

We tend to think about fibrations as `nicely behaved surjections', and cofibrations as `nicely behaved injections'. 

\smallskip

The localization $Ho(M) := W^{-1}M$ of $M$ along weak equivalences is called the \emph{homotopy category of $M$}. It has a nice description in terms of \emph{homotopy classes} of maps between \emph{fibrant} and \emph{cofibrant} objects. 

\begin{defn}
If $\emptyset \rightarrow X \in Cof$, then we say $X$ is \emph{cofibrant}. 
If $Y \rightarrow * \in Fib$, then $Y$ is \emph{fibrant}.
\end{defn}

One says that $f \sim g$, that is $f : X \rightarrow Y$ is \emph{homotopy equivalent} to $g : X \rightarrow Y$, if there is a commutative diagram
\[
\begin{tikzcd}
X \ar[loop above]{}{\id_X} \ar{d}{i} \ar{dr}{f} & {} \\
C(X) \ar{r}{h} \ar[bend left=60,dashed]{u}{Fib \cap W \ni p}  \ar[bend right=60,dashed,swap]{d}{p} & Y \\
X \ar[loop below]{}{\id_X} \ar[swap]{u}{j} \ar[swap]{ur}{g} & {}
\end{tikzcd}
\]
where $i \sqcup j : X \sqcup X \rightarrow C(X) \in Cof$. 
One calls $C(X)$ the \emph{cylinder object of $X$}. 
When $X$ is cofibrant and $Y$ fibrant, then $\sim$ is an equivalence relation on $\Hom_M(X,Y)$. Moreover, we have
\[
\Hom_{Ho(M)}(X,Y) \cong \Hom_{M}(X,Y)/\sim
\]
whenever $X$ is cofibrant and $Y$ fibrant. Note that any $X \in M$ admits a cofibrant replacement $QX$ since we have a commutative diagram
\[
\begin{tikzcd}
\emptyset \ar[swap]{dr}{Cof \ni i} \ar{rr} && X \\
&QX \ar[swap]{ur}{p \in Fib \cap W} &
\end{tikzcd}
\]
Similarly, any $Y \in M$ admits a fibrant replacement $RY$. 

\smallskip

Let $M^{cf}$ be the full subcategory of $M$ given by objects that are both fibrant and cofibrant. Let $M^{cf}/\sim$ be the quotient of $M^{cf}$ by identifying maps that are homotopy equivalent. 
Then, the localization functor $M \rightarrow Ho(M)$ restricts to $M^{cf}$, inducing an equivalence of categories
\[
M^{cf}/\sim \ \xrightarrow{\ \simeq\ }\  Ho(M).
\]

\begin{exe}
Let $C(\Bbbk)$ be the category of complexes of $\Bbbk$-modules. It comes with a model category structure where $W$ is the quasi-isomorphisms, $Fib$ is the surjective maps, and $Cof$ is given by the maps respecting the lifting property. All objects are fibrant and the cofibrant objects are essentially the complexes of projective $\Bbbk$-modules. Then $Ho(C(\Bbbk)) \cong \cD(\Bbbk)$. 
\end{exe}

A model category on $M$ is a \emph{$C(\Bbbk)$-model category} if it is (strongly) enriched over $C(\Bbbk)$, and the models are compatible (see~\cite[\S3.1]{toenlectures} for a precise definition). This definition means that we have:
\begin{itemize}
\item a tensor product $ - \otimes - : C(\Bbbk) \times M \rightarrow M$;
\item an enriched dg-hom-space $\cHom_M(X,Y) \in C(\Bbbk)$ for any $X,Y \in M$ compatible with the tensor product:
\[
\Hom_{M}(E \otimes X, Y) \cong \Hom_{C(\Bbbk)}(E, \cHom_M(X,Y)) \text{; }
\] 
\item $Ho(M)$ is enriched over $\cD(\Bbbk) \cong Ho(C(\Bbbk))$;
\item a derived hom-functor 
\[
\cRHom_M(X,Y) := \cHom_M(QX, RY) \in \cD(\Bbbk),
\]
where $QX$ is a cofibrant replacement of $X$, and $RY$ a fibrant replacement of $Y$;
\item $\Hom_{Ho(M)}(X,Y) \cong H^0(\cRHom_M(X,Y))$.
\end{itemize}
Note that in particular for $X,Y \in M^{cf}$ we have $\Hom_{Ho(M)}(X,Y) \cong  H^0(\cHom(X,Y))$. 

\begin{exe}
Let $\cA$ be  a dg-category. There is a $C(\Bbbk)$-model category on $\cA\amod$ where $W$ is given by the quasi-isomorphisms, $Fib$ are the surjective morphisms, and $Cof$ is given by the maps respecting the lifting property. Then $Ho(\cA\amod) \cong \cD(\cA)$. 
\end{exe}

\begin{rem}
In the $C(\Bbbk)$-model category $\cA\amod$, all objects are fibrant. Moreover, $P$ is cofibrant if and only if for all surjective quasi-isomorphism $f : L \xrightarrow{\simeq} X$ (i.e. map in $W \cap Fib$) then there exists $h : P \rightarrow L$ such that the following diagram commutes:
\[
\begin{tikzcd}
\emptyset \ar{d} \ar{r} & L \ar[twoheadrightarrow]{d}{\vsimeq} \\
P  \ar[dashed]{ur}{\exists h} \ar{r} & X
\end{tikzcd}
\]
In a practical way, cofibrant dg-modules are quasi-isomorphic to direct summand of dg-modules admitting a (possibly infinite) exhaustive filtration where all the quotients are free dg-modules.
\end{rem}

\begin{defn}
For $M$ a $C(\Bbbk)$-model category, let $\und{M}$ (resp. $Int(M)$) be the dg-category with
\begin{itemize}
\item the same objects as $M$ (resp. $M^{cf}$);
\item $\Hom_{\und M}(X,Y) := \cHom_M(X,Y)$.
\end{itemize}
\end{defn}

Then, we have $H^0(Int(M)) \cong Ho(M)$, and we say that $Int(M)$ is a \emph{dg-enhancement} of $Ho(M)$.

\begin{defn}\label{def:dgenhderived}
We write 
\[
\cD_{dg}(\cA) := Int(\cA\amod)
\]
for the \emph{dg-enhanced derived category of $\cA$}.
\end{defn}
Note that $\cD_{dg}(\cA)$ is a dg-enhancement of $\cD(\cA)$ since we have $H^0(\cD_{dg}(\cA)) \cong \cD(\cA)$. 

\begin{exe}
Let $R$ be a $\Bbbk$-algebra viewed as a dg-category with trivial differential. Then we have that $\cD_{dg}(R)$ is the dg-category of complexes of projective $R$-modules. 
\end{exe}

\subsection{The model category of dg-categories}

Let $W$ be the collection of quasi-equivalences in $\dgcat$. Let $Fib$ be the collection of dg-functors $F : \cA \rightarrow \cB$ in $\dgcat$ such that
\begin{enumerate}
\item $F_{X,Y} : \Hom_{\cA}(X,Y) \twoheadrightarrow \Hom_{\cB}(FX,FY)$ is surjective;
\item for all isomorphism $v : F(X) \xrightarrow{\simeq} Y \in H^0(\cB)$ there exists an isomorphism $u : X \xrightarrow{\simeq} Y_0 \in H^0(\cA)$ such that $[F](u) = v$. 
\end{enumerate}
This defines a model structure on $\dgcat$ where everything is fibrant. One calls 
\[
\Hqe := Ho(\dgcat)
\]
 the \emph{homotopy category of dg-categories} (up to quasi-equivalence). 

\smallskip

How can we compute $\Hom_{\Hqe}(\cA,\cB)$ ? It appears that constructing a cofibrant replacement for $\cA$ is in general a difficult problem. However, we can do the following:
\begin{enumerate}
\item replace $\cA$ by a \emph{$\Bbbk$-flat} quasi-equivalent dg-category $\cA'$: meaning it is such that 
\[
\Hom_{\cA'}(X,Y) \otimes_\Bbbk -
\]
 preserves quasi-isomorphisms  (e.g. when $\Hom_{\cA'}(X,Y)$ is cofibrant in $C(\Bbbk)$, i.e. a complex of projective $\Bbbk$-modules);
\item define $\Rep(\cA,\cB)$ as the subcategory of $\cD(\opalg{\cA} \otimes \cB )$ 
with $F \in \Rep(\cA,\cB)$ if and only if for all $X \in \cA$ there exists $Y \in \cB$ such that 
\[
X \Lotimes F \cong_{\cD(\cB)} Y^\vee, 
\]
(in other words, $F$ is a dg-bimodule sending representable $\cA$-modules to quasi-representable $\cB$-modules);
\item then
\[
\Hom_{\Hqe}(\cA,\cB) \cong Iso(\Rep(\cA,\cB)), 
\]
where $Iso$ means the set of objects up to isomorphism. 
\end{enumerate}
 
\begin{rem}
Note that whenever $\Bbbk$ is a field, all dg-categories are $\Bbbk$-flat.
\end{rem}
 
We refer to elements in $\Rep(\cA,\cB)$ as \emph{quasi-functors}. Note that a quasi-functor $F : \cA \rightarrow \cB$ induces a functor 
\[
 [F] : H^0(\cA) \rightarrow H^0(\cB).
\]
 Thus, we can think of $\Rep(\cA,\cB)$ as the category of `representations up to homotopy' of $\cA$ in $\cB$. 
 
\subsubsection{Closed monoidal structure}\label{app:closedmondg}

If $\cA$ is cofibrant, then $- \otimes \cA$ preserves quasi-equivalences and one can define the bifunctor
\[
- \Lotimes - : \Hqe \times \Hqe \rightarrow \Hqe, \quad \cA \Lotimes \cB := Q\cA \otimes Q\cB, 
\]
where $Q\cA$ and $Q\cB$ are cofibrant replacements. Then, as proven by Toen~\cite{toen}, there exists an internal hom-functor $\cRHom_{\Hqe}(-,-)$ such that
\[
\Hom_{\Hqe}(\cA \Lotimes \cB, \cC) \cong \Hom_{\Hqe}(\cA, \cRHom_{\Hqe}(\cB,\cC)).
\]
Therefore, $\Hqe$ is a symmetric closed monoidal category. 

\begin{rem}
Note that the internal hom can not simply be the derived hom functor (because tensor product of cofibrant dg-categories is not cofibrant in general). 
\end{rem}

Define the \emph{dg-category of quasi-functors} $\Rep_{dg}(\cA,\cB)$ as 
\begin{itemize}
\item the objects in $\Rep(\cA,\cB) \cap (\opalg{\cA} \otimes \cB\amod)^{cf}$;
\item the dg-homs $\cHom(X,Y)$ of $Int(\opalg{\cA} \otimes \cB\amod)$.
\end{itemize} 
In other words, $\Rep_{dg}(\cA,\cB)$ is the full subcategory of quasi-functors in $\cD_{dg}(\opalg{\cA} \otimes \cB)$, thus of cofibrant dg-bimodules that preserves quasi-representable modules.  It is a dg-enhancement of $\Rep(\cA,\cB)$. 

\smallskip

If $\cA$ is $\Bbbk$-flat, then 
\[
\cRHom_{\Hqe}(\cA,\cB) \cong_{\Hqe} \Rep_{dg}(\cA,\cB).
\]
Thus $H^0(\cRHom_{\Hqe}(\cA,\cB) ) \cong \Hom_{\Hqe}(\cA,\cB)$. 

\begin{rem}
If $\Bbbk$ is a field of characteristic $0$, then the dg-category $\cRHom_{\Hqe}(\cA,\cB)$ is equivalent to the $A_\infty$-category of strictly unital $A_\infty$-functors \cite{faonte}.
\end{rem}

\begin{exe}
We have $\Rep_{dg}(\cA, Int(C(\Bbbk))) \cong Int(\opalg{\cA}\amod) \cong \cD_{dg}(\cA)$.
\end{exe}

Recall that classical Morita theory says that for $A$ and $B$ being $\Bbbk$-algebras, there is an equivalence
\[
\Hom^{cop}(A\amod, B\amod) \cong \opalg{A}\otimes_\Bbbk B \amod,
\]
where $\Hom^{cop}$ is given by the functors that preserve coproducts. 

\smallskip

Similarly, we put $\Rep_{dg}^{cop}(\cD_{dg}(\cA), \cD_{dg}(\cB))$ for the subcategory of $\Rep_{dg}(\cD_{dg}(\cA), \cD_{dg}(\cB))$ where $F \in \Rep_{dg}^{cop}(\cD_{dg}(\cA), \cD_{dg}(\cB))$ if and only if $[F]    : \cD(\cA) \rightarrow \cD(\cB)$ preserves coproducts. 

\begin{thm}\label{thm:repdg}
If $\cA$ is $\Bbbk$-flat, then we have
\[
\cRHom^{cop}_{\Hqe}(\cD_{dg}(\cA), \cD_{dg}(\cB)) := \Rep_{dg}^{cop}(\cD_{dg}(\cA), \cD_{dg}(\cB)) \cong_{\Hqe} \cD_{dg}(\opalg{\cA} \otimes \cB).
\]
\end{thm}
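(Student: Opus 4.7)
The plan is to reduce Theorem~\ref{thm:repdg} to the general quasi-equivalence $\cRHom_{\Hqe}(\cX,\cY) \cong_{\Hqe} \Rep_{dg}(\cX,\cY)$ (valid whenever $\cX$ is $\Bbbk$-flat), applied to $\cX = \cD_{dg}(\cA)$ and $\cY = \cD_{dg}(\cB)$, and then to identify the full sub-dg-category of coproduct-preserving quasi-functors with $\cD_{dg}(\opalg{\cA} \otimes \cB)$ via restriction along the Yoneda embedding. First one checks that $\cD_{dg}(\cA)$ is again $\Bbbk$-flat when $\cA$ is: objects of $\cD_{dg}(\cA) = Int(\cA\amod)$ are cofibrant dg-modules, whose hom complexes are built (up to quasi-isomorphism) from the hom complexes of $\cA$ by shifts, sums, and iterated extensions, so they remain $\Bbbk$-flat complexes. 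Applying the general result then gives
\[
\cRHom_{\Hqe}(\cD_{dg}(\cA), \cD_{dg}(\cB)) \cong_{\Hqe} \Rep_{dg}(\cD_{dg}(\cA), \cD_{dg}(\cB)),
\]
and restricting to the full sub-dg-category of coproduct-preserving quasi-functors on both sides yields $\cRHom^{cop}_{\Hqe} \cong_{\Hqe} \Rep_{dg}^{cop}$.

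The second step is to build a comparison quasi-functor
\[
\Phi : \cD_{dg}(\opalg{\cA} \otimes \cB) \longrightarrow \Rep_{dg}^{cop}(\cD_{dg}(\cA), \cD_{dg}(\cB)),
\]
sending a cofibrant bimodule $X$ to the quasi-functor $M \mapsto M \otimes_\cA X$ (derived tensor product, computed with cofibrant $M$). Composition and the dg-structure of bimodules turn this into a dg-functor, and the tensor product preserves coproducts in the first variable, so $\Phi$ lands in $\Rep_{dg}^{cop}$. In the opposite direction, precomposition with the dg-Yoneda embedding $h : \cA \hookrightarrow \cD_{dg}(\cA)$, $X \mapsto X^\wedge$, yields a restriction map
\[
\Psi : \Rep_{dg}^{cop}(\cD_{dg}(\cA), \cD_{dg}(\cB)) \longrightarrow \Rep_{dg}(\cA, \cD_{dg}(\cB)) \cong_{\Hqe} \cD_{dg}(\opalg{\cA} \otimes \cB),
\]
where the last quasi-equivalence is again the general formula for $\cRHom_{\Hqe}$ (since $\cA$ is $\Bbbk$-flat) together with the identification of quasi-functors valued in $\cD_{dg}(\cB)$ with $(\opalg{\cA} \otimes \cB)$-dg-modules.

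The third step is to show $\Phi \circ \Psi \simeq \id$ and $\Psi \circ \Phi \simeq \id$ in $\Hqe$. The second composition is essentially tautological: for a bimodule $X$, its restriction along $h$ sends the representable $Y^\wedge$ to $Y^\wedge \otimes_\cA X \cong X(Y,-)$, which recovers $X$ itself as an $(\opalg{\cA} \otimes \cB)$-dg-module. The first composition is where the real content lies. Given $F \in \Rep_{dg}^{cop}(\cD_{dg}(\cA), \cD_{dg}(\cB))$, one must show that $F$ is determined, up to quasi-isomorphism, by its restriction to representables: i.e.\ that the natural transformation $\Phi(\Psi(F))(M) = M \otimes_\cA F(h) \to F(M)$ is a quasi-isomorphism for every $M \in \cD_{dg}(\cA)$. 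For $M = Y^\wedge$ a representable this is immediate, and both sides are exact in $M$ and commute with coproducts (by hypothesis on $F$ and because $\otimes_\cA$ does). One then invokes the standard density argument: every cofibrant dg-$\cA$-module $M$ admits an exhaustive filtration $0 = M_0 \subset M_1 \subset \cdots$ whose graded quotients are direct sums of shifts of representables, so that $M \cong \mcolim M_r$; arguing inductively through the Milnor-colimit distinguished triangles and using preservation of coproducts and cones propagates the quasi-isomorphism from representables to all cofibrant dg-modules.

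The main obstacle is precisely this density step: it requires care that the comparison map is natural enough, and that both $F$ and $\Phi(\Psi(F))$ are \emph{exact} quasi-functors (i.e.\ commute with cones and arbitrary coproducts in the homotopy category), so that the five-lemma applied to the Milnor-colimit triangles upgrades a pointwise quasi-isomorphism on representables to one on all of $\cD_{dg}(\cA)$. Once this is established, both $\Phi$ and $\Psi$ are quasi-equivalences, proving the theorem.
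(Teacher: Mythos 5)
The paper does not actually supply a proof of this theorem: the whole appendix is a recollection from \cite{keller} and \cite{toen}, and \cref{thm:repdg} is To\"en's derived Morita theorem (Theorem 7.2 in \cite{toen}), cited without argument. So there is no ``paper's proof'' to match against; I can only assess your sketch on its own terms. The outline you give -- tensoring by a bimodule for $\Phi$, restriction along the Yoneda embedding for $\Psi$, the observation that $\Psi\circ\Phi$ is tautologically the identity, and a density/Milnor-colimit argument to upgrade $\Phi\circ\Psi \to \id$ from representables to all cofibrant modules -- is indeed the skeleton of To\"en's argument, and the observations about exactness and coproduct preservation are the correct ingredients.

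There are, however, two genuine gaps. The lighter one: your justification that $\cD_{dg}(\cA)$ is $\Bbbk$-flat is wrong. For a cofibrant $M$, the complex $\cHom(M,N)$ is built from the hom-complexes of $\cA$ by shifts, \emph{products}, and inverse limits along the semi-free filtration of $M$, not by ``sums and iterated extensions''; products of flat modules are flat over a field or a Noetherian ring, but not over an arbitrary commutative $\Bbbk$. Fortunately this flatness is also not needed -- one simply takes a cofibrant replacement $Q\cD_{dg}(\cA)$, which is what the $\cRHom_{\Hqe}$ machinery does anyway -- so the claim should be dropped rather than repaired. The heavier gap is the comparison natural transformation $\eta_M \colon M\otimes_\cA \Psi(F) \to F(M)$, which you use but never construct. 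Since $F$ is only a quasi-functor (a bimodule over $\opalg{Q\cD_{dg}(\cA)}\otimes\cD_{dg}(\cB)$ sending representables to quasi-representables), there is no evaluation ``on the nose,'' and producing $\eta$ as an honest morphism in the relevant derived category of bimodules is precisely where the technical content sits; the density argument and the five-lemma only apply once $\eta$ exists globally. To\"en avoids building $\eta$ by hand by invoking the universal property of $\widehat{\cA}$ as the cocomplete model-category completion of $\cA$, which packages this comparison into a Quillen adjunction. As written, your proof is a correct blueprint, but without constructing $\eta$ -- or replacing the hands-on verification of $\Phi\circ\Psi\simeq\id$ with the universal-property argument -- it does not close.
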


Under the hypothesis of \cref{thm:repdg}, the internal composition of dg-quasifunctors preserving coproducts is given by taking a cofibrant replacement of the derived tensor product over $\cA$. 

\subsection{Pretriangulated dg-categories}\label{app:triangulateddg}

Basically, a triangulated dg-cate\-go\-ry is a dg-ca\-te\-go\-ry such that its homotopy category is canonically triangulated. But before being able to give a precise definition, we need to do a detour through Quillen exact categories, Frobenius categories and stable categories.

\subsubsection{Frobenius structure on $C(\cA)$}

Recall a Quillen exact category \cite{quillenexact} is an additive category with a class 
of short exact sequences
\[
0 \rightarrow X \xrightarrow{f} Y \xrightarrow{g} Z \rightarrow 0,
\]
called \emph{conflations}, which are pairs of ker-coker, where $f$ is called an \emph{inflation} and $g$ a \emph{deflation}, respecting some axioms:
\begin{itemize}
\item the identity is a deflation;
\item the composition of deflations is a deflation;
\item deflations (resp. inflations) are stable under base (resp. cobase) change.
\end{itemize}
A \emph{Frobenius} category is a Quillen exact category having enough injectives and projectives, and where injectives coincide with projectives. The \emph{stable category $\und \cC$} of  a Frobenius category $\cC$ is given by modding out the maps that factor through an injective/projective object. 
It carries a canonical triangulated structure where:
\begin{itemize}
\item the suspension functor $S$ is obtained by taking the target of a conflation
\[
0 \rightarrow X \rightarrow IX \rightarrow SX \rightarrow 0,
\]
where $IA$ is an injective hull of $X$, for all $X \in \cC$;
\item the distinguished triangles are equivalent to standard triangles
\[
X \xrightarrow{f} Y \xrightarrow{g} Z \xrightarrow{h} SX,
\]
 obtained from conflations by the following commutative diagram:
 \[
 \begin{tikzcd}
 0 \ar{r} & X \ar{r}{f}  \ar{d}{\id}  & Y  \ar{r}{g}  \ar{d} & Z  \ar{r}  \ar{d}{h} & 0 \\
 0 \ar{r} & X \ar{r} & IX \ar{r} & SX \ar{r} & 0.
 \end{tikzcd}
 \]
\end{itemize}

\begin{exe}\label{ex:quillenexact}
Let $\cA$ be a small dg-category. One can put a Frobenius structure on $C(\cA) (:= Z^0(\opalg{\cA}\amod))$ by using split short exact sequences as class of conflations. Then there is an equivalence $\und{C(\cA)} \cong H^0(\opalg{\cA}\amod)$, and the suspension functor coincides with the usual homological shift. 
Moreover, $\cD(\cA)$ inherits the triangulated structure from $H^0(\cA\amod)$, where distinguished triangles are equivalent to distinguished triangles obtained from all short exact sequences in $C(\cA)$.
\end{exe}

\subsubsection{Pretriangulated dg-categories}

Remark for any dg-category $\cA$ there is a Yoneda functor
\[
Z^0(\cA) \rightarrow C(\cA), \quad X \mapsto \Hom_{\cA}(-,X).
\]
\begin{defn}\label{def:pretrdg}
A dg-category $\cT$ is \emph{pretriangulated} if the image of the Yoneda functor is stable under translations and extensions (for the Quillen exact structure on $C(\cT)$ described in \cref{ex:quillenexact}).
\end{defn}

This definition implies that 
\begin{itemize}
\item $Z^0(\cT)$ is a Frobenius subcategory of $C(\cT)$;
\item $H^0(\cT)$ inherits a triangulated structure, called \emph{canonical triangulated structure}, from $H^0(\cT\amod)$.
\end{itemize}

\begin{exe}
Let $\cA$ be a dg-category. We have that $\cD_{dg}(\cA)$ is pretriangulated with $Z^0(\cD_{dg}(\cA)) \cong C(\cA)^{cf}$. 
Moreover, the canonical triangulated structure of $H^0(\cD_{dg}(\cA))$ coindices with the usual on $\cD(\cA)$. 
\end{exe}

Then, it is possible to show that
\begin{itemize}
\item any dg-category $\cA$ admits a pretriangulated hull $pretr(\cA)$ such that
\[
\cRHom_{\Hqe}(\cA,\cT) \xrightarrow{\simeq} \cRHom_{\Hqe}(pretr(\cA), \cT),
\]
for all pretriangulated dg-category $\cT$;
\item $\cRHom_{\Hqe}(\cA,\cT)$ is pretriangulated whenever $\cT$ is pretriangulated;
\item any dg-functor $F: \cT \rightarrow \cT'$ between pretriangulated dg-categories induces a triangulated functor $[F] : H^0(\cT) \rightarrow H^0(\cT')$. 
\end{itemize}

Note that for $\cA$ being $\Bbbk$-flat, the pretriangulated structure of $\cRHom_{\Hqe}(\cD_{dg}(\cA), \cD_{dg}(\cB))$ restricts to the one of $\cD_{dg}( \opalg{\cA} \otimes \cB)$ (viewed as sub-dg-category). In particular, we obtain distinguished triangles of quasi-functors from short exact sequences of dg-bimodules. 

\begin{defn}
For a morphism $f : X \rightarrow Y \in Z^0(\cT)$ in the underlying category of pretriangulated dg-category $\cT$, one calls \emph{mapping cone} an object $\cone(f) \in \cT$ such that
\[
\cone(f)^\wedge \cong \cone(X^\wedge \xrightarrow{\bar f} Y^\wedge) \in H^0(\cT\amod).
\]
\end{defn}

\subsubsection{Dg-Morita equivalences}

\begin{defn}
A dg-functor $F : \cA \rightarrow \cB$ is a dg-Morita equivalence if it induces an equivalence
\[
\boldsymbol{L}F : \cD(\cA) \xrightarrow{\simeq} \cD(\cB) : X \mapsto F(QX),
\]
where $QX$ is a cofibrant replacement of $X$. 
\end{defn}

\begin{exe}
In particular, a quasi-equivalence is a dg-Morita equivalence and the functor that sends dg-categories to their pretriangulated hull $\cA \mapsto pretr(\cA)$ is a dg-Morita equivalence.
\end{exe}

\begin{thm}[\cite{tabuada}]
There is a model structure $\dgcat_{mor}$ on $\dgcat$ where the weak-equivalences are the dg-Morita equivalences and the fibrations are the same as before. 
\end{thm}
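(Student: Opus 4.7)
The plan is to obtain $\dgcat_{mor}$ as a left Bousfield localization of the quasi-equivalence model structure on $\dgcat$ constructed earlier in the section. The strategy follows Tabuada's approach: identify a small set of maps whose localization produces exactly the dg-Morita equivalences, verify the hypotheses needed for Bousfield localization to exist, and then identify the resulting weak equivalences and fibrations.

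First I would recall/verify that the quasi-equivalence model structure on $\dgcat$ is cofibrantly generated (in fact combinatorial), with explicit generating (trivial) cofibrations produced by Tabuada. I would also verify that it is left proper, so that Hirschhorn's machinery for left Bousfield localization applies. Next, I would choose the localizing set: the natural candidates are the Yoneda-type embeddings $\iota_\cA : \cA \hookrightarrow pretr(\cA)$, together with the maps $\cB \hookrightarrow \cB^{\mathrm{kar}}$ expressing idempotent completion, taken over a small skeleton of homotopically finitely presented dg-categories. The key observation is that a dg-functor $F : \cA \to \cB$ is a dg-Morita equivalence if and only if $[\boldsymbol{L}F]$ is an equivalence of triangulated categories, equivalently $F$ becomes a quasi-equivalence after applying $(-)^{\mathrm{kar}} \circ pretr$; this is the content we want to force by localization.

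The third step would be to identify the fibrant objects of the localized structure: these will turn out to be exactly the \emph{Morita-fibrant} dg-categories, i.e.\ those $\cT$ which are pretriangulated and whose $H^0(\cT)$ is idempotent complete. With this in hand, weak equivalences in the localization are the maps that induce quasi-equivalences after mapping into such $\cT$, and one checks by standard arguments that these coincide with dg-Morita equivalences (using that $\cD_{dg}(\cA) \cong_{\Hqe} \cD_{dg}(\cB)$ via a quasi-functor exactly when the underlying dg-functor becomes a quasi-equivalence on Morita-fibrant replacements). For the claim that fibrations coincide with those of the quasi-equivalence structure, I would use the general fact about left Bousfield localization: cofibrations remain unchanged, and fibrations between fibrant objects remain unchanged. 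Since every object in the original structure is fibrant, one checks that the characterization of fibrations (surjective on hom-complexes plus isomorphism-lifting in $H^0$) still provides an equivalent description in the localized structure, because the lifting problems involving localized trivial cofibrations can be handled against the Morita-fibrant replacement.

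The main obstacle is the identification of the localized weak equivalences with precisely the dg-Morita equivalences. This reduces to a careful analysis of the unit map $\cA \to \widehat{\cA}$ into a Morita-fibrant replacement, and verifying that the two-out-of-three and saturation properties force equality of the two classes. The rest of the argument is largely formal once Hirschhorn's existence theorem for left Bousfield localizations of combinatorial, left proper model categories is invoked; in particular, the cofibrant generation and set-theoretic smallness conditions are inherited from the quasi-equivalence model structure constructed previously.
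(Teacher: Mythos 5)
The paper gives no proof of this statement — the theorem is recorded as a black box with a citation to Tabuada — so there is no internal argument to compare against, but your overall strategy (left Bousfield localization of the quasi-equivalence model structure at Yoneda/idempotent-completion maps) is the correct one and is indeed Tabuada's route.

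The genuine gap is your final paragraph: the fibrations of $\dgcat_{mor}$ are \emph{not} the same as those of the quasi-equivalence structure, and the argument you give for that claim cannot be repaired. In a nontrivial left Bousfield localization the cofibrations stay fixed while the fibrations strictly shrink. Concretely, the fibrant objects of $\dgcat_{mor}$ are precisely the triangulated dg-categories — as the appendix itself records a few lines below the theorem — whereas in the quasi-equivalence structure \emph{every} dg-category is fibrant; so for any non-triangulated $\cA$, the terminal map $\cA \to *$ is a fibration in the first structure but not a fibration in $\dgcat_{mor}$. Your appeal to the fact that "fibrations between fibrant objects remain unchanged" quantifies over objects fibrant \emph{in the localized structure}, a strictly smaller class, and therefore does not bootstrap to an agreement of all fibrations; and the idea of resolving the extra lifting problems "against the Morita-fibrant replacement" fails because the defining right lifting property of a fibration must hold for the map itself, not after a fibrant replacement of its target or source. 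Tabuada's theorem actually states that the \emph{cofibrations} (not the fibrations) of $\dgcat_{mor}$ coincide with those of the quasi-equivalence structure; the theorem as quoted in the appendix has these two words swapped. With that correction, your Bousfield-localization sketch goes through essentially formally — preservation of cofibrations is part of the definition of a left Bousfield localization — and the problematic final paragraph of your argument can simply be dropped.
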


\begin{defn}
We say that $\cT$ is triangulated if it is fibrant in $\dgcat_{mor}$.
\end{defn}

Equivalently, $\cT$ is triangulated if and only if the Yoneda functor induces an equivalence $H^0(\cT\amod) \xrightarrow{\simeq} \cD^c(\cT)$ (i.e. every compact object is quasi-representable). 
Also equivalently, $\cT$ is triangulated if and only if $\cT$ is pretriangulated and $H^0(\cT\amod)$ is idempotent complete. 

\smallskip

In particular, any category admits a triangulated hull $tr(\cA)$ (i.e. fibrant replacement). 
It is given by $tr(\cA) := \cD_{dg}^c(\cA)$, the dg-category of compact objects in $\cD_{dg}(\cA)$. 

\begin{exe}
Let $R$ be a $\Bbbk$-algebra viewed as a dg-category. Then $\cD_{dg}^c(R)$ is the dg-category of perfect complexes, i.e. bounded complexes of finitely generated projective $R$-modules. 
\end{exe}



\bibliographystyle{bibliography/habbrv}


\end{document}